\documentclass[11pt]{article} 
\usepackage{graphicx} 

\usepackage{amsmath,amssymb,amsthm,mathrsfs,dsfont,bm}
\usepackage[a4paper, total={7in, 9.5in}]{geometry}

\usepackage{titlesec,hyperref}
\usepackage{cite}
\usepackage{color}
\usepackage{fancyhdr}
\usepackage{tikz}
\usepackage{float}
\usepackage[]{cleveref}
\usetikzlibrary{positioning} 
\usepackage{xcolor}
\usepackage{lipsum}
\lhead{}

\usepackage{mathtools}

\usepackage[titletoc]{appendix}

\newtheorem{theo}{Theorem}[section]
\newtheorem{rema}[theo]{Remark}
\numberwithin{equation}{section}
\newtheorem{lemm}{Lemma}[section]
\newtheorem{prop}{Proposition}[section]

\newcommand\N{{\mathbb N}}
\newcommand\R{{\mathbb R}}
\newcommand\Z{{\mathbb Z}}
\newcommand\T{{\mathbb T}}
\newcommand\I{{\mathbb I}}

\newcommand\PP{{\mathbb P}}
\newcommand\D{{\mathscr D}}
\newcommand\FFF{{\mathscr F}}
\newcommand\F{{\mathcal F}}
\newcommand\E{{\mathcal E}}
\newcommand\LL{{\mathcal L}}
\newcommand\GG{{\mathcal G}}
\newcommand\HH{{\mathcal H}}

\newcommand\J{{\mathcal J}}

\newcommand\UU{{\mathcal U}}

\newcommand\x{\bm{x}}

\newcommand\uu{\bm{u}}
\newcommand\vv{\bm{v}}

\newcommand\RR{\mathcal{R}}

\newcommand\V{\bm{V}}
\newcommand\FF{\bm{F}}

\let\pa=\partial
\let\na=\nabla
\let\al=\alpha

\let\f=\frac

\let\om=\omega
\let\ep=\epsilon
\let\tri=\triangleq
\def\dive{\mathop{\rm div}\nolimits}
\def\curl{\mathop{\rm curl}\nolimits}
\def\exp{\mathop{\rm exp}\nolimits}

\allowdisplaybreaks[4]

\begin{document}
	\title{Nonlinear stability of 2-D Couette flow for the compressible Navier-Stokes equations at high Reynolds number}
	\author{Minling Li, Chao Wang, Zhifei Zhang}
	\author{Minling Li\thanks{limling@pku.edu.cn}
		\quad Chao Wang\thanks{wangchao@math.pku.edu.cn} 
		\quad Zhifei Zhang\thanks{zfzhang@math.pku.edu.cn}\\[10pt]
		\small {School of Mathematical Sciences, Peking University}\\
		\small {Beijing 100871, P. R. China}\\
		[5pt]}
	\date{}
	\maketitle

	\begin{abstract}
		In this paper, we  investigate the nonlinear stability of the Couette flow for the two-dimensional compressible Navier--Stokes equations at high Reynolds numbers($Re$) regime. It was proved that if the initial data $(\rho_{in},\uu_{in})$ satisfies $\|(\rho_{in},\uu_{in})-(1, y, 0)\|_{H^4(\T\times\R)}\leq \ep Re^{-1}$ for some small $\ep$ independent of $Re$, then the corresponding solution exists globally and remains close to the Couette flow for all time.  Formal asymptotics indicate that this stability threshold is sharp within the class of Sobolev perturbations. The proof relies on the Fourier-multiplier method and exploits three essential ingredients: (i) the introduction of ``good unknowns" that decouple the perturbation system; (ii) the construction of a carefully designed Fourier multiplier that simultaneously captures the enhanced dissipation and inviscid-damping effects while taming the lift-up mechanism; and (iii) the design of distinct energy functionals for the incompressible and compressible modes.
	\end{abstract}

	\setcounter{tocdepth}{1}
	
	
	\maketitle

	\section{Introduction}
	In this paper, we consider the two-dimensional isentropic compressible Navier-Stokes equations in a domain $\T\times \R$:
	\begin{align}\label{eq-CNS}
		\left\{\begin{array}{l}
			\partial_t\rho + \dive_{\x}(\rho \uu) = 0,\\
			\partial_t(\rho \uu) + \dive_{\x}(\rho \uu \otimes \uu) + \f{1}{M}\nabla_{\x} \mathcal{P}(\rho) = \mu \Delta_{\x} \uu + (\mu + \mu')\nabla_{\x} \dive_{\x} \uu,
		\end{array}\right.
	\end{align}
	where $\rho(t,\x)$, $\uu(t,\x)=(u^1,u^2)^T(t,\x)$ and $\mathcal{P}(\rho)$ represent the density, the velocity and the pressure, respectively. 	
	The pressure $\mathcal{P}(\rho)$ here is assumed to be a smooth function in a neighborhood of $1$ with $\mathcal{P}'(1)>0$. 
	Here, $M>0$ is the Mach number, $\mu$ and $\mu'$ are the shear viscosity and the bulk viscosity coefficients, respectively, satisfying the following physical condition
	\begin{align*}
		\mu>0,\quad \mu+\mu'\geq0.
	\end{align*}
	Without loss of generality, we assume that $\mathcal{P}'(1)=1$ and $M=1$ in this paper.  The initial data of system \eqref{eq-CNS} is given by
	\begin{align*}
		(\rho,\uu)|_{t=0}=(\rho_{in},\uu_{in})(\x).
	\end{align*}	
	In this paper, we study the nonlinear stability of the Couette flow given by
	\begin{align*}
		(\bar\rho,\bar\uu)(t,\x)=(1,(y,0)^{T}).
	\end{align*}
	Thus, we introduce  $(n, \vv)$ to be the perturbation of $(\rho, \uu)$ around $(\bar\rho, \bar\uu)$, i.e., 
	\begin{align}\label{def: eq-pertur}
		n\tri \rho-\bar\rho,\quad\vv\tri \uu-\bar\uu,
	\end{align}
	which satisfies the following perturbation system
	\begin{align}\label{eq-pertur}
		\left\{\begin{array}{l}
			\partial_t n +y\partial_x n+\dive_{\x}\vv = F_1,\\[1ex]
			\partial_t \vv + y\partial_x\vv+(v^2,0)^T+\nabla_{\x} n-\mu \Delta_{\x} \vv - (\mu + \mu')\nabla_{\x} \dive_{\x} \vv = \FF_2,\\[1ex]
			(n,\vv)|_{t=0}=(n_{in},\vv_{in})(\x)=(\rho_{in}-\bar\rho,\uu_{in}-\bar\uu),
		\end{array}\right.
	\end{align}
	where the nonlinear terms $F_1$ and $\FF_2$ can be determined explicitly as:
	\begin{align}
		F_1\tri& -\vv\cdot\nabla_{\x} n-n\dive_{\x} \vv,\nonumber\\
		\FF_2\tri&-\vv\cdot\nabla_{\x}\vv-\left(\f{\mathcal{P}'(1+n)}{1+n}-1\right)\nabla_{\x} n-\f{n}{1+n}\left(\mu \Delta_{\x} \vv + (\mu + \mu')\nabla_{\x} \dive_{\x} \vv\right).\label{def-bf-F2}
	\end{align}

	\subsection{Background and known results}	
	
	The stability of plane Couette flow has been investigated since the seminal works of Rayleigh \cite{Rayleigh} and Kelvin \cite{kelvin}. Romanov \cite{ Romanov} proved that Couette flow is spectrally stable for all Reynolds numbers, a result that appears to contradict both numerical simulations and physical experiments \cite{Chapman, D-R, GG, OK, SH, Trefethen, Yaglom}, which indicate that shear flows become unstable and undergo transition to turbulence at sufficiently high Reynolds numbers. This discrepancy is known as the Sommerfeld paradox.  To elucidate the mechanisms underlying transition to turbulence, Trefethen et al. \cite{Trefethen} first formulated the transition threshold problem: namely, to quantify the magnitude of perturbations required to trigger instability and to determine their scaling with the Reynolds number. More recently, Bedrossian, Germain, and Masmoudi \cite{Bedrossian-Germain-Masmoudi-2017, Bedrossian-Germain-Masmoudi-2019} provided a rigorous mathematical framework for this problem, stated as follows.
	
	{\it Given a norm $\|\cdot\|_{X}$, find a $\beta=\beta(X)$ so that
		\begin{align*}
			&\|\vv_{in}\|_{X} \leq Re^{-\beta} \to \textrm{stability},\\
			&\|\vv_{in}\|_{X} \geq Re^{-\beta}\to \textrm{instability}.
		\end{align*}
	}

	The exponent $\beta$ is referred to as the transition threshold.

	A substantial body of work in applied mathematics and physics has been devoted to estimating the exponent $\beta$ (see, e.g., \cite{BT, Chapman, DBL, LK, LHS, RSBH, Waleffe, Yaglom}). Over the past decade, rigorous mathematical results have appeared in rapid succession. For the two-dimensional incompressible case without physical boundaries (i.e., $\T\times\R$), Bedrossian, Masmoudi, and Vicol\cite{Bedrossian-Masmoudi-Vicol-2016} proved that $\beta=0$ when the perturbation space $X$ is of Gevrey class $2+$. If $X$ is taken to be a Sobolev space, subsequent works \cite{MZ, WZ-2023} have established the upper $\beta\leq \f13$.  In the finite channel $\T\times [-1, 1]$, Chen, Li, Wei, and the third author \cite{CLWZ} derived $\beta\le \f12$ for the two-dimensional problem under the no-slip boundary condition. Further two-dimensional stability results for Couette flow can be found in \cite{BWV, MZ-2020, LMZ, Bedrossian-He- Iyer-Wang}.   The picture changes significantly in three dimensions because of a new linear mechanism—the lift-up effect—that induces transient growth of disturbances.  Consequently, the transition threshold problem becomes substantially more difficult.  For the domain $\T\times\R\times\T$, Bedrossian, Germain, and Masmoudi \cite{Bedrossian-Germain-Masmoudi-2020, Bedrossian-Germain-Masmoudi-2022, Bedrossian-Germain-Masmoudi-2017} obtained $\beta\le 1$ when $X$ is a Gevrey space and $\beta\le \f32$ when $X$ is a Sobolev space. Wei and the third author \cite{WZ-cpam} later improved the Sobolev result to $\beta\le 1$. Most recently, Chen, Wei, and the third author \cite{CWZ-MAMS} treated the channel geometry $\T\times [-1, 1]\times\T$ under the no-slip boundary condition and again derived $\beta\le 1$ for Sobolev perturbations.

	Although the stability of compressible flows has a long history, the stability of compressible Couette flow remains far less understood than its incompressible counterpart.  The compressible system is inherently more complicated, and the associated lift-up effect is markedly stronger.  To date, most investigations have been numerical or physical (see, e.g., \cite{Glatzel, Glatzel1, HSH, HZ, MDA, GE, CRS, CRS1, FI}).  On the mathematical side, Kagei \cite{Kagei} established asymptotic stability of Couette flow for small Reynolds numbers, while Li and Zhang \cite{LZ} later treated the case with slip boundary conditions.  Regarding the transition threshold problem, Antonelli, Dolce, and Marcati \cite{Antonelli-Dolce-Marcati-2021} proved linear stability and enhanced dissipation for two-dimensional Couette flow on $\T\times\R$ at high Reynolds number; they recovered the $t^{1/2}$ growth of the $L^2$ norm previously observed in the physical literature \cite{GLB}.  Zeng, Zi, and the third author \cite{Zeng-Zhang-Zi-2022} obtained analogous results on $\T\times\R\times\T$. Most recently, Huang, Li, and Xu \cite{Huang-Li-Xi-2024} addressed the nonlinear problem and showed that $\beta\leq 11/3$ when the perturbation space $X$ is taken to be a Sobolev space.

	The goal of  this paper is to  investigate the nonlinear stability of  the Couette flow for the two-dimensional compressible Navier–Stokes equations and seek the optimal threshold exponent $\beta$.
	
	\subsection{Main result}
	
	Due to the different physical properties of the velocity field $\vv$, we decompose it into two distinct components: the compressible part and the incompressible part. Specifically, we define
	\begin{align}
		d\tri \dive_{\x} \vv,\quad \omega\tri\curl_{\x} \vv=\nabla^{\perp}_{\x}\cdot \vv=-\pa_yv^1+\pa_xv^2.
	\end{align}
	Thus, we have
	\begin{align}\label{decom-v}
		\vv=&\nabla_{\x}\Delta^{-1}_{\x}\dive_{\x} \vv+\nabla^{\perp}_{\x}\Delta^{-1}_{\x}\curl_{\x} \vv=\nabla_{\x}\Delta^{-1}_{\x}d+\nabla^{\perp}_{\x}\Delta^{-1}_{\x}\omega.
	\end{align}
	In view of the system \eqref{eq-pertur}, we find that $(n,d,\omega)$ satisfies the following system
	\begin{align}\label{eq_n-d-om}
		\left\{\begin{array}{l}
			\partial_t n +y\partial_x n+d = F_{1},\\[1ex]
			\partial_t d + y\partial_xd+2\pa_x\pa_y\Delta^{-1}_{\x}d+2\pa_x^2\Delta^{-1}_{\x}\omega+\Delta_{\x} n-\nu \Delta_{\x} d = \dive_{\x}{\FF}_{2},\\[1ex]
			\partial_t \omega + y\partial_x\omega-d-\mu \Delta_{\x} \omega = \curl_{\x}{\FF}_2,\\[1ex]
			(n,d,\omega)|_{t=0}=(n_{in},d_{in},\omega_{in})(\x)=(n_{in},\dive_{\x}\vv_{in},\curl_{\x} \vv_{in})(\x),
		\end{array}\right.
	\end{align}
	where $\nu\tri 2\mu+\mu'$.

	For any $s\in\N^+$, we say that $q(t,x,y)\in \HH^s(\T\times\R)$ whenever
	\begin{align*}
		\|q\|_{\HH^s}^2\tri \sum_{|\al|\leq s}\int_{\T\times\R}|(\pa_x,\pa_y+t\pa_x)^\al q|^2\,dxdy<+\infty.
	\end{align*}
	For any function $q(t,x,y)$, we also define
	\begin{align*}
		\PP_{0} q \tri \f{1}{2\pi} \int_{\T}q(t,x,y)\,dx,\quad \PP_{\not=} q  \tri q-\PP_{0} q.
	\end{align*}
	Here, to simplify the notations, we sometimes use the $(q_0, q_{\not=})$ instead of $(\PP_{0} q,\PP_{\not=} q)$.
	
	Now we are in a position to state our main result.
	
	\begin{theo}\label{theo1}
		There exists a small $\ep$ independent of $\mu, \nu$, such that if $0<\nu, \mu\leq 1$, $\nu$ has the same order of $\mu$, and the initial data $(n_{in},d_{in},w_{in})$ satisfies
		\begin{align}\label{initial data-ass}
			\|(n_{in},\vv_{in})\|_{H^4(\T\times\R)}
			\leq \ep \mu,
		\end{align}
		then the system \eqref{eq_n-d-om} admits a unique global solution $(n, d,\om)$, such that for any $t\geq0$, 
		\begin{align*}
			&\|(\PP_0n,\PP_0\Lambda^{-1}_{\x}d,\PP_0\om)\|_{L_t^{\infty}(\HH^3(\T\times\R))}+	\mu^{\f16}	\|(\PP_{\not=}n,\PP_{\not=}\Lambda^{-1}_{\x}d,\PP_{\not=}\om)\|_{L_t^{\infty}(\HH^3(\T\times\R))}   \leq C	 \|(n_{in},\vv_{in})\|_{H^4(\T\times\R)} ,
		\end{align*}
		where for any $s\in\R$, $\Lambda^{s}_{\x}\tri (-\Delta_{\x})^{\f{s}{2}}$ and $C$ is a constant independent of $t$ and $\mu, \nu$.
	\end{theo}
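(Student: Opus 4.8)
The plan is to run a bootstrap (continuity) argument on a high-order energy functional adapted to the three linear mechanisms at play: enhanced dissipation on the nonzero-in-$x$ modes, inviscid damping of $d$ and $\omega$, and the lift-up growth of the zero mode of $v^1$. First I would work in the moving frame via the $\HH^s$ norms defined above, so that $\pa_y$ is replaced by $\pa_y+t\pa_x$ and the transport term $y\pa_x$ disappears; the price is time-dependent coefficients in the elliptic operators $\Delta_{\x}$, $\pa_x\pa_y\Delta_{\x}^{-1}$, etc. I would then introduce the ``good unknowns'' promised in the abstract — combinations of $n$, $d$ (and possibly $\Lambda_{\x}^{-1}d$) chosen so that the $\Delta_{\x}n$ coupling in the $d$-equation and the $d$ coupling in the $n$-equation are diagonalized into a dispersive/dissipative block, separating the genuinely compressible (acoustic) dynamics from the incompressible vorticity $\omega$. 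The zero mode $(\PP_0 n,\PP_0 d,\PP_0\omega)$ and the nonzero mode $\PP_{\neq}$ must be treated by different functionals: for $\PP_0$ there is no enhanced dissipation, only the slow heat-type decay, and one must control the lift-up term $-d$ in the $\omega$-equation and the $(v^2,0)^T$ term feeding $\PP_0 v^1$, which is what produces the $t^{1/2}$-type growth and hence the loss of a power of $\mu$.

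The core step is the construction of the Fourier multiplier $m(t,k,\eta)$ (in the variables dual to $x$ and to the shifted $y$) that is monotone in time and encodes, for each frequency, the time at which enhanced dissipation kicks in ($t\sim k^{-1/3}\mu^{-1/3}$, the critical-layer time $t\sim\eta/k$), so that the commutator $[\pa_t m/m]$ generates a good negative term absorbing the worst transport-stretching errors $2\pa_x\pa_y\Delta_{\x}^{-1}$ coming from the off-diagonal couplings. I would define $E_{\neq} \sim \|m\,\w{\pa_x,\pa_y+t\pa_x}^3(\text{good unknowns})\|_{L^2}^2$ and a separate $E_0$ at the $\HH^3$ level for the zero mode, differentiate in time, and collect: (i) the dissipation terms $\mu\|\na_{\x}(\cdot)\|^2$, (ii) the negative multiplier terms $-\|\sqrt{-\pa_t m/m}\,(\cdot)\|^2$ capturing inviscid damping, and (iii) the forcing terms from $F_1$ and $\dive_{\x}\FF_2$, $\curl_{\x}\FF_2$. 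The nonlinear terms are handled by product estimates in $\HH^3$ (one needs $s=3$ plus the elliptic gain, consistent with the $H^4$ hypothesis on the data), splitting each product into zero/nonzero interactions: $\neq\times\neq\to 0$, $\neq\times\neq\to\neq$, $0\times\neq\to\neq$, with the ``bad'' factor $\PP_0 v^1$ of size $O(\mu^{-1/6}\epsilon\mu)=O(\epsilon\mu^{5/6})$ always multiplied either against a decaying factor or against a term carrying the multiplier's dissipative gain, so the total nonlinear contribution closes at size $\epsilon^2\mu$ against the $\epsilon\mu$ budget.

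The main obstacle, as usual for Couette-type thresholds, is the interaction between the lift-up mode $\PP_0 v^1$ and the nonzero modes in the nonlinearity — specifically the terms $\PP_{\neq}(\PP_0 v^1\,\pa_x(\PP_{\neq}\cdot))$ in $F_1$ and $\FF_2$ — which is the mechanism that dictates the threshold; one must show that the weight $\mu^{1/6}$ attached to $\PP_{\neq}$ in the statement is exactly balanced so that this term is controlled by the enhanced-dissipation gain $\mu^{1/3}\int\|\Lambda_{\x}^{1/3}\PP_{\neq}(\cdot)\|^2$ hidden in $-\pa_t m/m$. A secondary difficulty is the compressible (acoustic) block: the operator $\pa_x^2\Delta_{\x}^{-1}$ is only bounded (no smoothing), so $d$ and $\omega$ are coupled at top order, and one must exploit the dispersive oscillation of the acoustic part together with the viscosity $\nu\Delta_{\x}d$ (using crucially that $\nu\sim\mu$) to prevent a loss; choosing the good unknown and the energy so that this coupling is skew-symmetric up to lower-order/dissipative terms is the delicate algebraic point. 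Once the bootstrap inequality $E(t)\le E(0)+C\epsilon E(t)$ holds on the a priori interval, standard continuation gives the global bound and the stated norm control.
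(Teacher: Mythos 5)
Your high-level architecture (moving frame, Fourier multiplier, zero/nonzero-mode split, bootstrap) is broadly right, but the two structural ideas you would actually need are missing or misidentified, and each is load-bearing. First, the ``good unknown'' you propose is a combination of $n$ and $d$ meant to diagonalize the acoustic coupling $\Delta_{\x}n\leftrightarrow d$; the paper's good unknown is instead a modification of the vorticity, $w=\omega+n-\mu d$. The $(n,d)$ coupling is already essentially skew and is handled by multipliers, not by an algebraic change of variable; the real obstruction is the linear lift-up term $-d$ in the $\omega$-equation, which injects the growing compressible part directly into the vorticity. The unknown $w$ is chosen so that $-d$ is cancelled against $\pa_t n=-d+\cdots$ (the $-\mu d$ kills the residual viscous cross term), and the $w$-equation then has no linear $d$ term at all. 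This is precisely what lets the paper prove that $W$ is $\mu^{1/6}$ \emph{better} than $\omega$ and so carries no $\varphi$ weight; without it you would have to weight $\omega$ by $\varphi^{-1/4}$ too, the commutator would cost an extra $\mu^{1/6}$, and the energy closure at threshold $\ep\mu$ would fail. Second, you describe a single monotone multiplier with $-\pa_t m/m$ producing the enhanced-dissipation gain, and claim the $\mu^{1/6}$ is traded against that gain. In fact three distinct multipliers are used: $m_1$ (enhanced dissipation), $m_2$ (inviscid damping, $\pa_t m_2/m_2=Ak^2/p$), and $\varphi$ with $\pa_t\varphi/\varphi=\pa_t p/p$ on the critical strip, satisfying $1\le\varphi\lesssim\nu^{-2/3}$. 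The $\mu^{1/6}$ loss is not bought back from $-\pa_t m/m$; it is literally $\varphi^{1/4}\lesssim\mu^{-1/6}$, the cost of attaching $\varphi^{-1/4}$ to the compressible pair $(N,\widetilde{\Lambda}^{-1}D)$ to absorb the lift-up $\f{\pa_t p}{p}\hat D$ in the divergence equation, with the commutator loss accounted for as in \eqref{rmk-comm}. The asymmetric weighting (only the compressible pair carries $\varphi^{-1/4}$, and only to the power $1/4$ rather than the $3/4$ used in the linear analysis) is the delicate point that keeps you inside the budget.

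Two further gaps. Your zero-mode treatment does not explain how to avoid $\PP_0 v^1$: it can grow in $L^2$ and is not controlled by any energy you define, yet the transport term $\f12\int_{\R}\PP_0\pa_{y_1}V^2|\PP_0 W|^2\,dy_1$ in the $\PP_0 W$-estimate cannot be integrated in time from $L^\infty_t$ bounds alone. The paper derives an independent weighted $L^2$ estimate for $(\PP_0\varrho,\PP_0 V^2)$ directly from the conservation form \eqref{eq-pertur-1}, and for $\PP_0 W$ multiplies by $1-\PP_0 N$ and invokes the zero-mode continuity equation \eqref{eq-P0N} to cancel precisely that transport term; some such structural cancellation is indispensable. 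Finally, you omit the need for a separate higher-order compressible energy (with a $\mu^{5/3}$ prefactor) controlling one extra derivative of $(N,D)$: the $W$-equation contains $\mu(\mu+\mu')\widetilde{\Delta}D$, which costs a derivative on $D$ relative to $W$, and closing the $W$ estimate in $H^3$ requires $D$ in $H^4$ — harmless only because of the prefactor $\mu$, but it does force the three-tier energy structure of the paper rather than the single functional you sketch.
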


	\begin{rema}
		Based on Theorem \ref{theo1}, we can derive the decay rate for the zero mode of the solution as
		\begin{align}\label{decay-zero}
			&\|(\PP_0 \pa_y n,\PP_0 d,\PP_0\pa_y \om)\|_{L^2(\R)} 
			\leq C (1+\mu t)^{-\f12}.
		\end{align}
		For non-zero mode of the solution, we can establish the following enhanced dissipation and inviscid damping estimates
		\begin{align}
			&\|(\PP_{\not=}n,\PP_{\not=}\Lambda^{-1}_{\x}d)\|_{L^2(\T\times\R)}
			\leq C(1+t)^{\f12}e^{-\ep_1 \mu^{\f13}t},
			\label{decay-nonzero-1}
			\\
			&(1+t)\|\PP_{\not=}\pa_{x}\Delta_{\x}^{-1} \om\|_{L^2(\T\times\R)}+\|\PP_{\not=}\pa_{y}\Delta_{\x}^{-1} \om\|_{L^2(\T\times\R)}
			\leq
			C(1+t)^{-\f12}e^{-\ep_1 \mu^{\f13}t}.
			\label{decay-nonzero-2}
		\end{align}
		The decay estimate \eqref{decay-zero} can be derived by employing the time-weighted energy estimates.
		The decay estimates \eqref{decay-nonzero-1}-\eqref{decay-nonzero-2} are immediate consequences of applying the weights $m_1$ and $\varphi$ in the proof of Theorem \ref{theo1}. 
		Their proofs are therefore omitted.
	\end{rema}

	\begin{rema}\label{rmk-optimal}
		Let us provide some discussions on the stability threshold of  the initial data via a formal asymptotic analysis. \smallskip
		
		First, the linear stability analysis of Antonelli–Dolce–Marcati \cite{Antonelli-Dolce-Marcati-2021} shows that the linearized system associated with \eqref{decom-v} loses a factor $\mu^{1/6}$;  more precisely,
		\begin{align*}
			\|(\PP_{\not=}  n, \PP_{\not=} \vv)\|_{L^2} \leq \mu^{-1/6} \|(n_{in}, \vv_{in})\|_{L^2}.
		\end{align*}
		Returning to the nonlinear system,
		\begin{align*}
			\pa_t \PP_{\not=}  \vv=- \PP_{\not=} v^2 \pa_y \PP_{\not=}  \vv +\cdots,
		\end{align*}
		we obtain, schematically,
		\begin{align*}
			\PP_{\not=} \vv\sim t\cdot \PP_{\not=} v^2 \pa_y \PP_{\not=} \vv+\cdots.
		\end{align*}
		Enhanced dissipation dictates the time scale $t\sim \mu^{-\f13}$, while $\pa_y=(\pa_y-t\pa_x)+t\pa_x$ introduces an additional loss of  $\mu^{-\f13}$. Collecting these losses, closing the energy estimates for the nonlinear problem requires at least $\|(n_{in}, \vv_{in})\|_{X}\sim \mu^{\alpha}$ with $\alpha\geq 5/6$.  
		
		For compressible flows, however, $\alpha = 5/6$ is still insufficient because of the stronger lift-up effect. This marks another key difference from the incompressible setting, where—even in three dimensions—the lift-up mechanism appears only in the equation for $v^1$.  Exploiting the divergence-free condition, \cite{WZ-cpam} therefore starts from the subsystem $(\PP_{\not=}v^2, \PP_{\not=}v^3)$  and recovers $\PP_{\not=}v^1$  a posteriori; in effect, they estimate $\|(\PP_{\not=}v^2, \PP_{\not=}v^3)\|_{H^2}$ rather than $\|\PP_{\not=}\vv\|_{H^2}$.  Incompressible flows can thus circumvent the lift-up term for non-zero modes.
		Compressible flows lack this structure.  To tame the lift-up growth, the works \cite{Antonelli-Dolce-Marcati-2021,Bedrossian-Germain-Masmoudi-2017,Huang-Li-Xi-2024,Liss-2020,Zeng-Zhang-Zi-2022} introduce the Fourier multiplier $\varphi$ defined in \eqref{def-phi} (see Section \ref{sect-F-M}), which  satisfies  $1\lesssim  \varphi\lesssim  \mu^{-2/3}$. Using this weight, we control $\|\varphi^{-1/4} \hat{ \vv}\|_{H^2}$. Formally,
		\begin{align}\label{rmk-comm}
			(\F^{-1}[\varphi^{-\f14}] \PP_{\not=}  \vv)\sim t\cdot \PP_{\not=} v^2 \pa_y (\F^{-1}[\varphi^{-\f14}]\PP_{\not=} \vv)
			+t[\F^{-1}[\varphi^{-\f14}], \PP_{\not=} v^2 \pa_y ]\PP_{\not=}  \vv +\cdots,
		\end{align}
		where $\hat{ \vv}$ stands for the Fourier transform of $\vv$. The commutator sacrifices the weight $\varphi^{1/4}$, resulting in an additional loss of $\mu^{1/6}$. A similar loss already appears in \cite{Bedrossian-Germain-Masmoudi-2017}, where $\|\PP_{\not=}\vv\|_{H^N}$ is estimated with the aid of the multiplier $\varphi$ to counteract the lift-up effect.  Guided by these considerations,  the stability threshold of the initial data \eqref{initial data-ass} seems optimal. 
	\end{rema}

	\medskip

	Throughout this paper, all constants $C$ may be different in different lines, but universal.  A constant with subscript(s) illustrates the dependence of the constant, for example, $C_{a}$ is a constant which depends on $a$. For $a\lesssim b$, we mean that there exists a universal constant $C$, such that $a\leq Cb$. While $a\gtrsim b$, we mean that there exists a universal constant $C$, such that $a\geq Cb$. And $a\sim b$ means that $a\lesssim b\lesssim a$.  
	


		%
	%
	
	\section{Reformulation of the system and Main strategy}
	\subsection{New formulation}
	
	One of the main differences between compressible and incompressible flows is that the lift-up effect already appears in two-dimensional compressible flows, whereas in the incompressible setting it is genuinely three-dimensional. This mechanism induces linear growth up to the time scale $t\leq {\mu}^{-1}$. To mitigate the lift-up effect, Antonelli, Dolce, and Marcati \cite{Antonelli-Dolce-Marcati-2021} introduced a good unknown  
	\begin{align}\label{def-w}
		w\tri \omega+n-\mu d,
	\end{align}		
	which satisfies
	\begin{align}\label{eq_n-d-w}
		\left\{\begin{array}{l}
			\partial_t n +y\partial_x n+d = F_{1},\\[1ex]
			\partial_t d + y\partial_xd+2\pa_x\pa_y\Delta^{-1}_{\x}d+2\pa_x^2\Delta^{-1}_{\x}(w-n+\mu d)+\Delta_{\x} n-\nu \Delta_{\x} d = F_{2},\\[1ex]
			\partial_t w + y\partial_x w-\mu \Delta_{\x} w+\mu(\mu+\mu') \Delta_{\x} d-2\mu \pa_x\pa_y\Delta^{-1}_{\x}d-2\mu\pa_x^2\Delta^{-1}_{\x}(w-n+\mu d)
			= F_{3},\\[1ex]
			(n,d,w)|_{t=0}=(n_{in},d_{in},w_{in})(\x)\tri
			(n_{in},d_{in},\omega_{in}+n_{in}-\mu d_{in})(\x),
		\end{array}\right.
	\end{align}
	where 	
	\begin{align*}
		F_{1}\tri& -\vv\cdot\nabla_{\x} n-n d,
		\\
		F_{2}\tri&-\vv\cdot\nabla_{\x} d-F_{21},
		\\
		F_{3}\tri&-\vv\cdot\nabla_{\x} w-d(w+\mu d)-\nabla^{\perp}_{\x}\cdot\Big(\f{n}{1+n}\big(\nu \nabla_{\x} d+\mu \nabla^{\perp}_{\x}(w-n+\mu d)\big)\Big)+\mu F_{21},
	\end{align*}
	with
	\begin{align*}
		F_{21}\tri &~(\pa_xv^1)^2+(\pa_yv^2)^2+2\pa_yv^1\pa_xv^2+\dive_{\x}\Big(\big(\f{\mathcal{P}'(1+n)}{1+n}-1\big)\nabla_{\x} n\Big) \\
		&+\dive_{\x} \Big(\f{n}{1+n}\big(\nu \nabla_{\x} d+\mu \nabla^{\perp}_{\x}(w-n+\mu d)\big)\Big).
	\end{align*}

	\begin{rema} We now highlight the advantages of the new formulation.
		\begin{itemize}
			
			\item  The equation for $w$ contains no linear term in $d$, thereby eliminating a potential source of unbounded growth.  This structural property is essential to our analysis.	
			
			\item  For streak solutions, the system governing $(\PP_0 n, \PP_0 d, \PP_0 w)$ is completely devoid of lift-up terms.  Consequently, we obtain uniform-in-time bounds  for $(\PP_0 n, \PP_0 d, \PP_0 w)$ which is controlled by the initial data.  Using the identity $\PP_{0} \om= \partial_y\PP_{0} v^1$,  we further derive comparable estimates for  $(\PP_0 n, \PP_0 v^2, \PP_0 \pa_yv^1)$.	
			Although 	$\PP_0 v^1$ itself may grow in $L^2$, this growth does not propagate to the energy norms we control; hence the a priori estimates close.

			\item  For the non-zero mode, the lift-up effect appearing in the equation for $\PP_{\not=} w$  is strictly weaker than that in the equation for  $\PP_{\not=} \om$.	 We prove that $\PP_{\not=}w$ do not lose $\mu^{\f16}$ suffered by  $\PP_{\not=} \om$. In this sense, $w$ behaves exactly like the vorticity in two-dimensional incompressible flow.

		\end{itemize}

	\end{rema}

	\subsection{Coordinates transformation} 
	
	To eliminate the transport terms, such as $y\pa_x n$, $y\pa_x d$ and $y\pa_x w$, from system \eqref{eq_n-d-w}, we introduce the moving-frame coordinates
	\begin{align}\label{def-t1-x1-y1}
		t\tri t,\quad x_1\tri x-ty,\quad y_1\tri y.
	\end{align}
	In this frame we set
	\begin{align*}
		(N,D,W,\V,P)(t,x_1,y_1)\tri (n,d,w,\vv,\mathcal{P})(t,x_1+ty_1,y_1).
	\end{align*}
	More generally, for any function $q(t,x,y)$(which may be $n,d,w,\vv$), we define $Q(t,x_1,y_1)\tri q(t,x_1+ty_1,y_1)$ under which the differential operators transform as	
	\begin{align*}
		&\widetilde{\nabla}q\tri \nabla_{\x}q= (\pa_{x_1}Q,(\pa_{y_1}-t\pa_{x_1})Q),
		\quad\widetilde{\dive}q\tri \dive_{\x}q=\pa_{x_1} Q^1+(\pa_{y_1}-t\pa_{x_1})Q^2,
		\\
		&\widetilde{\Delta}q\tri \Delta_{\x}q=\pa_{x_1}^2Q+(\pa_{y_1}-t\pa_{x_1})^2Q.
	\end{align*}
	And we denote 
	\begin{align*}
		\nabla Q\tri (\pa_{x_1}Q,\pa_{y_1}Q),
		\quad \dive Q\tri \pa_{x_1} Q^1+\pa_{y_1}Q^2,
		\quad \Delta Q\tri \pa_{x_1}^2Q+\pa_{y_1}^2Q.
	\end{align*}
	The system \eqref{eq_n-d-w} is then reduced to
	\begin{align}\label{eq_n-d-w-1}
		\left\{\begin{array}{l}
			\pa_{t} N+D=\widetilde{F}_1,\\[1ex]
			\pa_{t} D+2\pa_{x_1}(\pa_{y_1}-t\pa_{x_1})\widetilde{\Delta}^{-1}D+2\pa_{x_1}^2\widetilde{\Delta}^{-1}(W-N+\mu D)+\widetilde{\Delta} N-\nu \widetilde{\Delta} D=\widetilde{F}_2,\\[1ex]
			\pa_{t} W-\mu \widetilde{\Delta} W+\mu(\mu+\mu') \widetilde{\Delta} D-2\mu \pa_{x_1}(\pa_{y_1}-t\pa_{x_1})\widetilde{\Delta}^{-1}D-2\mu\pa_{x_1}^2\widetilde{\Delta}^{-1}(W-N+\mu D)=\widetilde{F}_3,\\[1ex]
			(N,D,W)|_{t=0}=(n_{in},d_{in},w_{in})(x_1,y_1),
		\end{array}\right.
	\end{align}
	where the source terms may be determined explicitly,
	\begin{align}
		\widetilde{F}_{1}\tri& -\V\cdot\widetilde{\nabla} N-N D,
		\label{def_wF1}\\
		\widetilde{F}_{2}\tri&-\V\cdot\widetilde{\nabla} D
		-\widetilde{F}_{21},
		\label{def_wF2}\\
		\widetilde{F}_{3}\tri&-\V\cdot\widetilde{\nabla} W-D(W+\mu D)-\widetilde{\nabla}^{\perp}\cdot\left(g(N)\big(\nu \widetilde{\nabla} D+\mu \widetilde{\nabla}^{\perp}(W-N+\mu D)\big)\right)
		+\mu\widetilde{F}_{21},
		\label{def_wF3}\\
		\widetilde{F}_{21}\tri&~ (\pa_{x_1}V^1)^2+((\pa_{y_1}-t\pa_{x_1})V^2)^2+2(\pa_{y_1}-t\pa_{x_1})V^1\pa_{x_1}V^2+\widetilde{\dive}\left(f(N)\widetilde{\nabla} N\right)
		\label{def_wF21}\\
		&+\widetilde{\dive} \left(g(N)\big(\nu \widetilde{\nabla} D+\mu \widetilde{\nabla}^{\perp}(W-N+\mu D)\big)\right),
		\nonumber
	\end{align}
	and the nonlinear functions of $N$ are defined by
	\begin{align}\label{def-f-g}
		f(N)\tri \f{P'(1+N)}{1+N}-1,\quad g(N)\tri \f{N}{1+N}.
	\end{align}

	We introduce the definition of Sobolev space under the new coordinates. For any $s\in\N^+$, we say that $Q(t,x_1,y_1)\in H^s(\T\times\R)$ whenever
	\begin{align*}
		\|Q\|_{H^s}^2\tri \sum_{|\al|\leq s}\int_{\T\times\R}|(\pa_{x_1},\pa_{y_1})^\al Q|^2\,dx_1dy_1<+\infty.
	\end{align*}
	We note that for any $q(t,x,y)$ and $Q(t,x_1,y_1)\tri q(t,x_1+ty_1,y_1)$, we have
	\begin{align*}
		\|q\|_{\HH^s}^2=\|Q\|_{H^s}^2,\quad \forall~ s\in\N^+.
	\end{align*}
	
	\medskip

	Then it is easy to see that Theorem \ref{theo1} is a direct corollary of the following theorem.
	\begin{theo}\label{theo2}
		Under the assumptions in Theorem \ref{theo1}, system \eqref{eq_n-d-w-1} admits a unique global solution $(N,D,W)$ such that, for any $t\geq0$, 
		\begin{align*}
			\|\PP_0(N,\widetilde{\Lambda}^{-1}D)\|_{L_t^{\infty}H^3} +\|W\|_{L_t^{\infty}H^3}  \leq C\big(\|(n_{in},\Lambda^{-1}d_{in})\|_{H^4}+\|w_{in}\|_{H^3}\big),
		\end{align*}
		and
		\begin{align*}
			\mu^{\f16}	\|\PP_{\not=}(N,\widetilde{\Lambda}^{-1}D)\|_{L_t^{\infty}H^3} +\mu^{\f12}	\|(\widetilde{\Lambda}N,D)\|_{L_t^{\infty}H^3} \leq C\big(\|(n_{in},\Lambda^{-1}d_{in})\|_{H^4}+\|w_{in}\|_{H^3}\big),
		\end{align*}		
		where for any $s\in\R$, $\widetilde{\Lambda}^{s}\tri (-\widetilde{\Delta})^{\f{s}{2}}$  and
		$\Lambda^{s}\tri (-\Delta)^{\f{s}{2}}$.
	\end{theo}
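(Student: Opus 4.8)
\textbf{Proof proposal for Theorem \ref{theo2}.}

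The plan is to run a bootstrap (continuity) argument on a carefully chosen energy functional, so that the global existence and the stated bounds follow simultaneously. By local well-posedness in $H^4$ (standard for the compressible Navier--Stokes system, transported to the moving frame, which preserves Sobolev norms), it suffices to establish the a priori estimates on any time interval $[0,T]$ on which a solution exists with, say, twice the target bounds, and then improve them to the target bounds. The functional must reflect the three distinct behaviors isolated in the introduction: (i) the zero modes $\PP_0(N,\widetilde\Lambda^{-1}D,W)$ are bounded uniformly in time with no $\mu$-loss, since the $\PP_0$-subsystem has no lift-up term; (ii) the non-zero ``incompressible'' quantities $\PP_{\not=}(N,\widetilde\Lambda^{-1}D)$ behave like a 2-D incompressible perturbation and lose only $\mu^{1/6}$; (iii) the genuinely compressible quantities $(\widetilde\Lambda N, D)$ carry the full $\mu^{5/6}$ loss coming from the strong lift-up. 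Concretely I would build a weighted energy of the schematic form
\begin{align*}
\mathcal{E}(t) \tri \|\PP_0(N,\widetilde\Lambda^{-1}D,W)\|_{H^3}^2
+ \sum_{k\neq 0}\int \bigl( m_1\, |\widehat{\PP_{\not=}(N,\widetilde\Lambda^{-1}D)}|^2 + \varphi^{-1/2}\,|\widehat W|^2 + \mu^{5/3}\,|\widehat{(\widetilde\Lambda N,D)}|^2\bigr)\,\langle\text{derivatives}\rangle,
\end{align*}
where $m_1$ is the enhanced-dissipation/inviscid-damping multiplier and $\varphi$ the lift-up-taming multiplier referenced in Remark \ref{rmk-optimal}; the weight $\varphi^{-1/4}$ is applied to $\widehat N$ as indicated there. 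One then differentiates $\mathcal{E}$ in time, uses the structure of \eqref{eq_n-d-w-1} — in particular the fact that the $W$-equation has no linear $D$-term — to extract the good dissipation terms $\mu\|\widetilde\nabla(\cdot)\|^2$, $\nu\|\widetilde\nabla D\|^2$, the enhanced-dissipation gain $\mu^{1/3}$ from $-\dot m_1/m_1$, and the lift-up-taming gain from $-\dot\varphi/\varphi$, and absorbs the linear error terms (the off-diagonal couplings $\pa_{x_1}^2\widetilde\Delta^{-1}$ and the $\mu(\mu+\mu')\widetilde\Delta D$ term) into these.

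The order of the steps would be: first, treat the linear zero-mode system to get the uniform $H^3$ bound on $\PP_0(N,\widetilde\Lambda^{-1}D,W)$ together with the time-integrable dissipation $\int_0^t \mu\|\PP_0\widetilde\nabla(\cdot)\|_{H^3}^2$; here the only subtlety is that $\PP_0 V^1$ may grow but appears in the nonlinearity only through $\PP_0\pa_{y_1}V^1=\PP_0\om$, which is controlled. Second, set up the linear non-zero-mode analysis in Fourier in $(k,\eta)$: for $\PP_{\not=}(N,\widetilde\Lambda^{-1}D)$ use the multiplier $m_1$ to get exponential decay $e^{-\epsilon_1\mu^{1/3}t}$ at the price of $\mu^{-1/6}$ on the data, and for $\PP_{\not=}W$ argue — as stated in the second Remark — that its lift-up term is weaker than that of $\om$, so $W$ does not suffer the $\mu^{1/6}$ loss and is bounded like a 2-D incompressible vorticity; for $(\widetilde\Lambda N, D)$ perform the compressible-mode energy estimate with the $\varphi$-weight, which is where the $\mu^{5/6}$ threshold is forced. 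Third, estimate each nonlinear term $\widetilde F_1,\widetilde F_2,\widetilde F_3,\widetilde F_{21}$: use product/commutator estimates in $H^3$, the decomposition $\pa_{y_1}=(\pa_{y_1}-t\pa_{x_1})+t\pa_{x_1}$ trading powers of $t$ for the dissipative/decay gains, the algebra of $f(N),g(N)$ near $N=0$ (so $\|f(N)\|,\|g(N)\|\lesssim\|N\|$ while $\mathcal{E}^{1/2}\lesssim\epsilon\mu$ is small), and crucially the commutator $t[\F^{-1}[\varphi^{-1/4}],\PP_{\not=}V^2\pa_{y_1}]\PP_{\not=}N$ from \eqref{rmk-comm}, which costs $\varphi^{1/4}\sim\mu^{-1/6}$ and is exactly accounted for by the choice of threshold. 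Fourth, combine: all nonlinear contributions are bounded by $\mathcal{E}^{1/2}$ times the dissipation, so for $\mathcal{E}(0)\lesssim(\epsilon\mu)^2$ small, a Gronwall/continuity argument closes the bootstrap and yields $\mathcal{E}(t)\lesssim\mathcal{E}(0)$ for all $t$, which unpacks to the two displayed inequalities.

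The main obstacle, as flagged in Remark \ref{rmk-optimal}, is closing the estimate for $\PP_{\not=}N$ (equivalently $\widetilde\Lambda N$ with the $\mu^{5/6}$ weight) against the lift-up mechanism: the transport nonlinearity $\PP_{\not=}V^2\pa_{y_1}\PP_{\not=}N$ interacts with the $\varphi^{-1/4}$-weight through a commutator that loses $\varphi^{1/4}$, and one must verify that this loss, multiplied by the time scale $t\sim\mu^{-1/3}$ of enhanced dissipation and the extra $\mu^{-1/3}$ from the $t\pa_{x_1}$ part of $\pa_{y_1}$, is still absorbable under the $\epsilon\mu$ smallness of the data — i.e. that the budget $\alpha=5/6$ is not merely necessary but sufficient. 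This requires a delicate choice of the $\varphi$-weight's growth rate and careful tracking of how $-\dot\varphi/\varphi$ dissipation is shared between the linear lift-up term and the nonlinear commutator. A secondary difficulty is the compatibility of the three different weights ($\PP_0$ unweighted, $m_1$ on the incompressible non-zero part, $\varphi^{-1/4}$/$\mu^{5/3}$ on the compressible part) across the linear couplings in \eqref{eq_n-d-w-1}, so that cross terms between, say, $\PP_{\not=}W$ and $\PP_{\not=}N$ in the $\pa_{x_1}^2\widetilde\Delta^{-1}$ coupling do not reintroduce the $\mu^{1/6}$ loss into $W$; handling this is precisely the point of splitting the energy into an ``incompressible-mode'' functional and a ``compressible-mode'' functional as announced in the abstract.
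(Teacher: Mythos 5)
Your proposal correctly identifies the broad scheme (bootstrap on a Fourier-weighted energy, split into zero modes, a lift-up-taming $\varphi$-weight, an auxiliary $\mu^{5/3}$-weighted higher-order piece to close the $W$-$(N,D)$ coupling), which is indeed the skeleton of the paper's argument. But there are two substantive problems.

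First, the placement of the $\varphi$-weight in your schematic energy is inverted. You write $\varphi^{-1/2}|\widehat W|^2$ and $m_1|\widehat{\PP_{\not=}(N,\widetilde\Lambda^{-1}D)}|^2$, but the whole point — which you state correctly in words — is that the $\varphi^{-1/4}$ weight must sit on the compressible pair $(\widehat N,\widehat{\widetilde\Lambda^{-1}D})$ (where the unattenuated lift-up acts), while $W$ carries \emph{no} $\varphi$-weight because its lift-up terms have a prefactor $\mu$ and are absorbed by the dissipation directly. If you instead weight $W$ by $\varphi^{-1/2}$ ($\leq 1$, with $\varphi\lesssim\mu^{-2/3}$), the bound $\mathcal E(t)\lesssim\mathcal E(0)$ only gives $\mu^{1/6}\|W\|_{H^3}\lesssim\cdots$, not the uniform bound $\|W\|_{H^3}\lesssim\cdots$ stated in Theorem~\ref{theo2}. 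So as written, the energy cannot reproduce the theorem. Relatedly, you describe $(N,\widetilde\Lambda^{-1}D)$ as the ``incompressible'' quantities; they are the \emph{compressible} ones. The incompressible part is $W$, and it is precisely $W$ that does not lose $\mu^{1/6}$.

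Second, the zero-mode treatment is a genuine gap rather than a ``subtlety.'' The issue is not just that $\PP_0 V^1$ grows: it is that the transport terms $\PP_0 V^2\,\pa_{y_1}\PP_0 W$ and $\PP_0 V^2\,\pa_{y_1}\PP_0 N$ cannot be integrated in time, because $\PP_0 W$ and $\PP_0 N$ are only bounded in $L^\infty_t L^2$, not in $L^2_t L^2$, and there is no enhanced dissipation on zero modes. The paper handles this with two separate $L^2$ identities: for $\PP_0(\varrho,V^2)$ it uses the compressible energy $\int\bigl(\GG(\PP_0\varrho)+\tfrac12\PP_0\varrho|\PP_0 V^2|^2\bigr)$ (Proposition~\ref{prop-2-1}), and for $\PP_0 W$ it uses a density-weighted form $\int(1-\PP_0 N)|\PP_0 W|^2$ together with a cancellation coming from the $\PP_0 N$-equation (Proposition~\ref{prop-3-1}). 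Without identifying this cancellation structure, the closure of the zero-mode estimate fails, independently of everything else. Two further omissions you should be aware of — though they are closer to ``implementation details'' — are (i) the multiplier $m_2$ (with symbol growth $\exp(A\arctan(t-\xi/k))$), which is needed in addition to $m_1$ to absorb the various $k^2/p$-type linear cross terms into the dissipation by taking $A$ large, and (ii) the hierarchy of mixed ``good/bad''-derivative energies $\E_{j,3-j}^{\cdot}$, $j=0,\dots,3$, with $\mu^{2j/3}$ weights (Propositions~\ref{prop-5}--\ref{prop-7}), needed because $\nabla$ and $\widetilde\nabla$ do not commute and the lift-up term on $j$ bad derivatives dumps into the dissipation of $j-1$ bad derivatives.
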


	\subsection{Main strategy}
	In this subsection, we outline the main ideas of this paper.\smallskip
	
	$\bullet$ \underline{Introducing a new formulation.} 
	Guided by the linear analysis of Antonelli–Dolce–Marcati \cite{Antonelli-Dolce-Marcati-2021}, we recall that the linearized problem for system \eqref{eq_n-d-w} was treated as  a $3\times3$ system leading to estimates for 
	$$(\varphi^{-\f34}(\widetilde{\Delta})^{\f{1}{2}}\hat{N}, \varphi^{-\f34} \hat{D}, \varphi^{-\f34}\hat{ W} ),$$ 
	where $\hat{q}$ stands for the Fourier transform of $q$, and  the weight $\varphi^{-\f34}$ is introduced  to control the lift-up effect. In this setting, we need to deal with the following nonlinear term
	\begin{align}\label{rmk-comm---1}
		\F^{-1}[\varphi^{-\f34}]  ( \PP_{\not=} V \cdot \widetilde{\na} D )\sim   \PP_{\not=}V\cdot \widetilde{\na} (\F^{-1}[\varphi^{-\f34}]\PP_{\not=} D)+[\F^{-1}[\varphi^{-\f34}], V \cdot \widetilde{\na} ]\PP_{\not=} D +\cdots,
	\end{align}
	Here we notice that the commutator will bring a loss of $\mu^{\frac12}$, preventing the desired transition threshold.

	To overcome this, we return to the unknown $(n,\vv)$ while retaining the favorable structure of system \eqref{eq_n-d-w}.  Define $\UU=\widetilde{\Delta}^{-\f12}D$;  we then regard $(N, \UU)$ as a $2\times2$ system and estimate $W$ separately.  Schematically, we first study
	\begin{align*} 
		\left\{\begin{array}{l}
			\pa_{t} N+\widetilde{\Delta}^{\f12}\UU=\cdots,\\
			\pa_{t} \UU+\pa_{x_1}(\pa_{y_1}-t\pa_{x_1})\widetilde{\Delta}^{-1}\UU+2\pa_{x_1}^2\widetilde{\Delta}^{-\f32}(W-N+\mu D)+\widetilde{\Delta}^{\f12} N-\nu \widetilde{\Delta} \UU=\cdots,
		\end{array}\right.
	\end{align*}
	Enhanced dissipation and inviscid damping are recovered by introducing Fourier multipliers $m_1, m_2$, while the lift-up effect is tamed by the multiplier $\varphi^{\f14}$; see Subsection~\ref{sect-F-M}.  In this system, we use the weight $\varphi^{-\frac14}$, which will bring better estimates, i.e., result in a loss of $\mu^{\frac16}$ in nonlinear  estimates (see \eqref{rmk-comm}). Moreover, the equation for $W$ contains no lift-up term, and its energy estimate is carried out directly.

	$\bullet$ \underline{Constructing  suitable energy functionals.}  
	Estimating $(N,\UU)$ and $W$ separately creates a new difficulty.  On the one hand,
	\begin{align*}
		\pa_t (N, \UU) \sim \widetilde{\Delta}^{-\f12} W,
	\end{align*}
	so $\| (N, \UU) \|_{H^s}$ is controlled  by $\|\widetilde{\Delta}^{-\f12} W\|_{H^s}$. On the other hand, the equation for $\widetilde{\Delta}^{-\f12} W$ is 
	\begin{align*}
		\pa_t(p^{-\f12} \hat{W})+\f{\pa_{t} p}{2p}(p^{-\f12} \hat{W})=\cdots,\quad  p=k^2+(\xi-kt)^2,
	\end{align*}
	where the second term arises from the commutator  $[\pa_{t}, p^{-\f12}]$. This term is not a lift-up contribution and cannot be absorbed by $\varphi$. This obstacle motivated \cite{Antonelli-Dolce-Marcati-2021} to keep the $3\times3$ system. We instead continue to work with $W$ rather than $\widetilde{\Delta}^{-\f12} W$; accordingly,  we need $\|W\|_{H^s}$ to control the energy. The price is that $ (N, \UU)$ now requires one higher derivative.  Indeed, from
	\begin{align*}
		\pa_{t} W-\mu \widetilde{\Delta} W+\mu(\mu+\mu') \widetilde{\Delta} D-2\mu \pa_{x_1}(\pa_{y_1}-t\pa_{x_1})\widetilde{\Delta}^{-1}D=\cdots,
	\end{align*}	
	we get that $\|W\|_{H^s}$ is control by  $\mu^{\frac 56}\| (N, \UU) \|_{H^{s+1}}$. The prefactor $\mu$ compensates the derivative loss, making the estimate straightforward.  Meantime, to control the terms, like $V\cdot n $, we also need the estimates of $\| (N, \UU) \|_{H^{s+1}}$. For this reasons,  we introduce the additional high-order energy functional $\E_{j,3-j}^{com}(t)$  for the compressible part $(N, \UU)$; details are given in Section~\ref{sect:energy fun}.


	\section{Introduction of Fourier multipliers}
	In this section, we introduce some key Fourier multipliers, and collect definitions and auxiliary lemmas that will be used repeatedly in the energy estimates.	
	
	\subsection{The Fourier multipliers}\label{sect-F-M}

	We recall that the symbols $p$ and $2\f{k(\xi-kt)}{p}=-\f{\pa_{t}p}{p}$ correspond to the operators $-\widetilde{\Delta}$ and $2\pa_{x_1}(\pa_{y_1}-t\pa_{x_1})\widetilde{\Delta}^{-1}$, respectively. Since $2\pa_{x_1}(\pa_{y_1}-t\pa_{x_1})\widetilde{\Delta}^{-1}D$ is the bad term in equation $\eqref{eq_n-d-w-1}_2$, it is natural to study the toy model
	\begin{align}\label{eq-toy}
		\partial_{t} \hat{D}^k =\f{\pa_{t}p}{p}\hat{D}^k-\nu p\hat{D}^k.
	\end{align}
	For $t\leq \f{\xi}{k}$, we have $\pa_{t}p=-2k(\xi-kt)\leq 0$; hence the first term on the right-hand side acts as a damping term for $\hat{D}^k$.
	For $t> \f{\xi}{k}$ the derivative becomes positive, $\pa_{t}p> 0$,
	and the same term drives growth.
	Near the critical time $t=\f{\xi}{k}$ the dissipation satisfies $\nu p\sim\nu k^2$;
	thus the dissipation alone cannot uniformly control the growth in $\nu$.
	Nevertheless, sufficiently far from the critical layer, dissipation prevails:  for any fixed $\beta>2$,  the inequality	
	$$\f{\pa_{t}p}{p}\leq \f{1}{4}\nu p,$$
	holds whenever $|t-\f{\xi}{k}|\geq \beta \nu^{-\f13}$.	
	
	To exploit this dichotomy and allow the dissipative term $\nu p \hat{D}^k$ to balance the growth caused by $\f{\pa_tp}{p}\hat{D}^k$, we follow \cite{Antonelli-Dolce-Marcati-2021,Bedrossian-Germain-Masmoudi-2017,Huang-Li-Xi-2024,Liss-2020,Zeng-Zhang-Zi-2022} and introduce the Fourier multiplier $\varphi(t,k,\eta)$:
	\begin{align}\label{def-phi}
		\left\{\begin{array}{l}
			\partial_{t} \varphi = 
			\left\{\begin{array}{l}
				0,\quad t\not\in [\f{\xi}{k},\f{\xi}{k}+\beta\nu^{-\f13}],\\[1ex]
				\f{\pa_{t} p}{p}\varphi	,\quad t\in [\f{\xi}{k},\f{\xi}{k}+\beta\nu^{-\f13}],\\[1ex]
			\end{array}\right.\\
			\varphi(0,k,\xi)=1,
		\end{array}\right.
	\end{align}
	where $\beta>2$ is a constant determined later.
	Specially, we have $\varphi(t,0,\xi)=1$.

	In the energy estimates we also employ the following two Fourier multipliers to obtain enhanced dissipation and inviscid-damping bounds; they have been used extensively in \cite{Antonelli-Dolce-Marcati-2021,Bedrossian-Germain-Masmoudi-2017,Bedrossian-Germain-Masmoudi-2019,Bedrossian-Masmoudi-Vicol-2016,Huang-Li-Xi-2024,Zeng-Zhang-Zi-2022,Zillinger-2017}:	
    \begin{align}\label{def-m1}
		\left\{\begin{array}{l}
			\displaystyle \pa_{t}m_1(t,k,\xi)=\f{2\nu^{\f13}}{1+\nu^{\f23}(\f{\xi}{k}-t)^2}m_1(t,k,\xi),\\[1ex]
			\displaystyle m_1(0,k,\xi)=\exp\Big\{ 2\arctan \Big(-\nu^{\f13}\f{\xi}{k}\Big)\Big\},
		\end{array}\right.
	\end{align}
	and
	\begin{align}\label{def-m2}
		\left\{\begin{array}{l}
			\displaystyle 
            \pa_{t}m_2(t,k,\xi)=A\f{k^2}{p}m_2(t,k,\xi)=A\f{1}{1+(\f{\xi}{k}-t)^2}m_2(t,k,\xi),\\[1ex]
			\displaystyle m_2(0,k,\xi)=\exp\left\{A\arctan \left(-\f{\xi}{k}\right)\right\},
		\end{array}\right.
	\end{align}
	where $A$ is a large, but fixed constant, which will be determined later.
	It is easy to check that, for any $t\geq0$ and $\xi\in\R$, the quantities $m_1$ and $m_2$ can be expressed explicitly as
	\begin{align*}
		&m_1(t,k,\xi)=
		\begin{dcases}
			\exp\left\{2\arctan \left(\nu^{\f13}\left(t-\f{\xi}{k}\right)\right)\right\},\quad \text{for}~~k\in\Z\backslash\{0\},\\
			1,\quad\text{for}~~k=0;
		\end{dcases}
		\\
		&m_2(t,k,\xi)=
		\begin{dcases}
			\exp\left\{A\arctan \left(t-\f{\xi}{k}\right)\right\},\quad \text{for}~~k\in\Z\backslash\{0\},\\
			1,\quad\text{for}~~k=0.
		\end{dcases}
	\end{align*}
	
	These three Fourier multipliers introduced above enjoy the following crucial properties:
	\begin{lemm}\label{sec2:lem-1}
		Let $\varphi$, $m_1$ and $m_2$ be three Fourier multipliers defined in \eqref{def-phi}, \eqref{def-m1} and \eqref{def-m2}.
		Then the multipliers $\varphi$, $m_1$ and $m_2$ are bounded from above and below uniformly with respect to $0<\nu<1$ and $(t,k,\xi)\in\R^+\times \Z\times\R$. Specifically, they are bounded by
		\begin{align*}
			1\leq \varphi\lesssim \nu^{-\f23},\quad
			1\leq m_1\leq e^{2\pi},\quad 
			1\leq m_2\leq e^{A\pi}.
		\end{align*}
		Moreover, for any $t\geq0$, $k\in\Z \backslash \{0\}$ and $\xi\in\R$, we have
		\begin{align}\label{est-varphi-1}
			&1\leq \varphi(t,k,\xi)\leq \beta^2 \nu^{-\f23},\quad \f{\varphi}{p}\leq \f{1}{k^2}.
		\end{align}
		For any $\max\{2\beta^{-1}(\beta^2-1)^{-1},4\beta^{-1}\}<\delta_{\beta}\leq 1$ with $\beta>2$, we also have
		\begin{align}\label{est-varphi-2}
			\left\{\begin{array}{l}
				\delta_{\beta}\left(\f{\pa_{t} m_1}{m_1}+\nu p\right)+\f{\pa_{t}\varphi}{\varphi}-\f{\pa_{t} p}{p}\geq \delta_{\beta} \nu^{\f13}, \\[1ex]
				\delta_{\beta}\left(\f{\pa_{t} m_1}{m_1}+\nu^{\f13}\right)+\f{\pa_{t}\varphi}{\varphi}-\f{\pa_{t} p}{p}\geq \f{\delta_{\beta}}{2} \nu^{\f13}.
			\end{array}\right.
		\end{align}
	\end{lemm}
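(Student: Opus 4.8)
My plan is to treat each multiplier in turn, first establishing the two-sided bounds, and then deriving the differential inequalities \eqref{est-varphi-2}, which are the real content of the lemma. For $m_1$ and $m_2$ the bounds are immediate from the explicit formulas: since $\arctan$ takes values in $(-\pi/2,\pi/2)$, the arguments of the exponentials lie in $(-2\pi,2\pi)$ and $(-A\pi,A\pi)$ respectively; but both multipliers are monotone increasing in $t$ (their logarithmic derivatives $\frac{2\nu^{1/3}}{1+\nu^{2/3}(\xi/k-t)^2}$ and $\frac{A}{1+(\xi/k-t)^2}$ are nonnegative), so the infimum over $t\ge 0$ is attained at $t=0$, giving $m_i\ge m_i(0,k,\xi)$; combined with the crude fact $\arctan(\nu^{1/3}\xi/k)\ge -\pi/2$ etc. one gets $m_1\ge 1$ actually needs the observation that $2\arctan(\nu^{1/3}(t-\xi/k))+2\arctan(\nu^{1/3}\xi/k)\ge 0$, which follows from $\arctan$ being odd and the sum $\arctan a+\arctan b$ having the sign of $a+b$ when $ab<1$; here I would instead note $t-\xi/k$ and $\xi/k$ sum to $t\ge 0$ and use the addition formula, or simply absorb the harmless constant by redefining. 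For $\varphi$, the lower bound $\varphi\ge 1$ is clear because $\partial_t\varphi$ is either $0$ or has the sign of $\partial_t p=-2k(\xi-kt)$, which on the interval $[\xi/k,\xi/k+\beta\nu^{-1/3}]$ is $\ge 0$, so $\varphi$ is nondecreasing and starts at $1$.

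The upper bound $\varphi\le\beta^2\nu^{-2/3}$ is the first genuinely nontrivial point. Integrating the ODE \eqref{def-phi} on the active interval gives $\varphi(t,k,\xi)=\frac{p(\xi/k,k,\xi)}{p(t,k,\xi)}\cdot\varphi(\xi/k,k,\xi)$ for $t$ in that interval (and then $\varphi$ freezes at the endpoint value), i.e. $\varphi=\frac{k^2}{p(t)}$ times a constant, since $p(\xi/k,k,\xi)=k^2$ and $\varphi$ was $1$ up to time $\xi/k$. At the right endpoint $t=\xi/k+\beta\nu^{-1/3}$ one has $p=k^2+k^2\beta^2\nu^{-2/3}\ge k^2\beta^2\nu^{-2/3}$, wait — I want the \emph{maximum} of $k^2/p$ over the interval, which occurs where $p$ is smallest, namely at $t=\xi/k$ where $p=k^2$, giving $\varphi\le 1$ there; but $\varphi$ keeps growing until the right endpoint where it reaches $\frac{k^2}{k^2+k^2\beta^2\nu^{-2/3}}$? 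That is less than $1$, which contradicts monotonicity — so in fact the correct reading is that on $[\xi/k,\xi/k+\beta\nu^{-1/3}]$ the quantity $\partial_t p/p$ is \emph{positive} (since $t>\xi/k$ there, $\partial_t p=-2k(\xi-kt)=2k^2(t-\xi/k)>0$), hence $\varphi$ grows, and $\varphi(t)=\varphi(\xi/k)\cdot\frac{p(t)}{p(\xi/k)}=\frac{p(t)}{k^2}$, reaching its maximum $\frac{k^2+k^2\beta^2\nu^{-2/3}}{k^2}=1+\beta^2\nu^{-2/3}\le\beta^2\nu^{-2/3}+\beta^2\nu^{-2/3}\le 2\beta^2\nu^{-2/3}$ at the right endpoint (absorbing the $1$ into the constant, or using $\beta>2$ to keep $\beta^2\nu^{-2/3}$ clean up to a harmless factor). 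This also yields $\varphi/p\le 1/k^2$: on $[\xi/k,\xi/k+\beta\nu^{-1/3}]$, $\varphi/p=\varphi(\xi/k)/p(\xi/k)=1/k^2$ exactly; before the interval $\varphi=1$ and $p\ge k^2$; after it $\varphi$ is frozen at $p(\xi/k+\beta\nu^{-1/3})/k^2$ while $p$ keeps increasing, so $\varphi/p$ decreases below $1/k^2$. Hence \eqref{est-varphi-1}.

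For \eqref{est-varphi-2} I would split into the three regimes dictated by the definition of $\varphi$. \emph{Outside} the active interval, $\partial_t\varphi/\varphi=0$, so the left side of the first inequality is $\delta_\beta(\partial_t m_1/m_1+\nu p)-\partial_t p/p$; here I use that away from the critical layer $|t-\xi/k|\ge\beta\nu^{-1/3}$ one has $\partial_t p/p=\frac{-2k(\xi-kt)}{p}\le\frac{2k^2|t-\xi/k|}{k^2+k^2(t-\xi/k)^2}\le\frac{2}{|t-\xi/k|}\le\frac{2}{\beta}\nu^{1/3}$, combined with $\partial_t m_1/m_1\ge 0$ and $\nu p\ge \nu k^2\ge\nu$ — actually I need $\nu p$ or $\partial_t m_1/m_1$ to dominate $\nu^{1/3}$, and near $t\approx\xi/k\pm\beta\nu^{-1/3}$ the multiplier term $\partial_t m_1/m_1=\frac{2\nu^{1/3}}{1+\nu^{2/3}(\xi/k-t)^2}\approx\frac{2\nu^{1/3}}{1+\beta^2}$, so choosing $\delta_\beta>\frac{2}{\beta}\cdot\frac{1+\beta^2}{\ldots}$ — this is exactly why the threshold $\delta_\beta>\max\{2\beta^{-1}(\beta^2-1)^{-1},4\beta^{-1}\}$ appears. \emph{Inside} the active interval, $\partial_t\varphi/\varphi=\partial_t p/p$, so the last two terms cancel and the inequality reduces to $\delta_\beta(\partial_t m_1/m_1+\nu p)\ge\delta_\beta\nu^{1/3}$, which holds since $\partial_t m_1/m_1+\nu p\ge \nu p\ge\nu k^2$ — hmm, that is $\ge\nu$, not $\ge\nu^{1/3}$; so here I must instead use $\partial_t m_1/m_1\ge \frac{2\nu^{1/3}}{1+\nu^{2/3}(\xi/k-t)^2}$ and note that on $[\xi/k,\xi/k+\beta\nu^{-1/3}]$ we have $\nu^{2/3}(\xi/k-t)^2\le\beta^2$, whence $\partial_t m_1/m_1\ge\frac{2}{1+\beta^2}\nu^{1/3}\ge\nu^{1/3}$ provided $\beta^2\le 1$, which fails — so in fact the cancellation must be more careful: inside the interval $\partial_t\varphi/\varphi-\partial_t p/p=0$ exactly, and then one keeps a fraction of the dissipation: $\delta_\beta(\partial_t m_1/m_1+\nu p)\ge\delta_\beta\cdot\nu p$, and near the left endpoint $t=\xi/k$ we have $\nu p=\nu k^2$ which can be as small as $\nu$ — so the correct argument is that it is the \emph{combination} over all three regimes in the Duhamel/energy estimate that matters, and within the interval one simply records the (nonnegative) lower bound $\ge\delta_\beta\cdot 0$... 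I will need to recheck the precise bookkeeping. The second inequality of \eqref{est-varphi-2} is the analogous statement with $\nu p$ replaced by $\nu^{1/3}$ and a factor $1/2$ slack, handled by the same case analysis but now genuinely easy inside the interval since $\partial_t m_1/m_1\ge 0$ suffices after cancellation. \textbf{The main obstacle} I anticipate is the careful choice of $\delta_\beta$ and $\beta$ so that the ``outside the interval'' estimate closes uniformly: one must quantify precisely how $\partial_t p/p$ decays like $2/|t-\xi/k|$ against the slowly-varying lower bound on $\partial_t m_1/m_1$ near the boundary of the critical layer, and this is exactly where the explicit thresholds $2\beta^{-1}(\beta^2-1)^{-1}$ and $4\beta^{-1}$ enter; getting the constants consistent — rather than the qualitative inequalities — is the delicate part.
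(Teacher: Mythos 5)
The bounds $1\le m_1\le e^{2\pi}$, $1\le m_2\le e^{A\pi}$, the monotonicity argument for the lower bound, the explicit solution of the ODE for $\varphi$ on the active interval, and the reasoning for $\varphi/p\le 1/k^2$ are all essentially correct (the $1+\beta^2\nu^{-2/3}$ vs.\ $\beta^2\nu^{-2/3}$ discrepancy is harmless). However, your treatment of \eqref{est-varphi-2} has a genuine gap, and your proposed fix --- ``the combination over all three regimes in the Duhamel/energy estimate'' --- would not work, because \eqref{est-varphi-2} is a pointwise inequality in $t$ (it is later multiplied against $|\hat{N}^k_{0,s}|^2$ etc.\ at each instant), not an integrated one.

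The missing observation is that, after rescaling $a:=\nu^{1/3}(t-\xi/k)$ and using $k^2\ge 1$, one has \emph{both}
\[
\frac{\partial_t m_1}{m_1}=\frac{2\nu^{1/3}}{1+a^2}\qquad\text{and}\qquad
\nu p=\nu k^2\bigl(1+(t-\tfrac{\xi}{k})^2\bigr)\ \ge\ k^2 a^2\,\nu^{1/3}\ \ge\ a^2\,\nu^{1/3},
\]
so that
\[
\frac{\partial_t m_1}{m_1}+\nu p\ \ge\ \nu^{1/3}\Bigl(\frac{2}{1+a^2}+a^2\Bigr)\ \ge\ (2\sqrt2-1)\,\nu^{1/3}\ >\ \nu^{1/3}
\]
uniformly in $a\in\R$ (the minimum of $\tfrac{2}{1+x}+x$ over $x\ge 0$ is $2\sqrt2-1$ at $x=\sqrt2-1$). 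You tried each of the two terms separately ($\nu p\ge\nu k^2\ge\nu$, which is far below $\nu^{1/3}$; $\partial_t m_1/m_1\ge\frac{2}{1+\beta^2}\nu^{1/3}$, which needs $\beta\le 1$) and concluded neither suffices; in fact it is their \emph{sum}, via the complementary behaviour $\frac{2}{1+a^2}$ (large for small $a$) and $a^2$ (large for large $a$), that closes the inside case. The same lower bound $\nu p\gtrsim a^2\nu^{1/3}$ is what you need in the outside case $a>\beta$ for the first inequality of \eqref{est-varphi-2}: there $\frac{\partial_t\varphi}{\varphi}-\frac{\partial_t p}{p}=-\frac{\partial_t p}{p}\ge -\frac{2\nu^{1/3}}{a}\ge -\frac{2\nu^{1/3}}{\beta}$, and
\[
\delta_\beta\Bigl(\frac{\partial_t m_1}{m_1}+\nu p\Bigr)-\frac{\partial_t p}{p}\ \ge\ \delta_\beta\, a^2\nu^{1/3}-\frac{2\nu^{1/3}}{a}\ \ge\ \delta_\beta\nu^{1/3}\quad\text{iff}\quad \delta_\beta\ge\frac{2}{a(a^2-1)},
\]
whose sup over $a\ge\beta$ is $2\beta^{-1}(\beta^2-1)^{-1}$ --- exactly the first threshold in the lemma. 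The case $t<\xi/k$ is trivial since $-\partial_t p/p\ge 0$ there (your uniform bound $|t-\xi/k|\ge\beta\nu^{-1/3}$ ``outside'' is not correct in that subcase, but you do not need a bound on $\partial_t p/p$ there at all). The second inequality follows by the same split, and the bound $\frac{\partial_t p}{p}\le\frac{2\nu^{1/3}}{\beta}$ for $a\ge\beta$ produces the second threshold $4\beta^{-1}$.
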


	\subsection{Some useful lemmas}	
	In order to close the energy estimates for the solution, we need certain commutator estimates.
	First, for any function $q$, we introduce the following operators:
	\begin{align}
		&\RR_1 q\tri \F^{-1}[m_1^{-1}m_2^{-1}\varphi^{-\f14}\hat{q}], \quad 	\RR_2 q\tri \F^{-1}[m_1^{-1}m_2^{-1}\hat{q}],\label{def_R1-R2}\\
		&\RR_3 q\tri \F^{-1}\left[\f{\pa_{t}p}{p}\hat{q}\right]= -\pa_{x_1}(\pa_{y_1}-t\pa_{x_1})\widetilde{\Delta}^{-1}q,\label{def_R3-R4}\\
		&\RR_4 q_0\tri  \pa_{y_1}|\pa_{y_1}|^{-1}q_0,\quad \RR_5 q\tri  \widetilde{\Delta}^{-\f12}\widetilde{\dive} q. \label{def_R5}
	\end{align}
	We then recall the following two lemmas concerning commutator estimates.
	In the following first lemma, for $i=4$, we replace $\V$ and $q$ with $\V_0$ and $q_0$ in \eqref{est-Ri}. For simplify, we also denote as $\V$ and $q$.
	\begin{lemm}\label{sec2:lem-com-1}
		For any $\RR_i (i=3,4,5)$, $q\in L^2(\T\times\R)$ and $\widetilde{\nabla} \V\in L^{\infty}(\T\times\R)$, we have
		\begin{align}\label{est-Ri}
			\|[\RR_i,\V]\cdot \widetilde{\nabla}q\|_{L^2}\lesssim \|\widetilde{\nabla} \V\|_{L^\infty}\|q\|_{L^2}.
		\end{align}
	\end{lemm}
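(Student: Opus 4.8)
The plan is to prove the commutator estimate \eqref{est-Ri} by treating all five operators $\RR_i$ uniformly: each is a Fourier multiplier operator $T_{a_i}$ with symbol $a_i(t,k,\xi)$, so the commutator $[\RR_i,\V]\cdot\widetilde\na q$ acts on the Fourier side as an integral operator whose kernel is the \emph{symbol difference} $a_i(k,\xi)-a_i(k',\xi')$ paired against $\widehat{\widetilde\na\V}(k-k',\xi-\xi')$ and $\widehat{q}(k',\xi')$. First I would write, for a scalar component,
\begin{align*}
\F\big([\RR_i,v]\,\pa q\big)(k,\xi)
=\sum_{k'}\int \big(a_i(k,\xi)-a_i(k',\xi')\big)\,\widehat{v}(k-k',\xi-\xi')\,\widehat{\pa q}(k',\xi')\,d\xi',
\end{align*}
where $\pa$ is one of $\pa_{x_1},(\pa_{y_1}-t\pa_{x_1})$ so that $\widehat{\pa q}(k',\xi')$ carries a factor bounded by $|(k',\xi'-k't)|\le\langle(k',\xi'-k't)\rangle$. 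The key is the pointwise \emph{Lipschitz-type bound}
\begin{align*}
\big|a_i(k,\xi)-a_i(k',\xi')\big|\lesssim \frac{|(k-k',\xi-\xi')|+|(k-k',(\xi-\xi')-(k-k')t)|}{\langle(k',\xi'-k't)\rangle},
\end{align*}
i.e.\ the symbol difference gains a full negative power of the frequency on the $q$-side at the cost of a positive power on the $\V$-side. Once this is in hand, the factor $|(k',\xi'-k't)|$ from $\widehat{\pa q}$ is absorbed by the denominator $\langle(k',\xi'-k't)\rangle$, leaving a kernel controlled by $|(k-k',\xi-\xi')|\,|\widehat{\V}(k-k',\xi-\xi')|+|(k-k',(\xi-\xi')-(k-k')t)|\,|\widehat\V(k-k',\xi-\xi')|$, which is exactly the Fourier transform of $|\widetilde\na\V|$; Young's convolution inequality on $\ell^2(\Z)L^2(\R)\ast\ell^1(\Z)L^1(\R)$ (using $\|\widehat{\widetilde\na\V}\|_{\ell^1 L^1}\lesssim\|\widetilde\na\V\|_{L^\infty}$ after a harmless frequency-weight trade, or more cleanly the bilinear $L^2\times L^\infty\to L^2$ paraproduct estimate) then yields $\|[\RR_i,\V]\cdot\widetilde\na q\|_{L^2}\lesssim\|\widetilde\na\V\|_{L^\infty}\|q\|_{L^2}$.

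The work therefore reduces to verifying the pointwise Lipschitz bound for each of the five symbols. For $\RR_3$, whose symbol is $\tfrac{\pa_t p}{p}=-\tfrac{2k(\xi-kt)}{k^2+(\xi-kt)^2}$, and for $\RR_4$, $\RR_5$, whose symbols are $0$-homogeneous in the shifted frequency (ratios like $(\xi/|\xi|)$ or $(k^2+k\xi)/(k^2+(\xi-kt)^2)$-type quotients), the bound is a direct mean-value-theorem computation: the gradient of a bounded $0$-homogeneous symbol decays like one over the frequency modulus, and one must be slightly careful to record that the relevant modulus is $\langle(k',\xi'-k't)\rangle$ (the shifted one), which is why both $|(k-k',\xi-\xi')|$ and its shifted analogue appear on the numerator. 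For $\RR_1,\RR_2$ the symbols are $m_1^{-1}m_2^{-1}\varphi^{-1/4}$ and $m_1^{-1}m_2^{-1}$; here I would use that, by Lemma~\ref{sec2:lem-1}, these are bounded above and below uniformly, and that $m_1,m_2,\varphi$ depend on $(k,\xi)$ only through the single variable $\tfrac{\xi}{k}-t$ (and, for $m_1,m_2$, also through $\tfrac{\xi}{k}$), so $\pa_\xi$ and $\pa_k$ of the log-symbol are again $O(1/\langle(k',\xi'-k't)\rangle)$ after using the explicit arctangent formulas and the bound $0\le\tfrac{\pa_t p}{p}\cdot\tfrac1{\text{(denominator)}}\lesssim 1/\langle\cdot\rangle$ for $\varphi$; the low-frequency region $|k'|\lesssim 1$ is handled separately by the two-sided bounds (the symbol difference is then just $\lesssim 1\lesssim |(k-k',\xi-\xi')|$ whenever the $\V$-frequency is not also small, and when all frequencies are small the estimate is trivial). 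A cleaner alternative for $\RR_1,\RR_2$ is to compare $m_1^{-1}(k,\xi)-m_1^{-1}(k',\xi')$ by the fundamental theorem of calculus along the segment joining the two frequencies and bound the symbol's $(k,\xi)$-gradient directly.

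The main obstacle I anticipate is the \emph{non-homogeneity and $t$-dependence} of the multipliers $m_1,m_2,\varphi$: unlike the classical commutator lemma for homogeneous Fourier multipliers, here the symbols are defined through ODEs in $t$ and are only piecewise smooth (for $\varphi$, across $t=\tfrac{\xi}{k}$ and $t=\tfrac{\xi}{k}+\beta\nu^{-1/3}$), so one cannot blindly invoke a standard Coifman--Meyer statement; one must check that the relevant difference quotients are uniformly bounded \emph{independently of $\nu$ and $t$}, which is precisely where the uniform two-sided bounds and the explicit structure from Lemma~\ref{sec2:lem-1} are essential — in particular $\tfrac{\varphi}{p}\le\tfrac1{k^2}$ and $1\le\varphi\lesssim\nu^{-2/3}$ must be used to prevent the $\nu^{-2/3}$ upper bound from leaking into the constant. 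A secondary technical point is keeping track of the two different gradients $\widetilde\na$ versus $\na$: the symbols naturally differentiate with respect to the \emph{shifted} frequency $(k,\xi-kt)$, and the factor $\langle(k,\xi-kt)\rangle$ appearing in the denominator is exactly what matches the derivative $\widetilde\na q$ on the right-hand side, so the bookkeeping must respect that the ``$q$-side'' derivative is the twisted one $\widetilde\na$ and not $\na$. Once these points are handled, the convolution estimate closes routinely and uniformly in $\nu\in(0,1)$.
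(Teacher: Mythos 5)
Your argument for $\RR_1$ and $\RR_2$ follows the paper's own route: write the commutator as a bilinear Fourier integral with kernel $m(k,\xi)-m(k',\xi')$, split into the regime where $|(k',\xi')|\lesssim|(k-k',\xi-\xi')|$ (where uniform boundedness of $m$ suffices) and the regime where the two frequencies are comparable (where a mean-value bound on $(k,\xi)\nabla m$ transfers the derivative to $\V$), then close with the identity $\F[\nabla\V]*\hat q=\F[\nabla\V\cdot q]$ and Plancherel. That last reduction is the right way to finish; your alternative suggestion of Young's inequality via $\|\widehat{\widetilde\nabla\V}\|_{\ell^1L^1}\lesssim\|\widetilde\nabla\V\|_{L^\infty}$ is false in general — $L^\infty$ does not embed in the Wiener algebra — so stick with the $L^2\times L^\infty\to L^2$ paraproduct version you also mentioned.

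For $\RR_3$, $\RR_4$, $\RR_5$ you diverge from the paper in a way that trades a slick observation for unnecessary technical burden. The paper notices that conjugating by the change of coordinates $(x_1,y_1)\mapsto(x-ty,y)$ turns the time-dependent, sheared multipliers into the time-independent classical zero-homogeneous multipliers $\pa_x\pa_y\Delta^{-1}_{\x}$, $\pa_y|\pa_y|^{-1}$, $\Delta_{\x}^{-1/2}\dive_{\x}$, while $\widetilde\nabla$ becomes $\nabla_{\x}$ and $\V$ is simply composed with the change of variables; since the change of variables is an $L^2$-isometry and sends $\widetilde\nabla\V$ to $\nabla_{\x}[\V(x-ty,y)]$, the classical commutator estimate applies verbatim, uniformly in $t$ and $\nu$, with no case analysis and no touching of the singularity at the origin. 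Your direct mean-value argument in the shifted frequency variable $(k,\xi-kt)$ re-derives the same classical estimate from scratch; it is doable, but you must then verify that the segment used in the mean-value theorem does not cross the origin (or handle those frequency pairs separately by the trivial bound), and you must be careful that the relevant gradient of the zero-homogeneous symbol is taken in the shifted variable $(k,\eta)$, $\eta=\xi-kt$, not $(k,\xi)$ — otherwise the $k$-derivative picks up a factor of $t$ and the bound is not uniform. You flagged exactly this $t$-uniformity worry, which is good, but the coordinate-change trick dissolves it entirely rather than requiring you to chase it. Worth noting that this trick relies on the multipliers being functions of the sheared frequency $(k,\xi-kt)$; for $\RR_4$, whose symbol $i\xi/|\xi|$ is a function of $\xi$ alone, the identification is exact only on the $k=0$ mode (which is where the paper actually uses $\RR_4$), so your direct approach would actually have to face this operator head-on if a nonzero mode were involved.

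Finally, you are right that the real content for $\RR_1,\RR_2$ is verifying $\|(k,\xi)\nabla_{k,\xi}m\|_{L^\infty_{k,\xi}}\lesssim 1$ uniformly in $\nu,t$ for $m\in\{m_1^{-1},m_2^{-1},\varphi^{-1/4}\}$, including across the two breakpoints of the piecewise-defined $\varphi$. The paper asserts this without computation; making it explicit, as you propose via the arctangent formulas and the structure $\pa_t\varphi/\varphi=\pa_tp/p$ on the critical interval, is a worthwhile supplement rather than a departure.
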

	\begin{proof}
		First, we consider the case $\RR_3$. By using coordinate transformation, we compute that
		\begin{align}
			\|[\RR_3,\V]\cdot \widetilde{\nabla}q\|_{L^2}^2
			=&\int_{\T\times\R} |[\pa_{x_1}(\pa_{y_1}-t\pa_{x_1})\widetilde{\Delta}^{-1},\V]\cdot (\pa_{x_1},\pa_{y_1}-t\pa_{x_1})q|^2(t,x_1,y_1)\,dx_1dy_1
			\label{est-Ri-trans}\\
			=&\int_{\T\times\R} |[\pa_{x}\pa_{y}\Delta_{\x}^{-1},\V] \cdot (\pa_{x},\pa_{y})q|^2(t,x-ty,y)\,dxdy
			\nonumber\\
			\lesssim&\sup_{(x,y)\in\T\times\R} |(\pa_{x},\pa_{y})\cdot \V(t,x-ty,y)|^2
			\int_{\T\times\R} |q(t,x-ty,y) |^2\,dxdy
			\nonumber\\
			=&\sup_{(x_1,y_1)\in\T\times\R} |(\pa_{x_1},\pa_{y_1}-t\pa_{x_1})\cdot \V(t,x_1,y_1)|^2
			\int_{\T\times\R} |q(t,x_1,y_1) |^2\,dx_1dy_1
			\nonumber\\
			=&\|\widetilde{\nabla} \V\|_{L^\infty}^2\|q\|_{L^2}^2.
			\nonumber
		\end{align}
		By the same argument, we can prove  the cases for $\RR_i$ with $i=2,4,5$.
	\end{proof}
	\begin{lemm}\label{sec2:lem-com-1-1}
	    For any $\RR_i (i=1,2)$, $q\in L^2(\T\times\R)$ and $\pa_{y_1} \V_0\in L^{\infty}(\R)$, we have
		\begin{align}\label{est-R1}
			\|[\RR_i,\V_0]\cdot \widetilde{\nabla}q\|_{L^2}\lesssim \|\pa_{y_1}\V_0\|_{L^\infty}\|q\|_{L^2}.
		\end{align}
	\end{lemm}
	
	\begin{proof}
		We denote $m=m_1^{-1}, m_2^{-1}$ or $\varphi^{-\f14}$, and then we have
		\begin{align}
			&m \F[\V_0\cdot\nabla q]-\hat{\V}_0*(m\F[\nabla q])
			\label{est-m}\\
			=&\int_{\R}(m(t,k,\xi)-m(t,k,\xi'))\hat{\V}_0(t,\xi-\xi')\cdot (k,\xi'-kt)\hat{q}(t,k,\xi')\,d\xi'.
			\nonumber
		\end{align}
		It is easy to check that
		\begin{align*}
			&|(m(t,k,\xi)-m(t,k,\xi'))\hat{V}_0^1(t,\xi-\xi')k\hat{q}(t,k,\xi')|
			\\
			\lesssim&~ |k(m(t,k,\xi)-m(t,k,\xi'))(\xi-\xi')^{-1}|
			|(\xi-\xi')\hat{V}_0^1(t,\xi-\xi')||\hat{q}(t,k,\xi')|
			\\
			\lesssim&~ |(\xi-\xi')\hat{V}_0^1(t,\xi-\xi')||\hat{q}(t,k,\xi')|.
		\end{align*}
		For $\xi'-kt$ sufficiently large and far away from $\xi-kt$, we have $|\xi'-kt|\sim |\xi-\xi'|$.
		Hence,
		\begin{align*}
			&|(m(t,k,\xi)-m(t,k,\xi'))\hat{V}_0^2(t,\xi-\xi')(\xi'-kt)\hat{q}(t,k,\xi')|
			\\
			\lesssim &~\|m\|_{L^\infty_{k,\xi}}|(\xi-\xi')\hat{V}_0(t,\xi-\xi')||\hat{q}(t,k,\xi')|.
			\nonumber
		\end{align*}
		For $\xi'-kt$ sufficiently small and far away from $\xi-kt$ or $|\xi'-kt|\sim |\xi-kt|$, we obtain that
		\begin{align*}
			&|(m(t,k,\xi)-m(t,k,\xi'))\hat{V}_0^2(t,\xi-\xi')(\xi'-kt)\hat{q}(t,k,\xi')|
			\\
			=&~|(m(t,k,\xi)-m(t,k,\xi'))(\xi-\xi')^{-1}(\xi'-kt)| |(\xi-\xi')\hat{V}_0^2(t,\xi-\xi')| |\hat{q}(t,k,\xi')|
			\nonumber\\
			\lesssim&~ \big(\|m\|_{L^\infty_{k,\xi}}+\|(\xi-kt)\nabla_{\xi} m\|_{L^\infty_{k,\xi}}\big)|(\xi-\xi')\hat{V}_0^2(t,\xi-\xi')| |\hat{q}(t,k,\xi')|.
			\nonumber
		\end{align*}
		Substituting these three estimates into \eqref{est-m}, we find that
		\begin{align*}
			\|m \F[\V_0\cdot\nabla q]-\hat{\V}_0*(m\F[\nabla q])\|_{L^2}
			\lesssim \|\F[\nabla\V_0]*\hat{q}\|_{L^2}
			\lesssim\|\pa_{y_1} \V_0\|_{L^\infty}\|q\|_{L^2}.
		\end{align*}
		Thus, we conclude that estimate \eqref{est-R1} holds.
	\end{proof}
	
	\medskip		
	
	\begin{lemm}\label{sec2:lem-com-2}
		For any $s\in\N^+$, $g_1\in L^\infty(\T\times\R)$, $\nabla^{s-1}g_1\in L^2(\T\times\R)$, $g_2\in L^\infty(\T\times\R)$, $\widetilde{\nabla}^{s-1}g_2\in L^2(\T\times\R)$, and $\nabla \V\in L^{\infty}(\T\times\R)$, $\nabla^s \V\in L^{2}(\T\times\R)$, $\widetilde{\nabla}^s \V\in L^{2}(\T\times\R)$, we have
		\begin{align}
			&\|[\nabla^{s},\V] g_1\|_{L^2}\lesssim \|\nabla \V\|_{L^\infty}\|\nabla^{s-1}  g_1\|_{L^2}+\|\nabla^s \V\|_{L^2}\| g_1\|_{L^\infty},\label{est-com-1}\\
			&\|[\widetilde{\nabla}^{s},\V] g_2\|_{L^2}\lesssim \|\widetilde{\nabla} \V\|_{L^\infty}\|\widetilde{\nabla}^{s-1}  g_2\|_{L^2}+\|\widetilde{\nabla}^s \V\|_{L^2}\|g_2\|_{L^\infty}.\label{est-com-2}
		\end{align}
	\end{lemm}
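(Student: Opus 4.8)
The statement to prove is Lemma~\ref{sec2:lem-com-2}, two Moser-type commutator estimates. The plan is to prove \eqref{est-com-1} in detail and then observe that \eqref{est-com-2} follows verbatim after replacing $\nabla$ by $\widetilde\nabla=(\pa_{x_1},\pa_{y_1}-t\pa_{x_1})$; the only properties of $\nabla$ actually used are the Leibniz rule and the fact that the corresponding Sobolev norms are equivalent, both of which $\widetilde\nabla$ enjoys since it is a constant-coefficient (in $(x_1,y_1)$) first-order operator obtained from $\nabla$ by a linear change of variables.

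For \eqref{est-com-1} I would argue by the standard Kato--Ponce / Moser approach. Expanding $[\nabla^s,\V]g_1=\nabla^s(\V g_1)-\V\,\nabla^s g_1$ by the Leibniz rule, every surviving term is of the form $\nabla^a\V\cdot\nabla^b g_1$ with $a+b=s$ and $a\ge 1$. One then distributes the derivatives using the Gagliardo--Nirenberg interpolation inequalities on $\T\times\R$:
\begin{align*}
\|\nabla^a\V\|_{L^{p}}\lesssim \|\nabla\V\|_{L^\infty}^{1-\frac{a-1}{s-1}}\|\nabla^s\V\|_{L^2}^{\frac{a-1}{s-1}},\qquad
\|\nabla^b g_1\|_{L^{q}}\lesssim \|g_1\|_{L^\infty}^{1-\frac{b}{s-1}}\|\nabla^{s-1}g_1\|_{L^2}^{\frac{b}{s-1}},
\end{align*}
with the exponents $p,q$ chosen so that $\frac1p+\frac1q=\frac12$ (for $a=1$ one simply takes $\nabla\V\in L^\infty$ and $\nabla^{s-1}g_1\in L^2$, while for $a=s$ one takes $\nabla^s\V\in L^2$ and $g_1\in L^\infty$). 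Applying H\"older and then Young's inequality $x^\theta y^{1-\theta}\le \theta x+(1-\theta)y$ to the product of the two interpolation bounds collapses every term into $\|\nabla\V\|_{L^\infty}\|\nabla^{s-1}g_1\|_{L^2}+\|\nabla^s\V\|_{L^2}\|g_1\|_{L^\infty}$, which is the claimed bound; summing the finitely many terms finishes the proof. The hypotheses $g_1\in L^\infty$, $\nabla^{s-1}g_1\in L^2$, $\nabla\V\in L^\infty$, $\nabla^s\V\in L^2$ are exactly what is needed for all intermediate norms to be finite.

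For \eqref{est-com-2} I would first record that the diffeomorphism $(x,y)\mapsto(x_1,y_1)=(x-ty,y)$ is volume-preserving and transforms $\nabla_{\x}$ into $\widetilde\nabla$; hence for any function $Q$ one has $\|\widetilde\nabla^\al Q\|_{L^2(\T\times\R)}=\|\nabla_{\x}^\al q\|_{L^2(\T\times\R)}$ and likewise $\|Q\|_{L^\infty}=\|q\|_{L^\infty}$, where $q(x,y)=Q(x-ty,y)$ as in the coordinate change of Section~2. Since $\V$ and $g_2$ transform the same way and $\widetilde\nabla$ obeys the Leibniz rule, the commutator $[\widetilde\nabla^s,\V]g_2$ in the $(x_1,y_1)$ variables equals the commutator $[\nabla_{\x}^s,\vv]q$ (with $q$ the $(x,y)$-representative of $g_2$) in the $(x,y)$ variables; applying the already-proved inequality \eqref{est-com-1} in the $(x,y)$ frame and translating the norms back yields \eqref{est-com-2}.

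I do not foresee a genuine obstacle: these are classical commutator estimates and the only mild care needed is (i) to keep track of which terms force $\nabla^s\V\in L^2$ versus $\nabla\V\in L^\infty$ so that the right-hand side has precisely the stated form, and (ii) to make the interpolation inequalities rigorous on the unbounded cylinder $\T\times\R$ — this is standard, but one should note that Gagliardo--Nirenberg on $\T\times\R$ holds in the required form because the only noncompact direction is a full line. If one prefers to avoid interpolation altogether, an equivalent route is the Coifman--Meyer / paraproduct decomposition, but for integer $s$ the Moser argument above is the most elementary and self-contained.
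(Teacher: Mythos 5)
Your proof is correct and takes essentially the same approach as the paper: the paper simply cites the standard Moser commutator estimate (Majda--Bertozzi \cite{Majda-Bertozzi-2002}) for \eqref{est-com-1}—exactly the Leibniz expansion plus Gagliardo--Nirenberg interpolation and Young's inequality that you carry out—and then obtains \eqref{est-com-2} from \eqref{est-com-1} by the same volume-preserving change of variables $x=x_1+ty_1$, $y=y_1$ that converts $\widetilde\nabla$ into $\nabla_{\bm x}$, referencing the identical computation already displayed in \eqref{est-Ri-trans}. The one loose phrase in your opening paragraph ("the corresponding Sobolev norms are equivalent") is not literally true as stated—the $\widetilde\nabla$-Sobolev norm is not equivalent to the ordinary one in the same variables—but your third paragraph correctly states what is actually used, namely that the norms are \emph{equal} after the change of variables, so the argument stands.
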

	\begin{proof}
		The first commutator estimate \eqref{est-com-1} is standard; its proof is omitted here and the reader may consult \cite{Majda-Bertozzi-2002} for details.
		Following the same argument used to derive \eqref{est-Ri-trans}, the second commutator estimate \eqref{est-com-2} follows directly from \eqref{est-com-1}.
	\end{proof}

	\section{Construction of the energy functional}\label{sect:energy fun}
	In this section we construct the energy functional and establish the {\it a priori} estimates for smooth local solutions of system \eqref{eq_n-d-w-1}.  These estimates will allow us to extend the local solution  $(N,D,W)$ to a global one and play a fundamental role in the proof of Theorem \ref{theo2}.  We begin by taking the Fourier transform in both spatial variables $(x_1,y_1)$; thus, for any function $q(t,x_1,y_1)$, we set
	\begin{align*}
		\hat{q}^k(t,\xi)
		\tri \f{1}{(2\pi)^2} \int_{\T\times\R} q(t,x_1,y_1)e^{-ikx_1-i\xi y_1}\,dx_1 dy_1,\quad \forall ~k\in\Z.
	\end{align*} 
	Taking the Fourier transform of the system \eqref{eq_n-d-w-1},  we obtain
	\begin{align}
		\label{eq_n-d-w-2}
		\left\{\begin{array}{l}
			\pa_{t} \hat{N}^k+\hat{D}^k=\hat{\mathscr{F}}_1^k,\quad t\in\R^+,~~\xi\in\R,~~k\in \Z,\\[1ex]
			\displaystyle
			\pa_{t} \hat{D}^k-\f{\pa_{t}p}{p}\hat{D}^k+2\f{k^2}{p}(\hat{W}^k-\hat{N}^k+\mu\hat{D}^k)-p\hat{N}^k +\nu p\hat{D}^k=\hat{\mathscr{F}}_2^k,\\[1ex]
			\displaystyle
			\pa_{t} \hat{W}^k+\mu p\hat{W}^k-\mu(\mu+\mu') p\hat{D}^k+\mu\f{\pa_{t}p}{p}\hat{D}^k-2\mu \f{k^2}{p}(\hat{W}^k-\hat{N}^k+\mu\hat{D}^k)=\hat{\mathscr{F}}_3^k,\\[1ex]
			\displaystyle
			(\hat{N}^k,\hat{D}^k,\hat{W}^k)|_{t=0}=(\hat{n}_{in}^k,\hat{d}_{in}^k,\hat{w}_{in}^k)(\xi),
		\end{array}\right.
	\end{align}
	where $p$ is defined by
	\begin{align}\label{est-p}
		p\tri k^2+(\xi-kt)^2,\quad \pa_{t}p=-2k(\xi-kt).
	\end{align}
	In the following analysis, we concentrate on system \eqref{eq_n-d-w-2} and derive the {\it a priori} estimates.  When constructing the energy estimates, we will apply the operators $\nabla$ and $\widetilde{\nabla}$ to the system \eqref{eq_n-d-w-2} and refer to $\nabla$ as ``good derivative" and to $\widetilde{\nabla}$ as ``bad derivative".
	
	Therefore, we introduce the notations for the lower-order derivative of the compressible part $(N, \widetilde{\Delta}^{-\f12}D)$: for any $0\leq s\leq 3$, $0\leq j\leq s$,
	\begin{align}
		&\hat{N}_{j,s-j}^k\tri m_1^{-1}m_2^{-1}\varphi^{-\f14}<k,\xi>^{s-j}p^{\f{j}{2}}\hat{N}^k,\label{def_N-j}\\ 
		&\hat{  \mathcal{U} }_{j,s-j}^k\tri m_1^{-1}m_2^{-1}\varphi^{-\f14}<k,\xi>^{s-j}p^{\f{j-1}{2}}\hat{D}^k,\label{def_D-j}
	\end{align}
	where we use the notation $\UU$ represents the compressible part of the velocity, namely $\UU=\widetilde{\Delta}^{-\f12}D=\widetilde{\Delta}^{-\f12}\widetilde{\dive} \V$ in \eqref{def_D-j}.		
	
	For the incompressible part $W$, we define: for any $0\leq s\leq 3$, $0\leq j\leq s$,
	\begin{align}	
		\hat{W}_{j,s-j}^k\tri m_1^{-1}m_2^{-1}<k,\xi>^{s-j}p^{\f{j}{2}}\hat{W}^k.\label{def_W-j+1}
	\end{align}
	To close the energy, we need the higher-order derivative of the compressible part $(N, \widetilde{\Delta}^{-\f12}D)$, namely for any $0\leq s\leq 3$, $0\leq j\leq s$,		
	\begin{align}		
		&\hat{N}_{j+1,s-j}^k\tri  <k,\xi>^{s-j}p^{\f{j+1}{2}}\hat{N}^k,\label{def_N-j+1}\\
		&\hat{D}_{j,s-j}^k\tri  <k,\xi>^{s-j}p^{\f{j}{2}}\hat{D}^k. \label{def_D-j+1}
	\end{align}

	\begin{rema}
		
		We see that for any function $q$, the notation $q_{j,l}$ represents the $j$-th order ``bad derivative" and the $l$-th order ``good derivative" of $q$.
	\end{rema}

	\begin{rema}
		
		For the zero mode of the solution, it is not difficult to check that
		\begin{align}\label{est-P0-N-D-W}
			(\hat{N}_{j,s-j}^0,\hat{\UU}_{j,s-j}^0,\hat{W}_{j,s-j}^0,\hat{N}_{j+1,s-j}^0,\hat{D}_{j,s-j}^0)=(\hat{N}_{0,s}^0, \hat{\UU}_{0,s}^0,\hat{W}_{0,s}^0,\hat{N}_{1,s}^0,\hat{D}_{0,s}^0),\quad \forall~0\leq j\leq s.
		\end{align}
		
	\end{rema}

	Now we are in a position to introduce the energy functional, which is decomposed into compressible and incompressible parts.  For the compressible part, for any $j=0,\cdots,3$, we define
	\begin{align*}
		\E_{j,3-j}^{com,l}(t)\tri&\sum_{s=j}^{3}\sum_{k\in\Z}\int_{\R}\left(|\hat{N}_{j,s-j}^k|^2+|\hat{\UU}_{j,s-j}^k|^2\right) \,d\xi,
		\\
		\D_{j,3-j}^{com,l}(t)\tri&~
		\mu \sum_{s=j}^{3}\sum_{k\in\Z}\int_{\R}p|\hat{\UU}_{j,s-j}^k|^2\,d\xi
		+(3-j)\mu\sum_{s=j+1}^{3}\int_{\R}|\hat{N}_{j,s-j}^0|^2\,d\xi
		\\
		&+\sum_{s=j}^{3}\sum_{k\in\Z\backslash\{0\}}\int_{\R}\left(\mu^{\f13}+A\f{k^2}{p}\right)\left(|\hat{N}_{j,s-j}^k|^2+|\hat{\UU}_{j,s-j}^k|^2\right) \,d\xi.
	\end{align*}	
	For the incompressible part, for any $j=0,\cdots,3$, we define				
	\begin{align*}			
		\E_{j,3-j}^{in}(t)\tri& \sum_{s=j}^{3}\sum_{k\in\Z}\int_{\R}|\hat{W}_{j,s-j}^k|^2 \,d\xi,
		\\
		\D_{j,3-j}^{in}(t)\tri&~
		\mu \sum_{s=j}^{3}\sum_{k\in\Z}\int_{\R}p|\hat{W}_{j,s-j}^k|^2\,d\xi
		+\sum_{s=j}^{3}\sum_{k\in\Z\backslash\{0\}}\int_{\R}\left(\mu^{\f13}+A\f{k^2}{p}\right)|\hat{W}_{j,s-j}^k|^2 \,d\xi.
	\end{align*}	
	Moreover, we also need the high-order energy functional for compressible part: for $j=0,\cdots,3$,
	\begin{align*}	
		\E_{j,3-j}^{com}(t)\tri&\sum_{s=j}^{3} \sum_{k\in\Z}\int_{\R}\left(|\hat{N}_{j+1,s-j}^k|^2+|\hat{D}_{j,s-j}^k|^2\right) \,d\xi,
		\\
		\D_{j,3-j}^{com}(t)\tri&~
		\mu \sum_{s=j}^{3}\sum_{k\in\Z}\int_{\R}p|\hat{D}_{j,s-j}^k|^2\,d\xi
		+(3-j)\mu\sum_{s=j+1}^{3}\int_{\R}|\hat{N}_{j+1,s-j}^0|^2\,d\xi
		\\
		&+\mu^{\f13}\sum_{s=j}^{3}\sum_{k\in\Z\backslash\{0\}}\int_{\R}|\hat{N}_{j+1,s-j}^k|^2 \,d\xi.
	\end{align*}
	Here $A$ is a large yet fixed constant determined later.
    We note that the enhanced dissipation of $\hat{D}_{j,s-j}^k$ can be obtained via the dissipation of $\hat{\UU}_{j,s-j}^k$ as follows:
    \begin{align*}
        \mu^{\f13}\sum_{s=j}^{3}\sum_{k\in\Z\backslash\{0\}}\int_{\R}|\hat{D}_{j,s-j}^k|^2\,d\xi
        \lesssim \sum_{s=j}^{3}\sum_{k\in\Z\backslash\{0\}}\int_{\R}|p^{\f12}\hat{\UU}_{j,s-j}^k|^2\,d\xi.
    \end{align*}

	Then it is easy to find that Theorems \ref{theo1}-\ref{theo2} are a direct corollary of the following energy estimates.
	\begin{prop}\label{prop-1}
		Assume that $(N,\widetilde{\Lambda}^{-1}D)\in L^\infty([0,T];H^4(\T\times\R))$ and $W\in L^\infty([0,T];H^3(\T\times\R))$ are the solution for system \eqref{eq_n-d-w-1} such that, for any $\ep_0>0$ and $t\in[0,T]$,
		\begin{align}\label{priori assumption}
			\sup_{\tau\in[0,t]}\E(\tau)+\int_0^t \D(\tau)\,d\tau\leq \ep_0^2\mu^{2},
		\end{align}
		where $\E(t)$ and $\D(t)$, for some positive constants $c_j (j=0,\cdots,3)$,  are defined by
		\begin{align}
			\E(t)\tri&\sum_{j=0}^3c_j\mu^{\f{2j}{3}}\left(\E_{j,3-j}^{com,l}(t)+\tilde{\delta}_1\E_{j,3-j}^{in}(t)+\tilde{\delta}_2\mu\E_{j,3-j}^{com}(t)\right),\label{def_E}\\
			\D(t)\tri&\sum_{j=0}^3c_j\mu^{\f{2j}{3}}\left(\D_{j,3-j}^{com,l}(t)+\tilde{\delta}_1\D_{j,3-j}^{in}(t)+\tilde{\delta}_2\mu\D_{j,3-j}^{com}(t)\right)
			.\label{def_D}
		\end{align}
		Then for any $t\in[0,T]$, we have
		\begin{align}\label{priori result}
			\sup_{\tau\in[0,t]}\E(\tau)+\int_0^t \D(\tau)\,d\tau\lesssim \E(0)+\ep_0^3\mu^{2}.
		\end{align}
	\end{prop}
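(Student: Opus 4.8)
The plan is to perform energy estimates on the Fourier-transformed system \eqref{eq_n-d-w-2}, term by term, for each of the three families of weighted unknowns defined in \eqref{def_N-j}--\eqref{def_D-j+1}, and to assemble them into the combined functional $\E(t)$ with carefully chosen hierarchy constants. First I would fix the bootstrap setup: under the smallness assumption \eqref{priori assumption}, the Sobolev embedding $H^3(\T\times\R)\hookrightarrow L^\infty$ gives control of $\|\widetilde\nabla\V\|_{L^\infty}$, $\|\widetilde\nabla^s\V\|_{L^2}$, and the nonlinear coefficients $f(N),g(N)$ together with their derivatives, since $\|N\|_{L^\infty}\ll 1$. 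This is what makes the commutator bounds of Lemma~\ref{sec2:lem-com-1} and Lemma~\ref{sec2:lem-com-2} applicable and lets us treat $\hat{\mathscr F}_1^k,\hat{\mathscr F}_2^k,\hat{\mathscr F}_3^k$ as cubic error terms.

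The linear estimates are the heart of the argument. For the compressible pair $(\hat N_{j,s-j}^k,\hat{\UU}_{j,s-j}^k)$ I would differentiate $\frac12\frac{d}{dt}(|\hat N_{j,s-j}^k|^2+|\hat{\UU}_{j,s-j}^k|^2)$, use the $N$- and $D$-equations in \eqref{eq_n-d-w-2}, and track four sources of good/bad terms: (i) the weight derivatives $\frac{\pa_t m_1}{m_1}$, $\frac{\pa_t m_2}{m_2}$, $\frac{\pa_t\varphi}{\varphi}$ and $\frac{j}{2}\frac{\pa_t p}{p}$; (ii) the transport/stretching term $-\frac{\pa_t p}{p}\hat D^k$ in the $D$-equation, which after the weight $p^{(j-1)/2}$ in $\hat{\UU}$ must be balanced; (iii) the dissipation $\nu p$ and $\mu p$; and (iv) the lift-up term $2\frac{k^2}{p}(\hat W^k-\hat N^k+\mu\hat D^k)$. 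The cross term $p\hat N^k\overline{\hat{\UU}}$ against $\widetilde\Delta^{1/2}\UU\overline{\hat N}$ is antisymmetric and cancels at leading order. The key algebraic inputs are the two inequalities in \eqref{est-varphi-2}: they guarantee that $\delta_\beta(\frac{\pa_t m_1}{m_1}+\nu p)+\frac{\pa_t\varphi}{\varphi}-\frac{\pa_t p}{p}\ge\delta_\beta\nu^{1/3}$, so the weighted $N,D$ variables inherit a genuine $\mu^{1/3}$ enhanced-dissipation rate (recall $\nu\sim\mu$) even near the critical layer where bare dissipation $\sim\nu k^2$ fails. For $W$, the $W$-equation has \emph{no} linear $d$-term apart from the $\mu(\mu+\mu')\widetilde\Delta D$ and $\mu\frac{\pa_t p}{p}\hat D^k$ and $\mu\frac{k^2}{p}(\cdots)$ contributions, all carrying a prefactor $\mu$; hence $W$ behaves like a 2-D incompressible vorticity and its estimate only needs $m_1,m_2$ (not $\varphi$), yielding $\frac{d}{dt}\E^{in}_{j,3-j}+\D^{in}_{j,3-j}\lesssim \mu\,\E^{com}_{j,3-j}+\text{cubic}$, where the $\mu$-prefactor in front of the $W$-block in $\E(t)$ (namely $\tilde\delta_1$) and the $\mu^{5/3}$ in front of the high-order compressible block absorb the coupling.

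Next I would organize the coupling into a closed differential inequality. The structure is triangular: $(N,\UU)$ is forced by $\widetilde\Delta^{-1/2}W$ through the $2\frac{k^2}{p}\hat W^k$ term (small, since $\frac{k^2}{p}\le 1$), $W$ is forced by $\mu\widetilde\Delta D$ and therefore by the \emph{high-order} compressible energy $\E^{com}_{j,3-j}$ (one extra derivative, compensated by $\mu$), and the high-order compressible energy $\E^{com}_{j,3-j}$ is in turn controlled by $\E^{com,l}_{j,3-j}$ plus $\E^{in}_{j,3-j}$. Choosing $\tilde\delta_1,\tilde\delta_2$ small and then the $c_j$ in decreasing order (so each $\mu^{2j/3}$-level is dominated by the dissipation of the level below) lets the bad cross terms be absorbed into the good dissipation terms $\D$. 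The nonlinear contributions are handled by writing each of $\hat{\mathscr F}_i^k$ as (transport)$+$(genuinely nonlinear); the transport parts $\V\cdot\widetilde\nabla(\cdot)$ are moved past the weight operators $\RR_1,\RR_2$ using Lemma~\ref{sec2:lem-com-1} (the commutator costs only $\|\widetilde\nabla\V\|_{L^\infty}$, i.e.\ no weight loss), while the remaining terms are estimated by Lemma~\ref{sec2:lem-com-2} and Sobolev, each producing a factor $\|(N,\V)\|_{H^4}\lesssim\ep_0\mu$ times the energy, hence $\lesssim\ep_0\sqrt{\E\D}$ or $\ep_0\mu^{-?}\E\cdot(\text{dissipation})$; after a Young inequality these close as the $\ep_0^3\mu^2$ on the right of \eqref{priori result}.

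\textbf{Main obstacle.} The delicate point is the lift-up/commutator balance flagged in Remark~\ref{rmk-optimal}: the transport nonlinearity $\V\cdot\widetilde\nabla N$ carried through the weight $\varphi^{-1/4}$ produces a commutator $[\F^{-1}[\varphi^{-1/4}],\V\cdot\widetilde\nabla]N$ which, via $\|(k,\xi)\nabla\varphi^{-1/4}\|_{L^\infty}\lesssim 1$ in Lemma~\ref{sec2:lem-com-1}, costs no power of $\varphi$ at all — but one must still verify that the residual $\widetilde\nabla=\pa_{y_1}$ derivative hitting the nonlinearity, combined with the enhanced-dissipation time scale $t\sim\mu^{-1/3}$, only consumes the budget $\|(n_{in},\vv_{in})\|_{H^4}\le\ep\mu$ and not more. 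Concretely this means the highest-order term in $\widetilde F_2$ where $\widetilde\nabla$ falls on $\V$ must be paired with $\sqrt{\mu p}\,\hat{\UU}$ from the dissipation $\D^{com,l}$ using that $\|\widetilde\nabla^4\V\|_{L^2}$ itself carries the full $\mu^{5/3}$-weighted high-order energy. Getting every such pairing to land inside the available dissipation — rather than inside $\E$ with an uncontrolled $\mu$-power — is where the precise choice of weights $m_1^{-1}m_2^{-1}\varphi^{-1/4}$ versus $m_1^{-1}m_2^{-1}$ (weight on $W$) and the $\mu^{2j/3}$, $\mu^{5/3}$ prefactors in \eqref{def_E}--\eqref{def_D} is forced, and it is the step I expect to require the most care.
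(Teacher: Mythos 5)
Your plan captures the correct architecture --- the $\varphi^{-1/4}$-weighted compressible block, the $m_1m_2$-weighted (but $\varphi$-free) incompressible block for $W$, the high-order compressible block needed to absorb the $\mu\widetilde{\Delta}D$ forcing of $W$, the triangular coupling, and the hierarchy $\tilde\delta_1,\tilde\delta_2,c_j$ --- and this is essentially how the paper assembles Propositions~\ref{prop-2}--\ref{prop-7} into \eqref{priori result}. But two key ideas are missing. The first is the separate $L^2$ energy for the zero mode. Your scheme presumes the diagonal energy identities close at the $s=0$ level for $k=0$, but the transport term $\PP_0 V^2\,\PP_0\pa_{y_1}(\cdot)$ cannot be absorbed there: $\|\PP_0 W\|_{L^2}$ is controllable only in $L^\infty_t$, not $L^2_t$ (the streak dissipates at rate $\mu$, and $\PP_0 V^1$ is not in $L^2$), so $\int_0^t\!\int\PP_0\pa_{y_1}V^2\,|\PP_0 W|^2$ has no available source of smallness. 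The paper's cure (Propositions~\ref{prop-2-1} and~\ref{prop-3-1}) is to introduce the entropy-type functional $\GG(\PP_0\varrho)+\tfrac12\PP_0\varrho|\PP_0 V^2|^2$ and the renormalized functional $\int(1-\PP_0 N)|\PP_0 W|^2$, then exploit the zero-mode density equation $\pa_t\PP_0 N+\PP_0 D=\cdots$ to cancel $\PP_0 V^2\,\PP_0\pa_{y_1}W$. Without this device the base level of your energy ladder does not close.

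The second gap is the mechanism that produces dissipation for the density. $N$ has no native dissipation in $\eqref{eq_n-d-w-2}_1$, and antisymmetry of the $p\hat N\cdot\bar{\hat\UU}$ pairing merely cancels the leading exchange; it does not generate the $\mu^{1/3}|\hat N_{j,s-j}^k|^2$ term appearing in $\D^{com,l}_{j,3-j}$. The paper obtains it by adding explicit corrector cross-terms $\tfrac12\tfrac{\pa_t p}{p^{3/2}}\mathrm{Re}\bigl(\hat N_{j,s-j}^k\bar{\hat\UU}_{j,s-j}^k\bigr)$ and $-2\delta\mu^{1/3}p^{-1/2}\mathrm{Re}\bigl(\hat N_{j,s-j}^k\bar{\hat\UU}_{j,s-j}^k\bigr)$ (plus their $k=0$ analogues) to $\widetilde\E_2,\widetilde\E_5,\widetilde\E_6,\widetilde\E_8$; differentiating these cross terms in time is what manufactures the $|\hat N|^2$ dissipation. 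Finally, a clarification: the commutator $[\F^{-1}[\varphi^{-1/4}],\V\cdot\widetilde\nabla]$ indeed costs nothing inside Lemma~\ref{sec2:lem-com-1}, but that lemma outputs $\|q\|_{L^2}$ rather than $\|\RR_1 q\|_{L^2}$; re-expressing the output in terms of the energy therefore costs $\varphi^{1/4}\lesssim\mu^{-1/6}$, which is exactly the loss flagged in Remark~\ref{rmk-optimal} and exactly what dictates the $\ep\mu$ threshold --- so ``costs no power of $\varphi$ at all'' should not be read as saying there is no loss in the final estimate.
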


	At the end of this section, using the definitions \eqref{def_N-j}–\eqref{def_D-j+1} and the {\it a priori} assumptions \eqref{priori assumption}, we derive several estimates for $(N, \V)$ that will be used repeatedly in the energy estimates for the proof of Proposition~\ref{prop-1}.
	
	\begin{lemm}\label{lem-est-NDW}
		Under the assumptions in Proposition \ref{prop-1}, there holds that for any $j=0,\cdots,3$,	\smallskip
		
		$\bullet$ $(N, \widetilde{\Lambda}^{-1}D)$ satisfies 
		\begin{align*}
			&~
			\|\PP_{\not=}\widetilde{\Lambda}^{j}(N,\widetilde{\Lambda}^{-1}D)\|_{L^\infty_tH^{3-j}}^2
			\lesssim \mu^{-\f13}\E_{j,3-j}^{com,l}(t),\\
			&~\mu\|\PP_{\not=}\widetilde{\Lambda}^{j}D\|_{L^2_tH^{3-j}}^2
			+\mu^{\f13}\|\PP_{\not=}\widetilde{\Lambda}^{j}(N,\widetilde{\Lambda}^{-1}D)\|_{L^2_tH^{3-j}}^2
			\lesssim \mu^{-\f13}\int_0^t\D_{j,3-j}^{com,l}(\tau)\,d\tau, \\
			&~\|\PP_{0}(N,|\pa_{y_1}|^{-1}D)\|_{L^\infty_tH^3}^2
			\lesssim \E_{0,3}^{com,l}(t), \\
			&~
			\mu\|\PP_{0}\pa_{y_1}N\|_{L^2_tH^2}^2
			+\mu\|\PP_{0}D\|_{L^2_tH^3}^2
			\lesssim \int_0^t\D_{0,3}^{com,l}(\tau)\,d\tau.
		\end{align*}	
		
		$\bullet$ $W$ satisfies
		\begin{align*}
			&~\|\widetilde{\Lambda}^{j}W\|_{L^\infty_tH^{3-j}}^2
			\lesssim
			\E_{j,3-j}^{in}(t), \\
			&~\mu\|\widetilde{\Lambda}^{j+1}W\|_{L^2_tH^{3-j}}^2+\mu^{\f13}\|\PP_{\not=}\widetilde{\Lambda}^jW\|_{L^2_tH^{3-j}}^2
			\lesssim \int_0^t\D_{j,3-j}^{in}(\tau)\,d\tau.
		\end{align*}
		
		$\bullet$ $(\widetilde{\Lambda} N, D)$ satisfies
		\begin{align*}
			&~\|\PP_{\not=}\widetilde{\Lambda}^j(\widetilde{\Lambda}N,D)\|_{L^\infty_tH^{3-j}}^2
			\lesssim  \E_{j,3-j}^{com}(t), \\
			&~\mu\|\PP_{\not=}\widetilde{\Lambda}^{j+1}D\|_{L^2_tH^{3-j}}^2
			+\mu^{\f13}\|\PP_{\not=}\widetilde{\Lambda}^j(\widetilde{\nabla}N,D)\|_{L^2_tH^{3-j}}^2
			\lesssim \int_0^t\big(\mu^{-1}\D_{j,3-j}^{com,l}+\D_{j,3-j}^{com}\big)(\tau)\,d\tau, \\
			&~\|\PP_{0}(\pa_{y_1}N,D)\|_{L^\infty_tH^{3}}^2
			\lesssim \E_{0,3}^{com}(t), \\
			&~
			\mu\|\PP_{0}\pa_{y_1}^2N\|_{L^2_tH^2}^2+\mu\|\PP_{0}\pa_{y_1}D\|_{L^2_tH^{3}}^2
			\lesssim \int_0^t\D_{0,3}^{com}(\tau)\,d\tau. 
		\end{align*}
		
		$\bullet$ $\V$	 satisfies	
		\begin{align*}
			&~\|\PP_{\not=}\widetilde{\Lambda}^{j} \V\|_{L^\infty_tH^{3-j}}^2
			\lesssim
			\mu^{-\f13}\left(\E_{j,3-j}^{com,l}+\E_{j,3-j}^{in}\right)(t), 
			\\
			&~\mu\|\PP_{\not=}\widetilde{\Lambda}^{j+1}\V\|_{L^2_tH^{3-j}}^2
			+\mu^{\f13}\|\PP_{\not=}\widetilde{\Lambda}^{j}\V\|_{L^2_tH^{3-j}}^2
			\lesssim \mu^{-\f13}\int_0^t\left(\D_{j,3-j}^{com,l}+\D_{j,3-j}^{in}\right)(\tau)\,d\tau, 
			\\
			&~\|\PP_{\not=}\widetilde{\Lambda}^{j+1} \V\|_{L^\infty_tH^{3-j}}^2
			\lesssim \left(\E_{j,3-j}^{com}+\E_{j,3-j}^{in}\right)(t), 
			\\
			&~\mu\|\PP_{\not=}\widetilde{\Lambda}^{j+2}\V\|_{L^2_tH^{3-j}}^2
			+\mu^{\f13}\|\PP_{\not=}\widetilde{\Lambda}^{j+1}\V\|_{L^2_tH^{3-j}}^2
			\lesssim \int_0^t\left(\mu^{-1}\D_{j,3-j}^{com,l}+\D_{j,3-j}^{com}+\D_{j,3-j}^{in}\right)(\tau)\,d\tau.
		\end{align*}		
		
		$\bullet$ $(\PP_{0}V^1, \PP_{0}V^2)$ satisfies	
		\begin{align*}
			&~\|\PP_{0} \pa_{y_1}V^1\|_{L^\infty_tH^{2}}^2+\|\PP_{0} V^2\|_{L^\infty_tH^{3}}^2
			\lesssim \left(\E_{0,3}^{com,l}+\E_{0,3}^{in}\right)(t), 
			\\
			&~
			\mu\|\PP_{0}\pa_{y_1}^2V^1\|_{L^2_tH^{2}}^2+\mu\|\PP_{0}\pa_{y_1}V^2\|_{L^2_tH^{3}}^2
			\lesssim \int_0^t\left(\D_{0,3}^{com,l}+\D_{0,3}^{in}\right)(\tau)\,d\tau, 
			\\
			&~\|\PP_{0}\pa_{y_1} V^1\|_{L^\infty_tH^{3}}^2+\|\PP_{0}\pa_{y_1} V^2\|_{L^\infty_tH^{3}}^2
			\lesssim
			\left(\E_{0,3}^{com}+\E_{0,3}^{in}\right)(t), 
			\\
			&~
			\mu\|\PP_{0}\pa_{y_1}^2V^1\|_{L^2_tH^{3}}^2+
			\mu\|\PP_{0}\pa_{y_1}^2V^2\|_{L^2_tH^{3}}^2
			\lesssim \int_0^t\left(\D_{0,3}^{com}+\D_{0,3}^{in}\right)(\tau)\,d\tau.
		\end{align*}
	\end{lemm}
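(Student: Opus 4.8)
The plan is to verify each displayed inequality in Lemma~\ref{lem-est-NDW} by unwinding the definitions \eqref{def_N-j}--\eqref{def_D-j+1} of the weighted Fourier variables, applying Plancherel's theorem, and controlling the Fourier multipliers $m_1, m_2, \varphi$ using Lemma~\ref{sec2:lem-1}. Concretely, for the non-zero mode estimates on $(N,\widetilde{\Lambda}^{-1}D)$, I would start from $\hat{N}^k = m_1 m_2 \varphi^{1/4} <k,\xi>^{j-s} p^{-j/2}\,\hat{N}_{j,s-j}^k$ and use the uniform two-sided bounds $1\le m_1\le e^{2\pi}$, $1\le m_2\le e^{A\pi}$, $1\le \varphi\lesssim \nu^{-2/3}\sim \mu^{-2/3}$. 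Since the weight $\varphi^{1/4}$ appears in the definition of $\hat N_{j,s-j}^k$ but not in $\hat N^k$, restoring it costs a factor $\varphi^{1/4}\lesssim \mu^{-1/6}$, hence squaring gives the claimed $\mu^{-1/3}$ loss; the bound $\widetilde{\Lambda}^{-1}D \leftrightarrow p^{-1/2}\hat D^k$ is built directly into $\hat{\mathcal U}_{j,s-j}^k$. For the dissipation estimates I would read off from the definition of $\D_{j,3-j}^{com,l}$ that it already contains $\mu p |\hat{\mathcal U}|^2$ and $(\mu^{1/3}+Ak^2/p)(|\hat N|^2+|\hat{\mathcal U}|^2)$, so after Plancherel and time-integration these give exactly $\mu\|\PP_{\ne}\widetilde\Lambda^j D\|_{L^2_tH^{3-j}}^2$ (using $p\,|\hat{\mathcal U}|^2 = |\widetilde\Lambda\cdot\widetilde\Lambda^{-1}\hat D|^2$, i.e. $p^{j}\,\hat D$) and $\mu^{1/3}\|\PP_{\ne}\widetilde\Lambda^j(N,\widetilde\Lambda^{-1}D)\|^2$, again up to the $\varphi^{1/4}$-factor $\mu^{-1/3}$.

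For the zero mode $k=0$, I would invoke the observation \eqref{est-P0-N-D-W} that all weighted variables collapse to the $j=0$ case, together with $\varphi(t,0,\xi)=1$, $m_1(t,0,\xi)=m_2(t,0,\xi)=1$ from Lemma~\ref{sec2:lem-1}; thus no multiplier loss occurs and the estimates for $\PP_0$ are clean, with $p\big|_{k=0} = \xi^2$ so $\widetilde\Lambda|_{k=0} = |\pa_{y_1}|$. The $L^\infty_t$ bounds follow by taking the supremum of \eqref{priori assumption} over $\tau$, and the $L^2_t$ bounds follow by time-integration; the extra term $(3-j)\mu\sum_{s=j+1}^3\int|\hat N_{j,s-j}^0|^2$ in $\D^{com,l}_{j,3-j}$ is precisely what yields $\mu\|\PP_0\pa_{y_1}N\|_{L^2_tH^2}^2$ (the shift by one in the $s$-range reflecting that the pure zero-derivative mode $\hat N^0_{0,0}$ has no dissipation). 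The estimates for $W$ are simpler still: its weighted variable $\hat W_{j,s-j}^k$ carries only $m_1^{-1}m_2^{-1}$ (no $\varphi^{-1/4}$), so there is no $\mu^{-1/3}$ loss, and the stated bounds are immediate from the definitions of $\E^{in}, \D^{in}$. For the high-order compressible piece $(\widetilde\Lambda N, D)$ one uses $\hat N_{j+1,s-j}^k$ and $\hat D_{j,s-j}^k$, which again carry only $m_1^{-1}m_2^{-1}$, but here the $\mu^{5/3}$ prefactor in front of $\E^{com}_{j,3-j}$ inside $\E(t)$ (see \eqref{def_E}) means that the \emph{a priori} bound $\E^{com}_{j,3-j}\lesssim \mu^{-5/3}\ep_0^2\mu^2$ produces the $\mu^{-5/3}$ in the stated estimates.

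The remaining two bullets, for $\V$ and for $(\PP_0V^1,\PP_0V^2)$, are obtained by combining the compressible and incompressible estimates through the Helmholtz decomposition \eqref{decom-v}: writing $\V = \nabla_{\x}\Delta_{\x}^{-1}d + \nabla_{\x}^\perp\Delta_{\x}^{-1}\omega$ and recalling $\omega = w - n + \mu d$ (from \eqref{def-w}), one has $\widetilde\Lambda^j\V$ controlled in $L^\infty_tH^{3-j}$ by $\widetilde\Lambda^{j}\widetilde\Lambda^{-1}D = \widetilde\Lambda^{j-1}D$ (already estimated via $\hat{\mathcal U}$) and $\widetilde\Lambda^{j}\widetilde\Lambda^{-1}\omega$; since $\widetilde\Lambda^{-1}\omega = \widetilde\Lambda^{-1}(W-N+\mu D)$ and $\widetilde\Lambda^{-1}W$ is one derivative below $W$ while $\widetilde\Lambda^{-1}N \lesssim N$ at low frequency and $\lesssim \widetilde\Lambda^{-1}\widetilde\Lambda N$ at high frequency, one sees the velocity inherits the better $\mu^{-1/3}$ loss from the $(N,\widetilde\Lambda^{-1}D,W)$-group at the price of one derivative, and the $\mu^{-5/3}$-estimates come from using the high-order group $(\widetilde\Lambda N, D)$ together with $W$. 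For the $\PP_0$ velocity estimates one additionally uses the identity $\PP_0\omega = \pa_{y_1}\PP_0 V^1$ noted in the second Remark, so that $\PP_0\pa_{y_1}V^1 = \PP_0\omega = \PP_0(W-N+\mu D)$ is controlled directly, while $\PP_0 V^2 = -\pa_{x_1}\Delta^{-1}\omega + \cdots$ vanishes at $k=0$ except through $D$, giving $\PP_0 V^2$ in terms of $\PP_0 D$ and lower-order data. The main obstacle — really the only place where care is needed rather than bookkeeping — is keeping track consistently of which weighted variable carries the factor $\varphi^{-1/4}$ and which does not, since this is exactly what distinguishes the $\mu^{-1/3}$ losses (compressible-low-order and $W$, after restoring one $\varphi^{1/4}$) from the $\mu^{-5/3}$ losses (compressible-high-order, via the $\mu^{5/3}$-weight in $\E$), and then matching the derivative count on both sides of the Helmholtz decomposition so that the claimed Sobolev indices $H^{3-j}$, $H^{3-j}$ with the shifted powers $\widetilde\Lambda^{j}$, $\widetilde\Lambda^{j+1}$, $\widetilde\Lambda^{j+2}$ line up. Once the translation dictionary between $(\hat N^k,\hat D^k,\hat W^k)$ and the weighted variables is fixed, every inequality reduces to Plancherel plus the multiplier bounds of Lemma~\ref{sec2:lem-1}, so I expect no genuine analytic difficulty.
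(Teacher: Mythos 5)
Your plan is, in essence, the paper's own argument: unwind the definitions \eqref{def_N-j}--\eqref{def_D-j+1}, apply Plancherel with the uniform two-sided multiplier bounds of Lemma~\ref{sec2:lem-1}, use $\V=\widetilde\nabla\widetilde\Delta^{-1}D+\widetilde\nabla^{\perp}\widetilde\Delta^{-1}(W-N+\mu D)$ together with $|\widetilde\Lambda^{j-1}q|\le|\widetilde\Lambda^{j}q|$ on $\PP_{\not=}$ to pass from the scalar estimates to $\V$, and use the zero-mode identities $\PP_0\widetilde\Lambda^{-1}D=\PP_0|\pa_{y_1}|^{-1}\pa_{y_1}V^2$, $\PP_0(W-N+\mu D)=\PP_0\pa_{y_1}V^1$ for the last bullet. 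So the route is the same as the paper's.

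There is, however, one conceptual misstep worth correcting. You attribute the $\mu^{-5/3}$ in the third bullet to ``the $\mu^{5/3}$ prefactor in front of $\E^{com}_{j,3-j}$ inside $\E(t)$'' and ``the \emph{a priori} bound $\E^{com}_{j,3-j}\lesssim\mu^{-5/3}\ep_0^2\mu^2$.'' That is not what the lemma asserts, nor a mechanism that would prove it: the inequality reads $\|\PP_{\not=}\widetilde\Lambda^j(\widetilde\Lambda N,D)\|_{L^\infty_tH^{3-j}}^2\lesssim\mu^{-5/3}\,\E^{com}_{j,3-j}(t)$, i.e.\ a factor relative to $\E^{com}_{j,3-j}$ itself, whereas the weight $\mu^{5/3}$ in \eqref{def_E} only compares $\E^{com}_{j,3-j}$ to the total $\E(t)$, and the a priori bound only caps $\E^{com}_{j,3-j}$ by a constant. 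Lemma~\ref{lem-est-NDW} is a pure Plancherel-plus-multiplier dictionary and does not need \eqref{priori assumption} at all. Since, as you correctly observe, $\hat N_{j+1,s-j}$ and $\hat D_{j,s-j}$ carry only $m_1^{-1}m_2^{-1}$ and no $\varphi^{-1/4}$, the dictionary already gives the sharp bound $\lesssim\E^{com}_{j,3-j}(t)$ with \emph{no} $\mu$-loss; the $\mu^{-5/3}$ as stated is simply slack (harmless since $\mu\le 1$). Relatedly, your parenthetical lumping ``compressible-low-order and $W$'' together as the $\mu^{-1/3}$-loss cases contradicts your own later (and correct) remark that $\hat W_{j,s-j}$ carries no $\varphi^{-1/4}$ and hence $W$ suffers no $\mu^{-1/3}$ loss at all.
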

	\begin{proof}
		Recalling the definitions in \eqref{def_N-j}–\eqref{def_W-j+1} and invoking \eqref{est-varphi-1} together with the {\it a priori} assumptions \eqref{priori assumption}, we immediately obtain the desired estimates for $(N,\widetilde{\Lambda}^{-1}D)$ and $W$.

		Thanks to $|\PP_{\not=}\widetilde{\Lambda}^{j-1}q|\leq |\PP_{\not=}\widetilde{\Lambda}^{j}q|$, we obtain
		\begin{align*}
			\|\PP_{\not=}\widetilde{\Lambda}^{j} \V\|_{L^\infty_tH^{3-j}}^2
			\lesssim&\|\PP_{\not=}\widetilde{\Lambda}^{j-1}D\|_{L^\infty_tH^{3-j}}^2
			+\|\PP_{\not=}\widetilde{\Lambda}^{j-1}(W-N+\mu D)\|_{L^\infty_tH^{3-j}}^2
			\\
			\lesssim&\|\PP_{\not=}\widetilde{\Lambda}^{j-1}D\|_{L^\infty_tH^{3-j}}^2
			+\|\PP_{\not=}\widetilde{\Lambda}^{j}(W,N)\|_{L^\infty_tH^{3-j}}^2
			\\
			\lesssim& \mu^{-\f13}\left(\E_{j,3-j}^{com,l}(t)+\E_{j,3-j}^{in}(t)\right),
		\end{align*}
		which implies the estimates of $\V$.

		Thanks to
		\begin{align*}
			\PP_{0}\widetilde{\Lambda}^{-1}D=\PP_{0}|\pa_{y_1}|^{-1}\pa_{y_1} V^2,\quad \PP_{0}W=\PP_{0}\pa_{y_1} V^1,
		\end{align*}
		we find that
		\begin{align*} 
			\|\PP_{0} \pa_{y_1}V^1\|_{L^\infty_tH^{2}}^2
			\lesssim \|\PP_{0} (W-N+\mu D)\|_{L^\infty_tH^{2}}^2
			\lesssim\E_{0,3}^{com,l}(t)+\E_{0,3}^{in}(t).
		\end{align*}
		For $\PP_{0} V^2$, we notice that
		\begin{align*} 
			\|\PP_{0} V^2\|_{L^\infty_tH^{3}}^2\lesssim \|\PP_{0}|\pa_{y_1}|^{-1} D\|_{L^\infty_tH^{3}}^2
			\lesssim \E_{0,3}^{com,l}(t).
		\end{align*}
		The remaining estimates for $(\PP_{0}V^1, \PP_{0}V^2)$ follow from the same argument; we leave the details to the reader.			
	\end{proof}

	\section{Energy estimate of compressible part}
	
	In this section, we establish the {\it a priori} energy estimates for the lower-order derivatives of the compressible part (the density and the divergence) of the solution.  The argument follows a standard bootstrap procedure.  
	First, we assume
	\begin{align}\label{ass:density}
		\f12\leq N+1\leq 2,\quad \textrm{for all}\quad t\in[0,\infty). 
	\end{align}
	Hence, we immediately have
	\begin{align}\label{est-fn-gn}
		|(f(N),g(N))|\lesssim |N|,\quad |(f^{(s)}(N),g^{(s)}(N))|\lesssim 1, \quad \forall s\in\N^+,
	\end{align}
	where $f(N)$ and $g(N)$ are nonlinear functions of $N$ defined by \eqref{def-f-g}.

	The main result of this section is stated as follows.			
	\begin{prop}\label{prop-2}
		Under the assumptions of Theorem \ref{theo2} and \eqref{ass:density}, we have
		\begin{align}
			\E_{0,3}^{com,l}(t)+\int_0^t \D_{0,3}^{com,l}(\tau)\,d\tau
			\lesssim
			\E_{0,3}^{com,l}(0)
			+\f{1}{A}\int_0^t\D_{0,3}^{in}(\tau)\,d\tau
			+\mu^{-1}|\E(t)|^{\f12}
			\int_0^t \D (\tau)\,d\tau,
			\label{est-0+1}
		\end{align}
		where $A$ is a large but fixed constant, which will be determined later.
	\end{prop}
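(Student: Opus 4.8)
\textbf{Proof strategy for Proposition~\ref{prop-2}.}
The plan is to perform a weighted energy estimate on the Fourier-side system \eqref{eq_n-d-w-2} for the quantities $\hat N^k_{0,s}$ and $\hat\UU^k_{0,s}$, $s=0,1,2,3$, treating the pair $(N,\UU)=(N,\widetilde\Delta^{-1/2}D)$ as a $2\times2$ system and viewing $W$ as a source. First I would write down the evolution equations for $\hat N^k_{0,s}$ and $\hat\UU^k_{0,s}$: applying $m_1^{-1}m_2^{-1}\varphi^{-1/4}\w{k,\xi}^s$ and $m_1^{-1}m_2^{-1}\varphi^{-1/4}\w{k,\xi}^s p^{-1/2}$ to the first two equations of \eqref{eq_n-d-w-2} produces, besides the nonlinear source terms, the damping terms $-\tfrac{\pa_t m_1}{m_1}$, $-\tfrac{\pa_t m_2}{m_2}$, $+\tfrac14\tfrac{\pa_t\varphi}{\varphi}$ (note the sign: differentiating $\varphi^{-1/4}$), the dissipation $\nu p$, and a cross term $p^{1/2}$ coupling $\hat N$ and $\hat\UU$ which has a favorable sign/structure (it is the ``wave'' part and contributes no net growth after symmetrization). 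The key algebraic point is that the bad term $-\tfrac{\pa_t p}{p}\hat D^k$ in the $D$-equation, when rewritten for $\hat\UU$, combines with the commutator $[\pa_t,p^{-1/2}]$ to yield exactly $-\tfrac12\tfrac{\pa_t p}{p}$, and Lemma~\ref{sec2:lem-1}, specifically the first inequality in \eqref{est-varphi-2}, guarantees
\[
\delta_\beta\Bigl(\tfrac{\pa_t m_1}{m_1}+\nu p\Bigr)+\tfrac14\tfrac{\pa_t\varphi}{\varphi}-\tfrac12\tfrac{\pa_t p}{p}\ \gtrsim\ \nu^{1/3}+\nu p,
\]
so after choosing the multiplier weights with the small parameter $\delta_\beta$ the linear part of $\tfrac{d}{dt}\E^{com,l}_{0,3}$ is bounded above by $-c\,\D^{com,l}_{0,3}$ plus controlled error terms.

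Next I would handle the coupling to $W$. The term $2k^2 p^{-1}(\hat W^k-\hat N^k+\mu\hat D^k)$ in the $D$-equation, after multiplication by the $\UU$-weight, produces a contribution $2k^2 p^{-1/2}\hat W^k$ (up to the multipliers); using $\tfrac{\varphi}{p}\le k^{-2}$ from \eqref{est-varphi-1}, this is bounded by $\|\PP_{\ne}\hat W^k_{0,s}\|\cdot(A k^2/p)^{1/2}\|\hat\UU^k_{0,s}\|$ times the $m_2$-weight ratio, which by Young's inequality is absorbed into $\tfrac1A\D^{in}_{0,3}$ plus $\tfrac1{\text{(small)}}\D^{com,l}_{0,3}$ — this is where the $\tfrac1A\int_0^t\D^{in}_{0,3}$ term in \eqref{est-0+1} comes from, and it is why $A$ must be taken large. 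The terms involving $\hat N^k$ and $\mu\hat D^k$ inside that bracket are lower order: $-2k^2p^{-1}\hat N^k$ has size $\le\w{k,\xi}^{-1}\cdot(\text{weighted }\hat N)$ hence is dominated by the $\mu^{1/3}$-gain in $\D^{com,l}_{0,3}$ for $\nu\sim\mu$ small, and $2\mu k^2 p^{-1}\hat D^k$ carries the explicit prefactor $\mu$ which beats the lost derivative.

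Then I would estimate the nonlinear contributions $\hat{\mathscr F}_1^k,\hat{\mathscr F}_2^k$ coming from \eqref{def_wF1}--\eqref{def_wF21}. The decisive terms are the transport nonlinearities $\V\cdot\widetilde\na N$ and $\V\cdot\widetilde\na D$: I would commute the operator $\RR_1=\F^{-1}[m_1^{-1}m_2^{-1}\varphi^{-1/4}\hat\cdot]$ (and $\RR_2$, $\RR_3$, $\RR_5$ as needed) past $\V\cdot\widetilde\na$, using Lemma~\ref{sec2:lem-com-1} to bound the commutator by $\|\widetilde\na\V\|_{L^\infty}\|q\|_{L^2}$ and Lemma~\ref{sec2:lem-com-2} for the higher-order pieces, and then split each resulting trilinear term via Lemma~\ref{lem-est-NDW} into one factor measured in $L^\infty_x$ (controlled by an $\E^{1/2}$ via Sobolev embedding and the $\HH^s$/$H^s$ equivalence) and the remaining factors absorbed into $\int_0^t\D\,d\tau$. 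Tracking the $\mu$-powers: each $\E^{1/2}$ of $(N,\widetilde\Lambda^{-1}D)$ or $W$ costs $\mu^{-1/6-j/3}$, each $\D$-factor likewise, and the worst combination — roughly $\|\V\|_{L^\infty}\lesssim\mu^{-1/6}|\E^{com,l}+\E^{in}|^{1/2}$ times two $\D^{1/2}$ factors at level $j$ — assembles to the $\mu^{-1}(\,|\E^{com,l}_{0,3}|^{1/2}+\mu^{1/3}|\E^{com,l}_{1,2}|^{1/2}+|\E^{in}_{0,3}|^{1/2}\,)\int_0^t(\D^{com,l}_{0,3}+\mu^{2/3}\D^{com,l}_{1,2}+\D^{in}_{0,3})$ appearing on the right of \eqref{est-0+1}; the pressure and viscous nonlinearities $\widetilde\dive(f(N)\widetilde\na N)$, $\widetilde\dive(g(N)(\cdots))$ are handled identically using \eqref{est-fn-gn}, and are in fact better because of the explicit $\nu,\mu$ prefactors on the viscous pieces.

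The main obstacle will be the commutator between the Fourier multiplier $\varphi^{-1/4}$ and the transport term $\PP_{\ne}V^2\pa_y$ acting on $\PP_{\ne}N$, as flagged in Remark~\ref{rmk-optimal}: the commutator $[\RR_1,\V]\cdot\widetilde\na N$ sacrifices a factor $\varphi^{1/4}\sim\mu^{-1/6}$, and one must check that even after this loss — combined with the $t\sim\mu^{-1/3}$ enhanced-dissipation time scale and the $\pa_y=(\pa_{y_1})+t\pa_{x_1}$ conversion that costs another $t\sim\mu^{-1/3}$ — the trilinear bound still closes with total prefactor $\mu^{-1}$ against $\int_0^t\D$, which is exactly the budget permitted by the assumption $\E+\int\D\le\ep_0^2\mu^2$. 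Verifying that no term exceeds this $\mu^{-1}$ threshold — in particular that the $t$-growth hidden in $\widetilde\Lambda$ versus $\Lambda$ is always paid for by a $\D$-factor carrying the matching dissipation — is the delicate bookkeeping at the heart of the proof.
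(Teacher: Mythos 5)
Your strategy covers the bulk of the argument — the $2\times 2$ Fourier-multiplier estimate for the weighted $(\hat N^k_{0,s},\hat\UU^k_{0,s})$ with $W$ as a source, the role of the first inequality in \eqref{est-varphi-2}, the absorption of the $W$-coupling into $\tfrac1A\D^{in}_{0,3}$, and the commutator bookkeeping for the nonlinearities via Lemmas~\ref{sec2:lem-com-1}--\ref{sec2:lem-com-2} and \ref{lem-est-NDW} — and this is indeed what the paper does for the bulk of the energy, namely the quantity $\widetilde{\E}_2$ defined in \eqref{def-E-2}.

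However, there is a genuine gap: the $L^2$ estimate of the zero mode, i.e.\ the contribution $\widetilde{\E}_1 = \int_{\R}(|\hat N^0_{0,0}|^2+|\hat\UU^0_{0,0}|^2)\,d\xi = \|\PP_0 N\|^2_{L^2}+\|\PP_0\UU\|^2_{L^2}$, cannot be closed by the Fourier-multiplier estimate you propose. For the zero mode at zeroth order, the transport term in the $N$-equation yields, after integration by parts, $\tfrac12\int_{\R}\PP_0 D\,|\PP_0 N|^2\,dy_1$, where $\PP_0 D=\PP_0\pa_{y_1}V^2$ carries no factor of $\mu$ to compensate the lost derivative; one would need $\|\PP_0 D\|_{L^1_t L^\infty}$, and the only available bound $\mu\|\PP_0 D\|^2_{L^2_t H^3}\lesssim\int_0^t\D^{com,l}_{0,3}$ produces a $t$-growing contribution. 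The analogous terms in the $\UU$-equation (through $\RR_5(\V\cdot\widetilde\na\V)$) exhibit the same failure, and the advective contributions from the viscous and pressure nonlinearities further entangle $\PP_0 V^1$, for which no $L^2$ bound is available. The paper circumvents this by a separate \emph{physical} energy estimate (Proposition~\ref{prop-2-1}): working with the relative entropy $\GG(\PP_0\varrho)+\tfrac12\PP_0\varrho|\PP_0 V^2|^2$ for the $\PP_0$-reduced system \eqref{eq-pertur-3}, where the transport of $\PP_0\varrho$ and the convection of $\PP_0 V^2$ cancel exactly, so the problematic $\PP_0 V^2\pa_{y_1}$ terms never appear, and the remaining source $G_1,G_2$ involves only non-zero modes which are controlled by $\int_0^t\D$. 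Without this decomposition of $\E^{com,l}_{0,3}$ into $\widetilde{\E}_1+\widetilde{\E}_2$ and the entropy argument for $\widetilde{\E}_1$, your proof cannot close.

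A secondary point: your sketch treats the $\tfrac{\pa_t p}{p}$ cross-term as cancelling ``after symmetrization,'' but that undersells what is needed. The residual $+\tfrac14\tfrac{\pa_t p}{p}(|\hat N_{0,s}|^2-|\hat\UU_{0,s}|^2)$ is removed only by augmenting the energy with the explicit time-dependent corrector $\tfrac12\tfrac{\pa_t p}{p^{3/2}}\mathrm{Re}(\hat N_{0,s}\bar{\hat\UU}_{0,s})$, whose time derivative produces the exact cancellation, and a second corrector $-2\delta_1\mu^{1/3}p^{-1/2}\mathrm{Re}(\hat N_{0,s}\bar{\hat\UU}_{0,s})$ is needed to recover the \emph{density} dissipation $\mu^{1/3}|\hat N_{0,s}|^2$ appearing in $\D^{com,l}_{0,3}$, since the bare $\nu p$ dissipation acts only on $\hat\UU$. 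Your outline needs to incorporate both correctors explicitly — the constants $\tfrac14$ vs.\ $\tfrac12$ and the inequality you quote for \eqref{est-varphi-2} should also be rechecked, since the conclusion $\gtrsim\nu^{1/3}+\nu p$ is stronger than what \eqref{est-varphi-2} actually supplies.
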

	
	Before proceeding to the proof of Proposition \ref{prop-2}, we first state two auxiliary propositions — these are the key ingredients in establishing Proposition \ref{prop-2}.
	It is well-known that the compressible Navier--Stokes equations satisfy an energy equality in $L^2$.  Motivated by this observation and because the $L^2$  estimate for $\PP_{0}V^1$ is unavailable, we derive an independent $L^2$ estimate for $(\PP_{0}N,\PP_{0}V^2)$ with the aim of eliminating the undesired terms containing  $\PP_{0}V^1$.
	To this end, we introduce the following definitions:
	\begin{align}
		\widetilde{\E}_1(t)\tri&\int_{\R}\left(|\hat{N}_{0,0}^0|^2+|\hat{\UU}_{0,0}^0|^2
		\right)\,d\xi,
		\label{def-E-1}\\
		\widetilde{\E}_2(t)\tri&\sum_{k\in\Z\backslash\{0\}}\int_{\R}\left[|\hat{N}_{0,0}^k|^2+|\hat{\UU}_{0,0}^k|^2+\f12\f{\pa_{t}p}{p^{\f32}}Re\left(\hat{N}_{0,0}^k\bar{\hat{\UU}}_{0,0}^k\right)\right]\,d\xi
		\label{def-E-2}
		\\
		&+\sum_{s=1}^{3}\sum_{k\in\Z}\int_{\R}\left[|\hat{N}_{0,s}^k|^2+|\hat{\UU}_{0,s}^k|^2+\f12\f{\pa_{t}p}{p^{\f32}}Re\left(\hat{N}_{0,s}^k\bar{\hat{\UU}}_{0,s}^k\right)\right]\,d\xi
		\nonumber\\
		&-2\delta_1\mu^{\f13}\sum_{s=0}^{3}\sum_{k\in\Z\backslash\{0\}}\int_{\R}p^{-\f12}Re\left(\hat{N}_{0,s}^k\bar{\hat{\UU}}_{0,s}^k\right)\,d\xi
		-2\delta_1\mu\sum_{s=1}^{3}\int_{\R}\left[|\xi|^{-1}Re\left(\hat{N}_{0,s}^0\bar{\hat{\UU}}_{0,s}^0\right)\right]\,d\xi,
		\nonumber
	\end{align}
	and
	\begin{align}		
		\widetilde{\D}_1(t)\tri&~\nu \int_{\R}|\xi|^2|\hat{\UU}_{0,0}^0|^2\,d\xi,
		\label{def-D-1}\\
		\widetilde{\D}_2(t)\tri&\sum_{s=1}^{3}\sum_{k\in\Z\backslash\{0\}}\int_{\R}\left[\f{\pa_{t}m_1}{m_1}+\f{\pa_{t}m_2}{m_2}+\f14\left(\f{\pa_{t}\varphi}{\varphi}-\f{\pa_{t}p}{p}\right)+\delta_1\mu^{\f13}\left(1+2\f{k^2}{p^2}\right)\right]|\hat{N}_{0,s}^k|^2\,d\xi
		\label{def-D-2}
		\\
		&+\delta_1\mu\sum_{s=1}^{3}\int_{\R}|\hat{N}_{0,s}^0|^2\,d\xi
		+\sum_{k\in\Z\backslash\{0\}}\int_{\R}\left[\f{\pa_{t}m_1}{m_1}+\f{\pa_{t}m_2}{m_2}+\f14\left(\f{\pa_{t}\varphi}{\varphi}-\f{\pa_{t}p}{p}\right)+\nu p\right]|\hat{\UU}_{0,0}^k|^2\,d\xi
		\nonumber\\
		&+\sum_{s=1}^{3}\sum_{k\in\Z}\int_{\R}\left[\f{\pa_{t}m_1}{m_1}+\f{\pa_{t}m_2}{m_2}+\f14\left(\f{\pa_{t}\varphi}{\varphi}-\f{\pa_{t}p}{p}\right)+\nu p\right]|\hat{\UU}_{0,s}^k|^2\,d\xi.
		\nonumber
	\end{align}
	Here $\widetilde{\E}_1(t)$ and $\widetilde{\D}_1(t)$ represent $L^2$-norm for the zero-mode of density and the compressible part of the velocity. 
	And $\widetilde{\E}_2(t)$ and $\widetilde{\D}_2(t)$ involve the remaining terms in $\E_{0,3}^{com,l}(t)$ and $\D_{0,3}^{com,l}(t)$, respectively. More precisely, we find, by some direct computations, that
	\begin{align*}
		\E_{0,3}^{com,l}(t)\sim \widetilde{\E}_1(t)+\widetilde{\E}_2(t),\quad \D_{0,3}^{com,l}(t)\sim \widetilde{\D}_1(t)+\widetilde{\D}_2(t).
	\end{align*}
	Thus, Proposition \ref{prop-2} is a direct consequence of the following Propositions \ref{prop-2-1}-\ref{prop-2-2}.

	Firstly, we derive the $L^2$ estimate for the zero-mode of the density and the compressible part of the velocity.
	\begin{prop}\label{prop-2-1}
		Under the assumptions of Theorem \ref{theo2} and \eqref{ass:density}, we have
		\begin{align}
			\widetilde{\E}_1(t)+\int_0^t \widetilde{\D}_1(\tau)\,d\tau
			\lesssim
			\widetilde{\E}_1(0)+
			\mu^{-1}|\E(t)|^{\f12}
			\int_0^t \D (\tau)\,d\tau.
			\label{est-0}
		\end{align}
	\end{prop}
	In order to complete the proof of Proposition \ref{prop-2}, we need to estimate $\widetilde{\E}_{2}(t)$ and $\widetilde{\D}_2(t)$.
	
	\begin{prop}\label{prop-2-2}
		Under the assumptions of Theorem \ref{theo2} and \eqref{ass:density}, we have
		\begin{align}
			\widetilde{\E}_2(t)+\int_0^t \widetilde{\D}_2(\tau)\,d\tau
			\lesssim
			\widetilde{\E}_2(0)+
			\f{1}{A}\int_0^t\D_{0,3}^{in}(\tau)\,d\tau
			+\mu^{-1}|\E(t)|^{\f12}
			\int_0^t \D (\tau)\,d\tau,
			\label{est-1}
		\end{align}
		where $A$ is a large but fixed constant, which will be determined later.
	\end{prop}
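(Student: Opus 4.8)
The plan is to run a weighted $L^2$ energy estimate for the density--divergence subsystem, mode by mode in Fourier and then summed over $0\le s\le 3$ and $k\in\Z$, the pair $(k,s)=(0,0)$ being excluded since that contribution is exactly $\widetilde{\E}_1$ (treated in Proposition~\ref{prop-2-1}). Writing $\UU=\widetilde{\Lambda}^{-1}D$, so $\hat{\UU}^k=p^{-1/2}\hat{D}^k$, and dividing the $\hat{D}^k$--equation of \eqref{eq_n-d-w-2} by $p^{1/2}$ turns the first two equations into, schematically,
\begin{align*}
\pa_t\hat{N}^k&=-p^{1/2}\hat{\UU}^k+\hat{\mathscr{F}}_1^k,\\
\pa_t\hat{\UU}^k&=\Big(\tfrac{\pa_t p}{2p}-\nu p-2\mu\tfrac{k^2}{p}\Big)\hat{\UU}^k+p^{1/2}\hat{N}^k+2\tfrac{k^2}{p^{3/2}}\big(\hat{N}^k-\hat{W}^k\big)+p^{-1/2}\hat{\mathscr{F}}_2^k .
\end{align*}
I would apply the multiplier $m_1^{-1}m_2^{-1}\varphi^{-\f14}\langle k,\xi\rangle^s$ to each equation and compute $\tfrac{d}{dt}$ of $|\hat{N}_{0,s}^k|^2+|\hat{\UU}_{0,s}^k|^2+\tfrac12\tfrac{\pa_t p}{p^{3/2}}\mathrm{Re}(\hat{N}_{0,s}^k\bar{\hat{\UU}}_{0,s}^k)$. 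Two structural facts make the linear part close: the skew ``pressure'' coupling $\pm p^{1/2}$ cancels in $\tfrac{d}{dt}(|\hat{N}_{0,s}^k|^2+|\hat{\UU}_{0,s}^k|^2)$, leaving as the only dangerous term $\tfrac{\pa_t p}{p}|\hat{\UU}_{0,s}^k|^2$; and differentiating the cross term (using $\pa_t^2p=2k^2$, $(\pa_t p)^2=4k^2(p-k^2)$) produces $-\tfrac12\tfrac{\pa_t p}{p}|\hat{\UU}_{0,s}^k|^2+\tfrac12\tfrac{\pa_t p}{p}|\hat{N}_{0,s}^k|^2$ plus $O(k^2/p^{3/2})$ remainders, which halves that bad term. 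The residual $\tfrac12\big(\tfrac{\pa_t p}{p}-\tfrac{\pa_t\varphi}{\varphi}\big)\big(|\hat{N}_{0,s}^k|^2+|\hat{\UU}_{0,s}^k|^2\big)$ — the $\tfrac{\pa_t\varphi}{\varphi}$ coming from the $\varphi^{-\f14}$ weight carried by both unknowns — vanishes on the support of $\pa_t\varphi$ and, off it, is $\le C\nu^{1/3}/\beta$; for $\hat{\UU}_{0,s}^k$ it is absorbed by $\nu p+\tfrac{\pa_t m_1}{m_1}$ via \eqref{est-varphi-2} of Lemma~\ref{sec2:lem-1}, which is exactly what yields the $\tfrac{\pa_t m_1}{m_1}+\tfrac{\pa_t m_2}{m_2}+\tfrac14(\tfrac{\pa_t\varphi}{\varphi}-\tfrac{\pa_t p}{p})+\nu p$ entries of $\widetilde{\D}_2$; for $\hat{N}_{0,s}^k$, which has no viscous dissipation of its own, the residual will be absorbed by the $N$--dissipation manufactured in the next step once $\beta$ is taken large.

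Since the $\hat{N}^k$--equation carries no viscosity, that dissipation — the $\delta_1\mu^{\f13}(1+2k^2/p^2)|\hat{N}_{0,s}^k|^2$ (for $k\ne0$) and $\delta_1\mu|\hat{N}_{0,s}^0|^2$ (for $k=0$) in $\widetilde{\D}_2$ — is extracted from the correction terms $-2\delta_1\mu^{\f13}p^{-\f12}\mathrm{Re}(\hat{N}_{0,s}^k\bar{\hat{\UU}}_{0,s}^k)$ and $-2\delta_1\mu|\xi|^{-1}\mathrm{Re}(\hat{N}_{0,s}^0\bar{\hat{\UU}}_{0,s}^0)$ inside $\widetilde{\E}_2$: their time derivatives hit the coupling term $p^{1/2}|\hat{N}_{0,s}^k|^2$ in $\tfrac{d}{dt}\mathrm{Re}(\hat{N}_{0,s}^k\bar{\hat{\UU}}_{0,s}^k)$. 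The cost is a bad term of size $O(\delta_1\mu^{\f13})|\hat{\UU}_{0,s}^k|^2$, absorbed by $(\tfrac{\pa_t m_1}{m_1}+\nu p)|\hat{\UU}_{0,s}^k|^2\gtrsim\nu^{1/3}|\hat{\UU}_{0,s}^k|^2$ for $\delta_1$ small; balancing this against the residual of the previous step forces the ordering: fix $\beta$ large, then $\delta_1$ in the admissible range $1/\beta\lesssim\delta_1\ll1$, then $A$ large. One also checks, using $\mu^{1/3}p^{-1/2}\le\mu^{1/3}$, $|\tfrac{\pa_t p}{p^{3/2}}|\le1$, and mass conservation $\hat{N}^0(\xi)|_{\xi=0}=0$, that the correction terms are small enough for $\widetilde{\E}_1+\widetilde{\E}_2\sim\E_{0,3}^{com,l}$ and $\widetilde{\D}_1+\widetilde{\D}_2\sim\D_{0,3}^{com,l}$. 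Finally, the lift-up-type coupling $2\tfrac{k^2}{p^{3/2}}\varphi^{-\f14}\hat{W}_{0,s}^k$ to $\hat{\UU}_{0,s}^k$ (the $\varphi^{-\f14}$ again coming from the weight): since $\tfrac{k^2}{p^{3/2}}\varphi^{-\f14}\le\tfrac{k^2}{p}$ for $k\ne0$, Young's inequality splits it into $\varepsilon\tfrac{k^2}{p}|\hat{\UU}_{0,s}^k|^2$, absorbed by the $A\tfrac{k^2}{p}|\hat{\UU}_{0,s}^k|^2=\tfrac{\pa_t m_2}{m_2}|\hat{\UU}_{0,s}^k|^2$ already present in $\widetilde{\D}_2$, and $\tfrac{C}{A}A\tfrac{k^2}{p}|\hat{W}_{0,s}^k|^2$, which upon integration is the term $\tfrac1A\int_0^t\D_{0,3}^{in}\,d\tau$ of \eqref{est-1}.

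It remains to bound the nonlinear contributions, i.e.\ the pairings of the weighted $\hat{\mathscr{F}}_1^k,\hat{\mathscr{F}}_2^k$ with $\hat{N}_{0,s}^k,\hat{\UU}_{0,s}^k$. Passing to physical space I would split $\widetilde{F}_1=-\V\cdot\widetilde{\nabla} N-ND$ and $\widetilde{F}_2=-\V\cdot\widetilde{\nabla} D-\widetilde{F}_{21}$ as in \eqref{def_wF1}--\eqref{def_wF21}. The genuine products $ND$ and $\widetilde{F}_{21}$ are handled by Sobolev product estimates, Lemma~\ref{sec2:lem-com-2}, the pointwise bounds \eqref{est-fn-gn} for $f,g$, and the norms of $\V$ from Lemma~\ref{lem-est-NDW}; each produces one factor $(\text{energy})^{1/2}$ times quantities controlled by $\int_0^t\D\,d\tau$, with $\mu$--powers no worse than in \eqref{est-1} — the factors $\mu^{\f13}|\E_{1,2}^{com,l}|^{1/2}$ and $|\E_{0,3}^{in}|^{1/2}$ there appearing precisely because $\widetilde{\nabla}\V$ involves $\E_{1,2}$--type quantities and $\V$ involves the incompressible mode $W$. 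The transport parts are the crux. Writing the weighted quantity as $\RR_1\nabla^s(\V\cdot\widetilde{\nabla} N)$ with $\RR_1$ as in \eqref{def_R1-R2}, I would decompose
\[
\RR_1\nabla^s(\V\cdot\widetilde{\nabla} N)=\V\cdot\widetilde{\nabla}(\RR_1\nabla^s N)+[\RR_1,\V]\cdot\widetilde{\nabla}(\nabla^s N)+\RR_1[\nabla^s,\V\cdot\widetilde{\nabla}]N .
\]
Pairing the first term with $\RR_1\nabla^s N$ and integrating by parts in $(x_1,y_1)$ gives $-\tfrac12\int D\,|\RR_1\nabla^s N|^2$, which I bound by $\|D\|_{L^2}\|\RR_1\nabla^s N\|_{L^4}^2$ and Gagliardo--Nirenberg, then integrate in time using that $\mu\|D\|_{L^2_tL^2}^2$ and $\mu^{\f13}\|\hat{N}_{0,s}\|_{L^2_tH^1}^2$ are $\lesssim\int_0^t\D$. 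The commutators are controlled by Lemmas~\ref{sec2:lem-com-1}--\ref{sec2:lem-com-2}; crucially $\|[\RR_1,\V]\cdot\widetilde{\nabla}(\nabla^s N)\|_{L^2}\lesssim\|\widetilde{\nabla}\V\|_{L^\infty}\|\nabla^s N\|_{L^2}$ loses no $\varphi$--weight on $\V$, and since $\|\nabla^s N\|_{L^2}\le\beta^{1/2}\mu^{-1/6}\|\hat{N}_{0,s}\|_{L^2}$ (because $\varphi^{1/4}\lesssim\mu^{-1/6}$) and $\|\widetilde{\nabla}\V\|_{L^\infty}\lesssim\mu^{-1/6}(\E_{1,2}^{com,l}+\E_{1,2}^{in})^{1/2}$, after the time integration the two $\mu^{1/6}$ losses assemble into a negative power of $\mu$ no worse than $\mu^{-1}$; the $\mu^{\f13}$--weighted dissipations of $N$ and $\UU$ in $\widetilde{\D}_2$ then absorb the remaining integrals. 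The term $\V\cdot\widetilde{\nabla} D$ against $\hat{\UU}_{0,s}^k$ is treated the same way, using the dissipations $\mu p|\hat{\UU}_{0,s}^k|^2$, $\mu^{\f13}|\hat{\UU}_{0,s}^k|^2$ and the bound on $\widetilde{\Lambda} D$ in Lemma~\ref{lem-est-NDW}.

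The main obstacle is exactly this last step: taming the transport nonlinearities in the presence of the lift-up weight $\varphi^{-\f14}$. The weight-commutator $[\RR_1,\V]$ unavoidably sacrifices the factor $\varphi^{1/4}\lesssim\mu^{-1/6}$ (cf.\ Remark~\ref{rmk-optimal}), and the argument closes only because (i) $\widetilde{\D}_2$ contains the enhanced-dissipation term $\mu^{\f13}|\hat{N}_{0,s}^k|^2$ extracted above from the pressure coupling via the $\delta_1$--corrections, and (ii) $\nu\sim\mu$, so that the several $\mu^{1/6}$ losses stemming from $\varphi^{1/4}$ combine into the single factor $\mu^{-1}$ on the right of \eqref{est-1}; a larger loss would already destroy this estimate, and with it the threshold $\|(n_{in},\vv_{in})\|_{H^4}\le\ep\mu$.
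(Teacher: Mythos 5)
Your proposal reproduces the paper's argument almost line by line: the same weighted $L^2$ energy with multiplier $m_1^{-1}m_2^{-1}\varphi^{-1/4}\langle k,\xi\rangle^s$, the same cross--term $\tfrac{\pa_t p}{p^{3/2}}\,\mathrm{Re}(\hat N_{0,s}^k\bar{\hat\UU}_{0,s}^k)$ to neutralise the $\tfrac{\pa_t p}{p}$ growth, the same $\delta_1$--weighted correction $-p^{-1/2}\mathrm{Re}(\hat N_{0,s}^k\bar{\hat\UU}_{0,s}^k)$ (and its $|\xi|^{-1}$ variant at $k=0$) to manufacture density dissipation, the $A$--large absorption of the $\tfrac{k^2}{p^{3/2}}\varphi^{-1/4}\hat W_{0,s}^k$ coupling into $\tfrac1A\int\D_{0,3}^{in}$, and the commutator machinery of Lemmas~\ref{sec2:lem-com-1}--\ref{sec2:lem-com-2} together with the $\varphi^{1/4}\lesssim\mu^{-1/6}$ loss to land on the factor $\mu^{-1}$ in~\eqref{est-1}. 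Your observation that the parameter ordering must in fact be $\beta$ large first and then $\delta_1$ in the window $1/\beta\lesssim\delta_1\ll1$ (so that $\delta_1\mu^{1/3}$ dominates $\tfrac14|\tfrac{\pa_t\varphi}{\varphi}-\tfrac{\pa_t p}{p}|\lesssim\tfrac1\beta\nu^{1/3}$ beyond the critical layer, using $\nu\sim\mu$) is correct and is left implicit in the paper; it is needed for $\widetilde\D_2\gtrsim\D_{0,3}^{com,l}$ to hold.

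The one place where your sketch is thinner than the paper and where a naive execution would stall is the set of nonlinear pairings inherited from the corrections in $\widetilde\E_2$ and $\widetilde\D_2$ --- the paper's $I_3$, $I_4$, $I_5$, i.e.\ the pairings carrying the prefactors $\mu|\xi|^{-1}$, $\tfrac{\pa_t p}{p^{3/2}}$ and $\delta_1\mu^{1/3}p^{-1/2}$. Your transport decomposition $\RR_1\nabla^s(\V\cdot\widetilde\nabla N)=\V\cdot\widetilde\nabla(\RR_1\nabla^s N)+[\RR_1,\V]\cdot\widetilde\nabla\nabla^sN+\RR_1[\nabla^s,\V\cdot\widetilde\nabla]N$ handles the direct pairings ($I_1$, $I_2$), but applied term-by-term to $I_4$ it leads either to $\nabla^3\widetilde\nabla N$ (one derivative beyond the $H^3$ budget, since $\widetilde\nabla\ne\nabla$ when $k\ne0$) or to $\PP_0V^1$ (not controlled because there is no $L^2$ bound on $\PP_0V^1$). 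The paper's way out is to \emph{not} split $\FFF_1\bar{\hat\UU}_{0,s}$ and $\FFF_2\bar{\hat N}_{0,s}$ independently but to exploit the skew structure: rewrite $\PP_0\V\cdot\widetilde\nabla\PP_{\ne}\RR_3 N_{0,s}$ as $\widetilde\Delta^{1/2}(\PP_0\V\cdot\widetilde\nabla\PP_{\ne}\RR_3\widetilde\Delta^{-1/2}N_{0,s})-\PP_0|\pa_{y_1}|\V\cdot\widetilde\nabla\PP_{\ne}\RR_3\widetilde\Delta^{-1/2}N_{0,s}$ and integrate by parts against the $\UU$ pairing so that the two transport contributions partially cancel (cf.\ \eqref{est-I411-1}--\eqref{est-I511-1}); $I_3$ likewise relies on $\PP_0\RR_5\V=\PP_0\RR_4V^2$ to excise $\PP_0V^1$. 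You would need to add exactly this trick before your estimate closes; the rest of your plan is sound and matches the paper.
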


	\subsection{Proof of Proposition \ref{prop-2-1}}
	In this subsection, we intend to establish the $L^2$ energy estimate for $\PP_{0}(N_{0,0},{\UU}_{0,0})$.
	To proceed, we exploit the structural properties of the governing equation. 
	First, notice that $\PP_{0}\UU_{0,0}=|\pa_{y_1}|^{-1}\pa_{y_1}\PP_{0}V^2$.
	Consequently, recalling that $(\rho,v^2)$ satisfies 
	\begin{align}\label{eq-pertur-1}
		\left\{\begin{array}{l}
			\partial_t \rho +y\partial_x \rho+\dive_{\x}(\rho\vv) = 0,\\[1ex]
			\rho(\partial_t v^2 + y\partial_xv^2+\vv\cdot\nabla_{\x} v^2)+\pa_{y} \mathcal{P}(\rho)=\mu \Delta_{\x} v^2 +(\mu + \mu')\pa_{y} \dive_{\x} \vv,
		\end{array}\right.
	\end{align}
	Recalling the setting \eqref{def-t1-x1-y1} and setting  
	\begin{align*}
		(\varrho,P)(t,x_1,y_1)\tri(\rho,\mathcal{P})(t,x_1+ty_1,y_1),
	\end{align*}
	then the system \eqref{eq-pertur-1} can be rewritten as
	\begin{align}\label{eq-pertur-2}
		\left\{\begin{array}{l}
			\partial_{t} \varrho +\widetilde{\dive}(\varrho \V) = 0,\\[1ex]
			\varrho(\partial_{t} V^2+\V\cdot\widetilde{\nabla} V^2)
			+(\pa_{y_1}-t\pa_{x_1}) P(\varrho)=\mu \widetilde{\Delta} V^2 +(\mu + \mu')(\pa_{y_1}-t\pa_{x_1}) \widetilde{\dive} \V.
		\end{array}\right.
	\end{align}
	We apply the operator $\PP_{0}$ to system \eqref{eq-pertur-2} and find that
	\begin{align}\label{eq-pertur-3}
		\left\{\begin{array}{l}
			\partial_{t} \varrho_0 + \pa_{y_1}(
			\varrho_0 V^2_0) = G_1,\\[1ex]
			\varrho_0(\partial_t V^2_0+V^2_0\pa_{y_1}V^2_0)
			+\pa_{y_1}P(\varrho_0)-\nu\pa_{y_1}^2V^2_0
			=G_2,
		\end{array}\right.
	\end{align}
	where $G_1$ and $G_2$ are defined by
	\begin{align*}
		G_1\tri& -\pa_{y_1}\PP_{0}(
		\varrho_{\not=} V^2_{\not=}),\\
		G_2\tri&-\varrho_0\PP_{0}(\V_{\not=}\cdot\widetilde{\nabla} V^2_{\not=})
		-\varrho_0\pa_{y_1}\varrho_0\left(\PP_{0}\left(\f{P'(\varrho)}{\varrho}\right)-\f{P'(\varrho_0)}{\varrho_0}\right)
		\\
		&
		-\varrho_0\PP_{0}\left(\PP_{\not=}\left(\f{P'(\varrho)}{\varrho}\right)(\pa_{y_1}-t\pa_{x_1})\varrho_{\not=} \right)
		+\nu \varrho_0 \left(\PP_{0}\left(\f{1}{\varrho}\right)-\f{1}{\varrho_0}\right)\pa_{y_1}^2V^2_0
		\\
		&
		+\varrho_0 \PP_{0}\left(\PP_{\not=}\left(\f{1}{\varrho}\right)\PP_{\not=}\big(\mu \widetilde{\Delta} V^2 +(\mu + \mu')(\pa_{y_1}-t\pa_{x_1}) \widetilde{\dive} \V\big)\right).
	\end{align*}
	We employ Sobolev inequality and the a \textit{priori} assumption \eqref{priori assumption} to obtain 
	\begin{align*}
		\varrho
		\sim\varrho_0
		\sim1.
	\end{align*}

	\begin{proof}[\textbf{Proof of Proposition \ref{prop-2-1}}]
		By standard $L^2$ energy estimate, we can deduce from \eqref{eq-pertur-3} that
		\begin{align}
			&\f{d}{dt}\int_{\R}\left(
			\GG(\varrho_0)+\f12\varrho_0|V^2_0|^2
			\right)\,dy_1+\nu \int_{\R}|\pa_{y_1}V^2_0|^2\,dy_1
			\label{est-energy-P0L2}\\
			=&\int_{\R}G_1 \int_1^{\varrho_0} \f{P(s)-P(1)}{s^2}\,ds dy_1
			+\int_{\R}G_1\f{P(\varrho_0)-P(1)}{\varrho_0}\,dy_1 \nonumber
			\\
			&+\f12 \int_{\R}G_1| V^2_0|^2\,dy_1
			+\int_{\R}G_2 V^2_0\,dy_1
			\tri \sum_{i=1}^{4}H_i,
			\nonumber
		\end{align} 
		where $\GG(\varrho_0)$ is defined by
		\begin{align}\label{def-G}
			\GG(\varrho_0)\tri \varrho_0\int_1^{\varrho_0} \f{P(s)-P(1)}{s^2}\,ds.
		\end{align}
		Clearly, it follows that
		\begin{align*}
			\PP_{\not=} \varrho=\PP_{\not=} N,\quad \pa_{y_1}\varrho=\pa_{y_1}N,
		\end{align*}
		which, together with integration by parts, Sobolev inequality and Lemma \ref{lem-est-NDW}, gives 
		\begin{align}
			\int_0^t|H_1|(\tau)\,d\tau
			\lesssim &\int_0^t\int_{\R}\left|\PP_{0}(
			\varrho_{\not=} V^2_{\not=})  \f{P(\varrho_0)-P(1)}{(\varrho_0)^2}\pa_{y_1}\varrho_0\right| \,dy_1d\tau
			\label{est-H1}\\
			\lesssim& \|N_{\not=}\|_{L^2_tL^2}\|V^2_{\not=}\|_{L^2_tL^2}
			\|N_0\|_{L^\infty_tL^\infty} \|\pa_{y_1}N_0\|_{L^\infty_tL^\infty}
			\nonumber\\
			\lesssim& ~\mu^{-\f23}\E(t)
			\int_0^t \D (\tau)\,d\tau.
			\nonumber
		\end{align}
		By Sobolev inequality and Lemma \ref{lem-est-NDW}, we have
		\begin{align}
			\int_0^t|H_2|(\tau)\,d\tau
			\lesssim 
			\|N_{\not=}\|_{L^2_tH^2}\|V^2_{\not=}\|_{L^2_tH^2}
			\|N_0\|_{L^\infty_tL^\infty}
			\lesssim \mu^{-\f23}
			|\E(t)|^{\f12}
			\int_0^t \D (\tau)\,d\tau.
			\label{est-H2}
		\end{align}
		Following a similar line of reasoning as in the proof above, it suffices to prove that
		\begin{align}
			\int_0^t|H_3|(\tau)\,d\tau
			\lesssim
			\|N_{\not=}\|_{L^2_tH^2}\|V^2_{\not=}\|_{L^2_tH^2}
			\|V^2_0\|_{L^\infty_tL^\infty}^2
			\lesssim \mu^{-\f23}
			\E(t) \int_{0}^{t}\D (\tau)\,d\tau.
			\label{est-H3}
		\end{align}
		Recalling the definition of $G_2$, $H_4$ can be divided into the following five terms:
		\begin{align}
			H_4
			=& \int_{\R}\varrho_0\PP_{0}(\V_{\not=}\cdot\widetilde{\nabla} V^2_{\not=})V^2_0\,dy_1
			-\int_{\R}\varrho_0 \pa_{y_1}\varrho_0\left(\PP_{0}\left(\f{P'(\varrho)}{\varrho}\right)-\f{P'(\varrho_0)}{\varrho_0}\right) V^2_0\,dy_1
			\label{est-H4-1}\\
			&
			-\int_{\R}\varrho_0\PP_{0}\left(\PP_{\not=}\left(\f{P'(\varrho)}{\varrho}\right)(\pa_{y_1}-t\pa_{x_1})\varrho_{\not=} \right)V^2_0\,dy_1
			+\nu\int_{\R}\varrho_0 \left(\PP_{0}\Big(\f{1}{\varrho}\Big)-\f{1}{\varrho_0}\right)\pa_{y_1}^2V^2_0V^2_0\,dy_1
			\nonumber\\
			&
			+\int_{\R}\varrho_0 \PP_{0}\left(\PP_{\not=}\left(\f{1}{\varrho}\right)\PP_{\not=}(\mu \widetilde{\Delta} V^2 +(\mu + \mu')(\pa_{y_1}-t\pa_{x_1}) \widetilde{\dive} \V)\right) V^2_0\,dy_1
			\nonumber\\
			\tri&\sum_{i=1}^5H_{4i}.
			\nonumber
		\end{align}
		One can easily verify that
		\begin{align}
			\int_0^t |H_{41}|(\tau)\,d\tau
			\lesssim
			\|\V_{\not=}\|_{L^2_tL^2}\|\widetilde{\nabla} V^2_{\not=}\|_{L^2_tL^2} \|V^2_0\|_{L^\infty_tL^\infty}
			\lesssim \mu^{-1}|\E(t)|^{\f12}
			\int_0^t \D (\tau)\,d\tau.
			\label{est-H41}
		\end{align}
		Since $\varrho=\varrho_0+N_{\not=}$,  for any smooth function $h(\rho)$, we have
		\begin{align*}
			h(\varrho)=h(\varrho_0)+h'(\varrho_0)N_{\not=}+O((N_{\not=})^2).
		\end{align*}
		This yields that
		\begin{align}
			\int_0^t |H_{42}|(\tau)\,d\tau
			\lesssim\|\pa_{y_1}N_0\|_{L^2_tL^2}\|N_{\not=}\|_{L^2_tL^2}\|V^2_0\|_{L^\infty_tL^\infty}
			\lesssim
			\mu^{-\f56}|\E(t)|^{\f12}
			\int_0^t \D (\tau)\,d\tau.
			\label{est-H42}
		\end{align}
		A short calculation proves that
		\begin{align}
			\int_0^t |H_{43}|(\tau)\,d\tau
			=&\int_0^t\int_{\R}\varrho_0\PP_{0}\left(\PP_{\not=}\left(\f{P'(\varrho)}{\varrho}-1\right)(\pa_{y_1}-t\pa_{x_1})\varrho_{\not=} \right)V^2_0\,dy_1d\tau
			\label{est-H43}\\
			\lesssim& \|N_{\not=}\|_{L^2_tL^2}\|\widetilde{\nabla} N_{\not=}\|_{L^2_tL^2}\|V^2_0\|_{L^\infty_tL^\infty}
			\nonumber\\
			\lesssim& \mu^{-1}|\E(t)|^{\f12}
			\int_0^t \D (\tau)\,d\tau.
			\nonumber
		\end{align}
		Since
		\begin{align*}
			\PP_{0}\left(\f{1}{\varrho}\right)-\f{1}{\varrho_0}
			=&\f{1}{2\pi}\int_{\T} \left(\f{1}{\varrho}-\f{1}{\varrho_0}\right)\,dx
			=\f{1}{2\pi}\int_{\T}\f{\varrho_0-\varrho}{\varrho\varrho_0}\,dx
			\\
			=&-\PP_{0}\left(\varrho_{\not=} \PP_{\not=}\left(\f{1}{\varrho}\right)\right)\f{1}{\varrho_0}
			=\PP_{0}\left(N_{\not=}\PP_{\not=}\left(\f{1}{\varrho}\right)\right)\f{1}{\varrho_0},
		\end{align*}
		and
		\begin{align}\label{est-Pnot=-varrho}
			\PP_{\not=} \left(\f{1}{\varrho}\right)=\PP_{\not=}\left(\f{1}{N+1}-1\right)=\PP_{\not=} \left(\f{N}{N+1}\right),
		\end{align}
		we then have
		\begin{align}
			\int_0^t |H_{44}|(\tau)\,d\tau
			\lesssim \mu\|N_{\not=}\|_{L^2_tL^2}\|N_{\not=}\|_{L^2_tL^\infty}
			\|\pa_{y_1}^2V^2_0\|_{L^\infty_tL^2}\|V^2_0\|_{L^\infty_tL^\infty}
			\lesssim \mu^{\f13}|\E(t)|^{\f12}
			\int_0^t \D (\tau)\,d\tau.
			\label{est-H44}
		\end{align}
		While it is easy to observe, by applying \eqref{est-Pnot=-varrho} and Lemma \ref{lem-est-NDW}, that
		\begin{align}
			\int_0^t |H_{45}|(\tau)\,d\tau
			\lesssim \mu \|N_{\not=}\|_{L^2_tL^2} \|\widetilde{\nabla}^2\V_{\not=}\|_{L^2_tL^2}
			\|V^2_0\|_{L^\infty_tL^\infty}
			\lesssim \mu^{-\f13}|\E(t)|^{\f12}
			\int_0^t \D (\tau)\,d\tau.
			\label{est-H45}
		\end{align}
		Substituting \eqref{est-H41}-\eqref{est-H43}, \eqref{est-H44} and  \eqref{est-H45} into \eqref{est-H4-1},  we obtain
		\begin{align}
			\int_0^t |H_{4}|(\tau)\,d\tau
			\lesssim
			\mu^{-1}|\E(t)|^{\f12}
			\int_0^t \D (\tau)\,d\tau.
			\label{est-H4-2}
		\end{align}
		Plugging \eqref{est-H1}-\eqref{est-H3} and \eqref{est-H4-2} into \eqref{est-energy-P0L2}, and applying the \textit{a priori} assumption \eqref{priori assumption}, we find that
		\begin{align*}
			&\int_{\R}\left(
			\GG(\varrho_0)+\f12\varrho_0|V^2_0|^2
			\right)\,dy_1+\nu \int_0^t\int_{\R}|\pa_{y_1}V^2_0|^2\,dy_1 d\tau
			\\
			\lesssim& 
			\int_{\R}\left(
			\GG(\PP_{0}\rho_{in})+\f12\PP_{0}\rho_{in}|\PP_{0}v_{in}^2|^2
			\right)\,dy
			+
			\mu^{-1}|\E(t)|^{\f12}
			\int_0^t \D (\tau)\,d\tau.
		\end{align*}
		This, along with \eqref{def-E-1}, \eqref{def-D-1} and \eqref{def-G}, implies that \eqref{est-0} holds directly.
	\end{proof}

	\subsection{Proof of Proposition \ref{prop-2-2}}

	Recalling the definition of $\hat{N}_{j,s-j}^k$ and $\hat{\UU}_{j,s-j}^k$ in \eqref{def_N-j} and \eqref{def_D-j} for $j=0$, it follows from the system \eqref{eq_n-d-w-2} that
	\begin{align}
		\pa_{t}\hat{N}_{0,s}^k=&-\left[\f{\pa_{t}m_1}{m_1}+\f{\pa_{t}m_2}{m_2}+\f14\left(\f{\pa_{t}\varphi}{\varphi}-\f{\pa_{t}p}{p}\right)\right]\hat{N}_{0,s}^k-\f14 \f{\pa_{t}p}{p}\hat{N}_{0,s}^k-p^{\f12}\hat{\UU}_{0,s}^k+\hat{\FFF}_{1,0,s}^k,
		\label{eq_N-0-s}\\
		\pa_{t}\hat{\UU}_{0,s}^k=&-\left[\f{\pa_{t}m_1}{m_1}+\f{\pa_{t}m_2}{m_2}+\f14\left(\f{\pa_{t}\varphi}{\varphi}-\f{\pa_{t}p}{p}\right)+\nu p\right]\hat{\UU}_{0,s}^k+\f14 \f{\pa_{t}p}{p}\hat{\UU}_{0,s}^k+\left(p^{\f12}+2\f{k^2}{p^{\f32}}\right)\hat{N}_{0,s}^k
		\label{eq_D-0-s}\\
		&-2\mu \f{k^2}{p}\hat{\UU}_{0,s}^k
		-2\f{k^2}{p^{\f32}}\varphi^{-\f14}\hat{W}_{0,s}^k
		+\hat{\FFF}_{2,0,s}^k,\nonumber
	\end{align}
	where $\hat{\FFF}_{1,0,s}^k$ and $\hat{\FFF}_{2,0,s}^k$ are defined by
	\begin{align}
		\hat{\FFF}_{1,0,s}^k\tri&m_1^{-1}m_2^{-1}\varphi^{-\f14}<k,\xi>^s\hat{\FFF}_1^k,
		\label{def-F1}\\
		\hat{\FFF}_{2,0,s}^k\tri&m_1^{-1}m_2^{-1}\varphi^{-\f14}<k,\xi>^sp^{-\f12}\hat{\FFF}_2^k.
		\label{def-F2}
	\end{align}
	Making use of equations \eqref{eq_N-0-s} and \eqref{eq_D-0-s}, we obtain
	\begin{align}
		&\f12\f{d}{dt}|(\hat{N}_{0,s}^k,\hat{\UU}_{0,s}^k)|^2
		+\left[\f{\pa_{t}m_1}{m_1}+\f{\pa_{t}m_2}{m_2}+\f14\left(\f{\pa_{t}\varphi}{\varphi}-\f{\pa_{t}p}{p}\right)\right]|\hat{N}_{0,s}^k|^2
		\label{est-energy-1}\\
		&+\left[\f{\pa_{t}m_1}{m_1}+\f{\pa_{t}m_2}{m_2}+\f14\left(\f{\pa_{t}\varphi}{\varphi}-\f{\pa_{t}p}{p}\right)+\nu p\right]|\hat{\UU}_{0,s}^k|^2
		\nonumber\\
		=&-\f14\f{\pa_{t}p}{p}\left(|\hat{N}_{0,s}^k|^2-|\hat{\UU}_{0,s}^k|^2\right)+2\f{k^2}{p^{\f32}}Re\left(\hat{N}_{0,s}^k\bar{\hat{\UU}}_{0,s}^k\right)-2\mu \f{k^2}{p}|\hat{\UU}_{0,s}^k|^2-2\f{k^2}{p^{\f32}}\varphi^{-\f14}Re\left(\hat{W}_{0,s}^k\bar{\hat{\UU}}_{0,s}^k\right)
		\nonumber\\
		&+Re\left(\hat{\FFF}_{1,0,s}^k\bar{\hat{N}}_{0,s}^k\right)+Re\left(\hat{\FFF}_{2,0,s}^k\bar{\hat{\UU}}_{0,s}^k\right).\nonumber
	\end{align}
	In order to cancel the first term presented on the right-hand side of \eqref{est-energy-1}, we find that
	\begin{align}
		&\f{d}{dt}\left[\f{\pa_{t}p}{p^{\f32}}Re\left(\hat{N}_{0,s}^k\bar{\hat{\UU}}_{0,s}^k\right)\right]
		\label{est-energy-2}\\
		=&~\f{\pa_{t}p}{p}\left(|\hat{N}_{0,s}^k|^2-|\hat{\UU}_{0,s}^k|^2\right)+2\f{k^2\pa_{t}p}{p^3}|\hat{N}_{0,s}^k|^2
		-\left[\f{\pa_{t}p}{p^{\f32}}\left(2\f{\pa_{t}m_1}{m_1}+2\f{\pa_{t}m_2}{m_2}+\f12\f{\pa_{t}\varphi}{\varphi}
		\right.\right.
		\nonumber\\
		&\left.\left.-\f12\f{\pa_{t}p}{p}+\nu p\right)+2\mu \f{k^2\pa_{t}p}{p^{\f52}}-2\f{k^2}{p^{\f32}}+\f32\f{(\pa_{t}p)^2}{p^{\f52}}
		\right]Re\left(\hat{N}_{0,s}^k\bar{\hat{\UU}}_{0,s}^k\right)
		\nonumber\\
		&-2\f{k^2\pa_{t}p}{p^3}\varphi^{-\f14}Re\left(\hat{N}_{0,s}^k\bar{\hat{W}}_{0,s}^k\right)+\f{\pa_{t}p}{p^{\f32}}\left[Re\left(\hat{\FFF}_{1,0,s}^k\bar{\hat{\UU}}_{0,s}^k\right)+Re\left(\hat{\FFF}_{2,0,s}^k\bar{\hat{N}}_{0,s}^k\right) \right].
		\nonumber
	\end{align}
	Multiplying equality \eqref{est-energy-2} by $\f14$, adding the resulting equality to \eqref{est-energy-1}, then performing the summation and integration, we infer that
	\begin{align}
		&\f12\f{d}{dt}\left\{
		\sum_{k\in\Z\backslash\{0\}}\int_{\R}\left[|\hat{N}_{0,0}^k|^2+|\hat{\UU}_{0,0}^k|^2+\f12\f{\pa_{t}p}{p^{\f32}}Re\left(\hat{N}_{0,0}^k\bar{\hat{\UU}}_{0,0}^k\right)\right] \,d\xi
		\right.
		\label{est-energy-4}\\
		&\left.+\sum_{s=1}^{3}\sum_{k\in\Z}\int_{\R}\left[|\hat{N}_{0,s}^k|^2+|\hat{\UU}_{0,s}^k|^2+\f12\f{\pa_{t}p}{p^{\f32}}Re\left(\hat{N}_{0,s}^k\bar{\hat{\UU}}_{0,s}^k\right)\right] \,d\xi
		\right\}
		\nonumber\\
		&+\sum_{s=0}^{3}\sum_{k\in\Z}\int_{\R}\left[\f{\pa_{t}m_1}{m_1}+\f{\pa_{t}m_2}{m_2}+\f14\left(\f{\pa_{t}\varphi}{\varphi}-\f{\pa_{t}p}{p}\right)\right]|\hat{N}_{0,s}^k|^2\,d\xi
		\nonumber\\
		&+\sum_{k\in\Z\backslash\{0\}}\int_{\R}\left[\f{\pa_{t}m_1}{m_1}+\f{\pa_{t}m_2}{m_2}+\f14\left(\f{\pa_{t}\varphi}{\varphi}-\f{\pa_{t}p}{p}\right)+\nu p\right]|\hat{\UU}_{0,0}^k|^2\,d\xi
		\nonumber\\
		&+\sum_{s=1}^{3}\sum_{k\in\Z}\int_{\R}\left[\f{\pa_{t}m_1}{m_1}+\f{\pa_{t}m_2}{m_2}+\f14\left(\f{\pa_{t}\varphi}{\varphi}-\f{\pa_{t}p}{p}\right)+\nu p\right]|\hat{\UU}_{0,s}^k|^2\,d\xi
		\nonumber\\
		=&\sum_{s=0}^{3}\sum_{k\in\Z\backslash\{0\}}\int_{\R}
		\left\{\f12\f{k^2\pa_{t}p}{p^3}|\hat{N}_{0,s}^k|^2-2\mu\f{k^2}{p}|\hat{\UU}_{0,s}^k|^2 + \left[\f52\f{k^2}{p^{\f32}}-\f{\mu}{2}\f{k^2\pa_{t}p}{p^{\f52}}
		\right.\right.
		\nonumber\\
		&\left.
		-\f{\pa_{t}p}{p^{\f32}}\left(\f12\f{\pa_{t}m_1}{m_1}+\f12\f{\pa_{t}m_2}{m_2}
		+\f18\f{\pa_{t}\varphi}{\varphi}+\f14\f{\pa_{t}p}{p}+\f14\nu p\right) \right]Re\left(\hat{N}_{0,s}^k\bar{\hat{\UU}}_{0,s}^k\right)
		\nonumber\\
		&\left.
		-\f12\f{k^2\pa_{t}p}{p^3}\varphi^{-\f14}Re\left(\hat{N}_{0,s}^k\bar{\hat{W}}_{0,s}^k\right)-2\f{k^2}{p^{\f32}}\varphi^{-\f14}Re\left(\hat{\UU}_{0,s}^k\bar{\hat{W}}_{0,s}^k\right)\right\}\,d\xi
		\nonumber\\
		&+\sum_{k\in\Z\backslash\{0\}}\int_{\R}\left[
		Re\left(\hat{\FFF}_{1,0,0}^k\bar{\hat{N}}_{0,0}^k\right)+Re\left(\hat{\FFF}_{2,0,0}^k\bar{\hat{\UU}}_{0,0}^k\right)\right]\,d\xi \nonumber\\
		&+\sum_{s=1}^{3}\sum_{k\in\Z}\int_{\R}
		\left[Re\left(\hat{\FFF}_{1,0,s}^k\bar{\hat{N}}_{0,s}^k\right)
		+Re\left(\hat{\FFF}_{2,0,s}^k\bar{\hat{\UU}}_{0,s}^k\right)\right]\,d\xi
		\nonumber\\
		&+\f14\sum_{s=0}^{3}\sum_{k\in\Z\backslash\{0\}}\int_{\R}
		\f{\pa_{t}p}{p^{\f32}}\left[Re\left(\hat{\FFF}_{1,0,s}^k\bar{\hat{\UU}}_{0,s}^k\right)+Re\left(\hat{\FFF}_{2,0,s}^k\bar{\hat{N}}_{0,s}^k\right) \right]\,d\xi.
		\nonumber
	\end{align}
	Next, we aim to recover the dissipation of $\hat{N}_{0,s}^k$ as
	\begin{align}
		&-\f{d}{dt}\left[p^{-\f12}Re\left(\hat{N}_{0,s}^k\bar{\hat{\UU}}_{0,s}^k\right)\right]
		+\left(1+2\f{k^2}{p^2}\right)|\hat{N}_{0,s}^k|^2
		\label{est-energy-5}\\
		=&~|\hat{\UU}_{0,s}^k|^2+\f{1}{p^{\f12}}\left(2\f{\pa_{t}m_1}{m_1}+2\f{\pa_{t}m_2}{m_2}+\f12\f{\pa_{t}\varphi}{\varphi}+2\mu \f{k^2}{p}+\nu p\right)Re\left(\hat{N}_{0,s}^k\bar{\hat{\UU}}_{0,s}^k\right)
		\nonumber\\
		&+2\f{k^2}{p^2}\varphi^{-\f14}Re\left(\hat{N}_{0,s}^k\bar{\hat{W}}_{0,s}^k\right)-\f{1}{p^{\f12}}\left[Re\left(\hat{\FFF}_{1,0,s}^k\bar{\hat{\UU}}_{0,s}^k\right)+Re\left(\hat{\FFF}_{2,0,s}^k\bar{\hat{N}}_{0,s}^k\right)\right].
		\nonumber
	\end{align}
	We note that the dissipation estimates of the density for zero and non-zero frequencies are different.
	Multiplying equality \eqref{est-energy-5} by $\delta_1\mu^{\f13}$, performing an integration with respect to $\xi$, then summing over $k\in\Z\backslash\{0\}$ and $s$ from $0$ to $3$, we obtain
	\begin{align}
		&-\delta_1\mu^{\f13}\f{d}{dt}\sum_{s=0}^{3}\sum_{k\in\Z\backslash\{0\}}\int_{\R}\left[p^{-\f12}Re\left(\hat{N}_{0,s}^k\bar{\hat{\UU}}_{0,s}^k\right)\right]\,d\xi
		+\delta_1\mu^{\f13}\sum_{s=0}^{3}\sum_{k\in\Z\backslash\{0\}}\int_{\R}\left(1+2\f{k^2}{p^2}\right)|\hat{N}_{0,s}^k|^2\,d\xi
		\label{est-energy-5-1}\\
		=&~\delta_1\mu^{\f13}\sum_{s=0}^{3}\sum_{k\in\Z\backslash\{0\}}\int_{\R}\left\{|\hat{\UU}_{0,s}^k|^2+\f{1}{p^{\f12}}\left(2\f{\pa_{t}m_1}{m_1}+2\f{\pa_{t}m_2}{m_2}+\f12\f{\pa_{t}\varphi}{\varphi}+2\mu \f{k^2}{p}+\nu p\right)Re\left(\hat{N}_{0,s}^k\bar{\hat{\UU}}_{0,s}^k\right)
		\right.
		\nonumber\\
		&\left.+2\f{k^2}{p^2}\varphi^{-\f14}Re\left(\hat{N}_{0,s}^k\bar{\hat{W}}_{0,s}^k\right)
		-\f{1}{p^{\f12}}\left[Re\left(\hat{\FFF}_{1,0,s}^k\bar{\hat{\UU}}_{0,s}^k\right)+Re\left(\hat{\FFF}_{2,0,s}^k\bar{\hat{N}}_{0,s}^k\right)\right]\right\}\,d\xi.
		\nonumber
	\end{align}
	On the other hand, we multiply equality \eqref{est-energy-5} by $\delta_1\mu$ for $k=0$, perform an integration with respect to $\xi$, then sum over $s$ from $1$ to $3$ to obtain 
	\begin{align}
		&-\delta_1\mu\f{d}{dt}\sum_{s=1}^{3}\int_{\R}\left[|\xi|^{-1}Re\left(\hat{N}_{0,s}^0\bar{\hat{\UU}}_{0,s}^0\right)\right]\,d\xi
		+\delta_1\mu\sum_{s=1}^{3}\int_{\R}|\hat{N}_{0,s}^0|^2\,d\xi
		\label{est-energy-5-2}\\
		=&~\delta_1\mu\sum_{s=1}^{3}\int_{\R}\left\{|\hat{\UU}_{0,s}^0|^2+\nu |\xi|Re\left(\hat{N}_{0,s}^0\bar{\hat{\UU}}_{0,s}^0\right)
		-|\xi|^{-1}\left[Re\left(\hat{\FFF}_{1,0,s}^0\bar{\hat{\UU}}_{0,s}^0\right)+Re\left(\hat{\FFF}_{2,0,s}^0\bar{\hat{N}}_{0,s}^0\right)\right]\right\}\,d\xi.
		\nonumber
	\end{align}
	Plugging \eqref{est-energy-4}, \eqref{est-energy-5-1} and \eqref{est-energy-5-2} together, we find that
	\begin{align}
		&\f12\f{d}{dt}\widetilde{\E}_2(t)+\widetilde{\D}_2(t)
		=\LL_1(t)+\sum_{i=1}^{5}I_i(t),\label{est-energy-6}
	\end{align}
	where $\widetilde{\E}_2(t)$ and $\widetilde{\D}_2(t)$ are defined in \eqref{def-E-2} and \eqref{def-D-2}, respectively. 
	And the linear term $\LL_1$ is defined by
	\begin{align}
		\LL_1(t)\tri&\sum_{s=0}^{3}\sum_{k\in\Z\backslash\{0\}}\int_{\R}\left\{\f12\f{k^2\pa_{t}p}{p^3}|\hat{N}_{0,s}^k|^2+\left(\delta_1\mu^{\f13}-2\mu\f{k^2}{p}\right)|\hat{\UU}_{0,s}^k|^2
		+\left[\f52\f{k^2}{p^{\f32}}-\f{\mu}{2}\f{k^2\pa_{t}p}{p^{\f52}}
		\right.\right.
		\label{def_LL1}\\
		&
		-\f{\pa_{t}p}{p^{\f32}}\left(\f12\f{\pa_{t}m_1}{m_1}
		+\f12\f{\pa_{t}m_2}{m_2}
		+\f18\f{\pa_{t}\varphi}{\varphi}+\f14\f{\pa_{t}p}{p}+\f14\nu p\right)
		+2\delta_1\mu^{\f43}\f{k^2}{p^{\f32}}
		\nonumber\\
		&\left.
		+2\delta_1\mu^{\f13}\f{1}{p^{\f12}}\left(
		\f{\pa_{t}m_1}{m_1}+\f{\pa_{t}m_2}{m_2}+\f14\f{\pa_{t}\varphi}{\varphi}
		\right)
		+\delta_1\mu^{\f13}\nu p^{\f12} \right]
		Re\left(\hat{N}_{0,s}^k\bar{\hat{\UU}}_{0,s}^k\right)
		\nonumber\\
		&\left.
		+\left(2\delta_1\mu^{\f13}\f{k^2}{p^2}-\f12\f{k^2\pa_{t}p}{p^3}\right)\varphi^{-\f14}Re\left(\hat{N}_{0,s}^k\bar{\hat{W}}_{0,s}^k\right)
		-2\f{k^2}{p^{\f32}}\varphi^{-\f14}Re\left(\hat{\UU}_{0,s}^k\bar{\hat{W}}_{0,s}^k\right)
		\right\}\,d\xi
		\nonumber\\
		&
		+\delta_1\mu\sum_{s=1}^{3}\int_{\R}\left\{|\hat{\UU}_{0,s}^0|^2+\nu |\xi|Re\left(\hat{N}_{0,s}^0\bar{\hat{\UU}}_{0,s}^0\right)\right\}\,d\xi,
		\nonumber
	\end{align}
	and the nonlinear terms $I_i (i=1,\cdots,5)$ are defined by
	\begin{align}
		I_1(t)\tri&\sum_{k\in\Z\backslash\{0\}}\int_{\R}Re\left(\hat{\FFF}_{1,0,0}^k\bar{\hat{N}}_{0,0}^k\right)
		+\sum_{s=1}^{3}\sum_{k\in\Z}\int_{\R} Re\left(\hat{\FFF}_{1,0,s}^k\bar{\hat{N}}_{0,s}^k\right)\,d\xi,
		\nonumber\\
		I_2(t)\tri& 
		\sum_{k\in\Z\backslash\{0\}}\int_{\R} Re\left(\hat{\FFF}_{2,0,0}^k\bar{\hat{\UU}}_{0,0}^k\right)\,d\xi
		+\sum_{s=1}^{3}\sum_{k\in\Z}\int_{\R}Re\left(\hat{\FFF}_{2,0,s}^k\bar{\hat{\UU}}_{0,s}^k\right)\,d\xi,
		\nonumber\\
		I_3(t)\tri&-\delta_1\mu\sum_{s=1}^{3}\int_{\R}|\xi|^{-1}\left[Re\left(\hat{\FFF}_{1,0,s}^0\bar{\hat{\UU}}_{0,s}^0\right)+Re\left(\hat{\FFF}_{2,0,s}^0\bar{\hat{N}}_{0,s}^0\right)\right]\,d\xi,
		\nonumber\\
		I_4(t)\tri&~\f14\sum_{s=0}^{3}\sum_{k\in\Z\backslash\{0\}}\int_{\R}\f{\pa_{t}p}{p^{\f32}}\left[Re\left(\hat{\FFF}_{1,0,s}^k\bar{\hat{\UU}}_{0,s}^k\right)
		+Re\left(\hat{\FFF}_{2,0,s}^k\bar{\hat{N}}_{0,s}^k\right) \right]\,d\xi,
		\nonumber\\
		I_5(t)\tri&-\delta_1\mu^{\f13}\sum_{s=0}^{3}\sum_{k\in\Z\backslash\{0\}}\int_{\R}
		\f{1}{p^{\f12}}\left[Re\left(\hat{\FFF}_{1,0,s}^k\bar{\hat{\UU}}_{0,s}^k\right)+Re\left(\hat{\FFF}_{2,0,s}^k\bar{\hat{N}}_{0,s}^k\right)\right]\,d\xi.
		\nonumber
	\end{align}
	We now deal with all the terms on the right-hand side of \eqref{est-energy-6}.
	First, we present the following lemma concerning the estimate of the linear term $\LL_1$.
	\begin{lemm}\label{lem-est-L1}
		It holds that
		\begin{align}\label{est-LL1}
			\LL_1(t) \leq 
			\f12\widetilde{\D}_2(t)+\f{C}{A}\D_{0,3}^{in}(t).
		\end{align}
	\end{lemm}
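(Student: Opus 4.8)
The plan is to estimate each term appearing in the definition \eqref{def_LL1} of $\LL_1(t)$ and to show that all contributions are either (i) bounded by a small multiple of $\widetilde{\D}_2(t)$, or (ii) bounded by $\f{C}{A}\D_{0,3}^{in}(t)$, after using Lemma \ref{sec2:lem-1} and the elementary bounds $\f{k^2}{p}\leq 1$, $|\pa_{t}p|=2|k(\xi-kt)|\leq 2p$, and $\f{|\pa_{t}p|}{p}\leq \f{2|k||\xi-kt|}{k^2+(\xi-kt)^2}\leq 1$. The key point is that $\widetilde{\D}_2(t)$ contains, for each $(k,s)$ with $k\neq 0$, a good positive weight $\bigl[\f{\pa_{t}m_1}{m_1}+\f{\pa_{t}m_2}{m_2}+\f14(\f{\pa_{t}\varphi}{\varphi}-\f{\pa_{t}p}{p})+\delta_1\mu^{\f13}\bigr]$ on $|\hat{N}_{0,s}^k|^2$ and $\bigl[\f{\pa_{t}m_1}{m_1}+\f{\pa_{t}m_2}{m_2}+\f14(\f{\pa_{t}\varphi}{\varphi}-\f{\pa_{t}p}{p})+\nu p\bigr]$ on $|\hat{\UU}_{0,s}^k|^2$; by \eqref{est-m2} (with $\delta_\beta$ chosen appropriately) both these weights are $\gtrsim \mu^{\f13}$, and in fact $\f{\pa_{t}m_2}{m_2}=A\f{k^2}{p}\cdot\text{(const)}$ provides a large factor $A$ on $\f{k^2}{p}|\hat N|^2$ and $\f{k^2}{p}|\hat\UU|^2$.

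First I would dispose of the pure quadratic terms: $\f12\f{k^2\pa_{t}p}{p^3}|\hat{N}_{0,s}^k|^2$ obeys $|\f{k^2\pa_{t}p}{p^3}|\leq \f{k^2}{p^2}\leq \f{1}{p}\cdot\f{k^2}{p}$, so it is absorbed by the $\nu p$ or $A\f{k^2}{p}$ part of $\widetilde{\D}_2$ once $\mu$ is small; the term $(\delta_1\mu^{\f13}-2\mu\f{k^2}{p})|\hat{\UU}_{0,s}^k|^2$ is bounded by $\delta_1\mu^{\f13}|\hat{\UU}_{0,s}^k|^2$ which is $\f14$ of the $\hat{\UU}$-dissipation in $\widetilde{\D}_2$ for $\delta_1$ small; and the zero-mode terms $\delta_1\mu\int(|\hat{\UU}_{0,s}^0|^2+\nu|\xi|\,|\hat N_{0,s}^0\bar{\hat\UU}_{0,s}^0|)$ are controlled by $\delta_1\mu\int|\hat N_{0,s}^0|^2$ (the $\hat N$-dissipation at $k=0$) plus $\delta_1\mu\,\nu\int|\xi|^2|\hat\UU_{0,s}^0|^2$, the latter being lower order in $\nu\sim\mu$ against $\f14(\f{\pa_t m_1}{m_1}+\nu p)|\hat\UU|^2$. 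Next, the cross terms $Re(\hat{N}_{0,s}^k\bar{\hat{\UU}}_{0,s}^k)$ multiplied by the bracket on lines 2--3 of \eqref{def_LL1}: I would Cauchy--Schwarz each summand, $|Re(\hat N\bar{\hat\UU})|\leq \f12(|\hat N|^2+|\hat\UU|^2)$, and check that the coefficient is $\lesssim \f{k^2}{p^{3/2}}+\f{|\pa_t p|}{p^{3/2}}(\f{\pa_t m_1}{m_1}+\f{\pa_t m_2}{m_2}+\f{\pa_t\varphi}{\varphi}+\f{|\pa_t p|}{p}+\nu p)+\mu^{\f13}\f{k^2}{p^{3/2}}+\mu^{\f13}p^{-1/2}(\ldots)+\mu^{\f13}\nu p^{1/2}$; using $\f{k^2}{p^{3/2}}\leq \f{1}{|k|}\cdot\f{k^2}{p}\leq \f{k^2}{p}$ (for $|k|\geq 1$), $\f{|\pa_t p|}{p^{3/2}}\leq \f{1}{\sqrt p}$ with $\f1{\sqrt p}\leq \f1{|k|}\leq 1$, $\f{\pa_t\varphi}{\varphi}\leq \f{\pa_t p}{p}$ (from \eqref{def-phi}), and $\nu p^{1/2}\lesssim \nu^{1/2}(\nu^{1/2}p^{1/2})\lesssim \nu^{1/2}(\mu^{1/3}+\nu p)^{1/2}$, one sees each such coefficient is bounded by a small constant times the $\widetilde{\D}_2$-weight once $\delta_1$ and $\mu$ are small.

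The remaining and genuinely structural terms are the two involving $\hat{W}_{0,s}^k$, namely $(2\delta_1\mu^{\f13}\f{k^2}{p^2}-\f12\f{k^2\pa_t p}{p^3})\varphi^{-\f14}Re(\hat N_{0,s}^k\bar{\hat W}_{0,s}^k)$ and $-2\f{k^2}{p^{3/2}}\varphi^{-\f14}Re(\hat\UU_{0,s}^k\bar{\hat W}_{0,s}^k)$. These cannot be absorbed into $\widetilde{\D}_2$ by $\hat W$-terms because $\widetilde{\D}_2$ contains no $\hat W$-dissipation; this is where the constant $A$ enters. The trick is to split via Young's inequality with a large parameter: for the first, $\varphi^{-\f14}\leq 1$ and $\f{k^2}{p^2}\leq \f{k^2}{p}$, $\f{|k^2\pa_t p|}{p^3}\leq \f{k^2}{p^2}\leq \f{k^2}{p}$, so the term is $\lesssim \f{k^2}{p}|\hat N_{0,s}^k|\,|\hat W_{0,s}^k|\leq \eta\f{k^2}{p}|\hat N_{0,s}^k|^2+\f{1}{4\eta}\f{k^2}{p}|\hat W_{0,s}^k|^2$; choosing $\eta=\f{1}{A}$ makes the first piece $\leq \f{1}{A}\f{k^2}{p}|\hat N|^2\leq \f{1}{CA}\cdot A\f{k^2}{p}|\hat N|^2$, absorbed into $\widetilde{\D}_2$ (recall $\f{\pa_t m_2}{m_2}\sim A\f{k^2}{p}$), while the second piece is $\f{A}{4}\f{k^2}{p}|\hat W_{0,s}^k|^2$; but note $\D_{0,3}^{in}$ contains $\sum_{k\neq 0}\int A\f{k^2}{p}|\hat W_{0,s}^k|^2\,d\xi$ by its definition, so $\f A4\f{k^2}p|\hat W|^2\leq \f14\cdot A\f{k^2}p|\hat W|^2$ is $\leq \f14\D_{0,3}^{in}$ — here I must be slightly careful that we want a factor $\f{1}{A}$, not $\f14$, in front of $\D_{0,3}^{in}$. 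To get the $\f1A$, I instead Young with parameter $\eta=A^{-1/2}$: the $\hat N$-piece becomes $A^{-1/2}\f{k^2}{p}|\hat N|^2\leq A^{-3/2}(A\f{k^2}p|\hat N|^2)\leq \f12\widetilde{\D}_2$ for $A$ large, and the $\hat W$-piece becomes $\f{A^{1/2}}{4}\f{k^2}p|\hat W|^2\leq \f{1}{4A^{1/2}}(A\f{k^2}p|\hat W|^2)\leq \f{C}{A}\D_{0,3}^{in}$ after adjusting $A$ (equivalently, absorb one more power of $A$). The second $\hat W$-term $-2\f{k^2}{p^{3/2}}\varphi^{-\f14}Re(\hat\UU\bar{\hat W})$: since $\f{k^2}{p^{3/2}}\leq \f{1}{|k|}\f{k^2}{p}\le \f{k^2}{p}$ and $\varphi^{-\f14}\le 1$, Young with the same large parameter gives an $\hat\UU$-piece absorbed into the $\nu p$ or $A\f{k^2}{p}$ dissipation of $\hat\UU$ in $\widetilde{\D}_2$ and an $\hat W$-piece of size $\f{C}{A}A\f{k^2}p|\hat W|^2\le \f CA\D_{0,3}^{in}$. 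Summing over $s=0,\dots,3$ and $k\neq 0$ and collecting all contributions yields \eqref{est-LL1}. The main obstacle I anticipate is precisely this bookkeeping of powers of $A$: one needs the $\hat W$-coupling to leave exactly an $\f1A$ (not an $O(1)$) prefactor on $\D_{0,3}^{in}$ while simultaneously the $\hat N,\hat\UU$ leftovers are absorbed by $\widetilde{\D}_2$, which forces the Young parameter to be a suitable fractional power of $A$ and requires that the $\f{\pa_t m_2}{m_2}=A\f{k^2}{p}$ gain in $\widetilde{\D}_2$ genuinely dominates the worst coefficients $\f{k^2}{p^{3/2}}$ and $\f{k^2}{p^2}$ appearing in $\LL_1$ — which it does, uniformly, because $\f{k^2}{p^{3/2}},\f{k^2}{p^2}\leq \f{k^2}{p}$ for $|k|\ge 1$.
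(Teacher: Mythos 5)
Your overall strategy is the same as the paper's: bound each term in \eqref{def_LL1} with elementary inequalities ($|\partial_t p|\leq 2|k|p^{1/2}\leq p$, $|\partial_t\varphi/\varphi|\leq|\partial_t p/p|$, $k^2/p\leq 1$), Young's inequality, and the explicit weights in $\widetilde{\D}_2$ and $\D_{0,3}^{in}$, closing by taking $A$ large and $\delta_1,\widetilde{\ep}$ small. However, there is a genuine error in the part you single out as the ``main obstacle,'' namely the treatment of the $\hat W$-coupling.

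You conclude that extracting the prefactor $\f{C}{A}$ on $\D_{0,3}^{in}$ forces the Young parameter $\eta$ to be a fractional power of $A$, and then with $\eta=A^{-1/2}$ you claim $\f{1}{4A^{1/2}}\D_{0,3}^{in}\leq\f{C}{A}\D_{0,3}^{in}$ ``after adjusting $A$.'' That inequality is false for $A$ large with $C$ fixed, and ``absorb one more power of $A$'' is not a step. The resolution is the opposite of what you try: the Young parameter should be a fixed $O(1)$ constant, not $A$-dependent. By Cauchy--Schwarz,
\begin{align*}
\Bigl|\Bigl(2\delta_1\mu^{\f13}\f{k^2}{p^2}-\f12\f{k^2\pa_t p}{p^3}\Bigr)\varphi^{-\f14}Re\bigl(\hat N_{0,s}^k\bar{\hat W}_{0,s}^k\bigr)
-2\f{k^2}{p^{\f32}}\varphi^{-\f14}Re\bigl(\hat\UU_{0,s}^k\bar{\hat W}_{0,s}^k\bigr)\Bigr|
\lesssim \f{k^2}{p}\Bigl(|\hat N_{0,s}^k|^2+|\hat\UU_{0,s}^k|^2+|\hat W_{0,s}^k|^2\Bigr),
\end{align*}
with an absolute implied constant. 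The desired $\f{1}{A}$ factor does not come from the Young parameter at all; it comes from the fact that $\D_{0,3}^{in}$ already contains $A\f{k^2}{p}|\hat W_{0,s}^k|^2$, so $\f{k^2}{p}|\hat W_{0,s}^k|^2=\f{1}{A}\cdot A\f{k^2}{p}|\hat W_{0,s}^k|^2\leq\f{1}{A}\D_{0,3}^{in}$, giving $\f{C}{A}\D_{0,3}^{in}$ on the nose. Similarly the $|\hat N|^2$ and $|\hat\UU|^2$ pieces carry an $O(1)$ coefficient in front of $\f{k^2}{p}$, and since $\widetilde{\D}_2$ contains $\f{\pa_t m_2}{m_2}=A\f{k^2}{p}$ on both, these are absorbed with a factor $\f{C}{A}\leq\f12$ once $A$ is large. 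This is exactly what the paper does in \eqref{est-L1-8}; there is no bookkeeping of fractional powers of $A$ to be done.

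One further small point: several of your absorptions are justified ``once $\mu$ is small,'' but $\mu$ is a parameter of the problem and the lemma must hold uniformly in $0<\mu\leq 1$. In fact all your $\mu$-dependent coefficients (e.g.\ $\mu^{4/3}$, $\delta_1\mu^{1/3}$) are $\leq 1$, so the estimates go through for $\mu\leq 1$ without further restriction; the smallness you actually need is in $\delta_1$ and $\widetilde{\ep}$ and the largeness in $A$, which is what the paper records.
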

	\begin{proof}
		We begin by handling the linear terms with $k\not=0$ on the right-hand side of \eqref{def_LL1}.
		Recalling the definitions of $\varphi$ and $p$ from \eqref{def-phi} and \eqref{est-p}, respectively, we immediately obtain
		\begin{align}\label{est-p-varphi}
			|\pa_{t}p|\leq 2|k|p^{\f12},\quad |\pa_{t}p|\leq p,\quad \left|\f{\pa_{t}\varphi}{\varphi}\right| \leq \left|\f{\pa_{t}p}{p}\right|.
		\end{align}
		This implies that
		\begin{align}\label{est-L1-1}
			\left|\f12\f{k^2\pa_{t}p}{p^3}\right|   |\hat{N}_{0,s}^k|^2
			\lesssim \f{k^2}{p} |\hat{N}_{0,s}^k|^2.
		\end{align}
		It is not difficult to check that
		\begin{align}\label{est-L1-2}
			-2\mu\f{k^2}{p}  |\hat{\UU}_{0,s}^k|^2
			\leq 0.
		\end{align}
		With the help of \eqref{est-p-varphi} and Young's inequality, we get
		\begin{align}
			&\left|-\f{\nu}{4}\f{\pa_{t}p}{p^{\f12}} Re\left(\hat{N}_{0,s}^k\bar{\hat{\UU}}_{0,s}^k\right)\right|
			\lesssim \nu |k| |Re\left(\hat{N}_{0,s}^k\bar{\hat{\UU}}_{0,s}^k\right)| 
			\leq \widetilde{\ep}\nu p|\hat{\UU}_{0,s}^k|^2+ C_{\widetilde{\ep}} \f{k^2}{p}  |\hat{N}_{0,s}^k|^2,
			\label{est-L1-3}\\
			&\left|\f12\delta_1\mu^{\f13}\f{1}{p^{\f12}}\f{\pa_{t}\varphi}{\varphi}Re\left(\hat{N}_{0,s}^k\bar{\hat{\UU}}_{0,s}^k\right)\right|
			\lesssim \mu^{\f13}\left|\f{\pa_{t}p}{p^{\f32}}\right| \left|Re\left(\hat{N}_{0,s}^k\bar{\hat{\UU}}_{0,s}^k\right)\right|
			\leq \widetilde{\ep}\mu^{\f13} |\hat{\UU}_{0,s}^k|^2+ C_{\widetilde{\ep}} \f{k^2}{p}  |\hat{N}_{0,s}^k|^2.
			\label{est-L1-4}
		\end{align}
		Again thanks to \eqref{est-p-varphi}, we have
		\begin{align}
			&\left|\left(-\f12\f{\pa_{t}p}{p^{\f32}}+2\delta_1\mu^{\f13}\f{1}{p^{\f12}}\right)\left(\f{\pa_{t}m_1}{m_1}+\f{\pa_{t}m_2}{m_2}\right)Re\left(\hat{N}_{0,s}^k\bar{\hat{\UU}}_{0,s}^k\right)\right|
			\label{est-L1-5}\\
			\leq& \left(\f12+2\delta_1\mu^{\f13}\right) \left(\f{\pa_{t}m_1}{m_1}+\f{\pa_{t}m_2}{m_2}\right) \left|Re\left(\hat{N}_{0,s}^k\bar{\hat{\UU}}_{0,s}^k\right)\right|
			\nonumber\\
			\leq& ~\f12\left(\f12+2\delta_1\mu^{\f13}\right) \left(\f{\pa_{t}m_1}{m_1}+\f{\pa_{t}m_2}{m_2}\right)\left(|\hat{N}_{0,s}^k|^2+|\hat{\UU}_{0,s}^k|^2\right),
			\nonumber
		\end{align}
		and
		\begin{align}
			&\left|\left[\f52\f{k^2}{p^{\f32}}-\f{\mu}{2}\f{k^2\pa_{t}p}{p^{\f52}}-\f14\f{\pa_{t}p}{p^{\f32}}\left(\f12\f{\pa_{t}\varphi}{\varphi}+\f{\pa_{t}p}{p}\right)+2\delta_1\mu^{\f43}\f{k^2}{p^{\f32}}\right]Re\left(\hat{N}_{0,s}^k\bar{\hat{\UU}}_{0,s}^k\right)  \right|
			\label{est-L1-6}\\
			\lesssim& ~ \f{k^2}{p}\left(|\hat{N}_{0,s}^k|^2+|\hat{\UU}_{0,s}^k|^2\right).
			\nonumber
		\end{align}
		By virtue of \eqref{est-p-varphi} and Young's inequality, we get 
		\begin{align}
			\left|\delta_1\mu^{\f13}\nu p^{\f12} Re\left(\hat{N}_{0,s}^k\bar{\hat{\UU}}_{0,s}^k\right)\right|
			\lesssim\delta_1\mu^{\f43}\left(|\hat{N}_{0,s}^k|^2+p|\hat{\UU}_{0,s}^k|^2\right).
			\label{est-L1-7}
		\end{align} 
		The application of \eqref{est-varphi-1} and \eqref{est-p-varphi} yields
		\begin{align}
			&\left|\left(2\delta_1\mu^{\f13}\f{k^2}{p^2}-\f12\f{k^2\pa_{t}p}{p^3}\right)\varphi^{-\f14}Re\left(\hat{N}_{0,s}^k\bar{\hat{W}}_{0,s}^k\right)
			-2\f{k^2}{p^{\f32}}\varphi^{-\f14}Re\left(\hat{\UU}_{0,s}^k\bar{\hat{W}}_{0,s}^k\right)\right|  
			\label{est-L1-8}\\
			\lesssim&~ \f{k^2}{p}\left(|\hat{N}_{0,s}^k|^2+|\hat{\UU}_{0,s}^k|^2+|\hat{W}_{0,s}^k|^2\right).
			\nonumber
		\end{align}
		For $k=0$, it is easy to verify that
		\begin{align}
			&\left|\delta_1\mu\nu |\xi|Re\left(\hat{N}_{0,s}^0\bar{\hat{\UU}}_{0,s}^0\right)\right|
			\lesssim \delta_1\mu^2\left(|\hat{N}_{0,s}^0|^2+|\xi|^2|\hat{\UU}_{0,s}^0|^2\right).
			\label{est-L1-9}
		\end{align}
		Collecting all estimates \eqref{est-L1-1}-\eqref{est-L1-9} above together, then choosing $A$ defined in \eqref{def-m2} suitably large, $\delta_1$ and $\widetilde{\ep}$ suitably small, we obtain \eqref{est-LL1} immediately.
		We thereby complete the proof of this lemma.
		
	\end{proof}
	
	Furthermore, all the nonlinear terms $I_i(i=1,\cdots,5)$ will be handled according to the following five lemmas.
	\begin{lemm}\label{lem1-1}
		Under the assumptions of Proposition \ref{prop-2-2}, we have
		\begin{align}
			\int_0^tI_{1}(\tau)\,d\tau
			\lesssim  \mu^{-1}|\E(t)|^{\frac12}\int_0^t  \D(\tau) \, d\tau. 
			\label{est-I1-2}
		\end{align}
	\end{lemm}
	\begin{lemm}\label{lem1-2}
		Under the assumptions of Proposition \ref{prop-2-2}, we have
		\begin{align}
			\int_0^t I_2(\tau)\,d\tau \lesssim& \mu^{-1}|\E(t)|^{\frac12}\int_0^t  \D(\tau) \,d\tau.
			\label{est-I2-2} 
		\end{align}
	\end{lemm}
	
	\begin{lemm}\label{lem1-3}
		Under the assumptions of Proposition \ref{prop-2-2}, we have
		\begin{align}
			\int_0^t I_{3}(\tau)\,d\tau
			\lesssim 
			|\E(t)|^{\frac12}\int_0^t  \D(\tau) \, d\tau.
			\label{est-I3-2}
		\end{align}
	\end{lemm}
	
	\begin{lemm}\label{lem1-4}
		Under the assumptions of Proposition \ref{prop-2-2}, we have
		\begin{align}
			\int_0^t I_{4}(\tau) \,d\tau
			\lesssim&\mu^{-1}|\E(t)|^{\frac12}\int_0^t  \D(\tau) \, d\tau.
			\label{est-I5-2}
		\end{align}
	\end{lemm}
	
	\begin{lemm}\label{lem1-5}
		Under the assumptions of Proposition \ref{prop-2-2}, we have
		\begin{align}
			\int_0^tI_{5}(\tau)\,d\tau
			\lesssim &\mu^{-\f23}|\E(t)|^{\frac12}\int_0^t  \D(\tau) \, d\tau.
			\label{est-I6}
		\end{align}
	\end{lemm}
	
	We shall assume Lemmas \ref{lem1-1}–\ref{lem1-5} for the moment and proceed with the proof of Proposition \ref{prop-2-2}; the proofs of these lemmas will be provided at the end of this section.
	
	\begin{proof}[\textbf{Proof of Proposition \ref{prop-2-2}}]
		Integrating identity \eqref{est-energy-6} in time and inserting \eqref{est-LL1} together with \eqref{est-I1-2}–\eqref{est-I6} into the resulting expression, we obtain \eqref{est-1}.
	\end{proof}

	\subsubsection{Proof of Lemma \ref{lem1-1}}
	Remembering the definitions of $\RR_1$ and $\FFF_{1,0,s} (s=0,\cdots,3)$ in \eqref{def_R1-R2} and \eqref{def-F1}, respectively, it is not difficult to deduce from \eqref{def_wF1} that
	\begin{align}
		|I_1|
		\lesssim& 
		\left|\int_{\T\times\R}\PP_{\not=}\RR_1(\V\cdot\widetilde{\nabla} N) \PP_{\not=}N_{0,0} \,dx_1dy_1\right|
		+\sum_{s=1}^{3}
		\left|\int_{\T\times\R}\RR_1\nabla^s(\V\cdot\widetilde{\nabla} N) N_{0,s} \,dx_1dy_1\right|
		\label{est-I1-1}\\
		&+
		\left|\int_{\T\times\R}\left(\PP_{\not=}\RR_1(ND) \PP_{\not=}N_{0,0}+\sum_{s=1}^{3}\RR_1\nabla^s(ND)N_{0,s}\right) \,dx_1dy_1\right|
		\nonumber\\
		\tri& I_{11}+I_{12}+I_{13}.
		\nonumber
	\end{align}
	We recall the definitions of $N_{0,s}$ and $\RR_1$ in \eqref{def_N-j} and \eqref{def_R1-R2}, respectively, to obtain that
	\begin{align}\label{eq-I11}
		\PP_{\not=}\RR_1(\V\cdot\widetilde{\nabla} N)
		=
		\PP_{\not=}\left(\V\cdot\PP_{\not=} \widetilde{\nabla}N_{0,0}
		+[\RR_1,\V]\cdot \PP_{\not=}\widetilde{\nabla}N\right)
		+\RR_1\left(\PP_{\not=}V^2\PP_{0}\pa_{y_1}N\right).
	\end{align}
	Then, employing integration by parts, \eqref{eq-I11}, Lemmas \ref{sec2:lem-com-1-1} and \ref{lem-est-NDW}, we find that		
	\begin{align}
		\int_0^t I_{11}(\tau)\,d\tau
		\lesssim
		&
		\|\widetilde{\nabla} \V\|_{L^\infty_tH^2}
		\| \PP_{\not=} N_{0,0} \|_{L^2_tL^2}^2
		+\|\pa_{y_1} \V_0\|_{L^\infty_tH^2}
		\| \PP_{\not=} N \|_{L^2_tL^2}\| \PP_{\not=} N_{0,0} \|_{L^2_t L^2}
		\label{est-I10}
		\\
		&+\|\PP_{\not=} \V\|_{L^2_tH^2}
		\| \widetilde{\nabla} N \|_{L^\infty_tL^2}
		\| \PP_{\not=} N_{0,0} \|_{L^2_tL^2}
		\nonumber\\
		\lesssim&~ \mu^{-1}|\E(t)|^{\f12}
		\int_0^t \D(\tau)\,d\tau.
		\nonumber
	\end{align}
	In a same manner, we have
	\begin{align}
		I_{12}
		\lesssim &\sum_{s=1}^{3}
		\left|\int_{\T\times\R}|\widetilde{\dive}\V | |N_{0,s}|^2\,dx_1dy_1\right|
		+\sum_{s=1}^{3}
		\left|\int_{\T\times\R} [\RR_1,\V]\cdot \widetilde{\nabla}\nabla^sN \cdot  N_{0,s} 
		\,dx_1dy_1\right|
		\label{est-I11-1}\\
		&+\sum_{s=1}^{3}
		\left|\int_{\T\times\R}\RR_1([\nabla^s,\V]\cdot \widetilde{\nabla}N)\cdot  N_{0,s}  \,dx_1dy_1\right|
		\nonumber\\
		\tri&~ I_{121}+I_{122}+I_{123}.
		\nonumber
	\end{align}
	Notice that
	\begin{align}\label{P0dive}
		\PP_{0}\widetilde{\dive}\V 
		=\PP_{0}\pa_{y_1} V^2.
	\end{align}
	The application of Sobolev’s inequality, \eqref{est-varphi-1}, \eqref{P0dive}, and Lemma \ref{lem-est-NDW} yields 
	\begin{align}
		\int_0^t I_{121}(\tau)\,d\tau
		\lesssim& 
		\sum_{s=1}^{3}\left(\|\PP_{0}\pa_{y_1} V^2\|_{L^\infty_tL^\infty}
		\big(\| \PP_{0} N_{0,s} \|_{L^2_tL^2}^2+\| \PP_{\not=} N_{0,s} \|_{L^2_tL^2}^2\big)
		\right.
		\label{est-I111}\\
		&\left.
		+\|\PP_{\not=}\widetilde{\dive} \V\|_{L^2_tL^\infty} \|N_{0,s}\|_{L^\infty_tL^2} \|\PP_{\not=} N_{0,s} \|_{L^2_tL^2}\right)
		\nonumber\\
		\lesssim&~
		\mu^{-\f56}|\E(t)|^{\f12}
		\int_0^t \D(\tau)\,d\tau.\nonumber		
	\end{align}
	While it is observe that
	\begin{align}\label{est-I122-1}
		I_{122}
		=
		\sum_{s=1}^{3}
		\left|\int_{\T\times\R} \left(\PP_{0}\left([\RR_1,\V]\cdot \widetilde{\nabla}\nabla^sN\right) \cdot  \PP_{0}N_{0,s} +\PP_{\not=}\left([\RR_1,\V]\cdot \widetilde{\nabla}\nabla^sN\right) \cdot  \PP_{\not=}N_{0,s}\right)
		\,dx_1dy_1\right|.
	\end{align}
	We note that the decomposition in \eqref{est-I122-1} will be used extensively throughout the proof.
	Notice that
	\begin{align*}
		m_1=m_2=\varphi=1,\quad\text{for}~~k=0,
	\end{align*}
	which, along with the definition of \eqref{def_R1-R2}, gives 
	\begin{align}\label{est-P0-R1-0}
		\PP_{0}\RR_1 q= \PP_{0}q.
	\end{align}
	Hence, we have
	\begin{align}\label{est-P0-R1}
		\PP_{0}[\RR_1,\PP_{0}\V]\cdot \PP_{0}\widetilde{\nabla}\nabla^sN
		=0.
	\end{align}
	The estimates \eqref{est-varphi-1}, \eqref{est-I122-1}, and \eqref{est-P0-R1}, along with Lemmas \ref{sec2:lem-com-1}-\ref{sec2:lem-com-1-1} and \ref{lem-est-NDW}, imply that
	\begin{align}
		\int_0^t I_{122}(\tau)\,d\tau
		\lesssim&
		\sum_{s=1}^{3}
		\int_0^t
		\left(
		\left|\int_{\T\times\R} [\RR_1,\PP_{\not=}\V]\cdot \PP_{\not=}\widetilde{\nabla}\nabla^sN \cdot  N_{0,s} 
		\,dx_1dy_1\right|
		\right.
		\label{est-I112}	\\
		&\left.+\left|\int_{\T\times\R} \PP_{\not=}\left([\RR_1,\PP_{0}\V]\cdot \PP_{\not=}\widetilde{\nabla}\nabla^sN
		+[\RR_1,\PP_{\not=}\V]\cdot \PP_{0}\widetilde{\nabla}\nabla^sN
		\right) \cdot  \PP_{\not=}N_{0,s} 
		\,dx_1dy_1\right|
		\right)\,d\tau
		\nonumber\\
		\lesssim& \sum_{s=1}^{3}\Big(
		\|\PP_{\not=}\V\|_{L^2_tH^2} \|\PP_{\not=}\widetilde{\nabla} \nabla N\|_{L^2_tH^2}\|N_{0,s}\|_{L^\infty_tL^2} 	
		+\|\PP_{0}\pa_{y_1} \V\|_{L^\infty_tH^2}
		\| \PP_{\not=} \nabla N\|_{L^2_tH^2} \| \PP_{\not=} N_{0,s} \|_{L^2_tL^2}
		\nonumber\\
		&
		+\|\PP_{\not=}\V\|_{L^2_tH^2} \|\PP_{0}\pa_{y_1}^2N\|_{L^\infty_tH^2}\|\PP_{\not=}N_{0,s}\|_{L^2_tL^2} 	\Big)
		\nonumber\\
		\lesssim&~
		\mu^{-1}|\E(t)|^{\f12}\int_0^t \D(\tau)\,d\tau. 		\nonumber	
	\end{align}
	Recalling the definition \eqref{def_R1-R2}, we apply Lemmas \ref{sec2:lem-1}, \ref{sec2:lem-com-2}, \ref{lem-est-NDW}, Sobolev inequality, and \eqref{est-P0-N-D-W} to obtain
	\begin{align}
		\int_0^t I_{123}(\tau)\,d\tau
		\lesssim&
		\sum_{s=1}^{3}\left[
		\|\PP_{0}\pa_{y_1} \V\|_{L^\infty_tH^2}\|\PP_{0}\pa_{y_1} N\|_{L^2_tH^2}\|\PP_{0}N_{0,s}\|_{L^2_tL^2}
		\right.
		\label{est-I113}\\
		&
		+\|\PP_{\not=}\nabla \V\|_{L^2_tH^2}
		\|\PP_{\not=}\widetilde{\nabla} N\|_{L^2_tH^2}
		\|N_{0,s}\|_{L^\infty_tL^2}
		\nonumber\\
		&\left.
		+\|\PP_{0}\pa_{y_1}(N, \V)\|_{L^\infty_tH^2}\|\PP_{\not=}(\widetilde{\nabla} N, \nabla \V)\|_{L^2_tL^2}\|\PP_{\not=}N_{0,s}\|_{L^2_tL^2}\right]
		\nonumber\\
		\lesssim&~
		\mu^{-1}|\E(t)|^{\f12}\int_0^t \D(\tau)\,d\tau.
		\nonumber
	\end{align}
	Substituting \eqref{est-I111} \eqref{est-I112} and \eqref{est-I113} into \eqref{est-I11-1}, we find that
	\begin{align}
		\int_0^t I_{12}(\tau)\,d\tau\lesssim \mu^{-1}|\E(t)|^{\f12}\int_0^t \D(\tau)\,d\tau.
		\label{est-I11-2}
	\end{align}
	With the help of Lemmas \ref{sec2:lem-1}, \ref{lem-est-NDW}, and Sobolev inequality, we readily check that
	\begin{align}
		\int_0^tI_{13}(\tau)\,d\tau
		\lesssim&\sum_{s=1}^{3}
		\|\PP_{0}(N,D)\|_{L^\infty_tH^2}\|\PP_{0}\pa_{y_1}^s(N,D)\|_{L^2_tL^2} 
		\|\PP_{0}N_{0,s}\|_{L^2_tL^2}
		\label{est-I12}\\
		&
		+\sum_{s=0}^{3}\left[
		\|\PP_{\not=}N\|_{L^2_tH^3}\|\PP_{\not=}D\|_{L^2_tH^3}\|N_{0,s}\|_{L^\infty_tL^2}
		+\left(\|\PP_{0}N\|_{L^\infty_tH^3}\|\PP_{\not=}D\|_{L^2_tH^3}
		\right.\right.
		\nonumber\\
		&\left.\left.
		+\|\PP_{\not=}N\|_{L^2_tH^3}\|\PP_{0}D\|_{L^\infty_tH^2}
		+\|\PP_{\not=}N\|_{L^\infty_tH^2}\|\PP_{0}\pa_{y_1}^3D\|_{L^2_tL^2}
		\right) \|\PP_{\not=}N_{0,s}\|_{L^2_tL^2}
		\right]
		\nonumber\\
		\lesssim&~\mu^{-1}|\E(t)|^{\f12}\int_0^t \D(\tau)\,d\tau. \nonumber
	\end{align}
	Combining \eqref{est-I1-1}, \eqref{est-I10}, \eqref{est-I11-2}, and \eqref{est-I12}, one can deduce \eqref{est-I1-2} directly.

	\subsubsection{Proof of Lemma \ref{lem1-2}}
	Recalling the definitions of $\RR_1$ and $\RR_5$ in \eqref{def_R1-R2} and \eqref{def_R5}, respectively, and referring to \eqref{def-bf-F2} and \eqref{def_wF2}, we obtain
	\begin{align*}
		\FFF_{2,0,s}=\RR_1\widetilde{\Delta}^{-\f12}\nabla^s\FFF_2=\RR_1\widetilde{\Delta}^{-\f12}\nabla^s\widetilde{\dive}\FF_{2}=\RR_1\RR_5\nabla^s\FF_{2}.
	\end{align*}
	We begin by dealing with the second term by recalling \eqref{def-bf-F2} as follows
	\begin{align}
		I_2
		\leq & \left|\int_{\T\times\R}\PP_{\not=}\RR_1\RR_5\FF_2 \PP_{\not=}\UU_{0,0} \,dx_1dy_1\right|+\sum_{s=1}^{3}
		\left|\int_{\T\times\R}\RR_1\RR_5\nabla^s\FF_2 \UU_{0,s} \,dx_1dy_1\right|
		\label{est-I2-1}\\
		\lesssim & \left|\int_{\T\times\R}\PP_{\not=}\RR_1\RR_5\left(\V\cdot\widetilde{\nabla}\V\right)\PP_{\not=}\UU_{0,0} \,dx_1dy_1\right|
		+\sum_{s=1}^{3}
		\left|\int_{\T\times\R}\RR_1\RR_5\nabla^s\left(\V\cdot\widetilde{\nabla}\V\right)\UU_{0,s} \,dx_1dy_1\right|
		\nonumber\\
		&+
		\left|\int_{\T\times\R}\left(\PP_{\not=}\RR_1\RR_5\left(f(N)\widetilde{\nabla} N\right)\PP_{\not=}\UU_{0,0}+\sum_{s=1}^{3}\RR_1\RR_5\nabla^s\left(f(N)\widetilde{\nabla} N\right)\UU_{0,s}\right) \,dx_1dy_1\right|
		\nonumber\\
		&+
		\left|\int_{\T\times\R}\left(\PP_{\not=}\RR_1\RR_5\left(g(N)\left(\mu \widetilde{\Delta} \V + (\mu + \mu')\widetilde{\nabla} \widetilde{\dive}\V\right)\right)\PP_{\not=}\UU_{0,0}
		\right.\right.
		\nonumber\\
		&\left.\left.+\sum_{s=1}^{3}\RR_1\RR_5\nabla^s\left(g(N)\left(\mu \widetilde{\Delta} \V + (\mu + \mu')\widetilde{\nabla} \widetilde{\dive}\V\right)\right)\UU_{0,s}\right) \,dx_1dy_1\right|
		\nonumber\\
		\tri& \sum_{i=1}^4 I_{2i},\nonumber
	\end{align}
	where $f(N)$ and $g(N)$ are defined in \eqref{def-f-g}.
	Recalling  the definitions of $\RR_1$, $\RR_5$, and $\UU_{0,0}$ in \eqref{def_R1-R2}, \eqref{def_R5}, and \eqref{def_D-j}, respectively, we have
	\begin{align*}
		\PP_{\not=}\RR_1\RR_5\left(\V\cdot\widetilde{\nabla}\V\right)
		=&
		\PP_{\not=}\left[\V\cdot\PP_{\not=} \widetilde{\nabla}\UU_{0,0}
		+\PP_{\not=}[\RR_1,\V]\cdot\PP_{\not=} \widetilde{\nabla}\RR_5\V
		\right.
		\\
		&\left. +\PP_{\not=}\RR_1\left([\RR_5,\V]\cdot\PP_{\not=} \widetilde{\nabla}\V\right)
		+\RR_1\RR_5(\PP_{\not=}V^2\PP_{0}\pa_{y_1} \V)
		\right]
		.		\end{align*}
	This estimate, along with integrating by parts, and Lemma \ref{lem-est-NDW}, leads us to get that
	\begin{align}
		\int_0^t I_{21}(\tau)\,d\tau
		\lesssim&\|\widetilde{\nabla} \V\|_{L^\infty_tH^2}
		\left(\| \PP_{\not=} \UU_{0,0} \|_{L^2_tL^2}+\| \PP_{\not=} \V \|_{L^2_tL^2}\right)
		\| \PP_{\not=} \UU_{0,0} \|_{L^2_tL^2}
		\label{est-I20}\\
		&+\big(
		\| \PP_{\not=} \V \|_{L^\infty_tL^2} \| \PP_{\not=} \widetilde{\nabla}\V \|_{L^2_tL^2}
		+\|\PP_{\not=} V^2\|_{L^2_tL^\infty}\| \PP_{0}\pa_{y_1} \V \|_{L^\infty_tL^2}\Big)
		\| \PP_{\not=} \UU_{0,0} \|_{L^2_tL^2}
		\nonumber\\
		\lesssim&~ \mu^{-1}|\E(t)|^{\f12}\int_0^t \D(\tau)\,d\tau.
		\nonumber
	\end{align}
	By integration by parts again,  we find that
	\begin{align}
		I_{22}
		\lesssim&\sum_{s=1}^{3}
		\left|\int_{\T\times\R}\widetilde{\dive}\V |\UU_{0,s}|^2\,dx_1dy_1\right|
		+\sum_{s=1}^{3}
		\left|\int_{\T\times\R}[\RR_1,\V]\cdot \widetilde{\nabla}\nabla^s\RR_5\V \cdot  \UU_{0,s} \,dx_1dy_1\right|
		\label{est-I21-1}\\
		&+\sum_{s=1}^{3}
		\left|\int_{\T\times\R}\RR_1([\RR_5,\V]\cdot \widetilde{\nabla}\nabla^s\V)\cdot  \UU_{0,s}  \,dx_1dy_1\right|
		\nonumber\\
		&+\sum_{s=1}^{3}
		\left|\int_{\T\times\R}\RR_1\RR_5([\nabla^s,\V]\cdot \widetilde{\nabla}\V)\cdot  \UU_{0,s} \,dx_1dy_1\right|
		\nonumber\\
		\tri& \sum_{i=1}^4I_{22i}.
		\nonumber
	\end{align}
	Owing to $\RR_5\V=\widetilde{\Delta}^{-\f12}D$,
	it then follows, in a similar manner to estimating \eqref{est-I111} and \eqref{est-I112}, that
	\begin{align}
		\int_0^t (I_{221}+I_{222})(\tau)\,d\tau
		\lesssim
		\mu^{-1}|\E(t)|^{\f12}\int_0^t \D(\tau)\,d\tau.
		\label{est-I211-I212}
	\end{align}
	Notice that
	\begin{align}\label{est-P0-R5}
		\PP_{0} \RR_5\V=\PP_{0}\RR_4V^2,
	\end{align}
	which, together with \eqref{est-P0-R1-0}, implies that
	\begin{align}\label{est-P0-R1R5}
		\PP_{0}\RR_1\left([\RR_5,\PP_{0}\V]\cdot \PP_{0}\widetilde{\nabla}\nabla^s\V\right)
		=[\RR_4,\PP_{0}V^2] \PP_{0}\pa_{y_1}^{s+1}V^2.
	\end{align}
	Using a similar decomposition as \eqref{est-I122-1}, and applying  \eqref{est-P0-R1R5}, we obtain that
	\begin{align}\label{est-I223-1}
		I_{223}
		\lesssim&
		\sum_{s=1}^{3}
		\left|\int_{\T\times\R}[\RR_4,\PP_{0}V^2]\cdot \PP_{0}\pa_{y_1}^{s+1}V^2\cdot \PP_{0}\UU_{0,s}\,dx_1dy_1\right|
		\\
		&+\sum_{s=1}^{3}
		\left|\int_{\T\times\R}\RR_1[\RR_5,\PP_{\not=}\V]\cdot \PP_{\not=}\widetilde{\nabla}\nabla^s\V
		\UU_{0,s} \,dx_1dy_1\right|
		\nonumber\\
		&+\sum_{s=1}^{3}
		\left|\int_{\T\times\R}\left(\RR_1[\RR_5,\PP_{0}\V]\cdot \PP_{\not=}\widetilde{\nabla}\nabla^s\V+\RR_1[\RR_5,\PP_{\not=}\V]\cdot \PP_{0}\widetilde{\nabla}\nabla^s\V\right)
		\PP_{\not=}\UU_{0,s} \,dx_1dy_1\right|.
		\nonumber
	\end{align}
	By virtue of Lemmas \ref{sec2:lem-1}, \ref{sec2:lem-com-1}, and \ref{lem-est-NDW}, we deduce from \eqref{est-I223-1} that
	\begin{align}
		\int_0^t I_{223}(\tau)\,d\tau
		\lesssim& \sum_{s=1}^{3}\left(\|\PP_{0}\pa_{y_1} V^2\|_{L^\infty_tL^\infty}\| \PP_{0} \pa_{y_1}^sV^2 \|_{L^2_tL^2} \| \PP_{0} \UU_{0,s} \|_{L^2_tL^2}
		\right.
		\label{est-I213}\\
		&+\|\PP_{\not=}\widetilde{\nabla} \V\|_{L^2_tL^\infty}
		\| \PP_{\not=} \nabla^s \V\|_{L^2_tL^2} \| \UU_{0,s} \|_{L^\infty_tL^2}
		\nonumber\\
		&\left.
		+\|\PP_{0}\pa_{y_1} \V\|_{L^\infty_tH^2}
		\| \PP_{\not=}( \nabla\V, \widetilde{\nabla} V^2)\|_{L^2_tH^2} \| \PP_{\not=} \UU_{0,s} \|_{L^2_tL^2}
		\right)
		\nonumber\\
		\lesssim&~ \mu^{-1}|\E(t)|^{\f12}\int_0^t \D(\tau)\,d\tau.\nonumber				
	\end{align}
	Again thanks to Lemmas \ref{sec2:lem-1}, \ref{sec2:lem-com-2}, and \ref{lem-est-NDW}, along with \eqref{est-P0-R5}, it then follows from a similar decomposition as \eqref{est-I122-1} that
	\begin{align}
		\int_0^t I_{224}(\tau)\,d\tau
		\lesssim&
		\sum_{s=1}^3\left(\|\PP_{0}\pa_{y_1} \V\|_{L^\infty_tH^2}\|\PP_{0}\pa_{y_1} V^2\|_{L^2_tH^2}\|\PP_{0}\UU_{0,s}\|_{L^2_tL^2}
		\right.
		\label{est-I214}\\
		&+\|\PP_{\not=}\nabla \V\|_{L^2_tH^2}\|\PP_{\not=}\widetilde{\nabla} \V\|_{L^2_tH^2}
		 \|\UU_{0,s}\|_{L^\infty_tL^2}
		 \nonumber\\
		&\left.
		+\|\PP_{0}\pa_{y_1}\V\|_{L^\infty_tH^2}\|\PP_{\not=}(\widetilde{\nabla} \V, \nabla \V)\|_{L^2_tH^2}\|\PP_{\not=}\UU_{0,s}\|_{L^2_tL^2}\right)
		\nonumber\\
		\lesssim&~\mu^{-1}|\E(t)|^{\f12}\int_0^t \D(\tau)\,d\tau.\nonumber
	\end{align}
	Substituting \eqref{est-I211-I212} \eqref{est-I213} and \eqref{est-I214} into \eqref{est-I21-1}, we find that
	\begin{align}
		\int_0^t I_{22}(\tau)\,d\tau\lesssim \mu^{-1}|\E(t)|^{\f12}\int_0^t \D(\tau)\,d\tau.\label{est-I21-2}
	\end{align}
	For $I_{23}$, we divide it as the following two terms:
	\begin{align}
		I_{23}
		=&
		\left|\int_{\T\times\R}\left[\PP_{\not=}\left(\RR_1\RR_5\left(f(N)\widetilde{\nabla} N\right)\right)\PP_{\not=}\UU_{0,0}+\sum_{s=1}^{2}\left(\RR_1\RR_5\nabla^s\left(f(N)\widetilde{\nabla} N\right)\right)\UU_{0,s}\right] \,dx_1dy_1\right|
		\label{est-I22-1}\\ 
		&+
		\left|\int_{\T\times\R}\left(\RR_1\RR_5\nabla^3\left(f(N)\widetilde{\nabla} N\right)\right)\UU_{0,3} \,dx_1dy_1\right|
		\nonumber\\
		\tri&~  I_{231}+I_{232}.
		\nonumber
	\end{align}
	While it is easy to observe, by applying \eqref{est-fn-gn}, Lemmas \ref{sec2:lem-1} and \ref{lem-est-NDW}, that
	\begin{align}
		\int_0^t I_{231}(\tau)\,d\tau
		\lesssim&
		\sum_{s=1}^{2}\|\PP_{0}N\|_{L^\infty_tH^3}
		\| \PP_{0} \pa_{y_1} N \|_{L^2_tH^2}
		\| \PP_{0} \UU_{0,s} \|_{L^2_tL^2}
		\label{est-I221}\\
		&+\sum_{s=0}^{2}\left( \|\PP_{\not=}N\|_{L^2_tH^2}
		\| \PP_{\not=} \widetilde{\nabla}N \|_{L^2_tH^2}
		\|  \UU_{0,s} \|_{L^\infty_tL^2}
		\right.
		\nonumber\\
		&\left.
		+\|\PP_{0}N\|_{L^\infty_tH^3}
		\| \PP_{\not=}( \widetilde{\nabla} N,N) \|_{L^2_tH^2}
		\| \PP_{\not=} \UU_{0,s} \|_{L^2_tL^2} \right)
		\nonumber\\
		\lesssim&~ \mu^{-1}|\E(t)|^{\f12}\int_0^t \D(\tau)\,d\tau.
		\nonumber
	\end{align}
	As for $ I_{232}$, notice that there exists a smooth function $\tilde{f}$, such that
	\begin{align*}
		\nabla^3 (f(N)\widetilde{\nabla}N)=\na^3\widetilde{\nabla}(\tilde{f}(N)).
	\end{align*}
	Thus, using Lemmas \ref{sec2:lem-1}, \ref{lem-est-NDW}, and \eqref{est-fn-gn}, we integrate by parts to deduce
	\begin{align}
		\int_0^t I_{232}(\tau)\,d\tau
		\lesssim&
		\int_0^t\left|\int_{\T\times\R}\left(\RR_1\RR_5\left( \nabla^3 (\tilde{f}(N))\right)
		\right) \widetilde{\nabla} \UU_{0,3} \,dx_1dy_1\right|\,d\tau
		\label{est-I222}\\
		\lesssim& \left(
		\|\PP_{0}  N\|_{L^\infty_tH^2}\|\PP_{0}  \pa_{y_1}N\|_{L^2_tH^2}\| \PP_{0}\widetilde{\nabla} \UU_{0,3} \|_{L^2_tL^2}
		\right.
		\nonumber\\
		&+\|\PP_{\not=}  N\|_{L^\infty_tH^3}\|\PP_{\not=}  N\|_{L^2_tH^3}\| \widetilde{\nabla} \UU_{0,3} \|_{L^2_tL^2}
		\nonumber\\
		&\left.
		+\|\PP_{0}  N\|_{L^\infty_tH^3}
		\| \PP_{\not=}N \|_{L^2_tH^3}\| \PP_{\not=}\widetilde{\nabla} \UU_{0,3} \|_{L^2_tL^2}
		\right)
		\nonumber\\
		\lesssim&~\mu^{-1}|\E(t)|^{\f12}\int_0^t \D(\tau)\,d\tau.
		\nonumber
	\end{align}		
	The combination of \eqref{est-I22-1}-\eqref{est-I222} yields that
	\begin{align}
		\int_0^t I_{23}(\tau)\,d\tau\lesssim \mu^{-1}|\E(t)|^{\f12}\int_0^t \D(\tau)\,d\tau.\label{est-I22-2}
	\end{align}
	Now we return to estimate $I_{24}$. By integration by parts, we obtain
	\begin{align}
		I_{24}
		\lesssim&
		\left|\int_{\T\times\R}\left(\PP_{\not=}\RR_1\RR_5\left(g(N)\left(\mu \widetilde{\Delta} \V + (\mu + \mu')\widetilde{\nabla} \widetilde{\dive}\V\right)\right)\PP_{\not=}\UU_{0,0}
		\right.\right.
		\label{est-I23-1}\\
		&\left.\left.+\sum_{s=1}^{2}\RR_1\RR_5\nabla^s\left(g(N)\left(\mu \widetilde{\Delta} \V + (\mu + \mu')\widetilde{\nabla} \widetilde{\dive}\V\right)\right)\UU_{0,s}\right) \,dx_1dy_1\right|
		\nonumber\\
		&+\left|\int_{\T\times\R}\RR_1\RR_5\nabla^3\left(g(N)\left(\mu \widetilde{\Delta} \V + (\mu + \mu')\widetilde{\nabla} \widetilde{\dive}\V\right)\right)\UU_{0,3} \,dx_1dy_1\right|
		\nonumber\\
		\tri&~  I_{241}+I_{242}.
		\nonumber
	\end{align}
	With the help of \eqref{est-fn-gn}, Lemmas \ref{sec2:lem-1} and \ref{lem-est-NDW},  we get
	\begin{align}
		\int_0^t I_{241}(\tau)\,d\tau 
		\lesssim&~\mu\sum_{s=1}^{2}
		\|\PP_{0}N\|_{L^\infty_tH^2}\|\PP_{0}\pa_{y_1}^{2}\V\|_{L^2_tH^2}\|\PP_0\UU_{0,s}\|_{L^2_tL^2}
		\label{est-I231} \\
		&+\mu\sum_{s=0}^{2}
		\left(\|\PP_{\not=}N\|_{L^2_tH^2}\|\PP_{\not=}\widetilde{\nabla}^2\V\|_{L^2_tH^2}\|\UU_{0,s}\|_{L^\infty_tL^2}
		\right.
		\nonumber\\
		&\left.
		+\left(\|\PP_{0}N\|_{L^\infty_tH^2}\|\PP_{\not=}\widetilde{\nabla}^2\V\|_{L^2_tH^2} 
		+\|\PP_{\not=}N\|_{L^\infty_tH^2}\|\PP_{0}\pa_{y_1}^{2}\V\|_{L^2_tH^2}\right)
		\|\PP_{\not=}\UU_{0,s}\|_{L^2_tL^2}
		\right)
		\nonumber\\
		\lesssim&~\mu^{-\f13}|\E(t)|^{\f12}\int_0^t \D(\tau)\,d\tau.
		\nonumber
	\end{align}
	Noticing that $\nabla^3\left(\mu \widetilde{\Delta} \V + (\mu + \mu')\widetilde{\nabla}\widetilde{\dive}\V\right)$ in $I_{242}$ cannot be controlled by all the energy in the $H^3$ framework, we deal with this term by integration by parts and using the dissipation estimate of $D_{0,3}$, namely $\widetilde{\nabla}D_{0,3}$.
	More precisely, we have
	\begin{align}
		I_{242}
		\lesssim& \left|\sum_{s=0}^2\int_{\T\times\R}\RR_1\RR_5\nabla^{s}\left(\nabla g(N)\nabla^{2-s}\left(\mu \widetilde{\Delta} \V + (\mu + \mu')\widetilde{\nabla}\widetilde{\dive}\V\right)\right)
		\UU_{0,3} \,dx_1dy_1\right| 
		\label{est-I232-1}\\
		&+\left| \int_{\T\times\R}
		\left[\RR_1\RR_5\left( g(N)\nabla^3\left(\mu \widetilde{\nabla} \V + (\mu + \mu')\widetilde{\dive}\V\right)\right)\widetilde{\nabla}\UU_{0,3}
		\right.\right.
		\nonumber\\
		&\left.\left.
		\quad
		+  \RR_1\RR_5\left(\widetilde{\nabla} g(N)\nabla^3\left(\mu \widetilde{\nabla} \V + (\mu + \mu')\widetilde{\dive}\V\right)\right)
		\UU_{0,3} \right]\,dx_1dy_1\right|
		\nonumber\\	
		\tri&~I_{2421}+I_{2422}.
		\nonumber
	\end{align}
	According to Lemmas \ref{sec2:lem-1}, \ref{lem-est-NDW}, the Sobolev inequality, and \eqref{est-fn-gn}, we see that
	\begin{align}
		\int_0^t I_{2421}(\tau)\,d\tau
		\lesssim&~\mu 
		\|\PP_{0}\pa_{y_1}g(N)\|_{L^\infty_tH^2}\|\PP_{0}\pa_{y_1}^2\V\|_{L^2_tH^2}
		\|\PP_0\UU_{0,3}\|_{L^2_tL^2}
		+\mu \|\PP_{\not=}\nabla g(N)\|_{L^2_tH^2}
		\label{est-I2321}\\
		&\times
		\|\PP_{\not=}\widetilde{\nabla}^2\V\|_{L^2_tH^2}
		\|\UU_{0,3}\|_{L^\infty_tL^2}
		+\mu \left(\|\PP_{0}\pa_{y_1}g(N)\|_{L^\infty_tH^2}\|\PP_{\not=}\widetilde{\nabla}^2\V\|_{L^2_tH^2}
		\right.
		\nonumber\\
		&\left.
		+\|\PP_{\not=}\nabla g(N)\|_{L^\infty_tH^2}\|\PP_{0}\pa_{y_1}^2\V\|_{L^2_tH^2}
		\right)\|\PP_{\not=}\UU_{0,3}\|_{L^2_tL^2}
		\nonumber
		\\
		\lesssim&~\mu^{-\f13}|\E(t)|^{\f12}\int_0^t \D(\tau)\,d\tau.
		\nonumber
	\end{align}
	For $I_{2422}$, invoking Lemmas~\ref{sec2:lem-1}, \ref{lem-est-NDW}, the Sobolev inequality, and \eqref{est-fn-gn}, we obtain
	\begin{align}
		\int_0^t I_{2422}(\tau)\,d\tau
		\lesssim&~ \mu \|\PP_{0}\pa_{y_1}^4\V\|_{L^2_tL^2}
		\left(
		\|\PP_{0}N\|_{L^\infty_tH^2}\|\PP_0\pa_{y_1}\UU_{0,3}\|_{L^2_tL^2}
		+\|\PP_{0}\pa_{y_1}N\|_{L^\infty_tH^2}\|\PP_0\UU_{0,3}\|_{L^2_tL^2}
		\right.
		\label{est-I2322}\\
		&
		\left.
		+\|\PP_{\not=}N\|_{L^\infty_tH^2}
		\|\PP_{\not=}\widetilde{\nabla}\UU_{0,3}\|_{L^2_tL^2}
		+\|\PP_{\not=}\widetilde{\nabla}N\|_{L^\infty_tH^2}
		\|\PP_{\not=}\UU_{0,3}\|_{L^2_tL^2}
		\right)
		\nonumber\\
		&+\mu \|\PP_{\not=}\widetilde{\nabla}\nabla^3\V\|_{L^2_tL^2}
		\left(
		\|\PP_{\not=}N\|_{L^\infty_tH^2}\|\widetilde{\nabla}\UU_{0,3}\|_{L^2_tL^2}
		+\|\PP_{\not=}\widetilde{\nabla}N\|_{L^2_tH^2}\|\UU_{0,3}\|_{L^\infty_tL^2}
		\right.
		\nonumber\\
		&
		\left.
		+\|\PP_{0}N\|_{L^\infty_tH^2}\|\PP_{\not=}\widetilde{\nabla}\UU_{0,3}\|_{L^2_tL^2}
		+\|\PP_{0}\pa_{y_1}N\|_{L^\infty_tH^2}\|\PP_{\not=}\UU_{0,3}\|_{L^2_tL^2}
		\right)
		\nonumber\\
		\lesssim&~\mu^{-\f13}|\E(t)|^{\f12}\int_0^t \D(\tau)\,d\tau.
		\nonumber
	\end{align}
	Substituting \eqref{est-I2321} and \eqref{est-I2322} into \eqref{est-I232-1}, we deduce that
	\begin{align}
		\int_0^t I_{242}(\tau)\,d\tau \lesssim \mu^{-\f13}|\E(t)|^{\f12}\int_0^t \D(\tau)\,d\tau. \label{est-I232-2}
	\end{align}
	Substituting \eqref{est-I231} and \eqref{est-I232-2} into \eqref{est-I23-1}, we obtain
	\begin{align}
		\int_0^t I_{24}(\tau)\,d\tau \lesssim \mu^{-\f13}|\E(t)|^{\f12}\int_0^t \D(\tau)\,d\tau. \label{est-I23-2}
	\end{align}
	The combination of \eqref{est-I2-1}, \eqref{est-I20}, \eqref{est-I21-2}, \eqref{est-I22-2} and \eqref{est-I23-2} yields \eqref{est-I2-2} directly.

	\subsubsection{Proof of Lemma \ref{lem1-3}}
	Recalling \eqref{def_wF1}, \eqref{def_wF2}, \eqref{def_R5}, and \eqref{est-P0-R5} and integrating by parts, we decompose $I_3$ into the following three terms:
	\begin{align}
		I_3\leq &~\mu\sum_{s=1}^{3}\left|\int_{\T\times\R}\left(\PP_{0}|\pa_{y_1}|^{-1}\FFF_{1,0,s}\PP_{0}\UU_{0,s}+\PP_{0}\FFF_{2,0,s}\PP_{0}|\pa_{y_1}|^{-1}N_{0,s}\right)\,dx_1dy_1\right|
		\label{est-I3-1}\\
		\lesssim 
		&~ \mu \sum_{s=0}^{2}\left|
		\int_{\T\times\R} \PP_{0}\RR_4\pa_{y_1}^s\left(\V\cdot\widetilde{\nabla} N\right)
		\PP_{0}\RR_4\pa_{y_1}^{s+1}V^2
		\,dx_1dy_1
		\right|
		\nonumber\\
		&+\mu \sum_{s=0}^{2}\left|
		\int_{\T\times\R} \PP_{0}\RR_4\pa_{y_1}^s\left(ND\right)
		\PP_{0}\RR_4\pa_{y_1}^{s+1}V^2\,dx_1dy_1
		\right|
		\nonumber\\
		&+\mu \sum_{s=0}^{2}\left|
		\int_{\T\times\R}\PP_{0}\RR_4\pa_{y_1}^{s}\left(\V\cdot\widetilde{\nabla} V^2
		+f(N)\widetilde{\nabla} N
		\right.\right.
		\nonumber\\
		&\left.\left.
		+g(N)(\mu\widetilde{\Delta}\V
		+(\mu+\mu')\widetilde{\nabla}\widetilde{\dive}\V)
		\right)\PP_{0}\RR_4\pa_{y_1}^{s+1}N\,dx_1dy_1
		\right|
		\nonumber\\
		\tri& \sum_{i=1}^3I_{3i}.
		\nonumber
	\end{align}
	Then, we will handled all the terms $I_{3i}~ (i=1,2,3)$. 
	Some direct computations lead us to get that
	\begin{align*}
		\PP_{0}\left(\V\cdot\widetilde{\nabla} N\right)
		=\PP_{0}V^2\PP_{0} \pa_{y_1}N
		+\PP_{0}\left(\PP_{\not=}\V\cdot\PP_{\not=}\widetilde{\nabla} N\right).
	\end{align*}
	We observe that $\PP_{0}V^1$ does not appear; therefore, according to the Sobolev inequality, Lemmas \ref{sec2:lem-1} and \ref{lem-est-NDW}, we have
	\begin{align}
		\int_0^tI_{31}(\tau)\,d\tau
		\lesssim&~
		\mu \sum_{s=0}^{2}\left(\|\PP_{0}V^2\|_{L^\infty_tH^2}\|\PP_{0}\pa_{y_1} N\|_{L^2_tH^2}
		\|\PP_{0}\pa_{y_1}^{s+1}V^2\|_{L^2_tL^2}
		\right.
		\label{est-I31}\\
		&\left.
		+
		\|\PP_{\not=}\V\|_{L^2_tH^2}\|\PP_{\not=}\widetilde{\nabla} N\|_{L^2_tH^2}	\|\PP_{0}\pa_{y_1}^{s+1}V^2\|_{L^\infty_tL^2}
		\right)
		\nonumber\\
		\lesssim&~|\E(t)|^{\f12}\int_0^t \D(\tau)\,d\tau.
		\nonumber
	\end{align}
	Following a similar derivation as in \eqref{est-I31}, we obtain
	\begin{align}
		\int_0^t(I_{32}+I_{33})(\tau)\,d\tau
		\lesssim 
		|\E(t)|^{\f12}\int_0^t \D(\tau)\,d\tau.
		\label{est-I32}
	\end{align}
	Collecting \eqref{est-I3-1}-\eqref{est-I32} together, we obtain \eqref{est-I3-2} immediately.

	\subsubsection{Proof of Lemma \ref{lem1-4}}

	Unlike the estimate for $I_{3}$,  the ``bad derivative" $\widetilde{\nabla}$ does not coincide with  the ``good derivative" $\nabla$ on non-zero modes,  so $\nabla$ and $\widetilde{\Delta}^{-\f12}$ can not be canceled.
	Moreover, neither $\nabla^3\widetilde{\nabla}N$ nor $\PP_{0}V^1$ can be controlled by the energy we have defined.
	Consequently, we must combine the two transport terms and integrate by parts to handle $I_4$.  More precisely, using \eqref{def_wF1} and \eqref{def_wF2}, we obtain
	\begin{align}
		I_4\leq &\sum_{s=0}^3\left|\int_{\T\times\R}\left(\PP_{\not=}\RR_3\FFF_{1,0,s}\PP_{\not=}\widetilde{\Delta}^{-\f12}\UU_{0,s}+\PP_{\not=}\FFF_{2,0,s}\PP_{\not=}\widetilde{\Delta}^{-\f12}\RR_3N_{0,s}\right)\,dx_1dy_1\right|
		\label{est-I5-1}\\
		\lesssim&
		\sum_{s=0}^{3}\left|\int_{\T\times\R}\left(\PP_{\not=}\RR_1\RR_3\nabla^s(\V\cdot\widetilde{\nabla}N)\PP_{\not=} \widetilde{\Delta}^{-\f12}\UU_{0,s}
		+\PP_{\not=}\RR_1\RR_5\nabla^s(\V\cdot\widetilde{\nabla}\V)\PP_{\not=} \RR_3\widetilde{\Delta}^{-\f12}N_{0,s}
		\right)\,dx_1dy_1\right|
		\nonumber\\
		&+\sum_{s=0}^{3}\left|\int_{\T\times\R}\PP_{\not=}\RR_1\RR_3\nabla^s(ND)\PP_{\not=} \widetilde{\Delta}^{-\f12}\UU_{0,s}
		\,dx_1dy_1\right|
		\nonumber\\
		&
		+\sum_{s=0}^{3}\left|\int_{\T\times\R}\PP_{\not=}\RR_1\RR_5\nabla^s(f(N)\widetilde{\nabla}N)\PP_{\not=} \RR_3\widetilde{\Delta}^{-\f12}N_{0,s}
		\,dx_1dy_1\right|
		\nonumber\\
		&+\sum_{s=0}^{3}\left|\int_{\T\times\R}\PP_{\not=}\RR_1\RR_5\nabla^s\left(g(N)(\mu\widetilde{\Delta}\V+(\mu+\mu')\widetilde{\nabla}\widetilde{\dive}\V)\right)\PP_{\not=} \RR_3\widetilde{\Delta}^{-\f12}N_{0,s}
		\,dx_1dy_1\right|
		\nonumber\\
		\tri& \sum_{i=1}^{4}I_{4i}.
		\nonumber
	\end{align}
	For the first term $I_{41}$, while it is easy to observe that
	\begin{align*}
		\PP_{\not=}\RR_1\RR_3\nabla^s(\V\cdot\widetilde{\nabla}N)
		=&~\RR_1\RR_3\nabla^s(\PP_{0}\V\cdot\widetilde{\nabla}\PP_{\not=}N)
		+\PP_{\not=}\RR_1\RR_3\nabla^s(\PP_{\not=}\V\cdot\widetilde{\nabla}N)
		\\
		=&~\PP_{0}\V\cdot\widetilde{\nabla}\PP_{\not=}\RR_3 N_{0,s}
		+[\RR_1\RR_3,\PP_{0}\V]\cdot\widetilde{\nabla}\PP_{\not=}\nabla^sN
		\\
		&
		+\RR_1\RR_3\big([\nabla^s,\PP_{0}\V]\cdot\widetilde{\nabla}\PP_{\not=}N\big)
		+\PP_{\not=}\RR_1\RR_3\nabla^s(\PP_{\not=}\V\cdot\widetilde{\nabla}N),
		\nonumber\\
		\PP_{\not=}\RR_1\RR_5\nabla^s(\V\cdot\widetilde{\nabla}\V)
		=&~\RR_1\RR_5\nabla^s(\PP_{0}\V\cdot\widetilde{\nabla}\PP_{\not=}\V)
		+\PP_{\not=}\RR_1\RR_5\nabla^s(\PP_{\not=}\V\cdot\widetilde{\nabla}\V)
		\\
		=& ~\PP_{0}\V\cdot\widetilde{\nabla}\PP_{\not=}\UU_{0,s}
		+[\RR_1\RR_5,\PP_{0}\V]\cdot\widetilde{\nabla}\PP_{\not=}\nabla^s\V
		\\
		&~
		+\RR_1\RR_5\big([\nabla^s,\PP_{0}\V]\cdot\widetilde{\nabla}\PP_{\not=}\V\big)
		+\PP_{\not=}\RR_1\RR_5\nabla^s(\PP_{\not=}\V\cdot\widetilde{\nabla}\V).
		\nonumber
	\end{align*}
	These estimates enable us to express $I_{41}$ as the sum of four terms:
	\begin{align}
		I_{41}
		\lesssim&\sum_{s=0}^{3}\left|\int_{\T\times\R}\left(\PP_{0}\V\cdot\widetilde{\nabla}\PP_{\not=}\RR_3 N_{0,s}\PP_{\not=} \widetilde{\Delta}^{-\f12}\UU_{0,s}
		+\PP_{0}\V\cdot\widetilde{\nabla}\PP_{\not=}\UU_{0,s}\PP_{\not=} \RR_3\widetilde{\Delta}^{-\f12}N_{0,s}
		\right)\,dx_1dy_1\right|
		\label{est-I51-1}\\
		&+\sum_{s=0}^{3}\Big|\int_{\T\times\R}\big([\RR_1\RR_3,\PP_{0}\V]\cdot\widetilde{\nabla}\PP_{\not=}\nabla^sN\cdot\PP_{\not=}\widetilde{\Delta}^{-\f12}\UU_{0,s}
		\nonumber\\
		&\qquad~~+[\RR_1\RR_5,\PP_{0}\V]\cdot\widetilde{\nabla}\PP_{\not=}\nabla^s\V\cdot \PP_{\not=} \RR_3\widetilde{\Delta}^{-\f12}N_{0,s}
		\big)\,dx_1dy_1\Big|
		\nonumber\\
		&+\sum_{s=1}^{3}\Big|\int_{\T\times\R}\big(\RR_1\RR_3\big([\nabla^s,\PP_{0}\V]\cdot\widetilde{\nabla}\PP_{\not=}N\big)\cdot\PP_{\not=} \widetilde{\Delta}^{-\f12}\UU_{0,s}
		\nonumber\\
		&\qquad~~+\RR_1\RR_5\big([\nabla^s,\PP_{0}\V]\cdot\widetilde{\nabla}\PP_{\not=}\V\big)\cdot\PP_{\not=} \RR_3\widetilde{\Delta}^{-\f12}N_{0,s}
		\big)\,dx_1dy_1\Big|
		\nonumber\\
		&+\sum_{s=0}^{3}\Big|\int_{\T\times\R}\big(\PP_{\not=}\RR_1\RR_3\nabla^s(\PP_{\not=}\V\cdot\widetilde{\nabla}N)\PP_{\not=} \widetilde{\Delta}^{-\f12}\UU_{0,s}
		\nonumber\\
		&\qquad~~+\PP_{\not=}\RR_1\RR_5\nabla^s(\PP_{\not=}\V\cdot\widetilde{\nabla}\V)\PP_{\not=} \RR_3\widetilde{\Delta}^{-\f12}N_{0,s}
		\big)\,dx_1dy_1\Big|
		\nonumber\\
		\tri& \sum_{i=1}^{4}I_{41i}.
		\nonumber
	\end{align}
	For $I_{411}$, by some direct calculations, we have
	\begin{align}\label{est-I411-1}
		\PP_{0}\V\cdot\widetilde{\nabla}\PP_{\not=}\RR_3 N_{0,s}
		=\widetilde{\Delta}^{\f12}\left(\PP_{0}\V\cdot\widetilde{\nabla}\PP_{\not=}\RR_3\widetilde{\Delta}^{-\f12}N_{0,s}\right)
		-\PP_{0}|\pa_{y_1}|\V\cdot\widetilde{\nabla}\PP_{\not=}\RR_3\widetilde{\Delta}^{-\f12}N_{0,s}.
	\end{align}
	Then, applying \eqref{est-I411-1} and integrating by parts, we find that
	\begin{align}
		I_{411}
		=&
		\sum_{s=0}^{3}
		\left|\int_{\T\times\R}\left(
		\PP_{0}\V\cdot\widetilde{\nabla}\PP_{\not=}\RR_3\widetilde{\Delta}^{-\f12}N_{0,s}\PP_{\not=} \UU_{0,s}
		\right.
		\right.
		\label{est-I511-1}\\
		&\left.
		\left.
		\qquad
		-\PP_{0}|\pa_{y_1}|\V \cdot\widetilde{\nabla}\PP_{\not=}\RR_3\widetilde{\Delta}^{-\f12}N_{0,s}\PP_{\not=} \widetilde{\Delta}^{-\f12}\UU_{0,s}
		+\PP_{0}\V\cdot\widetilde{\nabla}\PP_{\not=}\UU_{0,s}\PP_{\not=} \RR_3\widetilde{\Delta}^{-\f12}N_{0,s}
		\right)\,dx_1dy_1\right|
		\nonumber\\
		=&\sum_{s=0}^{3}
		\left|\int_{\T\times\R}\big(
		\PP_{0}\pa_{y_1}V^2\PP_{\not=}\RR_3\widetilde{\Delta}^{-\f12}N_{0,s}\PP_{\not=} \UU_{0,s}
		+\PP_{0}|\pa_{y_1}|\V\cdot\widetilde{\nabla}\PP_{\not=}\RR_3\widetilde{\Delta}^{-\f12}N_{0,s}\PP_{\not=} \widetilde{\Delta}^{-\f12}\UU_{0,s}
		\big)\,dx_1dy_1\right|
		\nonumber
	\end{align}
	Invoking Lemma \ref{lem-est-NDW}, we can deduce from \eqref{est-I511-1} that
	\begin{align}
		\int_0^t I_{411}(\tau)\,d\tau
		\lesssim \sum_{s=0}^{3}
		\|\PP_{0}\pa_{y_1}\V\|_{L^\infty_tL^\infty}\|\PP_{\not=}N_{0,s}\|_{L^2_tL^2}\|\PP_{\not=}\UU_{0,s}\|_{L^2_tL^2}
		\lesssim \mu^{-\f13}|\E(t)|^{\f12}\int_0^t \D(\tau)\,d\tau.
		\label{est-I511}
	\end{align}
	Thanks to Lemmas \ref{sec2:lem-1}, \ref{sec2:lem-com-1-1} and \ref{lem-est-NDW}, we arrive at
	\begin{align}
		\int_0^t I_{412}(\tau)\,d\tau
		\lesssim&\sum_{s=0}^{3}
		\|\PP_{0}\pa_{y_1}\V\|_{L^\infty_tL^\infty}\left(\|\PP_{\not=}\nabla^s N\|_{L^2_tL^2}\|\PP_{\not=}\UU_{0,s}\|_{L^2_tL^2}
		+\|\PP_{\not=}\nabla^s \V\|_{L^2_tL^2}\|\PP_{\not=}N_{0,s}\|_{L^2_tL^2}\right)
		\label{est-I512}\\
		\lesssim&~\mu^{-\f12}|\E(t)|^{\f12}\int_0^t \D(\tau)\,d\tau.
		\nonumber
	\end{align}
	Invoking Lemmas \ref{sec2:lem-1}, \ref{sec2:lem-com-2}, and \ref{lem-est-NDW}, we conclude that
	\begin{align}
		\int_0^t I_{413}(\tau)\,d\tau			
		\lesssim&\sum_{s=1}^{3}
		\left(\|\PP_{0}\pa_{y_1}\V\|_{L^\infty_tH^2}\|\PP_{\not=}\widetilde{\nabla} N\|_{L^2_tH^2}
		\|\PP_{\not=}\UU_{0,s}\|_{L^2_tL^2}
		\right.
		\label{est-I513} \\
		&\left.
		\qquad+\|\PP_{0}\pa_{y_1}\V\|_{L^\infty_tH^2}\|\PP_{\not=}\widetilde{\nabla} \V\|_{L^2_tH^2}
		\|\PP_{\not=}N_{0,s}\|_{L^2_tL^2}
		\right)
		\nonumber\\
		\lesssim&~
		\mu^{-\f56}|\E(t)|^{\f12}\int_0^t \D(\tau)\,d\tau.	\nonumber
	\end{align}
	As for $I_{414}$, while it is easy to observe that
	\begin{align}\label{est-I414-1}
		\PP_{\not=}\RR_1\RR_3\nabla^s(\PP_{\not=}\V\cdot\widetilde{\nabla}N)
		=\PP_{\not=}\RR_1\RR_3\nabla^s\widetilde{\nabla}(\PP_{\not=}\V N)
		-\PP_{\not=}\RR_1\RR_3\nabla^s(\PP_{\not=}\widetilde{\dive}\V N).
	\end{align}
	Applying \eqref{est-I414-1} and using integration by parts, we find that
	\begin{align}
		I_{414}
		=&\sum_{s=0}^{3}
		\left|\int_{\T\times\R}\left(\PP_{\not=}\RR_1\RR_3\nabla^s\widetilde{\nabla}\widetilde{\Delta}^{-\f12}(\PP_{\not=}\V N)\PP_{\not=} \UU_{0,s}
		-\PP_{\not=}\RR_1\RR_3\nabla^s(\PP_{\not=}\widetilde{\dive}\V N)\PP_{\not=} \widetilde{\Delta}^{-\f12}\UU_{0,s}
		\right.\right.
		\label{est-I514-1}\\
		&\left.\left.
		\qquad+\PP_{\not=}\RR_1\RR_5\nabla^s(\PP_{\not=}\V\cdot\widetilde{\nabla}\V)\PP_{\not=} \RR_3\widetilde{\Delta}^{-\f12}N_{0,s}
		\right)\,dx_1dy_1\right|.
		\nonumber
	\end{align}
	Again thanks to  Lemmas \ref{sec2:lem-1} and \ref{lem-est-NDW}, it follows from \eqref{est-I514-1} that
	\begin{align}
		\int_0^t I_{414}(\tau)\,d\tau
		\lesssim&\sum_{s=0}^{3}
		\left(\|\PP_{\not=}\V\|_{L^2_tH^3}+\|\PP_{\not=}\widetilde{\nabla}\V\|_{L^2_tH^3}\right)
		\|N\|_{L^\infty_tH^3}
		\|\PP_{\not=}\UU_{0,s}\|_{L^2_tL^2}
		\label{est-I514}\\
		&
		\qquad+\sum_{s=0}^{3}\|\PP_{\not=}\V\|_{L^\infty_tH^3}\|\widetilde{\nabla}\V\|_{L^2_tH^3}
		\|\PP_{\not=}N_{0,s}\|_{L^2_tL^2}
		\nonumber\\
		\lesssim&~\mu^{-1}|\E(t)|^{\f12}\int_0^t \D(\tau)\,d\tau.
		\nonumber
	\end{align}
	Substituting \eqref{est-I511}-\eqref{est-I513} and \eqref{est-I514} into \eqref{est-I51-1}, we find that
	\begin{align}
		\int_0^t I_{41}(\tau)\,d\tau\lesssim \mu^{-1}|\E(t)|^{\f12}\int_0^t \D(\tau)\,d\tau.
		\label{est-I51-2}
	\end{align}
	It then follows from Lemmas \ref{sec2:lem-1} and \ref{lem-est-NDW} that
	\begin{align}
		\int_0^t I_{42}(\tau)\,d\tau
		\lesssim&\sum_{s=0}^{3}
		\left(\|\PP_{0}N\|_{L^\infty_tH^3}\|\PP_{\not=}D\|_{L^2_tH^3}
		+\|\PP_{\not=}N\|_{L^\infty_tH^3}\|D\|_{L^2_tH^3}
		\right)\|\PP_{\not=}\UU_{0,s}\|_{L^2_tL^2}
		\label{est-I52}\\
		\lesssim&~ \mu^{-1}|\E(t)|^{\f12}\int_0^t \D(\tau)\,d\tau.
		\nonumber
	\end{align}
	Integrating by parts, together with \eqref{est-fn-gn}, Lemmas \ref{sec2:lem-1} and \ref{lem-est-NDW}, we obtain
	\begin{align}
		\int_0^t I_{43}(\tau)\,d\tau
		\lesssim&\sum_{s=0}^{3}\int_0^t 
		\left|\int_{\T\times\R}
		\PP_{\not=}\RR_1\RR_5\widetilde{\nabla} (\tilde{f}(N))
		\PP_{\not=}\RR_3\widetilde{\Delta}^{-\f12}N_{0,s}\,dx_1dy_1\right|
		\,d\tau
		\label{est-I53-2}\\
		\lesssim&\sum_{s=0}^{3}\int_0^t 
		\left|\int_{\T\times\R}
		\PP_{\not=}\RR_1\RR_5 \tilde{f}(N)
		\PP_{\not=}\RR_3 N_{0,s}\,dx_1dy_1\right|
		\,d\tau
		\nonumber\\
		\lesssim&
		\sum_{s=0}^{3}
		\|N\|_{L^\infty_tH^3}\|\PP_{\not=} N\|_{L^2_tH^3}\|\PP_{\not=}N_{0,s}\|_{L^2_tL^2}
		\nonumber\\
		\lesssim&~\mu^{-\f23}|\E(t)|^{\f12}\int_0^t \D(\tau)\,d\tau.
		\nonumber
	\end{align}
	As for $I_{44}$, one can readily see that
	\begin{align}
		&\nabla^s\left(g(N)(\mu\widetilde{\Delta}\V+(\mu+\mu')\widetilde{\nabla}\widetilde{\dive}\V)\right)
		\label{est-I44-1}
		\\
		=&~g(N)\nabla^s\left(\mu\widetilde{\Delta}\V+(\mu+\mu')\widetilde{\nabla}\widetilde{\dive}\V\right)
		+[\nabla^s,g(N)]\left(\mu\widetilde{\Delta}\V+(\mu+\mu')\widetilde{\nabla}\widetilde{\dive}\V\right).
		\nonumber\\
		=&~\widetilde{\dive}\left(g(N)\nabla^s \left(\mu\widetilde{\nabla}\V+(\mu+\mu')\widetilde{\dive}\V\I\right)
		\right)
		+\widetilde{\nabla}g(N)\cdot \nabla^s \left(\mu\widetilde{\nabla}\V+(\mu+\mu')\widetilde{\dive}\V\I\right)
		\nonumber\\
		&+[\nabla^s,g(N)]\left(\mu\widetilde{\Delta}\V+(\mu+\mu')\widetilde{\nabla}\widetilde{\dive}\V\right).
		\nonumber
	\end{align}
	Applying \eqref{est-I44-1} and integration by parts again, it is easy to check that
	\begin{align}
		I_{44}
		\lesssim&\sum_{s=0}^{3}\left|\int_{\T\times\R}
		\PP_{\not=}\RR_1\RR_5\widetilde{\dive}\widetilde{\Delta}^{-\f12}\left(g(N)\nabla^s \left(\mu\widetilde{\nabla}\V+(\mu+\mu')\widetilde{\dive}\V\I\right)
		\right)
		\PP_{\not=}\RR_3 N_{0,s}\,dx_1dy_1\right|
		\nonumber\\
		&+\sum_{s=0}^{3}\left|\int_{\T\times\R}
		\PP_{\not=}\RR_1\RR_5\left(\widetilde{\nabla}g(N)\cdot\nabla^s \left(\mu\widetilde{\nabla}\V+(\mu+\mu')\widetilde{\dive}\V\I\right)
		\right)\PP_{\not=}\RR_3\widetilde{\Delta}^{-\f12}N_{0,s}\,dx_1dy_1\right|
		\nonumber\\
		&+\sum_{s=1}^{3}\left|\int_{\T\times\R}
		\PP_{\not=}\RR_1\RR_5\left([\nabla^s,g(N)]\left(\mu\widetilde{\Delta}\V+(\mu+\mu')\widetilde{\nabla}\widetilde{\dive}\V\right)
		\right)\PP_{\not=}\RR_3\widetilde{\Delta}^{-\f12}N_{0,s}\,dx_1dy_1\right|.
		\nonumber
	\end{align}
	This, together with \eqref{est-fn-gn}, Lemmas \ref{sec2:lem-1}, \ref{sec2:lem-com-2} and \ref{lem-est-NDW}, implies that
	\begin{align}
		\int_0^t I_{44}(\tau)\,d\tau 
		\lesssim&~\mu\sum_{s=0}^{3}
		\|\PP_{0}N\|_{L^\infty_tH^3}
		\|\PP_{\not=}\nabla^{s}\widetilde{\nabla}\V\|_{L^2_tL^2}\|\PP_{\not=} N_{0,s}\|_{L^2_tL^2}
		\label{est-I54-2}\\
		&+\mu\sum_{s=0}^{3}\left(\|\PP_{\not=}N\|_{L^\infty_tH^2}+ \|\PP_{\not=}\widetilde{\nabla}N\|_{L^\infty_tH^2}\right)
		\|\nabla^{s}\widetilde{\nabla}\V\|_{L^2_tL^2} 
		\|\PP_{\not=} N_{0,s}\|_{L^2_tL^2}
		\nonumber\\
		&	+\mu \sum_{s=1}^{3}
		\left(\|\PP_{0}\nabla N\|_{L^\infty_tH^2}\|\PP_{\not=}\widetilde{\nabla}^2\V\|_{L^2_tH^2}
		+\|\PP_{\not=}\nabla N\|_{L^\infty_tH^2}\|\widetilde{\nabla}^2\V\|_{L^2_tH^2}
		\right)\|\PP_{\not=} N_{0,s}\|_{L^2_tL^2}
		\nonumber\\
		\lesssim&~\mu^{-\f13}|\E(t)|^{\f12}\int_0^t \D(\tau)\,d\tau.
		\nonumber
	\end{align}
	Substituting \eqref{est-I51-2}-\eqref{est-I53-2} and \eqref{est-I54-2} into \eqref{est-I5-1}, we obtain \eqref{est-I5-2}.
	
	\subsubsection{Proof of Lemma \ref{lem1-5}}
	It is easy to check that
	\begin{align*}
		|I_5|\leq \mu^{\f13}\sum_{s=0}^3\left|\int_{\T\times\R}\left(\PP_{\not=}\FFF_{1,0,s}\PP_{\not=}\widetilde{\Delta}^{-\f12}\UU_{0,s}+\PP_{\not=}\FFF_{2,0,s}\PP_{\not=}\widetilde{\Delta}^{-\f12}N_{0,s}\right)\,dx_1dy_1\right|.
	\end{align*}
	Throughout the proof of $I_4$ , we treat $\RR_3$ as a bounded operator. Proceeding exactly as in the proof of \eqref{est-I5-2}, we immediately obtain \eqref{est-I6}; the details are omitted for brevity.		
	
	\section{Energy estimate of  incompressible part}
	We observe that $\D_{0,3}^{in}(t)$—the incompressible part of the solution—appears on the right–hand side of \eqref{est-0+1} and remains to be controlled. Accordingly, the present subsection is devoted to deriving the corresponding \textit{a priori} energy estimates for the higher-order derivatives of the vorticity, as stated in Proposition~\ref{prop-3}.

	\begin{prop}\label{prop-3}
		Under the assumptions of Theorem \ref{theo2} and \eqref{ass:density}, we have
		\begin{align}
			\E_{0,3}^{in}(t)+\int_0^t \D_{0,3}^{in}(\tau)\,d\tau \lesssim&~
			\E_{0,3}^{in}(0)
            +\mu^{\f23}\int_0^t
            \big(\D_{0,3}^{com,l}+\mu\D_{0,3}^{com}\big)(\tau)\,d\tau
			+\mu^{-1}|\E(t)|^{\f12}\int_0^t \D(\tau)\,d\tau.
			\label{est-2+2'}
		\end{align}
	\end{prop}

	\subsection{Proof of Proposition \ref{prop-3}}
	Before proceeding, we state the two key propositions on which the proof of Proposition~\ref{prop-3} relies.
	
	Since no $L^2$ estimate  is available for $\PP_{0}V^1$, we cannot bound $\|\PP_{0}W\|_{L^2}$ in $L^2_t$; only an $L^\infty_t$ bound is attainable.  This renders the transport term $\PP_{0}V^2\PP_{0}\pa_{y_1}W$ difficult to control directly.
	Inspired by the treatment of $(\PP_{0}N,\PP_{0}V^2)$ in the previous section, we derive a separate $L^2$ estimate for  $\PP_{0}W$ and exploit the density equation to cancel problematic terms such as $\PP_{0}V^2\PP_{0}\pa_{y_1}W$. Accordingly, we introduce the following definitions.
	\begin{align}
		&\widetilde{\E}_3(t)\tri
		\int_{\R}|\hat{W}_{0,0}^0|^2\,d\xi,
		\label{def-E-3}\\
		&	\widetilde{\E}_4(t)\tri\sum_{k\in\Z\backslash\{0\}}\int_{\R}|\hat{W}_{0,0}^k|^2\,d\xi
		+\sum_{s=1}^{3}\sum_{k\in\Z}\int_{\R}|\hat{W}_{0,s}^k|^2\,d\xi,\label{def-E-4},
	\end{align}
	and
	\begin{align}
		\widetilde{\D}_3(t)\tri&
		\mu\int_{\R}|\xi|^2|\hat{W}_{0,0}^0|^2\,d\xi, 
		\label{def-D-3}\\
		\widetilde{\D}_4(t)\tri &\sum_{k\in\Z\backslash\{0\}}\int_{\R}\left(\f{\pa_{t}m_1}{m_1}+\f{\pa_{t}m_2}{m_2}+\mu p\right)|\hat{W}_{0,0}^k|^2\,d\xi
		\label{def-D-4}\\
		&+\sum_{s=1}^{3}\sum_{k\in\Z}\int_{\R}\left(\f{\pa_{t}m_1}{m_1}+\f{\pa_{t}m_2}{m_2}+\mu p\right)|\hat{W}_{0,s}^k|^2\,d\xi,
		\nonumber
	\end{align}
	where $\widetilde{\E}_3(t)$ and $\widetilde{\D}_3(t)$ are the $L^2$ estimate for the zero-mode of the vorticity. And $\widetilde{\E}_4(t)$ and $\widetilde{\D}_4(t)$ involve
	the remaining terms in $\E_{0,3}^{in}(t)$ and $\D_{0,3}^{in}(t)$.
	The straightforward calculations yields that
	\begin{align}
		\E_{0,3}^{in}(t)\sim \widetilde{\E}_3(t)+\widetilde{\E}_4(t),\quad \D_{0,3}^{in}(t)\sim \widetilde{\D}_3(t)+\widetilde{\D}_4(t).
	\end{align}
	From these results, Proposition~\ref{prop-3} follows once we combine Propositions~\ref{prop-3-1} and \ref{prop-3-2}.

	\medskip
	
	We begin by deriving the $L^2$ estimate for the zero-mode of the vorticity.
	
	\begin{prop}\label{prop-3-1}
		Under the assumptions of Proposition \ref{prop-3}, we have
		\begin{align}
			\widetilde{\E}_3(t)+\int_0^t\widetilde{\D}_3(\tau)\,d\tau
			\lesssim&~
			\widetilde{\E}_3(0)
			+\mu^2\int_{0}^t\D_{0,3}^{com}(\tau)\,d\tau+\mu^{-1} \left(1+|\E(t)|^{\f12}\right)|\E(t)|^{\frac12}\int_0^t  \D(\tau) \, d\tau.
			\label{est-2'}
		\end{align}
	\end{prop}

	Next, we estimate the remaining terms in $\E_{0,3}^{in}(t)$ and $\D_{0,3}^{in}(t)$, namely $\widetilde{\E}_4(t)$ and $\widetilde{\D}_4(t)$.

	\begin{prop}\label{prop-3-2}
		Under the assumptions of Proposition \ref{prop-3}, we have
		\begin{align}
			\widetilde{\E}_{4}(t)+\int_0^t \widetilde{\D}_{4}(t)(\tau)\,d\tau 
			\lesssim&~
			\widetilde{\E}_{4}(0)
            +\mu^{\f23}\int_0^t
            \big(\D_{0,3}^{com,l}+\mu\D_{0,3}^{com}\big)(\tau)\,d\tau
			+\mu^{-1}|\E(t)|^{\frac12}\int_0^t  \D(\tau) \, d\tau.
			\label{est-2}
		\end{align}
	\end{prop}
	

	\subsection{Proof of Proposition \ref{prop-3-1}}
	We apply the operator $\PP_{0}$ to $\eqref{eq_n-d-w-1}_3$, to obtain that
	\begin{align}\label{eq-P0W}
		\pa_{t} \PP_{0}W-\mu \pa_{y_1}^2 \PP_{0}W+\mu(\mu+\mu') \pa_{y_1}^2 \PP_{0} D
		=-\pa_{y_1}(\PP_{0}W \PP_{0}V^2)+G_3,
	\end{align}
	where $G_3$ is defined by
	\begin{align}
		G_{3}\tri&-\PP_{0}(\PP_{\not=}\V\cdot \PP_{\not=}\widetilde{\nabla}W+\PP_{\not=}D\PP_{\not=}W)-\mu\PP_{0}(D^2)
		\label{def_G3}\\
		&+\PP_{0}\pa_{y_1}\left(g(N)(\nu\pa_{x_1}D-\mu(\pa_{y_1}-t\pa_{x_1})(W-N+\mu D))\right)+\mu G_{31},
		\nonumber\\
		G_{31}\tri&(\pa_{y_1}\PP_{0}V^2)^2+
		\PP_{0}\left[ (\PP_{\not=}\pa_{x_1}V^1)^2+(\PP_{\not=}(\pa_{y_1}-t\pa_{x_1})V^2)^2+2\PP_{\not=}(\pa_{y_1}-t\pa_{x_1})V^1\PP_{\not=}\pa_{x_1}V^2\right]
		\label{def_G31}\\
		&
		+\PP_{0}\pa_{y_1}\left(f(N)(\pa_{y_1}-t\pa_{x_1}) N\right)
		+\PP_{0}\pa_{y_1}\left(g(N)(\nu (\pa_{y_1}-t\pa_{x_1}) D+\mu \pa_{x_1}(W-N+\mu D))\right).
		\nonumber
	\end{align}
	Then applying $\PP_{0}$ to $\eqref{eq_n-d-w-1}_1$, we get
	\begin{align}\label{eq-P0N}
		\pa_{t}\PP_{0}N+\PP_{0}D=\PP_{0}\pa_{y_1}(N V^2).
	\end{align}
	
	\begin{proof}[\textbf{Proof of Proposition \ref{prop-3-1}}]
		Applying the standard $L^2$ energy estimate to equation \eqref{eq-P0W}, we obtain		
		\begin{align}
			&\f12\f{d}{dt}\|\PP_{0} W\|_{L^2}^2
			+\mu \|\pa_{y_1}\PP_{0} W\|_{L^2}^2
			\label{est-P0W-1}\\
			=&-\mu(\mu+\mu') \int_{\R}\pa_{y_1}^2 \PP_{0} D\PP_{0} W\,dy_1
			-\int_{\R} \pa_{y_1}(\PP_{0}W \PP_{0}V^2)\PP_{0} W\,dy_1
			+\int_{\R}G_3\PP_{0} W\,dy_1
			\nonumber\\
			=&\mu(\mu+\mu') \int_{\R}\pa_{y_1} \PP_{0} D\pa_{y_1}\PP_{0} W\,dy_1
			-\f12\int_{\R} \PP_{0}\pa_{y_1}V^2|\PP_{0} W|^2\,dy_1
			+\int_{\R}G_3\PP_{0} W\,dy_1.
			\nonumber
		\end{align}
		In order to eliminate the bad term $\int_{\R} \PP_{0}\pa_{y_1}V^2|\PP_{0} W|^2\,dy_1$ on the right-hand side of \eqref{est-P0W-1}, we multiply \eqref{eq-P0N} by $\f12|\PP_{0}W|^2$, to deduce that
		\begin{align}
			&\f12\int_{\R} \PP_{0}\pa_{y_1}V^2|\PP_{0} W|^2\,dy_1 \label{est-P0W-2} \\
			=&-\f12\int_{\R}\pa_{t}\PP_{0} N |\PP_{0} W|^2\,dy_1
			-\f12\int_{\R}\PP_{0}\pa_{y_1} (N V^2) |\PP_{0} W|^2\,dy_1
			\nonumber\\
			=&-\f12\f{d}{dt}\int_{\R}\PP_{0} N |\PP_{0} W|^2\,dy_1
			+\int_{\R}\PP_{0} N \PP_{0} W\pa_{t}\PP_{0} W\,dy_1
			-\f12\int_{\R}\PP_{0}\pa_{y_1} (N V^2) |\PP_{0} W|^2\,dy_1
			\nonumber\\
			=&-\f12\f{d}{dt}\int_{\R}\PP_{0} N |\PP_{0} W|^2\,dy_1
			+\int_{\R}\PP_{0} N \PP_{0} W
			\left(\mu\pa_{y_1}^2\PP_{0} W-\mu(\mu+\mu')\pa_{y_1}^2\PP_{0}D\right)\,dy_1
			\nonumber\\
			&-\int_{\R}\PP_{0} N \PP_{0} W\pa_{y_1}(\PP_{0}W\PP_{0} V^2)\,dy_1
			+\int_{\R}\PP_{0} N \PP_{0} W G_3\,dy_1
			-\f12\int_{\R}\PP_{0}\pa_{y_1} (N V^2) |\PP_{0} W|^2\,dy_1.
			\nonumber
		\end{align}
		Collecting \eqref{est-P0W-1} and \eqref{est-P0W-2} together, we achieve that
		\begin{align}
			&\f12\f{d}{dt}\int_{\R}(1-\PP_{0}N)|\PP_{0} W|^2\,dy_1
			+\mu \|\pa_{y_1}\PP_{0} W\|_{L^2}^2
			\label{est-P0W-3}\\
			=&\mu(\mu+\mu') \int_{\R}\pa_{y_1} \PP_{0} D\pa_{y_1}\PP_{0} W\,dy_1
			-\int_{\R}\PP_{0} N \PP_{0} W\left(\mu\pa_{y_1}^2 \PP_{0} W-\mu(\mu+\mu')\pa_{y_1}^2\PP_{0}D\right)\,dy_1
			\nonumber\\
			&+\int_{\R}\PP_{0} N \PP_{0} W\pa_{y_1}(\PP_{0}W\PP_{0} V^2)\,dy_1
			+\int_{\R}G_3\PP_{0} W(1-\PP_{0} N)\,dy_1
			+\f12\int_{\R}\PP_{0}\pa_{y_1} (N V^2) |\PP_{0} W|^2\,dy_1
			\nonumber\\
			\tri& \sum_{i=1}^{5}\J_i.
			\nonumber
		\end{align}
		By Young’s inequality, we readily obtain
		\begin{align}
			|\J_1| \leq \widetilde{\ep} \mu \|\pa_{y_1}\PP_{0} W\|_{L^2}^2+C_{\widetilde{\ep}}\mu^3 \|\pa_{y_1}\PP_{0} D\|_{L^2}^2
			\leq \widetilde{\ep} \mu \|\pa_{y_1}\PP_{0} W\|_{L^2}^2+C_{\widetilde{\ep}}\mu^2 \D_{0,3}^{com}.
			\label{est-cJ1}
		\end{align}
		Integration by parts yields that
		\begin{align}
			\int_0^t |\J_2|(\tau)\,d\tau 
			\lesssim&\int_0^t\int_{\R}\left|\pa_{y_1}\left(\PP_{0} N \PP_{0} W\right)\left(\mu\pa_{y_1}\PP_{0} W-\mu(\mu+\mu')\pa_{y_1}\PP_{0}D\right)\right|\,dy_1d\tau
			\label{est-cJ2}\\
			\lesssim&~\mu \|\PP_{0} (N, W)\|_{L^\infty_tL^\infty}
			\|\pa_{y_1}\PP_{0}( N,W)\|_{L^2_tL^2}\left( \|\pa_{y_1}\PP_{0} W\|_{L^2_tL^2}+\mu \|\pa_{y_1}\PP_{0} D\|_{L^2_tL^2}\right)
			\nonumber\\
			\lesssim&~|\E(t)|^{\f12}
			\int_0^t \D (\tau)\,d\tau.
			\nonumber
		\end{align}
		In light of Sobolev inequality and Lemma \ref{lem-est-NDW}, we get
		\begin{align}
			\int_0^t |\J_3|(\tau)\,d\tau 	
			\lesssim& \|\PP_{0} (N,W)\|_{L^4_tL^\infty}^2 \|\PP_{0}(W,V^2)\|_{L^\infty_tL^2}\|\pa_{y_1}\PP_{0}(W,V^2)\|_{L^2_tL^2}
			\label{est-cJ3}\\
			\lesssim&
			\|\PP_{0} (N,W)\|_{L^\infty_tH^2}\|\pa_{y_1}\PP_{0} (N,W)\|_{L^2_tH^2}
			\|\PP_{0}(W,V^2)\|_{L^\infty_tL^2}\|\pa_{y_1}\PP_{0}(W,V^2)\|_{L^2_tL^2}
			\nonumber\\
			\lesssim&~\mu^{-1}|\E(t)|^{\f12}\int_0^t \D(\tau)\,d\tau.
			\nonumber
		\end{align} 
		Recalling the definition of $G_3$ in \eqref{def_G3}, we can infer that
		\begin{align}
			|\J_4|=&\left|\int_{\R}(1-\PP_{0} N) \PP_{0} W \PP_{0}(\PP_{\not=}\V\cdot \PP_{\not=}\widetilde{\nabla}W+\PP_{\not=}D\PP_{\not=}W)\,dy_1\right|
			+\mu\left|\int_{\R}(1-\PP_{0} N) \PP_{0} W \PP_{0}(D^2)\,dy_1\right|
			\label{est-cJ4-1}\\
			&+\left|\int_{\R}(1-\PP_{0} N) \PP_{0} W \PP_{0}\pa_{y_1}\left(g(N)(\nu\pa_{x_1}D-\mu(\pa_{y_1}-t\pa_{x_1})(W-N+\mu D))\right)\,dy_1\right|
			\nonumber\\
			&+\mu\left|\int_{\R}(1-\PP_{0} N)\PP_{0} W G_{31}\,dy_1\right|
			\nonumber\\
			\tri& \sum_{i=1}^{4}\J_{4i}.
			\nonumber
		\end{align}
		We then estimate $\J_{4i}$ $(i=1,\cdots,4)$ term by term.
		Again thanks to Lemma \ref{lem-est-NDW} and Sobolev inequality, we find that
		\begin{align}
			\int_0^t |\J_{41}|(\tau)\,d\tau 	
			\lesssim&~ 
			(1+\|\PP_{0}N\|_{L^\infty_tH^2}) \|\PP_{0}W\|_{L^\infty_tH^2}
			\big(\|\PP_{\not=}\V\|_{L^2_tL^2}\|\PP_{\not=}\widetilde{\nabla} W\|_{L^2_tL^2}
			\label{est-cJ41}\\
			&
			+\|\PP_{\not=} D\|_{L^2_tL^2}\|\PP_{\not=} W\|_{L^2_tL^2}
			\big)
			\nonumber\\
			\lesssim&~ \mu^{-\f56}\left(1+|\E(t)|^{\f12}\right)|\E(t)|^{\f12}\int_0^t \D(\tau)\,d\tau,
			\nonumber
		\end{align}
		and
		\begin{align}
			\int_0^t |\J_{42}|(\tau)\,d\tau 
			\lesssim&~
			\mu (1+\|\PP_{0}N\|_{L^\infty_tL^\infty})
			\|\PP_{0}W\|_{L^\infty_tL^\infty}
			(\|\PP_{0}D\|_{L^2_tL^2}^2+\|\PP_{\not=}D\|_{L^2_tL^2}^2)
			\label{est-cJ42}\\
			\lesssim&~\mu^{-\f13}
			\left(1+|\E(t)|^{\f12}\right)|\E(t)|^{\f12}\int_0^t \D(\tau)\,d\tau.
			\nonumber
		\end{align}
		By integrating by parts, one obtains that
		\begin{align}
			\J_{43}
			\lesssim&~
			\mu
			\left|\int_{\R}\pa_{y_1}[(1-\PP_{0} N) \PP_{0} W] \left(\PP_{0}g(N)\pa_{y_1}\PP_{0}(W-N+\mu D)\right)\,dy_1\right|
			\label{est-cJ43-1}\\
			&+\left|\int_{\R}(1-\PP_{0} N) \PP_{0} W \PP_{0}\pa_{y_1}\left(\PP_{\not=}g(N)\PP_{\not=}(\nu\pa_{x_1}D-\mu(\pa_{y_1}-t\pa_{x_1})(W-N+\mu D))\right)\,dy_1\right|.
			\nonumber
		\end{align}
		Invoking Lemma \ref{lem-est-NDW} and Sobolev inequality, we infer from \eqref{est-cJ43-1} that
		\begin{align}
			\int_0^t |\J_{43}|(\tau)\,d\tau 
			\lesssim&~
			\mu (1+\|\PP_{0}(N,W)\|_{L^\infty_tH^2})\|\pa_{y_1}\PP_{0}(N,W)\|_{L^2_tL^2}
			\label{est-cJ43}\\
			&\times 
			\|\PP_{0}N\|_{L^\infty_tH^2} \|\pa_{y_1}\PP_{0}(W-N+\mu D)\|_{L^2_tL^2}+\mu (1+\|\PP_{0}N\|_{L^\infty_tH^2})
			\|\PP_{0}W\|_{L^\infty_tH^2}
			\nonumber\\
			&\times
			\|\PP_{\not=}N\|_{L^2_tH^1}
			\big(\|\nabla\PP_{\not=}D\|_{L^2_tH^1}
			+\|\widetilde{\nabla}\PP_{\not=}(W-N+\mu D)\|_{L^2_tH^1}\big)
			\nonumber\\
			\lesssim&~ \big(1+|\E(t)|^{\f12}\big)|\E(t)|^{\f12}
			\int_0^t \D(\tau)\,d\tau.
			\nonumber
		\end{align}
		Remembering the definition of $G_{31}$ in \eqref{def_G31}, we can get that
		\begin{align}
			|\J_{44}|
			\lesssim&~ \mu\left|\int_{\R}(1-\PP_{0} N)\PP_{0} W (\pa_{y_1}\PP_{0}V^2)^2\,dy_1\right|
			\label{est-cJ44-1}\\
			&+\mu\Big|\int_{\R}(1-\PP_{0} N)\PP_{0} W \PP_{0}\left[ (\PP_{\not=}\pa_{x_1}V^1)^2+(\PP_{\not=}(\pa_{y_1}-t\pa_{x_1})V^2)^2
			\right.	 \nonumber\\
			&\left. \qquad +2\PP_{\not=}(\pa_{y_1}-t\pa_{x_1})V^1\PP_{\not=}\pa_{x_1}V^2\right]\,dy_1\Big|
			\nonumber\\
			&+\mu\left|\int_{\R}(1-\PP_{0} N)\PP_{0} W \PP_{0}\pa_{y_1}\left(f(N)(\pa_{y_1}-t\pa_{x_1}) N\right)\,dy_1\right|
			\nonumber\\
			&+\mu\left|\int_{\R}(1-\PP_{0} N)\PP_{0} W \PP_{0}\pa_{y_1}\left(g(N)(\nu (\pa_{y_1}-t\pa_{x_1}) D+\mu \pa_{x_1}(W-N+\mu D))\right)\,dy_1\right|
			\nonumber\\
			\tri& \sum_{i=1}^{4}\J_{44i}.
			\nonumber
		\end{align}
		With the help of Soboev's inequality and Lemma \ref{lem-est-NDW}, we obtain
		\begin{align}
			\int_0^t |\J_{441}|(\tau)\,d\tau 
			\lesssim&~
			\mu (1+\|\PP_{0}N\|_{L^\infty_tL^\infty})
			\|\PP_{0}W\|_{L^\infty_tL^\infty}
			\|\PP_{0}D\|_{L^2_tL^2}^2
			\label{est-cJ441}\\
			\lesssim&~
			\big(1+|\E(t)|^{\f12}\big)|\E(t)|^{\f12}\int_0^t \D(\tau)\,d\tau,
			\nonumber
		\end{align}
		and
		\begin{align}
			\int_0^t |\J_{442}|(\tau)\,d\tau 
			\lesssim&~
			\mu (1+\|\PP_{0}N\|_{L^\infty_tL^\infty})
			\|\PP_{0}W\|_{L^\infty_tL^\infty}
			\|\PP_{\not=}\widetilde{\nabla}\V\|_{L^2_tL^2}^2
			\label{est-cJ442}\\
			\lesssim&~\mu^{-\f13}
			\big(1+|\E(t)|^{\f12}\big)|\E(t)|^{\f12}\int_0^t \D(\tau)\,d\tau.
			\nonumber
		\end{align}
		The derivation proceeds similarly to \eqref{est-cJ43-1} and \eqref{est-cJ43}, yielding
		\begin{align}
			\int_0^t |\J_{443}|(\tau)\,d\tau 
			\lesssim&~
			\mu \big(1+\|\PP_{0}(N,W)\|_{L^\infty_tH^2}\big)
			\|\pa_{y_1}\PP_{0}(N,W)\|_{L^2_tL^2}\|\PP_{0}N\|_{L^\infty_tH^2} \|\pa_{y_1}\PP_{0}N\|_{L^2_tL^2}
			\label{est-cJ443}\\
			&+	\mu  \big(1+\|\PP_{0}N\|_{L^\infty_tH^2}\big)\|\PP_{0}W\|_{L^\infty_tH^2}
			\|\PP_{\not=}N\|_{L^2_tH^1} \|\widetilde{\nabla}\PP_{\not=}N\|_{L^2_tH^1}
			\nonumber\\
			\lesssim&~ \big(1+|\E(t)|^{\f12}\big)|\E(t)|^{\f12}\int_0^t \D(\tau)\,d\tau.
			\nonumber
		\end{align}
		Employing the same manner as in  \eqref{est-cJ43-1} and \eqref{est-cJ43} again, we conclude that
		\begin{align}
			\int_0^t |\J_{444}|(\tau)\,d\tau 
			\lesssim&~
			\mu^2 (1+\|\PP_{0}(N,W)\|_{L^\infty_tH^2})\|\pa_{y_1}\PP_{0}(N,W)\|_{L^2_tL^2}\|\PP_{0}N\|_{L^\infty_tH^2} \|\pa_{y_1}\PP_{0}D\|_{L^2_tL^2}
			\label{est-cJ444}\\
			&+\mu^2 (1+\|\PP_{0}N\|_{L^\infty_tH^2})
			\|\PP_{0}W\|_{L^\infty_tH^2}
			\|\PP_{\not=}N\|_{L^2_tH^1}
			\nonumber\\
			&\times 
			\left(\|\widetilde{\nabla}\PP_{\not=}D\|_{L^2_tH^1}
			+\|\nabla\PP_{\not=}(W-N+\mu D)\|_{L^2_tH^1}\right)
			\nonumber\\
			\lesssim&~\mu^{\f23} \big(1+|\E(t)|^{\f12}\big)|\E(t)|^{\f12}\int_0^t \D(\tau)\,d\tau.
			\nonumber
		\end{align}
		Substituting \eqref{est-cJ441}-\eqref{est-cJ444} into \eqref{est-cJ44-1},  we achieve that
		\begin{align}
			\int_0^t |\J_{44}|(\tau)\,d\tau 	
			\lesssim \mu^{-\f13} \big(1+|\E(t)|^{\f12}\big)|\E(t)|^{\f12}\int_0^t \D(\tau)\,d\tau.
			\label{est-cJ44-2}
		\end{align}
		Plugging \eqref{est-cJ41}, \eqref{est-cJ42}, \eqref{est-cJ43} and \eqref{est-cJ44-2} into \eqref{est-cJ4-1}, we have
		\begin{align}
			\int_0^t |\J_{4}|(\tau)\,d\tau 
			\lesssim \mu^{-\f56} \big(1+|\E(t)|^{\f12}\big)|\E(t)|^{\f12}\int_0^t \D(\tau)\,d\tau.
			\label{est-cJ4-2}
		\end{align}
		Following the same approach as for $\J_3$, we bound $\J_5$ as follows:
		\begin{align}\label{est-cJ5}
			\int_0^t |\J_{5}|(\tau)\,d\tau 
			\lesssim&~
			\|\PP_{0}(N,V^2)\|_{L^\infty_tH^2}\|\pa_{y_1}\PP_{0}(N,V^2)\|_{L^2_tH^2}\|\PP_{0} W\|_{L^\infty_tL^2}\|\pa_{y_1}\PP_{0} W\|_{L^2_tL^2}
			\\
			&+\|\PP_{\not=}(N,V^2)\|_{L^2_tL^2}\|\PP_{\not=}(N,V^2)\|_{L^2_tH^1}\|\PP_{0} W\|_{L^\infty_tH^2}^2
			\nonumber\\
			\lesssim&~\mu^{-1}\E(t)
			\int_0^t \D(\tau)\,d\tau.
			\nonumber
		\end{align}
		
		Finally, we can conclude that \eqref{est-2'} holds by collecting \eqref{est-cJ1}-\eqref{est-cJ3}, \eqref{est-cJ4-2}, and \eqref{est-cJ5} together and choosing $\widetilde{\ep}$ suitably small. 
	\end{proof}

	\subsection{Proof of Proposition \ref{prop-3-2}}
	Recalling the definition of $\hat{W}_{j,s-j}^k$ in \eqref{def_W-j+1} for $j=0$, we then deduce from the equation $\eqref{eq_n-d-w-2}_3$ that
	\begin{align}
		\pa_{t}\hat{W}_{0,s}^k=&-\left[\f{\pa_{t}m_1}{m_1}+\f{\pa_{t}m_2}{m_2}+\mu p\right]\hat{W}_{0,s}^k-2\mu \f{k^2}{p^{\f32}}\hat{N}_{1,s}^k+\mu(\mu+\mu')p\hat{D}_{0,s}^k
		\label{eq_W-1-s}\\
		&-\mu\left(\f{\pa_{t}p}{p}-2\mu \f{k^2}{p}\right)\hat{D}_{0,s}^k+2\mu \f{k^2}{p}\hat{W}_{0,s}^k+\hat{\FFF}_{3,1,s}^k,\nonumber
	\end{align}
	where
	\begin{align}
		\hat{\FFF}_{3,1,s}^k\tri&m_1^{-1}m_2^{-1}<k,\xi>^s\hat{\FFF}_3^k.
	\end{align}

	\medskip
	
	\begin{proof}[\textbf{Proof of Proposition \ref{prop-3-2}}]
		It follows from equation \eqref{eq_W-1-s} that
		\begin{align}
			&\f12\f{d}{dt}|\hat{W}_{0,s}^k|^2+\left(\f{\pa_{t}m_1}{m_1}+\f{\pa_{t}m_2}{m_2}+\mu p\right)|\hat{W}_{0,s}^k|^2
			\label{est-energy-8}\\
			=&~\mu(\mu+\mu')p Re\left(\hat{D}_{0,s}^k\bar{\hat{W}}_{0,s}^k\right)
			-2\mu \f{k^2}{p^{\f32}} Re\left(\hat{N}_{1,s}^k\bar{\hat{W}}_{0,s}^k\right)
			\nonumber\\
			&
			-\mu \left(\f{\pa_{t}p}{p}-2\mu \f{k^2}{p}\right) Re\left(\hat{D}_{0,s}^k\bar{\hat{W}}_{0,s}^k\right)+2\mu \f{k^2}{p}|\hat{W}_{0,s}^k|^2
			+Re\left(\hat{\FFF}_{3,1,s}^k\bar{\hat{W}}_{0,s}^k\right).\nonumber
		\end{align}
		Applying \eqref{est-p-varphi} and Young's inequality again, the linear terms on the right-hand side of \eqref{est-energy-8} can be controlled by
		\begin{align}
			&\left|\mu(\mu+\mu')p Re\left(\hat{D}_{0,s}^k\bar{\hat{W}}_{0,s}^k\right)\right|
			\leq \widetilde{\ep} \mu p |\hat{W}_{0,s}^k|^2+C_{\widetilde{\ep}}\mu^3 p |\hat{D}_{0,s}^k|^2,
			\label{est-W-j0-1}\\
			&\left|2\mu \f{k^2}{p^{\f32}} Re\left(\hat{N}_{1,s}^k\bar{\hat{W}}_{0,s}^k\right)\right|
			\leq \widetilde{\ep} \f{k^2}{p}|\hat{W}_{0,s}^k|^2+C_{\widetilde{\ep}} \mu^2\f{k^2}{p^2}|\hat{N}_{1,s}^k|^2
            \leq \widetilde{\ep} \f{k^2}{p}|\hat{W}_{0,s}^k|^2+C_{\widetilde{\ep}} \mu^2|\hat{N}_{0,s}^k|^2,
			\label{est-W-j0-2}\\
			&\left|\mu \left(\f{\pa_{t}p}{p}-2\mu \f{k^2}{p}\right) Re\left(\hat{D}_{0,s}^k\bar{\hat{W}}_{0,s}^k\right)\right|
			\leq \widetilde{\ep} \mu^{\f13} |\hat{W}_{0,s}^k|^2+C_{\widetilde{\ep}}\mu^{\f53} \f{k^2}{p} |\hat{D}_{0,s}^k|^2
            \leq \widetilde{\ep} \mu^{\f13} |\hat{W}_{0,s}^k|^2+C_{\widetilde{\ep}}\mu^{\f53} |p^{\f12} \hat{\UU}_{0,s}^k|^2.
			\label{est-W-j0-3}
		\end{align}
		Employing \eqref{est-varphi-2}, and plugging all the estimates above into \eqref{est-energy-8}, then choosing $\widetilde{\ep}$ suitably small and $A$ defined in \eqref{def-m2} suitably large, finally performing the summation and integration, we arrive at
		\begin{align}
			\f{d}{dt}\widetilde{\E}_4(t)
			+\widetilde{\D}_4(t)
			\lesssim&\sum_{k\in\Z\backslash\{0\}}\int_{\R}\left\{\mu^2\f{k^2}{p}|\hat{N}_{1,0}^k|^2+\mu^3 p |\hat{D}_{0,0}^k|^2+\mu^{\f53} \f{k^2}{p} |\hat{D}_{0,0}^k|^2+Re\left(\hat{\FFF}_{3,1,0}^k\bar{\hat{W}}_{0,0}^k\right)\right\}\,d\xi
			\label{est-energy-9}\\
			&+ \sum_{s=1}^{3}\sum_{k\in\Z}\int_{\R}\left\{\mu^2\f{k^2}{p}|\hat{N}_{1,s}^k|^2+\mu^3 p |\hat{D}_{0,s}^k|^2+\mu^{\f53} \f{k^2}{p} |\hat{D}_{0,s}^k|^2+Re\left(\hat{\FFF}_{3,1,s}^k\bar{\hat{W}}_{0,s}^k\right)\right\}\,d\xi
			\nonumber\\
			\lesssim& ~\mu^{\f23}\D_{0,3}^{com,l}(t)+\mu^{\f53}\D_{0,3}^{com}(t)
			+\left|\int_{\T\times\R}
			\left(\PP_{\not=}\FFF_{3,1,0}\PP_{\not=}W_{0,0}+\sum_{s=1}^{3}\FFF_{3,1,s}W_{0,s}\right)\,dx_1dy_1
			\right|,
			\nonumber
		\end{align}
		where $\widetilde{\E}_4(t)$ and $\widetilde{\D}_4(t)$ are defined by \eqref{def-E-4} and \eqref{def-D-4}, respectively.

		We now begin to estimate the nonlinear term on the right-hand side of \eqref{est-energy-9}.
		\begin{align}
			&\left|\int_{\T\times\R}
			\left(\PP_{\not=}\FFF_{3,1,0}\PP_{\not=}W_{0,0}+\sum_{s=1}^{3}\FFF_{3,1,s}W_{0,s}\right)\,dx_1dy_1
			\right|
			\label{est-F3-1}\\
			\lesssim & 
			\left|\int_{\T\times\R}\left(\PP_{\not=}\RR_2(\V\cdot\widetilde{\nabla} W) \PP_{\not=}W_{0,0}+\sum_{s=1}^{3}\RR_2\nabla^s(\V\cdot\widetilde{\nabla} W) W_{0,s}\right) \,dx_1dy_1\right|
			\nonumber\\
			&+
			\left|\int_{\T\times\R}\left(\PP_{\not=}\RR_2(DW ) \PP_{\not=}W_{0,0}+\sum_{s=1}^{3}\RR_2\nabla^s(DW ) W_{0,s}\right) \,dx_1dy_1\right|
			\nonumber\\
			&
			+\mu 
			\left|\int_{\T\times\R}\left(\PP_{\not=}\RR_2(D^2) \PP_{\not=}W_{0,0}+\sum_{s=1}^{3}\RR_2\nabla^s(D^2) W_{0,s}\right) \,dx_1dy_1\right|
			\nonumber\\
			&+
			\left|\int_{\T\times\R}\left(\PP_{\not=}\RR_2\left(\widetilde{\nabla}^{\perp}\cdot\left(g(N)(\nu \widetilde{\nabla} D+\mu \widetilde{\nabla}^{\perp}(W-N+\nu D))\right)\right) \PP_{\not=}W_{0,0}
			\right.\right.
			\nonumber\\
			&\left.\left.
			+\sum_{s=1}^{3}\RR_2\nabla^s\left(\widetilde{\nabla}^{\perp}\cdot\left(g(N)(\nu \widetilde{\nabla} D+\mu \widetilde{\nabla}^{\perp}(W-N+\nu D))\right)\right) W_{0,s}\right) \,dx_1dy_1\right|
			\nonumber\\
			&
			+\mu
			\left|\int_{\T\times\R}\left(\PP_{\not=}\RR_2\widetilde{F}_{21} \PP_{\not=}W_{0,0}+\sum_{s=1}^{3}\RR_2\nabla^s\widetilde{F}_{21} W_{0,s}\right) \,dx_1dy_1\right|
			\nonumber\\
			\tri& \sum_{i=1}^{5}J_i,\nonumber
		\end{align}
		where $\widetilde{F}_{21}$ is defined in \eqref{def_wF21}.
		By a similar derivation as in \eqref{est-I10} and \eqref{est-I11-2}, we have
		\begin{align}
			\int_0^tJ_1(\tau)\,d\tau \lesssim  \mu^{-1} |\E(t)|^{\f12}\int_0^t \D(\tau)\,d\tau.
			\label{est-J1}
		\end{align}
		Using simialr decomposition as \eqref{est-I122-1},  together with Lemma \ref{sec2:lem-1} and Sobolev inequality, we deduce that
		\begin{align}
			\int_0^tJ_2(\tau)\,d\tau
			\lesssim& ~	
			\sum_{s=1}^{3}
			\|\PP_{0}D\|_{L^2_tH^3}\|\PP_{0}W\|_{L^\infty_tH^3}\|\PP_0W_{0,s}\|_{L^2_tL^2}
			\label{est-J2}\\
			&
			+ \sum_{s=0}^{3}
			\|\PP_{\not=}D\|_{L^2_tH^3}\|\PP_{\not=}W\|_{L^2_tH^3}
			\|W_{0,s}\|_{L^\infty_tL^2}
			\nonumber\\
			&
			+ \sum_{s=0}^{3}
			\left(\|\PP_{0}D\|_{L^2_tH^3}\|\PP_{\not=}W\|_{L^\infty_tH^3}
			+\|\PP_{\not=}D\|_{L^2_tH^3}\|\PP_{0}W\|_{L^\infty_tH^3}
			\right)\|\PP_{\not=}W_{0,s}\|_{L^2_tL^2}
			\nonumber\\
			\lesssim&~ \mu^{-1}|\E(t)|^{\f12}\int_0^t \D(\tau)\,d\tau.
			\nonumber
		\end{align}
		Again thanks to Sobolev inequality and Lemma \ref{sec2:lem-1}, by a similar procedure, we find that
		\begin{align}
			\int_0^tJ_3(\tau)\,d\tau
			\lesssim& ~ \mu\sum_{s=1}^{3}
			\|\PP_{0}D\|_{L^2_tH^2}
			\|\PP_{0}D\|_{L^2_tH^3}\|\PP_0W_{0,s}\|_{L^\infty_tL^2}
			\label{est-J3}\\
			&+\mu\sum_{s=0}^{3} \|\PP_{\not=}D\|_{L^2_tH^2}\|\PP_{\not=}D\|_{L^2_tH^3}\|W_{0,s}\|_{L^\infty_tL^2}
			\nonumber\\
			& +\mu\sum_{s=0}^{3}\left[ \left(\|\PP_{0}D\|_{L^\infty_tH^2}\|\PP_{\not=}D\|_{L^2_tH^3}+
			\|\PP_{\not=}D\|_{L^\infty_tH^2}\|\PP_{0}D\|_{L^2_tH^3}\right)\|\PP_{\not=}W_{0,s}\|_{L^2_tL^2}
			\right]
			\nonumber\\
			\lesssim& ~\mu^{-\f13}|\E(t)|^{\f12}\int_0^t \D(\tau)\,d\tau.
			\nonumber
		\end{align}
		Invoking the Sobolev inequality and Lemma~\ref{sec2:lem-1} once more, we integrate by parts to obtain
		\begin{align}
			\int_0^tJ_4(\tau)\,d\tau
			\lesssim&~ \mu\sum_{s=1}^3
			\|\PP_{0} N\|_{L^\infty_tH^3}
			\|\PP_{0} \pa_{y_1} (D,W,N)\|_{L^2_tH^3}
			\|\PP_{0} \pa_{y_1} W_{0,s}\|_{L^2_tL^2}
			\label{est-J4}\\
			&+\mu\sum_{s=0}^{3}
			\|\PP_{\not=} N\|_{L^\infty_tH^3}
			\|\PP_{\not=} \widetilde{\nabla} (D,W,N)\|_{L^2_tH^3}
			\| \widetilde{\nabla} W_{0,s}\|_{L^2_tL^2}
			\nonumber\\
			&+
			\mu\sum_{s=0}^{3}
			\left(
			\|\PP_{0} N\|_{L^\infty_tH^3}
			\|\PP_{\not=} \widetilde{\nabla} (D,W,N)\|_{L^2_tH^3}
			\right.
			\nonumber\\
			&\left.
			+
			\|\PP_{\not=} N\|_{L^\infty_tH^3}
			\|\PP_{0} \pa_{y_1}(D,W,N)\|_{L^2_tH^3}
			\right)\| \PP_{\not=}\widetilde{\nabla} W_{0,s}\|_{L^2_tL^2}
			\nonumber\\
			\lesssim&~ \mu^{-1}|\E(t)|^{\f12}\int_0^t \D(\tau)\,d\tau.
			\nonumber
		\end{align}
		In order to bound the last term $J_5$, we split it into the following three parts:
		\begin{align}
			J_5\lesssim&~
			\mu
			\left|\int_{\T\times\R}\Big(\PP_{\not=}\RR_2\left[(\pa_{x_1}V^1)^2+((\pa_{y_1}-t\pa_{x_1})V^2)^2+2(\pa_{y_1}-t\pa_{x_1})V^1\pa_{x_1}V^2
			\right] \PP_{\not=}W_{0,0}
			\right.
			\label{est-J5-1}\\
			&\left.
			+\sum_{s=1}^{3}\RR_2\nabla^s\left[(\pa_{x_1}V^1)^2+((\pa_{y_1}-t\pa_{x_1})V^2)^2+2(\pa_{y_1}-t\pa_{x_1})V^1\pa_{x_1}V^2
			\right] W_{0,s} \Big)\,dx_1dy_1\right|
			\nonumber\\
			&+\mu
			\left|\int_{\T\times\R}
			\left(\PP_{\not=}\RR_2\widetilde{\dive}\left(f(N)\widetilde{\nabla} N\right)
			\PP_{\not=}W_{0,0}
			+\sum_{s=1}^{3}\RR_2\nabla^s\widetilde{\dive}\left(f(N)\widetilde{\nabla} N\right)
			W_{0,s} \right)\,dx_1dy_1\right|
			\nonumber\\
			&+\mu
			\left|\int_{\T\times\R}\left(\PP_{\not=}\RR_2
			\widetilde{\dive} \left(g(N)(\nu \widetilde{\nabla} D+\mu \widetilde{\nabla}^{\perp}(W-N+\nu D))\right) \PP_{\not=}W_{0,0}
			\right.\right. 
			\nonumber\\
			&\left.\left.
			+\sum_{s=1}^{3}\RR_2\nabla^s
			\widetilde{\dive} \left(g(N)(\nu \widetilde{\nabla} D+\mu \widetilde{\nabla}^{\perp}(W-N+\nu D))\right) W_{0,s}\right) \,dx_1dy_1\right|
			\nonumber\\
			\tri&~ J_{51}+J_{52}+J_{53}.
			\nonumber
		\end{align}
		Then we estimate $J_{5i} (i=1,2,3)$ term by term.
		Since $\PP_{0}\pa_{x_1}q=0$, we observe that
		\begin{align}
			J_{51}
			\lesssim&~ \mu\sum_{s=1}^{3}
			\left|\int_{\T\times\R}\RR_2\pa_{y_1}^s(\pa_{y_1}\PP_{0}V^2)^2
			\PP_{0} W_{0,s} \,dx_1dy_1\right|
			\label{est-J51-1}\\
			&+\mu\sum_{s=0}^{3}
			\Big|\int_{\T\times\R}\RR_2\nabla^s [(\pa_{x_1}\PP_{\not=}V^1)^2+((\pa_{y_1}-t\pa_{x_1})\PP_{\not=}V^2)^2
			\nonumber\\
			&\qquad +2(\pa_{y_1}-t\pa_{x_1})\PP_{\not=}V^1\pa_{x_1}\PP_{\not=}V^2
			] W_{0,s} \,dx_1dy_1 \Big|
			\nonumber\\
			&+\mu\sum_{s=0}^{3}
			\left|\int_{\T\times\R}\RR_2\nabla^s\left[\pa_{y_1}\PP_{0}V^2(\pa_{y_1}-t\pa_{x_1})\PP_{\not=}V^2
			+2\pa_{y_1}\PP_{0}V^1\pa_{x_1}\PP_{\not=}V^2
			\right] \PP_{\not=}W_{0,s} \,dx_1dy_1\right|.
			\nonumber
		\end{align}
		Then employing Lemma \ref{sec2:lem-1} and Sobolev inequality, we can deduce from \eqref{est-J51-1} that
		\begin{align}
			\int_0^tJ_{51}(\tau)\,d\tau
			\lesssim&~\mu
			\sum_{s=1}^{3}\|\PP_{0}\pa_{y_1} V^2\|_{L^\infty_tH^2}\|\PP_{0}\pa_{y_1}V^2\|_{L^2_tH^3}\|\PP_{0}W_{0,s}\|_{L^2_tL^2}
			\label{est-J51}\\
			&	+\mu \sum_{s=0}^{3}
			\|\PP_{\not=}\widetilde{\nabla} \V\|_{L^2_tH^2}\|\PP_{\not=}\widetilde{\nabla}\V\|_{L^2_tH^3}\|W_{0,s}\|_{L^\infty_tL^2}
			\nonumber\\
			&+\mu \sum_{s=0}^{3}
			\|\PP_{0}\pa_{y_1}\V \|_{L^\infty_tH^2}\|\PP_{\not=}\widetilde{\nabla}\V\|_{L^2_tH^3}\|\PP_{\not=}W_{0,s}\|_{L^2_tL^2}
			\nonumber\\
			&	+\mu\sum_{s=0}^{2}\|\PP_{\not=}\widetilde{\nabla} \V\|_{L^2_tH^2}\|\PP_{0}\pa_{y_1}\V\|_{L^\infty_tH^3}\|\PP_{\not=}W_{0,s}\|_{L^2_tL^2}
			\nonumber\\
			&
			+\mu\|\PP_{\not=}\widetilde{\nabla} \V\|_{L^\infty_tH^2}\|\PP_{0}\pa_{y_1}^{4}\V\|_{L^2_tL^2}\|\PP_{\not=}W_{0,s}\|_{L^2_tL^2}
			\nonumber\\
			\lesssim&~
			\mu^{-\f13}|\E(t)|^{\f12}\int_0^t \D(\tau)\,d\tau.
			\nonumber
		\end{align}
		Regarding the term $J_{52}$, integrating by parts, we have
		\begin{align}\label{est-J52-1}
			J_{52}
			=\mu
			\left|\int_{\T\times\R}\left(\PP_{\not=}\RR_2\left(f(N)\widetilde{\nabla} N\right)\cdot
			\widetilde{\nabla} \PP_{\not=}W_{0,0} +\sum_{s=1}^{3}\RR_2\nabla^s\left(f(N)\widetilde{\nabla} N\right)\cdot
			\widetilde{\nabla} W_{0,s}\right) \,dx_1dy_1\right|.
		\end{align}
		Invoking Lemma \ref{sec2:lem-1} together with the Sobolev inequality, we deduce from \eqref{est-J52-1} that
		\begin{align}
			\int_0^tJ_{52}(\tau)\,d\tau
			\lesssim&~
			\mu\sum_{s=1}^{3}
			\|\PP_{0}N\|_{L^\infty_tH^3} \|\PP_{0}\pa_{y_1}N\|_{L^2_tH^3}
			\|\PP_{0}\widetilde{\nabla}W_{0,s}\|_{L^2_tL^2}
			\label{est-J52}\\
			&+\mu\sum_{s=0}^{3}
			\|\PP_{\not=}N\|_{L^\infty_tH^3}
			\|\PP_{\not=}\widetilde{\nabla}N\|_{L^2_tH^3}
			\|\widetilde{\nabla}W_{0,s}\|_{L^2_tL^2}
			\nonumber\\
			&+\mu\sum_{s=0}^{3}
			\left(\|\PP_{0}N\|_{L^\infty_tH^3}
			\|\PP_{\not=}\widetilde{\nabla}N\|_{L^2_tH^3}
			+\|\PP_{\not=}N\|_{L^2_tH^3}\|\PP_{0}\pa_{y_1}N\|_{L^\infty_tH^3}
			\right)\|\PP_{\not=}\widetilde{\nabla}W_{0,s}\|_{L^2_tL^2}
			\nonumber\\
			\lesssim&~\mu^{-\f12}|\E(t)|^{\f12}\int_0^t \D(\tau)\,d\tau.
			\nonumber
		\end{align}
		Following a similar way as $J_{52}$, one gets that
		\begin{align}
			\int_0^tJ_{53}(\tau)\,d\tau 	
			\lesssim&~\mu^2\sum_{s=1}^{3}
			\|\PP_{0}N\|_{L^\infty_tH^3}\|\PP_{0}\pa_{y_1}(D+W+N)\|_{L^2_tH^3}\|\PP_{0}\pa_{y_1}W_{0,s}\|_{L^2_tL^2}
			\label{est-J53}\\
			&
			+\mu^2\sum_{s=0}^{3}
			\|\PP_{\not=}N\|_{L^\infty_tH^3}\|\PP_{\not=}\widetilde{\nabla}(D+W+N)\|_{L^2_tH^3}\|\widetilde{\nabla}W_{0,s}\|_{L^2_tL^2}
			\nonumber\\
			&
			+
			\mu^2\sum_{s=0}^{3}	\left(\|\PP_{0}N\|_{L^\infty_tH^3}\|\PP_{\not=}\widetilde{\nabla}(D+W+N)\|_{L^2_tH^3}
			\right.
			\nonumber\\
			&\left.
			+\|\PP_{\not=}N\|_{L^\infty_tH^3}\|\PP_{0}\pa_{y_1}(D+W+N)\|_{L^2_tH^3}
			\right)\|\PP_{\not=}\widetilde{\nabla}W_{0,s}\|_{L^2_tL^2}
			\nonumber\\
			\lesssim&~|\E(t)|^{\f12}\int_0^t \D(\tau)\,d\tau.
			\nonumber
		\end{align}
		Substituting \eqref{est-J51}, \eqref{est-J52} and \eqref{est-J53} into \eqref{est-J5-1}, we find that
		\begin{align}
			\int_0^tJ_{5}(\tau)\,d\tau
			\lesssim \mu^{-\f12}|\E(t)|^{\f12}\int_0^t \D(\tau)\,d\tau.	
			\label{est-J5-2}
		\end{align}
		Plugging \eqref{est-J1}-\eqref{est-J4} and \eqref{est-J5-2} into \eqref{est-F3-1}, we arrive at
		\begin{align}
			\int_0^t\left|\int_{\T\times\R}\left(\PP_{\not=}\FFF_{3,1,0}\PP_{\not=}W_{0,0}+
			\sum_{s=1}^{3}\FFF_{3,1,s}W_{0,s}\right)\,dx_1dy_1
			\right|\,d\tau
			\lesssim \mu^{-1}|\E(t)|^{\f12}\int_0^t \D(\tau)\,d\tau
		\end{align}
		This, along with \eqref{est-energy-9}, yields \eqref{est-2} immediately.
	\end{proof}

	\section{Higher order energy estimate of compressible part} 
	
	We observe that the term $\D_{0,3}^{com}(t)$, appearing on the right-hand side of \eqref{est-2+2'}, has yet to be estimated. To close the energy estimates, we therefore need to bound the high-order derivatives of the density and the divergence, namely $\E_{0,3}^{com}(t)$ and $\D_{0,3}^{com}(t)$.
	We formulate the required bounds in the following proposition.
	
	\begin{prop}\label{prop-4}
		Under the assumptions of Theorem \ref{theo2} and \eqref{ass:density}, we have
		\begin{align}
			&\mu \E_{0,3}^{com}(t)+\mu \int_0^t \D_{0,3}^{com}(\tau)\,d\tau
			\label{est-3}\\
			\lesssim&~
			\mu \E_{0,3}^{com}(0)
			+ \int_0^t \D_{0,3}^{com,l}(\tau)\,d\tau
			+\f{1}{A}\mu 
			\int_0^t \D_{0,3}^{in}(\tau)\,d\tau +
			\mu^{-1} |\E(t)|^{\frac12}\int_0^t  \D(\tau) \, d\tau. 
			\nonumber
		\end{align}
	\end{prop}
	Before proving Proposition~\ref{prop-4}, we first recall definitions of $\hat{N}_{j+1,s-j}^k$ and $\hat{D}_{j,s-j}^k$ in \eqref{def_N-j+1} and \eqref{def_D-j+1} with $j=0$,  and then derive from equations $\eqref{eq_n-d-w-2}_1$ and $\eqref{eq_n-d-w-2}_2$ that
	\begin{align}
		\pa_{t}\hat{N}_{1,s}^k=&\f12\f{\pa_{t}p}{p}\hat{N}_{1,s}^k
		-p^{\f12}\hat{D}_{0,s}^k+\hat{\FFF}_{1,1,s}^k,
		\label{eq_N-1-s}\\
		\pa_{t}\hat{D}_{0,s}^k=&-\nu p\hat{D}_{0,s}^k+\f{\pa_{t}p}{p}\hat{D}_{0,s}^k
		+\left(p^{\f12}+2\f{k^2}{p^{\f32}}\right)\hat{N}_{1,s}^k
		-2\mu \f{k^2}{p}\hat{D}_{0,s}^k
		-2\f{k^2}{p}\hat{W}_{0,s}^k
		+\hat{\FFF}_{2,1,s}^k,
		\label{eq_D-1-s}
	\end{align}
	where $\hat{\FFF}_{1,1,s}^k$ and $\hat{\FFF}_{2,1,s}^k$ are defined by
	\begin{align}
		\hat{\FFF}_{1,1,s}^k\tri&<k,\xi>^sp^{\f12}\hat{\FFF}_1^k,
		\\
		\hat{\FFF}_{2,1,s}^k\tri&
        <k,\xi>^s\hat{\FFF}_2^k.
	\end{align}
	By some direct computations, we can deduce from  \eqref{eq_N-1-s} and \eqref{eq_D-1-s} that
	\begin{align}
		&\f12\f{d}{dt}|(\hat{N}_{1,s}^k,\hat{D}_{0,s}^k)|^2
		+\nu p |\hat{D}_{0,s}^k|^2
		\label{est-energy-10}\\
		=&~\f12\f{\pa_{t}p}{p} |\hat{N}_{1,s}^k|^2
		+\f{\pa_{t}p}{p}|\hat{D}_{0,s}^k|^2+2\f{k^2}{p^{\f32}}Re\left(\hat{N}_{1,s}^k\bar{\hat{D}}_{0,s}^k\right)-2\mu \f{k^2}{p}|\hat{D}_{0,s}^k|^2-2\f{k^2}{p}Re\left(\hat{W}_{0,s}^k\bar{\hat{D}}_{0,s}^k\right)
		\nonumber\\
		&+Re\left(\hat{\FFF}_{1,1,s}^k\bar{\hat{N}}_{1,s}^k\right)+Re\left(\hat{\FFF}_{2,1,s}^k\bar{\hat{D}}_{0,s}^k\right).\nonumber
	\end{align}
	To cancel the first term on the right-hand side of \eqref{est-energy-10}, we require		
	\begin{align}
		-\f{d}{dt}\left[\f{\pa_{t}p}{p^{\f32}}Re\left(\hat{N}_{1,s}^k\bar{\hat{D}}_{0,s}^k\right)\right] 	
		=&-\f{\pa_{t}p}{p}|\hat{N}_{1,s}^k|^2+\f{\pa_{t}p}{p}|\hat{D}_{0,s}^k|^2-2\f{k^2\pa_{t}p}{p^3}|\hat{N}_{1,s}^k|^2
		\label{est-energy-11}
        \\
		&+\left(\nu \f{\pa_{t}p}{p^{\f12}}+2\mu \f{k^2\pa_{t}p}{p^{\f52}}-2\f{k^2}{p^{\f32}}
		\right)Re\left(\hat{N}_{1,s}^k\bar{\hat{D}}_{0,s}^k\right)
		\nonumber\\
		&+2\f{k^2\pa_{t}p}{p^{\f52}}Re\left(\hat{N}_{1,s}^k\bar{\hat{W}}_{0,s}^k\right)-\f{\pa_{t}p}{p^{\f32}}\left[Re\left(\hat{\FFF}_{1,1,s}^k\bar{\hat{D}}_{0,s}^k\right)+Re\left(\hat{\FFF}_{2,1,s}^k\bar{\hat{N}}_{1,s}^k\right) \right].\nonumber
	\end{align}
	Multiplying \eqref{est-energy-11} by $1/2$, adding the resulting equality to \eqref{est-energy-10}, finally performing the summation and integration, we find that
	\begin{align}
		&\f12\f{d}{dt}\sum_{s=0}^{3}\sum_{k\in\Z}\int_{\R}\left[|\hat{N}_{1,s}^k|^2+|\hat{D}_{0,s}^k|^2
		-\f{\pa_{t}p}{p^{\f32}}Re\left(\hat{N}_{1,s}^k\bar{\hat{D}}_{0,s}^k\right)
		\right]\,d\xi
        +\sum_{s=0}^{3}\sum_{k\in\Z}\int_{\R}\nu p|\hat{D}_{0,s}^k|^2\,d\xi
		\label{est-energy-11-1}\\
		=&\sum_{s=0}^{3}\sum_{k\in\Z\backslash\{0\}}\int_{\R}\Big\{-\f{k^2\pa_{t}p}{p^{3}}|\hat{N}_{1,s}^k|^2
		+\left(\f32\f{\pa_{t}p}{p}
		-2\mu\f{k^2}{p}\right)|\hat{D}_{0,s}^k|^2
		\nonumber\\
		&\quad+\Big(\f12\nu\f{\pa_{t}p}{p^{\f12}}+\mu \f{k^2\pa_{t}p}{p^{\f52}}
		+\f{k^2}{p^{\f32}}
		\Big)
		Re\left(\hat{N}_{1,s}^k\bar{\hat{D}}_{0,s}^k\right)
		\nonumber\\
		&\quad+2\f{k^2}{p^{\f32}}\f{\pa_{t}p}{p}Re\left(\hat{N}_{1,s}^k\bar{\hat{W}}_{0,s}^k\right)
		-2\f{k^2}{p}Re\left(\hat{D}_{0,s}^k\bar{\hat{W}}_{0,s}^k\right)
		\Big\}\,d\xi
		\nonumber\\
		&\quad+\sum_{s=0}^{3}\sum_{k\in\Z}\int_{\R}\left[Re\left(\hat{\FFF}_{1,1,s}^k\bar{\hat{N}}_{1,s}^k\right)
		+Re\left(\hat{\FFF}_{2,1,s}^k\bar{\hat{D}}_{0,s}^k\right)\right]\,d\xi
		\nonumber\\
		&\quad-\sum_{s=0}^{3}\sum_{k\in\Z\backslash\{0\}}\int_{\R}\f{\pa_{t}p}{p^{\f32}}\left[Re\left(\hat{\FFF}_{1,1,s}^k\bar{\hat{D}}_{0,s}^k\right)+Re\left(\hat{\FFF}_{2,1,s}^k\bar{\hat{N}}_{1,s}^k\right)\right]\,d\xi
		.\nonumber
	\end{align} 
	For the sake of obtaining the dissipation for $\hat{N}_{1,s}^k$, we infer that
	\begin{align}
		&-\f{d}{dt}\left[p^{-\f12}Re\left(\hat{N}_{1,s}^k\bar{\hat{D}}_{0,s}^k\right)\right]
		+\left(1+2\f{k^2}{p^2}\right)|\hat{N}_{1,s}^k|^2
		\label{est-energy-12}\\
		=&~|\hat{D}_{0,s}^k|^2+\f{1}{p^{\f12}}\left(-\f{\pa_{t}p}{p}+2\mu \f{k^2}{p}+\nu p\right)Re\left(\hat{N}_{1,s}^k\bar{\hat{D}}_{0,s}^k\right)
		\nonumber\\
		&+2\f{k^2}{p^{\f32}}Re\left(\hat{N}_{1,s}^k\bar{\hat{W}}_{0,s}^k\right)-\f{1}{p^{\f12}}\left[Re\left(\hat{\FFF}_{1,1,s}^k\bar{\hat{D}}_{0,s}^k\right)+Re\left(\hat{\FFF}_{2,1,s}^k\bar{\hat{N}}_{1,s}^k\right)\right].\nonumber
	\end{align}
	For $k\not=0$, we multiply \eqref{est-energy-12} by $\delta_2\mu^{\f13}$, then integrate over $\xi$ in $\R$ and sum over $k$ in $\Z\backslash\{0\}$, to find that
	\begin{align}
		&-\delta_2\mu^{\f13}\f{d}{dt}\sum_{s=0}^{3}\sum_{k\in\Z\backslash\{0\}}\int_{\R}\left[p^{-\f12}Re\left(\hat{N}_{1,s}^k\bar{\hat{D}}_{0,s}^k\right)\right]\,d\xi
		+\delta_2\mu^{\f13}\sum_{s=0}^{3}\sum_{k\in\Z\backslash\{0\}}\int_{\R}\left(1+2\f{k^2}{p^2}\right)|\hat{N}_{1,s}^k|^2\,d\xi
		\label{est-energy-12-1}\\
		=&~\delta_2\mu^{\f13}\sum_{s=0}^{3}\sum_{k\in\Z\backslash\{0\}}\int_{\R}\left\{|\hat{D}_{0,s}^k|^2+\f{1}{p^{\f12}}\left(-\f{\pa_{t}p}{p}+2\mu \f{k^2}{p}+\nu p\right)Re\left(\hat{N}_{1,s}^k\bar{\hat{D}}_{0,s}^k\right)\right.
		\nonumber\\
		&\left.+2\f{k^2}{p^{\f32}}Re\left(\hat{N}_{1,s}^k\bar{\hat{W}}_{0,s}^k\right)
		-\f{1}{p^{\f12}}\left[Re\left(\hat{\FFF}_{1,1,s}^k\bar{\hat{D}}_{0,s}^k\right)+Re\left(\hat{\FFF}_{2,1,s}^k\bar{\hat{N}}_{1,s}^k\right)\right]\right\}\,d\xi.\nonumber
	\end{align}
	While for $k=0$, we multiply \eqref{est-energy-12} by $\delta_2\mu$, and integrate over $\xi$ in $\R$, to get that
	\begin{align}
		&-\delta_2\mu\f{d}{dt}\sum_{s=1}^{3}\int_{\R}\left[p^{-\f12}Re\left(\hat{N}_{1,s}^0\bar{\hat{D}}_{0,s}^0\right)\right]\,d\xi
		+\delta_2\mu\sum_{s=1}^{3}\int_{\R}|\hat{N}_{1,s}^0|^2
		\label{est-energy-12-2}\\
		=&~\delta_2\mu\sum_{s=1}^{3}\int_{\R}\left\{|\hat{D}_{0,s}^0|^2+\nu |\xi|Re\left(\hat{N}_{1,s}^0\bar{\hat{D}}_{0,s}^0\right)-\f{1}{p^{\f12}}\left[Re\left(\hat{\FFF}_{1,1,s}^0\bar{\hat{D}}_{0,s}^0\right)+Re\left(\hat{\FFF}_{2,1,s}^0\bar{\hat{N}}_{1,s}^0\right)\right]\right\}\,d\xi.
		\nonumber
	\end{align}
	We  define 
	\begin{align*}
		\widetilde{\E}_{5}(t)\tri& \sum_{s=0}^{3}\sum_{k\in\Z}\int_{\R}\left[|\hat{N}_{1,s}^k|^2+|\hat{D}_{0,s}^k|^2
		-\f{\pa_{t}p}{p^{\f32}}Re\left(\hat{N}_{1,s}^k\bar{\hat{D}}_{0,s}^k\right)
		\right]\,d\xi
		\\
		&
		-2\delta_2\mu^{\f13}\sum_{s=0}^{3}\sum_{k\in\Z\backslash\{0\}}\int_{\R}p^{-\f12}Re\left(\hat{N}_{1,s}^k\bar{\hat{D}}_{0,s}^k\right)\,d\xi
		-2\delta_2\mu\sum_{s=1}^{3}\int_{\R}p^{-\f12}Re\left(\hat{N}_{1,s}^0\bar{\hat{D}}_{0,s}^0\right)\,d\xi,
		\\
		\widetilde{\D}_{5}(t)\tri&\sum_{s=0}^{3}\sum_{k\in\Z\backslash\{0\}}\int_{\R}\left(\delta_2\mu^{\f13}\left(1+2\f{k^2}{p^2}\right)
		\right)|\hat{N}_{1,s}^k|^2\,d\xi
		+\delta_2\mu\sum_{s=1}^{3}\int_{\R}|\hat{N}_{1,s}^0|^2\,d\xi
		+\sum_{s=0}^{3}\sum_{k\in\Z}\int_{\R}\nu p|\hat{D}_{0,s}^k|^2\,d\xi.
	\end{align*}
	By a few calculations, we obtain
	\begin{align}\label{con-equi-3}
		\widetilde{\E}_{5}(t)\sim \E_{0,3}^{com}(t),\quad \widetilde{\D}_{5}(t)\sim \D_{0,3}^{com}(t).
	\end{align}
	Summing up \eqref{est-energy-11-1}, \eqref{est-energy-12-1} and \eqref{est-energy-12-2} yields directly that
	\begin{align}
		&\f12\f{d}{dt}\widetilde{\E}_{5}(t)+\widetilde{\D}_{5}(t)
		=\LL_2(t)+\sum_{i=1}^{5}K_i(t).\label{est-energy-13}
	\end{align} 
	Here the linear term $\LL_2$ is defined by
	\begin{align}
		\LL_2(t)\tri&
		\sum_{s=0}^{3}\sum_{k\in\Z\backslash\{0\}}\int_{\R}\left\{
		-\f{k^2\pa_{t}p}{p^{3}}|\hat{N}_{1,s}^k|^2
		+\left(\f32\f{\pa_{t}p}{p}
		+\delta_2\mu^{\f13}-2\mu\f{k^2}{p}\right)|\hat{D}_{0,s}^k|^2
		\right.
		\label{def_L2}\\
		&
		+\left[\delta_2\mu^{\f13}\left(\nu p^{\f12}+2\mu \f{k^2}{p^{\f32}}\right)
		+\f12\nu\f{\pa_{t}p}{p^{\f12}}+\mu \f{k^2\pa_{t}p}{p^{\f52}}
		+\f{k^2}{p^{\f32}}
		-\delta_2\mu^{\f13}\f{\pa_{t}p}{p^{\f32}}
		\right] Re\left(\hat{N}_{1,s}^k\bar{\hat{D}}_{0,s}^k\right)
		\nonumber\\
		&\left.+2\f{k^2}{p^{\f32}}\left(\f{\pa_{t}p}{p}+\delta_2\mu^{\f13}\right)Re\left(\hat{N}_{1,s}^k\bar{\hat{W}}_{0,s}^k\right)
		-2\f{k^2}{p}Re\left(\hat{D}_{0,s}^k\bar{\hat{W}}_{0,s}^k\right)\right\}\,d\xi
		\nonumber\\
		&
		+\delta_2\mu\sum_{s=1}^{3}\int_{\R}\left\{|\hat{D}_{0,s}^0|^2+\nu |\xi|Re\left(\hat{N}_{1,s}^0\bar{\hat{D}}_{0,s}^0\right)\right\}\,d\xi,
		\nonumber
	\end{align}
	and the nonlinear terms $K_i (i=1,\cdots,5)$ are defined by
	\begin{align}
		K_1(t)\tri& \sum_{s=0}^{3}\sum_{k\in\Z}\int_{\R}Re\left(\hat{\FFF}_{1,1,s}^k\bar{\hat{N}}_{1,s}^k\right)\,d\xi,
		\nonumber\\
		K_2(t)\tri&\sum_{s=0}^{3}\sum_{k\in\Z}\int_{\R}Re\left(\hat{\FFF}_{2,1,s}^k\bar{\hat{D}}_{0,s}^k\right)\,d\xi,
		\nonumber\\
		K_3(t)\tri& -\delta_2\mu\sum_{s=1}^{3}\int_{\R}\f{1}{p^{\f12}}\left[Re\left(\hat{\FFF}_{1,1,s}^0\bar{\hat{D}}_{0,s}^0\right)+Re\left(\hat{\FFF}_{2,1,s}^0\bar{\hat{N}}_{1,s}^0\right)\right]\,d\xi,
		\nonumber\\
		K_4(t)\tri&-\sum_{s=0}^{3}\sum_{k\in\Z\backslash\{0\}}\int_{\R}\f{\pa_{t}p}{p^{\f32}}\left[Re\left(\hat{\FFF}_{1,1,s}^k\bar{\hat{D}}_{0,s}^k\right)+Re\left(\hat{\FFF}_{2,1,s}^k\bar{\hat{N}}_{1,s}^k\right)\right]\,d\xi,
		\nonumber\\
		K_5(t)\tri& -\delta_2\mu^{\f13}\sum_{s=0}^{3}\sum_{k\in\Z\backslash\{0\}}\int_{\R}\f{1}{p^{\f12}}\left[Re\left(\hat{\FFF}_{1,1,s}^k\bar{\hat{D}}_{0,s}^k\right)+Re\left(\hat{\FFF}_{2,1,s}^k\bar{\hat{N}}_{1,s}^k\right)\right]\,d\xi.
		\nonumber
	\end{align}
	We now estimate each term on the right-hand side of \eqref{est-energy-13}; the following lemma addresses the linear term $\LL_2$.
	\begin{lemm}\label{lem2-1}
		It holds that
		\begin{align}\label{est-L2}
			\LL_2(t)\leq \f34\widetilde{\D}_5(t)+C\mu^{-1}\D_{0,3}^{com,l}(t)+\f{C}{A}\D_{0,3}^{in}(t).
		\end{align}
	\end{lemm}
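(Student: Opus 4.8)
The plan is to follow the scheme of Lemma~\ref{lem-est-L1} verbatim: bound each term of $\LL_2$ displayed in \eqref{def_L2} by Young's inequality, routing every piece either into $\widetilde{\D}_5$ (equivalently $\D_{0,3}^{com}$, by \eqref{con-equi-3}), into $\D_{0,3}^{in}$ with a gain $A^{-1}$, or into $\mu^{-4/3}\D_{0,3}^{com,l}$. The elementary inputs are \eqref{est-p-varphi}, namely $|\pa_t p|\leq 2|k|p^{1/2}$ and $|\pa_t p|\leq p$, together with $\f{\pa_t m_1}{m_1}=\f{2\nu^{1/3}}{1+\nu^{2/3}(\f{\xi}{k}-t)^2}\leq 2\nu^{1/3}$, $\f{\pa_t m_2}{m_2}=A\f{k^2}{p}$, and $\nu\sim\mu$. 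The structural point, which replaces the $\varphi$-mechanism of \eqref{est-varphi-2} used in Lemma~\ref{lem-est-L1}, is the identity read off from \eqref{def_N-j}, \eqref{def_D-j}, \eqref{def_N-j+1}, \eqref{def_D-j+1},
\begin{align*}
\hat{\UU}_{0,s}^k=\varphi^{-\f14}p^{-\f12}\hat{D}_{0,s}^k,\qquad \hat{N}_{1,s}^k=\varphi^{\f14}p^{\f12}\hat{N}_{0,s}^k ,
\end{align*}
which, combined with $1\leq\varphi\lesssim\mu^{-2/3}$ from \eqref{est-varphi-1}, gives $|\hat{D}_{0,s}^k|^2=\varphi^{\f12}p|\hat{\UU}_{0,s}^k|^2\lesssim\mu^{-\f13}p|\hat{\UU}_{0,s}^k|^2$; hence the summand $\mu p|\hat{\UU}_{0,s}^k|^2$ of $\D_{0,3}^{com,l}$ dominates $\gtrsim\mu^{4/3}|\hat{D}_{0,s}^k|^2$ (for $k=0$ one has $\hat{D}_{0,s}^0=|\xi|\hat{\UU}_{0,s}^0$ and $\mu\xi^2|\hat{\UU}_{0,s}^0|^2\geq\mu|\hat{D}_{0,s}^0|^2$ with no $\mu$-loss). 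Since $\widetilde{\D}_5$ carries no $\varphi$ weight, exactly the ``lift-up'' contributions to $\LL_2$ — the term $\f32\f{\pa_t p}{p}|\hat{D}_{0,s}^k|^2$ with its bad sign near $t>\f{\xi}{k}$, and the diagonal $\delta_2\mu^{1/3}|\hat{D}_{0,s}^k|^2$ — are the ones we cannot absorb into $\widetilde{\D}_5$, and are instead dumped into $\mu^{-4/3}\D_{0,3}^{com,l}$ via the above; this is the extra derivative the compressible part must pay, as explained in the discussion of the main strategy.

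Concretely I would argue as follows. For $k\neq0$: $-\f{k^2\pa_t p}{p^3}|\hat{N}_{1,s}^k|^2\lesssim\f{k^2}{p}|\hat{N}_{1,s}^k|^2\leq A^{-1}\widetilde{\D}_5$, $-2\mu\f{k^2}{p}|\hat{D}_{0,s}^k|^2\leq0$, while $\f32\f{\pa_t p}{p}|\hat{D}_{0,s}^k|^2\lesssim|\hat{D}_{0,s}^k|^2$ and $\delta_2\mu^{1/3}|\hat{D}_{0,s}^k|^2$ are placed in $C\mu^{-4/3}\D_{0,3}^{com,l}$ (the latter may alternatively be absorbed by $\nu p|\hat{D}_{0,s}^k|^2\subset\widetilde{\D}_5$ on the range $p\gtrsim\mu^{-2/3}$). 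The coefficient of $\mathrm{Re}(\hat{N}_{1,s}^k\bar{\hat{D}}_{0,s}^k)$ in \eqref{def_L2} is split termwise: the unbounded pieces $\nu p^{1/2}$ and $\f12\nu p$ are treated by Young against $\nu p|\hat{D}_{0,s}^k|^2\subset\widetilde{\D}_5$, whose remainder $|\hat{N}_{1,s}^k|^2$-coefficient (of order $\delta_2^2\mu^{5/3}$, resp.\ $\nu k^2/p$) is then dominated with a small prefactor by the diagonal $\delta_2\mu^{1/3}|\hat{N}_{1,s}^k|^2$ and $\f{\pa_t m_2}{m_2}|\hat{N}_{1,s}^k|^2$ already in $\widetilde{\D}_5$; the remaining coefficients ($\f{k^2}{p^{3/2}}$, $\mu\f{k^2\pa_t p}{p^{5/2}}$, and the $\f{\pa_t m_1}{m_1}+\f{\pa_t m_2}{m_2}$ pieces, which appear multiplied by $|\f{\pa_t p}{p^{3/2}}|\leq p^{-1/2}\leq1$ or by $\delta_2\mu^{1/3}p^{-1/2}$) are either $\lesssim A^{-1}\widetilde{\D}_5$ or absorbed directly by the $m_1,m_2$ parts of $\widetilde{\D}_5$. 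For the $\hat{W}$-cross terms $\f{k^2}{p^{3/2}}\bigl(\f{\pa_t p}{p}+\delta_2\mu^{1/3}\bigr)\mathrm{Re}(\hat{N}_{1,s}^k\bar{\hat{W}}_{0,s}^k)$ and $\f{k^2}{p}\mathrm{Re}(\hat{D}_{0,s}^k\bar{\hat{W}}_{0,s}^k)$, Young's inequality sends the $|\hat{W}_{0,s}^k|^2$ part into $A^{-1}\bigl(A\f{k^2}{p}|\hat{W}_{0,s}^k|^2\bigr)\subset A^{-1}\D_{0,3}^{in}$ and the $|\hat{N}_{1,s}^k|^2$, $|\hat{D}_{0,s}^k|^2$ parts into $A^{-1}\widetilde{\D}_5$. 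Finally, the $k=0$ block $\delta_2\mu\int(|\hat{D}_{0,s}^0|^2+\nu|\xi|\,\mathrm{Re}(\hat{N}_{1,s}^0\bar{\hat{D}}_{0,s}^0))\,d\xi$ is handled by $\delta_2\mu|\hat{D}_{0,s}^0|^2=\delta_2\mu\xi^2|\hat{\UU}_{0,s}^0|^2\leq\delta_2\D_{0,3}^{com,l}$ together with Young against $\nu\xi^2|\hat{D}_{0,s}^0|^2$ and $\delta_2\mu|\hat{N}_{1,s}^0|^2$ (both in $\widetilde{\D}_5$). Summing over $s=0,\dots,3$ and $k\in\Z$, then fixing $A$ large and afterwards $\delta_2,\widetilde{\ep}$ small so that the accumulated $\widetilde{\D}_5$-contributions total at most $\f34\widetilde{\D}_5$, yields \eqref{est-L2}.

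The main obstacle is the bookkeeping of $\mu$-powers through the Young splittings rather than any isolated estimate: in every split of a coefficient times $\mathrm{Re}(\hat{N}_{1,s}^k\bar{\hat{D}}_{0,s}^k)$ one must verify that the $|\hat{D}_{0,s}^k|^2$ remainder is dominated either by $\nu p|\hat{D}_{0,s}^k|^2$ or by $\mu^{4/3}\cdot\mu^{-4/3}(\mu p|\hat{\UU}_{0,s}^k|^2)$, and that the $|\hat{N}_{1,s}^k|^2$ remainder is dominated by $\delta_2\mu^{1/3}|\hat{N}_{1,s}^k|^2+A\f{k^2}{p}|\hat{N}_{1,s}^k|^2$. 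The delicate case is the factor $\nu p^{1/2}$, which is unbounded in $p$: one must choose the Young weight so that the leftover scales as $\delta_2^2\mu^{5/3}|\hat{N}_{1,s}^k|^2$, and this only closes because the higher-order dissipation $\widetilde{\D}_5$ retains the genuine diagonal term $\delta_2\mu^{1/3}(1+2k^2/p^2)|\hat{N}_{1,s}^k|^2$ (and not a $\varphi$-weighted substitute). Keeping the hierarchy $A\gg1\gg\delta_2\gg\widetilde{\ep}$ consistent across all these splittings is what requires care.
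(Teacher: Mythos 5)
Your proposal is correct and follows essentially the same route as the paper: the central observation $|\hat{D}_{0,s}^k|^2=\varphi^{\f12}p|\hat{\UU}_{0,s}^k|^2\lesssim\mu^{-\f13}p|\hat{\UU}_{0,s}^k|^2$ is exactly the paper's estimate \eqref{est-L2-1}, and the remaining terms are dispatched by the same Young splittings into $\widetilde{\D}_5$, $A^{-1}\D_{0,3}^{in}$, and $\mu^{-4/3}\D_{0,3}^{com,l}$, with the same treatment of the $k=0$ block. The only cosmetic difference is in the Young weight for the $\delta_2\mu^{1/3}\nu p^{1/2}$ coefficient (the paper's \eqref{est-L2-3} takes the symmetric split yielding $\delta_2\mu^{4/3}(|\hat{N}_{1,s}^k|^2+p|\hat{D}_{0,s}^k|^2)$ rather than your $\delta_2^2\mu^{5/3}$ remainder), and both close in the same way.
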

	\begin{proof}
		We first estimate all the linear terms for $k\not=0$ on the right-hand side of \eqref{est-energy-13}.
		In view of \eqref{est-varphi-1} and \eqref{est-p-varphi}, we obtain
		\begin{align}\label{est-L2-1}
			\left|\f32\f{\pa_{t}p}{p}|\hat{D}_{0,s}^k|^2\right|
			\lesssim\f{|k|}{p^{\f12}}|\hat{D}_{0,s}^k|^2
			\lesssim |p^{\f12}\hat{\UU}_{0,s}^k|^2.
		\end{align}
		The lift-up term in $D$ is controlled by the dissipation of the lower-order velocity derivatives. 
        It is easy to verify that
        \begin{align*}
            \Big|\big(\delta_2\mu^{\f13}-2\mu\f{k^2}{p}\big)\Big||\hat{D}_{0,s}^k|^2
            \lesssim
            |p^{\f12}\hat{\UU}_{0,s}^k|^2.
        \end{align*}
        Exploiting \eqref{est-p-varphi}, we obtain
		\begin{align}\label{est-L2-2}
			\left|\f{k^2\pa_{t}p}{p^{3}}|\hat{N}_{1,s}^k|^2\right|
			\lesssim \f{|k|^3}{p^{\f52}}|\hat{N}_{1,s}^k|^2
            \lesssim \f{|k|^2}{p}|\hat{N}_{0,s}^k|^2.
		\end{align}
		It is not difficult to check that
		\begin{align}
			&\left|\delta_2\mu^{\f13}\nu p^{\f12}Re\left(\hat{N}_{1,s}^k\bar{\hat{D}}_{0,s}^k\right)\right|
			\lesssim  \delta_2\mu^{\f43}\left( |\hat{N}_{1,s}^k|^2+p |\hat{D}_{0,s}^k|^2 \right),
			\label{est-L2-3}\\
			&\left|\f12\nu \f{\pa_{t}p}{p^{\f12}} Re\left(\hat{N}_{1,s}^k\bar{\hat{D}}_{0,s}^k\right) \right|
			\lesssim\nu |k| \left|Re\left(\hat{N}_{1,s}^k\bar{\hat{D}}_{0,s}^k\right) \right|
			\leq \widetilde{\ep}\nu p |\hat{D}_{0,s}^k|^2+ C_{\widetilde{\ep}}\mu |\hat{N}_{1,s}^k|^2.
			\label{est-L2-4}
		\end{align}
		In light of \eqref{est-varphi-1} and \eqref{est-p-varphi}, we achieve that
		\begin{align}
			&\quad \left|
            \left(2\delta_2\mu^{\f43}\f{k^2}{p^{\f32}}+\mu \f{k^2\pa_{t}p}{p^{\f52}}
			-\delta_2\mu^{\f13}\f{\pa_{t}p}{p^{\f32}}
			\right)
			Re\left(\hat{N}_{1,s}^k\bar{\hat{D}}_{0,s}^k\right)\right|
            \label{est-L2-6}
            \\
            &\lesssim \mu^{\f13}\f{|k|}{p}|\hat{N}_{1,s}^k||\hat{D}_{0,s}^k|
            \leq \widetilde{\ep} |p^{\f12}\hat{\UU}_{0,s}^k|^2
            +C_{\widetilde{\ep}} \mu^{\f13}
            |\hat{N}_{0,s}^k|^2,
			\nonumber
        \end{align}
        where we have used the fact that
        \begin{align*}
            \f{|k|^{\f12}}{p^{\f34}}|\hat{N}_{1,s}^k|
            \lesssim |\hat{N}_{0,s}^k|.
        \end{align*}
        Analogously, we have
        \begin{align}
            &\left|\f{k^2}{p^{\f32}}
            Re\left(\hat{N}_{1,s}^k\bar{\hat{D}}_{0,s}^k\right)\right|\lesssim |\hat{N}_{0,s}^k|^2+ |p^{\f12}\hat{\UU}_{0,s}^k|^2,
            \label{est-L2-6-1}\\
			&\left|2\f{k^2}{p^{\f32}}\left(\f{\pa_{t}p}{p}+\delta_2\mu^{\f13}\right)Re\left(\hat{N}_{1,s}^k\bar{\hat{W}}_{0,s}^k\right)\right|
			\lesssim \f{k^2}{p}  |\hat{W}_{0,s}^k|^2
            +|\hat{N}_{0,s}^k|^2,
			\label{est-L2-7}\\
			&\left|2\f{k^2}{p}Re\left(\hat{D}_{0,s}^k\bar{\hat{W}}_{0,s}^k\right)\right|
			\lesssim  \f{k^2}{p}|\hat{W}_{0,s}^k|^2+|p^{\f12}\hat{\UU}_{0,s}^k|^2.
			\label{est-L2-8}
		\end{align}
		For $k=0$, a direct computation yields immediately that
		\begin{align}
			&\left|\delta_1\mu\nu |\xi|Re\left(\hat{N}_{1,s}^0\bar{\hat{D}}_{0,s}^0\right)\right|
			\lesssim \delta_1\mu^2\left(|\hat{N}_{1,s}^0|^2+|\xi|^2|\hat{D}_{0,s}^0|^2\right).
			\label{est-L2-9}
		\end{align}
		Substituting \eqref{est-L2-1}–\eqref{est-L2-9} into \eqref{def_L2}, invoking \eqref{est-varphi-2}, and then fixing $A$ (defined in \eqref{def-m2}) sufficiently large while taking $\delta_2$ and $\widetilde{\ep}$  sufficiently small, we immediately obtain \eqref{est-L2}.		
	\end{proof}
	
	The nonlinear terms $K_i (i=1,\cdots,5)$ will be dealt with in the following lemma.
	\begin{lemm}\label{lem2-2}
		Under the assumptions of Proposition \ref{prop-4}, we have
		\begin{align}\label{est-Ki}
			\mu \sum_{i=1}^{5}\int_0^t K_i(\tau)\,d\tau
			\lesssim
			\mu^{-1} |\E(t)|^{\frac12}\int_0^t  \D(\tau) \, d\tau. 
		\end{align}
	\end{lemm} 
	
	\begin{proof}
		For the first term $K_1$, proceeding as in the estimate of  $I_1$ in Lemma~\ref{lem1-1}, we obtain
		\begin{align}\label{est-K1}
			\mu \int_0^t K_1(\tau)\,d\tau
			\lesssim \mu^{-1} |\E(t)|^{\frac12}\int_0^t  \D(\tau) \, d\tau. 
		\end{align}
		Recalling \eqref{def_wF2}, we get
		\begin{align}
			K_2
			\lesssim & \sum_{s=0}^{3}
			\left|\int_{\T\times\R}\RR_2\nabla^s\left(\V\cdot\widetilde{\nabla} D\right) D_{0,s} \,dx_1dy_1\right|
			+\sum_{s=0}^{3}
			\left|\int_{\T\times\R}\RR_2\nabla^s\widetilde{F}_{21} D_{0,s} \,dx_1dy_1\right|
			\tri \sum_{i=1}^2K_{2i},\label{est-K2-1}
		\end{align}
		where $\widetilde{F}_{21}$ is defined in \eqref{def_wF21}. Following a similar approach as in the estimate of $I_1$ in Lemma~\ref{lem1-1}, we deduce that	
		\begin{align}\label{est-K21}
			\mu \int_0^t K_{21}(\tau)\,d\tau
			\lesssim \mu^{-1} |\E(t)|^{\frac12}\int_0^t  \D(\tau) \, d\tau. 
		\end{align}
		The estimate proceeds in essentially the same manner as that for $J_5$ in the proof of Proposition~\ref{prop-3-2}, yielding
		\begin{align}
			\mu \int_0^t K_{22}(\tau)\,d\tau \lesssim \mu^{-1} |\E(t)|^{\frac12}\int_0^t  \D(\tau) \, d\tau.
			\label{est-K22} 
		\end{align}
		The combination of estimates \eqref{est-K2-1}–\eqref{est-K22} immediately yields
		\begin{align}
			\mu \int_0^t K_{2}(\tau)\,d\tau
			\lesssim \mu^{-1} |\E(t)|^{\frac12}\int_0^t  \D(\tau) \, d\tau
			\label{est-K2-2} 
		\end{align}
		For $K_3$, arguing as in the proof of \eqref{est-I3-2} in Lemma~\ref{lem1-3}, we deduce that
		\begin{align}
			\mu \int_0^t K_{3}(\tau)\,d\tau
			\lesssim \mu^{-1} |\E(t)|^{\frac12}\int_0^t  \D(\tau) \, d\tau. 				\label{est-K3} 
		\end{align}
		For $K_4$, we divide it into the following three terms:
		\begin{align}
			K_4
			\lesssim&\sum_{s=0}^{3}\left|\int_{\T\times\R}
			\left(\PP_{\not=}\RR_3\nabla^{s}\left(\V\cdot \widetilde{\nabla} N\right)\PP_{\not=}D_{0,s}
			+\PP_{\not=}\nabla^s\left(\V\cdot \widetilde{\nabla}D
			\right)\PP_{\not=}\widetilde{\Delta}^{-\f12}\RR_3N_{1,s}\right)\,dxdy\right|
			\label{est-K4-1}\\
			&+\sum_{s=0}^{3}\left|\int_{\T\times\R}
			\PP_{\not=}\RR_3\nabla^{s}\left(ND\right)\PP_{\not=}D_{0,s}
			\,dxdy\right|
			+\sum_{s=0}^{3}\left|\int_{\T\times\R}
			\left(\PP_{\not=}\nabla^s\widetilde{F}_{21}
			\PP_{\not=}\widetilde{\Delta}^{-\f12}\RR_3 N_{1,s}\right)\,dxdy\right|,
			\nonumber 	 
		\end{align}
		where $\widetilde{F}_{21}$ is defined in \eqref{def_wF21}.
		Proceeding exactly as in the proofs of \eqref{est-I51-2} and \eqref{est-I52} in Lemma~\ref{lem1-4} and of \eqref{est-J5-2} in Proposition~\ref{prop-3-2}, we deduce from \eqref{est-K4-1} that
		\begin{align}
			\mu  \int_0^t K_4(\tau)\,d\tau\lesssim
			\mu^{-1} |\E(t)|^{\frac12}\int_0^t  \D(\tau) \, d\tau	.			\label{est-K4-2} 
		\end{align}
		It then follows by the same argument used to establish \eqref{est-K4-2} that
		\begin{align}
			\mu \int_0^t K_5(\tau)\,d\tau\lesssim
			\mu^{-1} |\E(t)|^{\frac12}\int_0^t  \D(\tau) \, d\tau.				\label{est-K5} 
		\end{align}
		Collecting \eqref{est-K1}, \eqref{est-K2-2}, \eqref{est-K3}, \eqref{est-K4-2} and \eqref{est-K5} together, we obtain \eqref{est-Ki}.
	\end{proof}
	
	With Lemmas \ref{lem2-1}-\ref{lem2-2} at hand, we can complete the proof of Proposition \ref{prop-4}.
	
	\begin{proof}[\textbf{Proof of Proposition \ref{prop-4}}]
		Integrating \eqref{est-energy-13} with respect to t and then substituting \eqref{est-L2} and \eqref{est-Ki} into the resulting inequality, we conclude that \eqref{est-3} holds.		
	\end{proof}

	\section{Mixed type energy estimates}
	Because the operator $\widetilde{\nabla}$ differs from $\nabla$ whenever $k\not=0$, cross-terms that couple “good” and “bad” derivatives of the density and velocity are no longer controllable. Consequently, even though the estimates for  $\E_{j,3-j}^{com,l}$, $\E_{j,3-j}^{in}$ and $\E_{j,3-j}^{com}$ with $j=0$ were already obtained in Propositions \ref{prop-2}, \ref{prop-3}, \ref{prop-4}, we must still derive,  within the $H^s$-framework with  $1\le s\le 3$, analogous {\it a priori} energy bounds for solutions possessing $(s-j)$ good derivatives and $j$ bad derivatives, where $j$ ranges from 1 to $s$.

	\subsection{Cross terms of compressible part}
	In this subsection, we establish the following proposition, which provides the {\it a priori} energy estimate for the cross-terms that couple the “good’’ and “bad’’ lower–order derivatives of the compressible part of the solution.
	
	\begin{prop}\label{prop-5}
		Under the assumptions of Theorem \ref{theo2} and \eqref{ass:density}, we have
		\begin{align}
			&\sum_{j=1}^{3}c_j\mu^{\f{2j}{3}}\E_{j,3-j}^{com,l}(t)+\sum_{j=1}^{3}c_j\mu^{\f{2j}{3}}\int_0^t \D_{j,3-j}^{com,l}(\tau)\,d\tau 
			\label{est-4}\\
			\lesssim&~
			\sum_{j=1}^{3}c_j\mu^{\f{2j}{3}}\E_{j,3-j}^{com,l}(0)+
			\sum_{j=1}^3c_j\mu^{\f{2(j-1)}{3}}\int_0^t\D_{j-1,3-(j-1)}^{com,l}(\tau)\,d\tau 
			\nonumber\\
			&		+\f{1}{A}\sum_{j=1}^3c_j\mu^{\f{2j}{3}}\int_0^t\D_{j,3-j}^{in}(\tau)\,d\tau	+
			\mu^{-1} |\E(t)|^{\frac12}\int_0^t  \D(\tau) \, d\tau	,
			\nonumber
		\end{align}
		where $c_j ~(j=1,2,3)$ are some positive constant that will be determined later.
	\end{prop}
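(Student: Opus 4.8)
The plan is to mirror the $j=0$ analysis of Propositions \ref{prop-2}--\ref{prop-2-2}. Fix $j\in\{1,2,3\}$. For each $s$ with $j\le s\le 3$ and each $k\in\Z$, apply to the first two equations of \eqref{eq_n-d-w-2} the Fourier weights $m_1^{-1}m_2^{-1}\varphi^{-\f14}\w{k,\xi}^{s-j}p^{\f j2}$ and $m_1^{-1}m_2^{-1}\varphi^{-\f14}\w{k,\xi}^{s-j}p^{\f{j-1}2}$, obtaining evolution equations for $\hat N_{j,s-j}^k$ and $\hat\UU_{j,s-j}^k$ of exactly the structure of \eqref{eq_N-0-s}--\eqref{eq_D-0-s}, the only change being that $\pa_t$ hitting the new factor $p^{\f j2}$ (resp.\ $p^{\f{j-1}2}$) contributes an extra $\f j2\f{\pa_tp}{p}$ (resp.\ $\f{j-1}2\f{\pa_tp}{p}$); thus the coefficient $\f14\f{\pa_tp}{p}$ in \eqref{eq_N-0-s}--\eqref{eq_D-0-s} is replaced by $\f{2j-1}4\f{\pa_tp}{p}$ on $\hat N_{j,s-j}^k$ and $\f{2j+1}4\f{\pa_tp}{p}$ on $\hat\UU_{j,s-j}^k$, while the forcings are $\hat\FFF_{1,j,s}^k=m_1^{-1}m_2^{-1}\varphi^{-\f14}\w{k,\xi}^{s-j}p^{\f j2}\hat\FFF_1^k$ and $\hat\FFF_{2,j,s}^k=m_1^{-1}m_2^{-1}\varphi^{-\f14}\w{k,\xi}^{s-j}p^{\f{j-1}2}\hat\FFF_2^k$. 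Running the $L^2$ energy identity for $(\hat N_{j,s-j}^k,\hat\UU_{j,s-j}^k)$ as in \eqref{est-energy-1} and, since the density carries no dissipation, adjoining the corrector $-\delta_j\mu^{\f13}p^{-\f12}Re(\hat N_{j,s-j}^k\bar{\hat{\UU}}_{j,s-j}^k)$ (together with the $\mu$-weighted version for $k=0$) exactly as in \eqref{est-energy-5}--\eqref{est-energy-5-2} to manufacture the $\mu^{\f13}|\hat N_{j,s-j}^k|^2$ dissipation, and then summing over $j\le s\le3$ and $k$, one arrives at an identity $\f12\f{d}{dt}\widetilde\E(t)+\widetilde\D(t)=\LL(t)+(\text{nonlinear})$ with $\widetilde\E\sim\E_{j,3-j}^{com,l}$, $\widetilde\D\sim\D_{j,3-j}^{com,l}$.

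The genuinely new point — and the reason for the term $\sum_{j=1}^3c_j\mu^{2(j-1)/3}\int_0^t\D_{j-1,3-(j-1)}^{com,l}$ on the right of \eqref{est-4} — is the treatment of the ``bad'' commutator terms $\f{2j-1}4\f{\pa_tp}{p}|\hat N_{j,s-j}^k|^2$ and $\f{2j+1}4\f{\pa_tp}{p}|\hat\UU_{j,s-j}^k|^2$ produced by differentiating $p^{j/2}$ and $p^{(j-1)/2}$. Unlike in the $j=0$ case these cannot be absorbed by the level-$j$ dissipation alone; instead, using $|\pa_tp|\le 2|k|p^{\f12}$ (see \eqref{est-p-varphi}) and $|k|\w{k,\xi}^{s-j}\le\w{k,\xi}^{s-(j-1)}$, one gets $\big|\tfrac{\pa_tp}{p}\hat N_{j,s-j}^k\big|\le 2|\hat N_{j-1,s-(j-1)}^k|$ and $\big|\tfrac{\pa_tp}{p}\hat\UU_{j,s-j}^k\big|\le 2|\hat\UU_{j-1,s-(j-1)}^k|$, so Young's inequality with weight $\mu^{1/3}$ yields $\big|\tfrac{\pa_tp}{p}\big|\,|\hat N_{j,s-j}^k|^2\le\mu^{\f13}|\hat N_{j,s-j}^k|^2+\mu^{-\f13}|\hat N_{j-1,s-(j-1)}^k|^2$ and likewise for $\UU$. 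The first summand is absorbed into $\D_{j,3-j}^{com,l}$; the second, after multiplication by the grading factor $\mu^{2j/3}$, becomes $\mu^{2(j-1)/3}\mu^{1/3}|\hat N_{j-1,s-(j-1)}^k|^2$, which is exactly a term of $\mu^{2(j-1)/3}\D_{j-1,3-(j-1)}^{com,l}$. This ``telescoping in $j$'' is precisely what forces the $\mu^{2j/3}$-graded energy, and it closes only because each bad derivative supplies a spare $\mu^{1/3}$ for the lower level to pay. The remaining linear terms (the coupling to $\hat W_{j,s-j}^k$ through $2\f{k^2}{p^{3/2}}\varphi^{-\f14}$, the $\nu p$ and $\mu\f{k^2}{p}$ terms, and the corrector leftovers) are bounded verbatim as in Lemma \ref{lem-est-L1}, giving $\LL(t)\le\f12\widetilde\D(t)+\f CA\D_{j,3-j}^{in}(t)$ upon taking $A$ large and $\delta_j,\widetilde{\ep}$ small; for the zero mode $\pa_tp=0$, so no commutator term occurs and, by \eqref{est-P0-N-D-W}, the $k=0$ contribution coincides with the $L^2$ estimate for $(\PP_0N,\PP_0V^2)$ already established in Proposition \ref{prop-2-1}.

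The nonlinear terms arising from $\hat\FFF_{1,j,s}^k,\hat\FFF_{2,j,s}^k$ are estimated by the identical scheme to Lemmas \ref{lem1-1}--\ref{lem1-5}: one writes $\widetilde{\nabla}^{j}\nabla^{s-j}$ acting on $\V\cdot\widetilde{\nabla}N$, $\V\cdot\widetilde{\nabla}\V$, $ND$, $f(N)\widetilde{\nabla}N$ and $g(N)(\mu\widetilde{\Delta}\V+(\mu+\mu')\widetilde{\nabla}\widetilde{\dive}\V)$, commutes the operators $\RR_1,\RR_5$ past $\V$ via Lemma \ref{sec2:lem-com-1}, distributes derivatives with Lemma \ref{sec2:lem-com-2}, separates zero and non-zero modes as in \eqref{est-I122-1} (using that $\PP_0V^1$ and the top-order $\nabla^{s-j}\widetilde{\nabla}^{j+1}N$ are not controlled by the energy, so the two transport pieces must be combined and integrated by parts exactly as in the proof of Lemma \ref{lem1-4}), integrates by parts once against $\widetilde{\nabla}$ on the highest-order dissipative nonlinearity $\nabla^{s-j}\widetilde{\nabla}^{j}(\mu\widetilde{\Delta}\V+\cdots)$ as in \eqref{est-I232-1}, and invokes the level-$j$ bounds for $(N,\widetilde{\Lambda}^{-1}D)$, $W$, $(\widetilde{\Lambda}N,D)$ and $\V$ from Lemma \ref{lem-est-NDW}. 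Each resulting contribution carries a prefactor of the form $\mu^{j/3}|\E_{j,3-j}^{\bullet}(t)|^{1/2}$ against $\int_0^t\mu^{2j/3}\D_{j,3-j}^{\bullet}(\tau)\,d\tau$, with $\bullet\in\{com,l;\,in;\,com\}$ and the extra $\mu^{5/6}$, $\mu^{5/3}$ attached to the $com$-slots exactly as on the right of \eqref{est-4}; summing over $j=1,2,3$ after multiplication by $c_j\mu^{2j/3}$ and combining with the linear bound above then yields \eqref{est-4}.

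I expect the principal difficulty to be purely organizational: tracking every power of $\mu$ so that the commutator term lands exactly on $\mu^{2(j-1)/3}\D_{j-1,3-(j-1)}^{com,l}$ (and on nothing with a worse $\mu$-power), so that each of the many nonlinear pieces falls into the correct graded dissipation slot with only a harmless $|\E|^{1/2}$ prefactor, and so that the mode decompositions stay straight so that $\PP_0V^1$ never appears undifferentiated. Conceptually nothing beyond the $j=0$ argument is needed, apart from the telescoping-in-$j$ absorption of the bad $\f{\pa_tp}{p}$ terms into the preceding level's dissipation, which is the essential mechanism behind the $\mu^{2j/3}$-grading in the definitions of $\E(t)$ and $\D(t)$.
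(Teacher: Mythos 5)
Your proposal captures the paper's proof of Proposition \ref{prop-5} accurately and in the same spirit, and it identifies the one genuinely new mechanism at level $j\geq1$: the lift-up coefficient $\f{j}{2}\f{\pa_tp}{p}$ (your $\f{2j-1}{4}$ / $\f{2j+1}{4}$ bookkeeping is an equivalent reorganization, since you retain the $\f14(\f{\pa_t\varphi}{\varphi}-\f{\pa_tp}{p})$ split while the paper absorbs the $\f14\f{\pa_tp}{p}$ into the explicit lift-up coefficient) can no longer be cancelled by a cross term $\f{\pa_tp}{p^{3/2}}Re(\hat N\bar{\hat\UU})$ as at $j=0$, so one instead uses $|\pa_tp|\le 2|k|p^{1/2}$ together with $|k|\w{k,\xi}^{s-j}\le\w{k,\xi}^{s-j+1}$, i.e.\ $\f{|k|}{p^{1/2}}|\hat N_{j,s-j}^k|\le|\hat N_{j-1,s-(j-1)}^k|$, and Young's inequality with weight $\mu^{1/3}$, to dump the bad part into $\mu^{-1/3}|\hat N_{j-1,s-(j-1)}^k|^2$; after multiplication by the grading $c_j\mu^{2j/3}$ this is exactly a constant times $c_j\mu^{2(j-1)/3}\D_{j-1,3-(j-1)}^{com,l}$, which is precisely \eqref{est-L3-1} in the paper. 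The remaining linear terms and the $\delta_j\mu^{1/3}p^{-1/2}Re(\hat N\bar{\hat\UU})$ corrector (and its $\mu$-weighted $k=0$ version) are handled as in Lemma \ref{lem-est-L1}, giving \eqref{est-L3}, and the nonlinear terms repeat the commutator estimates of Lemmas \ref{lem1-1}--\ref{lem1-5} with the $j$-graded weights; all of this matches the paper.

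One small inaccuracy: your statement that for $k=0$ the contribution ``coincides with the $L^2$ estimate for $(\PP_0N,\PP_0V^2)$ already established in Proposition \ref{prop-2-1}'' misattributes the mechanism. By \eqref{est-P0-N-D-W}, $\hat N_{j,s-j}^0=\hat N_{0,s}^0$ for $s\geq j\geq1$, so the $k=0$ sector at level $j\geq1$ corresponds to the $s\geq1$ part of the $j=0$ energy; that was handled in Proposition \ref{prop-2-2}, not Proposition \ref{prop-2-1}. In fact the relevant observation, which the paper states explicitly, is that $\widetilde{\E}_6$ already carries at least one derivative, so the delicate $L^2$ zero-mode cancellation against $\PP_0V^1$ (which is what Proposition \ref{prop-2-1} was actually for) is simply not needed at $j\geq1$; the nonlinear terms can be handled directly by the scheme of Proposition \ref{prop-2-2}. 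This does not affect the validity of your argument, only the citation.
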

	We derive from \eqref{eq_n-d-w-2} that
	\begin{align}
		\pa_{t}\hat{N}_{j,s-j}^k=&-\left(\f{\pa_{t}m_1}{m_1}+\f{\pa_{t}m_2}{m_2}+\f14\f{\pa_{t}\varphi}{\varphi}\right)\hat{N}_{j,s-j}^k+\f{j}{2} \f{\pa_{t}p}{p}\hat{N}_{j,s-j}^k-p^{\f12}\hat{\UU}_{j,s-j}^k+\hat{\FFF}_{1,j,s-j}^k,
		\label{eq_N-j}\\
		\pa_{t}\hat{\UU}_{j,s-j}^k=&-\left(\f{\pa_{t}m_1}{m_1}+\f{\pa_{t}m_2}{m_2}+\f14\f{\pa_{t}\varphi}{\varphi}+\nu p\right)\hat{\UU}_{j,s-j}^k+\f{j+1}{2} \f{\pa_{t}p}{p}\hat{\UU}_{j,s-j}^k
		\label{eq_D-j}\\
		&+\left(p^{\f12}+2\f{k^2}{p^{\f32}}\right)\hat{N}_{j,s-j}^k-2\mu \f{k^2}{p}\hat{\UU}_{j,s-j}^k
		-2\f{k^2}{p^{\f32}}\varphi^{-\f14}\hat{W}_{j,s-j}^k
		+\hat{\FFF}_{2,j,s-j}^k,\nonumber
	\end{align}
	where  nonlinear terms are be determined explicitly as
	\begin{align}
		\hat{\FFF}_{1,j,s-j}^k\tri&m_1^{-1}m_2^{-1}\varphi^{-\f14}<k,\xi>^{s-j}p^{\f{j}{2}}\hat{\FFF}_1^k,
		\\
		\hat{\FFF}_{2,j,s-j}^k\tri&m_1^{-1}m_2^{-1}\varphi^{-\f14}<k,\xi>^{s-j}p^{\f{j-1}{2}}\hat{\FFF}_2^k.
	\end{align}
	It follows from  \eqref{eq_N-j} and \eqref{eq_D-j} that
	\begin{align}
		&\f12\f{d}{dt}\sum_{s=1}^{3}\sum_{j=1}^{s}c_j\mu^{\f{2j}{3}}\sum_{k\in\Z}\int_{\R}|(\hat{N}_{j,s-j}^k,\hat{\UU}_{j,s-j}^k)|^2\,d\xi
		\label{est-energy-16}\\
		&+\sum_{s=1}^{3}\sum_{j=1}^{s}c_j\mu^{\f{2j}{3}}\sum_{k\in\Z}\int_{\R}\left(\f{\pa_{t}m_1}{m_1}+\f{\pa_{t}m_2}{m_2}+\f14\f{\pa_{t}\varphi}{\varphi}\right)|\hat{N}_{j,s-j}^k|^2\,d\xi
		\nonumber\\
		&+\sum_{s=1}^{3}\sum_{j=1}^{s}c_j\mu^{\f{2j}{3}}\sum_{k\in\Z}\int_{\R}\left(\f{\pa_{t}m_1}{m_1}+\f{\pa_{t}m_2}{m_2}+\f14\f{\pa_{t}\varphi}{\varphi}+\nu p\right)|\hat{\UU}_{j,s-j}^k|^2\,d\xi
		\nonumber\\
		=&\sum_{s=1}^{3}\sum_{j=1}^{s}c_j\mu^{\f{2j}{3}}\sum_{k\in\Z\backslash\{0\}}\int_{\R}\left\{\f{j}{2}\f{\pa_{t}p}{p}|\hat{N}_{j,s-j}^k|^2+\f{j+1}{2}\f{\pa_{t}p}{p}|\hat{\UU}_{j,s-j}^k|^2+2\f{k^2}{p^{\f32}}Re\left(\hat{N}_{j,s-j}^k\bar{\hat{\UU}}_{j,s-j}^k\right)
		\right.
		\nonumber\\
		&\left. \qquad-2\mu \f{k^2}{p}|\hat{\UU}_{j,s-j}^k|^2
		-2\f{k^2}{p^{\f32}}\varphi^{-\f14}Re\left(\hat{W}_{j,s-j}^k\bar{\hat{\UU}}_{j,s-j}^k\right)\right\}\,d\xi
		\nonumber\\
		&\quad+\sum_{s=1}^{3}\sum_{j=1}^{s}c_j\mu^{\f{2j}{3}}\sum_{k\in\Z}\int_{\R}\left\{Re\left(\hat{\FFF}_{1,j,s-j}^k\bar{\hat{N}}_{j,s-j}^k\right)+Re\left(\hat{\FFF}_{2,j,s-j}^k\bar{\hat{\UU}}_{j,s-j}^k\right)\right\}\,d\xi,
		\nonumber
	\end{align}
	where $c_j ~(j=1,2,3)$ are some positive constant that will be determined later.
	We now establish the dissipation estimate for the density:
	\begin{align}
		&-\f{d}{dt}\left[p^{-\f12}Re\left(\hat{N}_{j,s-j}^k\bar{\hat{\UU}}_{j,s-j}^k\right)\right]
		+\left(1+2\f{k^2}{p^2}\right)|\hat{N}_{j,s-j}^k|^2
		\label{est-energy-17}\\
		=&~|\hat{\UU}_{j,s-j}^k|^2+\f{1}{p^{\f12}}\left(2\f{\pa_{t}m_1}{m_1}+2\f{\pa_{t}m_2}{m_2}+\f12\f{\pa_{t}\varphi}{\varphi}-j\f{\pa_{t}p}{p}+2\mu \f{k^2}{p}+\nu p\right)Re\left(\hat{N}_{j,s-j}^k\bar{\hat{\UU}}_{j,s-j}^k\right)
		\nonumber\\
		&+2\f{k^2}{p^2}\varphi^{-\f14}Re\left(\hat{N}_{j,s-j}^k\bar{\hat{W}}_{j,s-j}^k\right)
		-\f{1}{p^{\f12}}\left[Re\left(\hat{\FFF}_{1,j,s-j}^k\bar{\hat{\UU}}_{j,s-j}^k\right)+Re\left(\hat{\FFF}_{2,j,s-j}^k\bar{\hat{N}}_{j,s-j}^k\right)\right].\nonumber
	\end{align}
	We observe that the dissipation estimates for the density differ between the zero and the non-zero frequencies.
	For $k\not=0$, we multiply \eqref{est-energy-17} by $\delta_5\mu^{\f13}$, then integrate over $\xi$ in $\R$, and sum over $k$ in $\Z\backslash\{0\}$ to obtain
	\begin{align}
		&-\delta_5\mu^{\f13}\f{d}{dt}\sum_{s=1}^{3}\sum_{j=1}^{s}c_j\mu^{\f{2j}{3}}\sum_{k\in\Z\backslash\{0\}}\int_{\R}p^{-\f12}Re\left(\hat{N}_{j,s-j}^k\bar{\hat{\UU}}_{j,s-j}^k\right)\,d\xi
		\label{est-energy-17-1}\\
		&+\delta_5\mu^{\f13}
		\sum_{s=1}^{3}\sum_{j=1}^{s}c_j\mu^{\f{2j}{3}}\sum_{k\in\Z\backslash\{0\}}\int_{\R}\left(1+2\f{k^2}{p^2}\right)|\hat{N}_{j,s-j}^k|^2\,d\xi
		\nonumber\\
		=&~\delta_5\mu^{\f13}\sum_{s=1}^{3}\sum_{j=1}^{s}c_j\mu^{\f{2j}{3}}\sum_{k\in\Z\backslash\{0\}}\int_{\R}\left\{|\hat{\UU}_{j,s-j}^k|^2+\f{1}{p^{\f12}}\left(2\f{\pa_{t}m_1}{m_1}+2\f{\pa_{t}m_2}{m_2}+\f12\f{\pa_{t}\varphi}{\varphi}
		\right.\right.
		\nonumber\\
		&\left.\left.
		-j\f{\pa_{t}p}{p}+2\mu \f{k^2}{p}+\nu p\right)Re\left(\hat{N}_{j,s-j}^k\bar{\hat{\UU}}_{j,s-j}^k\right)
		+2\f{k^2}{p^2}\varphi^{-\f14}Re\left(\hat{N}_{j,s-j}^k\bar{\hat{W}}_{j,s-j}^k\right)
		\right.
		\nonumber\\
		&\left.-\f{1}{p^{\f12}}\left[Re\left(\hat{\FFF}_{1,j,s-j}^k\bar{\hat{\UU}}_{j,s-j}^k\right)+Re\left(\hat{\FFF}_{2,j,s-j}^k\bar{\hat{N}}_{j,s-j}^k\right)\right]\right\}\,d\xi.\nonumber
	\end{align}
	While for $k=0$, we multiply \eqref{est-energy-17} by $\delta_5\mu$, and integrate over $\xi$ in $\R$, to obtain 
	\begin{align}
		&-\delta_5\mu\f{d}{dt}\sum_{s=2}^{3}\sum_{j=1}^{s-1}c_j\mu^{\f{2j}{3}}\int_{\R}
		p^{-\f12}Re\left(\hat{N}_{j,s-j}^0\bar{\hat{\UU}}_{j,s-j}^0\right)\,d\xi
		+\delta_5\mu
		\sum_{s=2}^{3}\sum_{j=1}^{s-1}c_j\mu^{\f{2j}{3}}\int_{\R}|\hat{N}_{j,s-j}^0|^2\,d\xi
		\label{est-energy-17-2}\\
		=&~\delta_5\mu\sum_{s=2}^{3}\sum_{j=1}^{s-1}c_j\mu^{\f{2j}{3}}\int_{\R}\Big\{|\hat{\UU}_{j,s-j}^0|^2+\nu |\xi| Re\left(\hat{N}_{j,s-j}^0\bar{\hat{\UU}}_{j,s-j}^0\right)  \nonumber\\
		&   \quad-|\xi|^{-1}\left[Re\left(\hat{\FFF}_{1,j,s-j}^0\bar{\hat{\UU}}_{j,s-j}^0\right)+Re\left(\hat{\FFF}_{2,j,s-j}^0\bar{\hat{N}}_{j,s-j}^0\right)\right]\Big\}\,d\xi.
		\nonumber
	\end{align}
	We define
	\begin{align*}
		\widetilde{\E}_6(t)\tri&
		\sum_{s=1}^{3}\sum_{j=1}^{s}c_j\mu^{\f{2j}{3}}\sum_{k\in\Z}\int_{\R}
		\left[|\hat{N}_{j,s-j}^k|^2+|\hat{\UU}_{j,s-j}^k|^2\right]\,d\xi
		\\		
		&-2\delta_5\mu^{\f13}\sum_{s=1}^{3}\sum_{j=1}^{s}c_j\mu^{\f{2j}{3}}\sum_{k\in\Z\backslash\{0\}}\int_{\R} p^{-\f12}Re\left(\hat{N}_{j,s-j}^k\bar{\hat{\UU}}_{j,s-j}^k\right)\,d\xi
		\\
		&-2\delta_5\mu\sum_{s=2}^{3}\sum_{j=1}^{s-1}c_j\mu^{\f{2j}{3}}\int_{\R}p^{-\f12}Re\left(\hat{N}_{j,s-j}^0\bar{\hat{\UU}}_{j,s-j}^0\right)\,d\xi,
		\nonumber\\
		\widetilde{\D}_6(t)\tri&
		\sum_{s=1}^{3}\sum_{j=1}^{s}c_j\mu^{\f{2j}{3}}\sum_{k\in\Z\backslash\{0\}}\int_{\R}
		\left(\f{\pa_{t}m_1}{m_1}+\f{\pa_{t}m_2}{m_2}+\f14\f{\pa_{t}\varphi}{\varphi}
		\right.
		\\
		&\left.
		+\delta_5\mu^{\f13}\left(1+2\f{k^2}{p^2}\right)
		\right)|\hat{N}_{j,s-j}^k|^2\,d\xi
		+\delta_5\mu\sum_{s=2}^{3}\sum_{j=1}^{s-1}c_j\mu^{\f{2j}{3}}\int_{\R}|\hat{N}_{j,s-j}^0|^2\,d\xi
		\\
		&+\sum_{s=1}^{3}\sum_{j=1}^{s}c_j\mu^{\f{2j}{3}}\sum_{k\in\Z}\int_{\R}
		\left(\f{\pa_{t}m_1}{m_1}+\f{\pa_{t}m_2}{m_2}+\f14\f{\pa_{t}\varphi}{\varphi}+\nu p\right)|\hat{\UU}_{j,s-j}^k|^2\,d\xi.
		\nonumber
	\end{align*}
	By some direct calculations, we obtain 
	\begin{align}\label{con-equi-4}
		\widetilde{\E}_6(t)\sim \sum_{j=1}^{3}c_j\mu^{\f{2j}{3}}\E_{j,3-j}^{com}(t),\quad \widetilde{\D}_6(t)\sim \sum_{j=1}^{3}c_j\mu^{\f{2j}{3}}\D_{j,3-j}^{com}(t).
	\end{align}
	Combining \eqref{est-energy-16}, \eqref{est-energy-17-1} and \eqref{est-energy-17-2}, we immediately obtain
	\begin{align}
		&\f12\f{d}{dt}\widetilde{\E}_6(t)+\widetilde{\D}_6(t)
		=\LL_3(t)+\sum_{i=1}^3L_i.
		\label{est-energy-18}
	\end{align} 
	Here the linear term $\LL_3$ and the nonlinear terms $L_i(i=1,2,3)$ are defined by
	\begin{align}
		\LL_3(t)\tri &\sum_{s=1}^{3}\sum_{j=1}^{s}c_j\mu^{\f{2j}{3}}\sum_{k\in\Z\backslash\{0\}}\int_{\R}
		\left\{\f{j}{2}\f{\pa_{t}p}{p}|\hat{N}_{j,s-j}^k|^2
		+\left(\f{j+1}{2}\f{\pa_{t}p}{p}
		+\delta_5\mu^{\f13}-2\mu\f{k^2}{p}\right)|\hat{\UU}_{j,s-j}^k|^2
		\right.
		\label{def_L3}\\
		&
		+\left[\delta_5\mu^{\f13}\left(\nu p^{\f12}+2\mu\f{k^2}{p^{\f32}}\right)
		+2\f{k^2}{p^{\f32}}
		+\delta_5\mu^{\f13}\f{1}{p^{\f12}}\left(2\f{\pa_{t}m_1}{m_1}+2\f{\pa_{t}m_2}{m_2}+\f12\f{\pa_{t}\varphi}{\varphi}-j\f{\pa_{t}p}{p}\right)
		\right]
		\nonumber\\
		&\left.\times Re\left(\hat{N}_{j,s-j}^k\bar{\hat{\UU}}_{j,s-j}^k\right)
		+2\delta_5\mu^{\f13}\f{k^2}{p^2}\varphi^{-\f14}Re\left(\hat{N}_{j,s-j}^k\bar{\hat{W}}_{j,s-j}^k\right)
		-2\f{k^2}{p^{\f32}}\varphi^{-\f14}Re\left(\hat{W}_{j,s-j}^k\bar{\hat{\UU}}_{j,s-j}^k\right)\right\}\,d\xi
		\nonumber\\
		&+\delta_5\mu\sum_{s=2}^{3}\sum_{j=2}^{s}c_j\mu^{\f{2j}{3}}\int_{\R}\left\{|\hat{\UU}_{j,s-j}^0|^2+\nu |\xi|Re\left(\hat{N}_{j,s-j}^0\bar{\hat{\UU}}_{j,s-j}^0\right)
		\right\}\,d\xi,
		\nonumber\\
		L_1\tri&\sum_{s=1}^{3}\sum_{j=1}^{s}c_j\mu^{\f{2j}{3}}\sum_{k\in\Z}\int_{\R}
		\left\{
		Re\left(\hat{\FFF}_{1,j,s-j}^k\bar{\hat{N}}_{j,s-j}^k\right)+Re\left(\hat{\FFF}_{2,j,s-j}^k\bar{\hat{\UU}}_{j,s-j}^k\right)\right\}\,d\xi,
		\nonumber\\
		L_2\tri&-\delta_5\mu\sum_{s=2}^{3}\sum_{j=1}^{s-1}c_j\mu^{\f{2j}{3}}\int_{\R}|\xi|^{-1}\left[Re\left(\hat{\FFF}_{1,j,s-j}^0\bar{\hat{\UU}}_{j,s-j}^0\right)
		+Re\left(\hat{\FFF}_{2,j,s-j}^0\bar{\hat{N}}_{j,s-j}^0\right)\right]\,d\xi,
		\nonumber\\
		L_3\tri&-\delta_5\mu^{\f13}\sum_{s=1}^{3}\sum_{j=1}^{s}c_j\mu^{\f{2j}{3}}\sum_{k\in\Z\backslash\{0\}}\int_{\R}\f{1}{p^{\f12}}\left[Re\left(\hat{\FFF}_{1,j,s-j}^k\bar{\hat{\UU}}_{j,s-j}^k\right)+Re\left(\hat{\FFF}_{2,j,s-j}^k\bar{\hat{N}}_{j,s-j}^k\right)\right]\,d\xi.
		\nonumber
	\end{align}
	We now begin to deal with all the terms on the right-hand side of \eqref{est-energy-18}.
	Firstly, we give the following lemma to deal with the linear term $\LL_3$.
	\begin{lemm}\label{lem-L3}
		It holds that
		\begin{align}\label{est-L3}
			\LL_3(t) \leq \f12\widetilde{\D}_6(t)
			+C\sum_{j=1}^{3}c_j\mu^{\f{2(j-1)}{3}}\D_{j-1,3-(j-1)}^{com,l}(t) +\f{C}{A}\sum_{j=1}^{3}\D_{j,3-j}^{in}(t).
		\end{align}
	\end{lemm}
	\begin{proof}
		We begin by estimating every linear term in \eqref{def_L3} for $k\not=0$.
		The lift-up term containing the $j$-order bad derivative in the right-hand side of \eqref{def_L3} can be controlled by the enhanced dissipation term involving the $j$-order and $(j-1)$-order bad derivative.  Indeed, applying \eqref{est-p-varphi} together with Young’s inequality yields
		\begin{align}
			&\left|\f{j}{2}\f{\pa_{t}p}{p}|\hat{N}_{j,s-j}^k|^2
			+\f{j+1}{2}\f{\pa_{t}p}{p}|\hat{\UU}_{j,s-j}^k|^2\right|
			\label{est-L3-1}\\
			\lesssim&~ \f{|k|}{p^{\f12}} \left(|\hat{N}_{j,s-j}^k|^2+ |\hat{\UU}_{j,s-j}^k|^2\right)
			\nonumber\\
			\lesssim&~ \widetilde{\ep} \mu^{\f13}\left(|\hat{N}_{j,s-j}^k|^2+ |\hat{\UU}_{j,s-j}^k|^2\right) + C_{\widetilde{\ep}} \mu^{-\f13} \left(|\hat{N}_{j-1,s-(j-1)}^k|^2+ |\hat{\UU}_{j-1,s-(j-1)}^k|^2\right).
			\nonumber
		\end{align}
		The remaining terms in \eqref{def_L3} can be estimated in the same way as in the proof of \eqref{est-LL1}.
		By Young's inequality, we get 
		\begin{align}
			&\left|2\delta_5\mu^{\f13}\f{1}{p^{\f12}}\left(\f{\pa_{t}m_1}{m_1}+\f{\pa_{t}m_2}{m_2}+\f14\f{\pa_{t}\varphi}{\varphi}\right)
			Re\left(\hat{N}_{j,s-j}^k\bar{\hat{\UU}}_{j,s-j}^k\right)\right|
			\label{est-L3-2}\\
			\lesssim&~
			\delta_5\left(\f{\pa_{t}m_1}{m_1}+\f{\pa_{t}m_2}{m_2}+\f14\f{\pa_{t}\varphi}{\varphi}\right)
			\left(|\hat{N}_{j,s-j}^k|^2+ |\hat{\UU}_{j,s-j}^k|^2\right).
			\nonumber
		\end{align}
		After some direct calculations, we infer that
		\begin{align}
			&\left|\delta_5\mu^{\f13}\nu p^{\f12}Re\left(\hat{N}_{j,s-j}^k\bar{\hat{\UU}}_{j,s-j}^k\right)\right|
			\lesssim
			\delta_5\mu^{\f43}
			\left(|\hat{N}_{j,s-j}^k|^2+ p|\hat{\UU}_{j,s-j}^k|^2\right),
			\label{est-L3-3}\\
			&\left|2\delta_5\mu^{\f13}\f{k^2}{p^2}\varphi^{-\f14}Re\left(\hat{N}_{j,s-j}^k\bar{\hat{W}}_{j,s-j}^k\right)
			-2\f{k^2}{p^{\f32}}\varphi^{-\f14}Re\left(\hat{W}_{j,s-j}^k\bar{\hat{\UU}}_{j,s-j}^k\right)\right|
			\label{est-L3-4}\\
			\lesssim& ~
			\f{k^2}{p}\left(|\hat{N}_{j,s-j}^k|^2+ |\hat{\UU}_{j,s-j}^k|^2+|\hat{W}_{j,s-j}^k|^2\right).
			\nonumber
		\end{align}
		By \eqref{est-p-varphi} again, we find that
		\begin{align}
			&\left|\left[2\f{k^2}{p^{\f32}}+\delta_5\mu^{\f13}\f{1}{p^{\f12}}\left(2\mu\f{k^2}{p}-j\f{\pa_{t}p}{p}\right)
			\right]
			Re\left(\hat{N}_{j,s-j}^k\bar{\hat{\UU}}_{j,s-j}^k\right)\right|
			\lesssim
			\f{k^2}{p}\left(|\hat{N}_{j,s-j}^k|^2+ |\hat{\UU}_{j,s-j}^k|^2\right).
			\label{est-L3-5}
		\end{align}
		While for $k=0$, the linear terms on the right-hand side of \eqref{est-energy-18} can be controlled by
		\begin{align}
			\left|\delta_5\mu\nu |\xi|Re\left(\hat{N}_{j,s-j}^0\bar{\hat{\UU}}_{j,s-j}^0\right)\right|
			\lesssim \delta_5\mu^2\left(|\hat{N}_{j,s-j}^0|^2+|\xi|^2|\hat{\UU}_{j,s-j}^0|^2
			\right).
			\label{est-L3-6}
		\end{align}
		Collecting all estimates \eqref{est-L3-1}-\eqref{est-L3-6} together, and then choosing $A$ defined in \eqref{def-m2} suitably large, $\widetilde{\ep}$ and $\delta_5$ suitably small, we can deduce \eqref{est-L3}.
	\end{proof}

	With the help of Lemma \ref{lem-L3}, we can prove Proposition \ref{prop-5}.
	
	\begin{proof}[\textbf{Proof of Proposition \ref{prop-5}}]
		The nonlinear terms $L_i (i=1,2,3)$  can be handled by the same arguments employed in the proof of Proposition~\ref{prop-2}.
		Indeed, because the energy  $\widetilde{\E}_6$  already incorporates at least one derivative of the solution, no separate estimate of the zero-mode in $L^2$ is required. Consequently, the reasoning presented in Proposition~\ref{prop-2} yields			
		\begin{align}
			\sum_{i=1}^{3}\int_0^tL_i(\tau)\,d\tau \lesssim \mu^{-1} |\E(t)|^{\frac12}\int_0^t  \D(\tau) \, d\tau.
			\nonumber
		\end{align}
		Inserting the preceding estimate together with \eqref{est-L3} into \eqref{est-energy-18} and invoking \eqref{est-varphi-2} and \eqref{con-equi-4}, we arrive at \eqref{est-4}.			
	\end{proof}

	\subsection{Cross-terms of  incompressible part}
	Our goal in this subsection is to derive the {\it a priori} energy estimate for the cross-terms that couple the good and bad derivatives of the vorticity.
	
	Recalling $\eqref{eq_n-d-w-2}_3$ once more, we obtain 
	\begin{align}
		\pa_{t}\hat{W}_{j,s-j}^k=&-\left(\f{\pa_{t}m_1}{m_1}+\f{\pa_{t}m_2}{m_2}+\mu p\right)\hat{W}_{j,s-j}^k+\f{j}{2}\f{\pa_{t}p}{p}\hat{W}_{j,s-j}^k-2\mu \f{k^2}{p^{\f32}}\hat{N}_{j+1,s-j}^k
		\label{eq_W-j+1}\\
		&+\mu(\mu+\mu')p\hat{D}_{j,s-j}^k
		-\mu\left(\f{\pa_{t}p}{p}-2\mu \f{k^2}{p}\right)\hat{D}_{j,s-j}^k
		+2\mu \f{k^2}{p}\hat{W}_{j,s-j}^k+\hat{\FFF}_{3,j+1,s-j}^k,\nonumber
	\end{align}
	where nonlinear term is defined by
	\begin{align}
		\hat{\FFF}_{3,j+1,s-j}^k\tri&m_1^{-1}m_2^{-1}<k,\xi>^{s-j}p^{\f{j}{2}}\hat{\FFF}_3^k.
	\end{align}
	
	Here is the main results of this subsection.				
	\begin{prop}\label{prop-6}
		Under the assumptions of Theorem \ref{theo2} and \eqref{ass:density}, we have
		\begin{align}
			&\sum_{j=1}^{3}c_j\mu^{\f{2j}{3}}\E_{j,3-j}^{in}(t)+\sum_{j=1}^{3} c_j\mu^{\f{2j}{3}}\int_0^t\D_{j,3-j}^{in}(\tau)\,d\tau 
			\label{est-5}\\
			\lesssim&~
			\sum_{j=1}^{3}c_j\mu^{\f{2j}{3}}\E_{j,3-j}^{in}(0)+
			\sum_{j=1}^3c_j\mu^{\f{2(j-1)}{3}}\int_0^t\D_{j-1,3-(j-1)}^{in}(\tau)\,d\tau
            \nonumber\\
			&\quad +
			\mu^{\f23}\sum_{j=0}^3c_j\mu^{\f{2j}{3}}\int_0^t\big(\D_{j,3-j}^{com,l}+\mu\D_{j,3-j}^{com}\big)(\tau)\,d\tau
            +\mu^{-1} |\E(t)|^{\frac12}\int_0^t  \D(\tau) \, d\tau. 
			\nonumber
		\end{align}
	\end{prop}

	\begin{proof}
		We deduce from \eqref{eq_W-j+1} that
		\begin{align}
			&\f12\f{d}{dt}|\hat{W}_{j,s-j}^k|^2+\left(\f{\pa_{t}m_1}{m_1}+\f{\pa_{t}m_2}{m_2}+\mu p\right)|\hat{W}_{j,s-j}^k|^2
			\label{est-energy-20}\\
			=&~\f{j}{2}\f{\pa_{t}p}{p}|\hat{W}_{j,s-j}^k|^2
			+\mu(\mu+\mu')p Re\left(\hat{D}_{j,s-j}^k\bar{\hat{W}}_{j,s-j}^k\right)
			-2\mu \f{k^2}{p^{\f32}} Re\left(\hat{N}_{j+1,s-j}^k\bar{\hat{W}}_{j,s-j}^k\right)
			\nonumber\\
			&-\mu \left(\f{\pa_{t}p}{p}-2\mu \f{k^2}{p}\right) Re\left(\hat{D}_{j,s-j}^k\bar{\hat{W}}_{j,s-j}^k\right)
			+2\mu \f{k^2}{p}|\hat{W}_{j,s-j}^k|^2
			+Re\left(\hat{\FFF}_{3,j+1,s-j}^k\bar{\hat{W}}_{j,s-j}^k\right).
			\nonumber
		\end{align}
		The lift-up term with $j$-order bad derivative on the right-hand side of \eqref{est-energy-20} can be controlled by the enhanced dissipation term with $j$-order and $(j-1)$-order bad derivative. In fact, we have
		\begin{align}\label{L-W-j+1}
			\left|\f{j}{2}\f{\pa_{t}p}{p}|\hat{W}_{j,s-j}^k|^2\right| \lesssim\f{|k|}{p^{\f12}}|\hat{W}_{j,s-j}^k|^2\lesssim \widetilde{\ep}\mu^{\f13}|\hat{W}_{j,s-j}^k|^2+ C_{\widetilde{\ep}} \mu^{-\f13} |\hat{W}_{j-1,s-(j-1)}^k|^2.
		\end{align}
		Then by the same procedure as in \eqref{est-W-j0-1}-\eqref{est-W-j0-3}, it is easy to verify that the other linear terms on the right-hand side of \eqref{est-energy-20} can be computed as follows
		\begin{align*}
			&\left|\mu(\mu+\mu')p Re\left(\hat{D}_{j,s-j}^k\bar{\hat{W}}_{j,s-j}^k\right)\right|
			\leq \widetilde{\ep} \mu p |\hat{W}_{j,s-j}^k|^2+C_{\widetilde{\ep}}\mu^3 p |\hat{D}_{j,s-j}^k|^2,
			\\
			&\left|2\mu \f{k^2}{p^{\f32}} Re\left(\hat{N}_{j+1,s-j}^k\bar{\hat{W}}_{j,s-j}^k\right)\right|
			\leq \widetilde{\ep} \f{k^2}{p}|\hat{W}_{j,s-j}^k|^2+C_{\widetilde{\ep}} \mu^2|\hat{N}_{j,s-j}^k|^2,
			\\
			&\left|\mu \left(\f{\pa_{t}p}{p}-2\mu \f{k^2}{p}\right) Re\left(\hat{D}_{j,s-j}^k\bar{\hat{W}}_{j,s-j}^k\right)\right|
			\leq \widetilde{\ep} \mu^{\f13} |\hat{W}_{j,s-j}^k|^2+C_{\widetilde{\ep}}\mu^{\f53}  |p^{\f12}\hat{\UU}_{j,s-j}^k|^2.
		\end{align*}
		Plugging all the estimates above and \eqref{L-W-j+1} into \eqref{est-energy-20}, then choosing $A$ defined in \eqref{def-m2} suitably large and $\widetilde{\ep}$ suitably small, we end up with
		\begin{align}
			\f{d}{dt}\widetilde{\E}_7(t)
			+\widetilde{\D}_7(t)
			\lesssim& \sum_{s=1}^{3}\sum_{j=1}^{s}c_j\mu^{\f{2j}{3}}\sum_{k\in\Z}\int_{\R}
			\left\{\mu^{-\f13} |\hat{W}_{j-1,s-(j-1)}^k|^2+\mu^2|\hat{N}_{j,s-j}^k|^2
			\right.
			\label{est-energy-21}\\
			&\left.
			+\mu^3 p |\hat{D}_{j,s-j}^k|^2
			+\mu^{\f53}  |p^{\f12}\hat{\UU}_{j,s-j}^k|^2
			+Re\left(\hat{\FFF}_{3,j+1,s-j}^k\bar{\hat{W}}_{j,s-j}^k\right)\right\}\,d\xi
			\nonumber\\
			\lesssim&\sum_{j=1}^3c_j\mu^{\f{2(j-1)}{3}}\D_{j-1,3-(j-1)}^{in}
			+\mu^{\f53}\sum_{j=1}^3c_j\mu^{\f{2j}{3}}\D_{j,3-j}^{com}
			\nonumber\\
			&+\sum_{s=1}^{3}\sum_{j=1}^{s}c_j\mu^{\f{2j}{3}}\sum_{k\in\Z}\int_{\R}
			Re\left(\hat{\FFF}_{3,j+1,s-j}^k\bar{\hat{W}}_{j,s-j}^k\right)\,d\xi,
			\nonumber
		\end{align}
		where $\widetilde{\E}_7$ and $\widetilde{\D}_7$ are defined by
		\begin{align*}
			\widetilde{\E}_7(t)\tri&\sum_{s=1}^{3}\sum_{j=1}^{s}c_j\mu^{\f{2j}{3}}\sum_{k\in\Z}\int_{\R}|\hat{W}_{j,s-j}^k|^2\,d\xi
			\\
			\widetilde{\D}_7(t)\tri&\sum_{s=1}^{3}\sum_{j=1}^{s}c_j\mu^{\f{2j}{3}}\sum_{k\in\Z}\int_{\R}\left(\f{\pa_{t}m_1}{m_1}+\f{\pa_{t}m_2}{m_2}+\mu p\right)|\hat{W}_{j,s-j}^k|^2\,d\xi.
		\end{align*}
		It is not difficult to check that
		\begin{align}\label{con-equi-5}
			\widetilde{\E}_7(t)\sim \sum_{j=1}^3c_j\mu^{\f{2j}{3}}\E_{j,s-j}^{in}(t),\quad 	\widetilde{\D}_7(t)\sim \sum_{j=1}^3c_j\mu^{\f{2j}{3}}\D_{j,s-j}^{in}(t).
		\end{align}
		The nonlinear term is controlled by an argument parallel to the proof of Proposition \ref{prop-3}.
		Because the energy  $\widetilde{\E}_7$ already contains at least one derivative of the solution,	no separate $L^2$ estimate for the zero mode is required; consequently, the proof is simpler and is therefore omitted. One obtains			
		\begin{align*}
			\sum_{s=1}^3\sum_{j=1}^sc_j\mu^{\f{2j}{3}}\int_0^t \left|\sum_{k\in\mathbb{Z}}\int_{\R}
			Re\left(\hat{\FFF}_{3,j+1,s-j}^k\bar{\hat{W}}_{j,s-j}^k\right)\,d\xi\right|\,d\tau
			\lesssim 
			\mu^{-1} |\E(t)|^{\frac12}\int_0^t  \D(\tau) \, d\tau,
		\end{align*}
		which, together with \eqref{est-energy-21} and \eqref{con-equi-5}, yields \eqref{est-5} immediately.
	\end{proof}

	\subsection{Cross-terms of compressible part with higher-order derivatives}
	In this section we establish the {\it a-priori} energy estimate for the cross terms that couple the good and bad higher-order derivatives arising from the compressible component of the solution.  A precise statement is given in the following proposition.

	\begin{prop}\label{prop-7}
		Under the assumptions of Theorem \ref{theo2} and \eqref{ass:density}, we have
		\begin{align}
			&\mu \sum_{j=1}^{3}c_j\mu^{\f{2j}{3}}\E_{j,3-j}^{com}(t)+\mu \sum_{j=1}^{3}c_j\mu^{\f{2j}{3}}\int_0^t\D_{j,3-j}^{com}(\tau)\,d\tau 
			\label{est-6}\\
			\lesssim&~
			\mu \sum_{j=1}^{3}c_j\mu^{\f{2j}{3}}\E_{j,3-j}^{com}(0)
			+\mu \sum_{j=1}^3c_j\mu^{\f{2(j-1)}{3}}\int_0^t\D_{j-1,3-(j-1)}^{com}(\tau)\,d\tau
            + \sum_{j=1}^3c_j\mu^{\f{2j}{3}}\int_0^t\D_{j,3-j}^{com,l}(\tau)\,d\tau
			\nonumber\\
			&+\f{1}{A}\mu \sum_{j=1}^3c_j\mu^{\f{2j}{3}}\int_0^t\D_{j,3-j}^{in}(\tau)\,d\tau
			+  \mu^{-1} |\E(t)|^{\frac12}\int_0^t  \D(\tau) \, d\tau,
			\nonumber
		\end{align}
		where $A$ is defined in \eqref{def-m2} and will be determined later.
	\end{prop}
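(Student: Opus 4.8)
The plan is to run the argument of Proposition~\ref{prop-4} in the cross-derivative setting, importing from Propositions~\ref{prop-5} and~\ref{prop-6} the device by which a $j$-th order lift-up is exchanged for the dissipation of the $(j-1)$-th order quantity. First I would record, from \eqref{eq_n-d-w-2} and the definitions \eqref{def_N-j+1} and \eqref{def_D-j+1}, the evolution equations for $\hat{N}_{j+1,s-j}^k$ and $\hat{D}_{j,s-j}^k$ with $1\le j\le s\le 3$; these have the same shape as equations \eqref{eq_N-1-s} and \eqref{eq_D-1-s} except that the weights $p^{(j+1)/2}$ and $p^{j/2}$ enlarge the lift-up coefficients to $\tfrac{j+1}{2}\tfrac{\pa_{t}p}{p}\hat{N}_{j+1,s-j}^k$ and $\tfrac{j+2}{2}\tfrac{\pa_{t}p}{p}\hat{D}_{j,s-j}^k$, the extra $+1$ in the latter being the intrinsic lift-up of the $D$-equation. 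Then, exactly as in \eqref{est-energy-10}--\eqref{est-energy-12-2}, I would form the basic energy identity for $|(\hat{N}_{j+1,s-j}^k,\hat{D}_{j,s-j}^k)|^2$, add the commutator-cancellation correction $\tfrac12\tfrac{d}{dt}\bigl[\tfrac{\pa_{t}p}{p^{3/2}}Re(\hat{N}_{j+1,s-j}^k\bar{\hat{D}}_{j,s-j}^k)\bigr]$ to remove the $N$-lift-up, and add $\delta_6\mu^{1/3}$ (respectively $\delta_6\mu$ when $k=0$) times the dissipation-recovery identity $-\tfrac{d}{dt}\bigl[p^{-1/2}Re(\hat{N}_{j+1,s-j}^k\bar{\hat{D}}_{j,s-j}^k)\bigr]+(1+2\tfrac{k^2}{p^2})|\hat{N}_{j+1,s-j}^k|^2=\cdots$ to recover the density dissipation. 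Summing over $s$ and $k$, weighting the $j$-level by $c_j\mu^{2j/3}$ and multiplying by the overall factor $\mu^{5/3}$, I obtain a master identity $\tfrac12\tfrac{d}{dt}\widetilde{\E}_8(t)+\widetilde{\D}_8(t)=\LL_4(t)+\sum_i M_i(t)$ with $\widetilde{\E}_8\sim\mu^{5/3}\sum_j c_j\mu^{2j/3}\E_{j,3-j}^{com}$, $\widetilde{\D}_8\sim\mu^{5/3}\sum_j c_j\mu^{2j/3}\D_{j,3-j}^{com}$, where $\LL_4$ collects the linear errors and the $M_i$ are the nonlinear terms built from $\hat{\FFF}_{1,j+1,s-j}^k$ and $\hat{\FFF}_{2,j,s-j}^k$.

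The linear term $\LL_4$ would be handled by a lemma merging Lemma~\ref{lem2-1} with the $(j-1)$-order absorption of Lemma~\ref{lem-L3}. Every error term carrying $\tfrac{\pa_{t}p}{p}$, $\tfrac{k^2}{p^{3/2}}$, $\nu p^{1/2}$ etc.\ with weight $p^{j/2}$ or $p^{(j+1)/2}$ is estimated by \eqref{est-p-varphi}, \eqref{est-varphi-1} and Young's inequality so that the genuinely new lift-up pieces $\tfrac{\pa_{t}p}{p}|\hat{N}_{j+1,s-j}^k|^2$ and $\tfrac{\pa_{t}p}{p}|\hat{D}_{j,s-j}^k|^2$, via $|\tfrac{\pa_{t}p}{p}|\le 2|k|p^{-1/2}$ together with $|\hat{N}_{j+1,s-j}^k|^2=\tfrac{p}{<k,\xi>^2}|\hat{N}_{j,s-j+1}^k|^2$ and $|\hat{D}_{j,s-j}^k|^2=\tfrac{p}{<k,\xi>^2}|\hat{D}_{j-1,s-j+1}^k|^2$, split into $\widetilde{\ep}\mu^{1/3}(|\hat{N}_{j+1,s-j}^k|^2+|\hat{D}_{j,s-j}^k|^2)$, absorbed into $\widetilde{\D}_8$, plus $C_{\widetilde{\ep}}\mu^{-1/3}(|\hat{N}_{j,s-j+1}^k|^2+|\hat{D}_{j-1,s-j+1}^k|^2)$, which is exactly a piece of $\mu^{5/3}c_j\mu^{2(j-1)/3}\D_{j-1,3-(j-1)}^{com}$ once one notes that the factor $\mu^{-1/3}$ matches the weight $\mu^{1/3}$ in $\D^{com}$ and $\mu^{2j/3}\mu^{-2/3}=\mu^{2(j-1)/3}$. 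The $\hat{W}_{j,s-j}^k$-coupling term lands in $\tfrac{C}{A}\mu^{5/3}\sum_j c_j\mu^{2j/3}\D_{j,3-j}^{in}$ after taking $A$ large, and the $k=0$ terms are estimated as in \eqref{est-L3-6}. Choosing $\widetilde{\ep},\delta_6$ small and $A$ large then gives $\LL_4\le\tfrac12\widetilde{\D}_8+C\mu^{5/3}\sum_j c_j\mu^{2(j-1)/3}\D_{j-1,3-(j-1)}^{com}+\tfrac{C}{A}\mu^{5/3}\sum_j c_j\mu^{2j/3}\D_{j,3-j}^{in}$. In contrast with the $j=0$ case treated in Proposition~\ref{prop-4}, no linear $\D^{com,l}$ error term appears here, because for $j\ge 1$ every lift-up piece can be demoted to one lower bad-derivative order instead of being passed sideways to the lower-order velocity.

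For the nonlinear terms $M_i$ I would argue exactly as for the $K_i$ in Lemma~\ref{lem2-2}, which itself rests on Lemmas~\ref{lem1-1}--\ref{lem1-5} and Proposition~\ref{prop-3-2}: each term is rewritten through the operators $\RR_1,\RR_2,\RR_3,\RR_5$, the transport and pressure nonlinearities are paired with the commutator bounds of Lemmas~\ref{sec2:lem-com-1} and~\ref{sec2:lem-com-2}, and the resulting norms are controlled via Lemma~\ref{lem-est-NDW} and Sobolev embedding; the only additional care is to route each factor into the correct functional among $\E^{com,l},\E^{in},\E^{com}$ at levels $0\le j\le 3$ and to supply the $\mu$-powers dictated by the $p^{j/2}$-type weights, just as in Proposition~\ref{prop-5}'s treatment of $L_1,L_2,L_3$. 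Since $\widetilde{\E}_8$ already carries at least one derivative of the solution, no separate $L^2$ estimate of the zero mode (of the kind used in Propositions~\ref{prop-2} and~\ref{prop-3}) is required. Integrating the master identity in time and inserting the two lemmas, then invoking \eqref{est-varphi-2} and the equivalences $\widetilde{\E}_8\sim\mu^{5/3}\sum_j c_j\mu^{2j/3}\E_{j,3-j}^{com}$, $\widetilde{\D}_8\sim\mu^{5/3}\sum_j c_j\mu^{2j/3}\D_{j,3-j}^{com}$, yields \eqref{est-6}.

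\textbf{The main obstacle} will be the bookkeeping inside the $\LL_4$-lemma, which is purely organizational but genuinely delicate: three ``bad'' mechanisms act on the high-order compressible part at once---the intrinsic lift-up $\tfrac{\pa_{t}p}{p}\hat{D}^k$ with its $\tfrac{k^2}{p}(W-N+\mu D)$ coupling, the $p^{j/2}$-weight-induced $j$-th order lift-up, and the derivative loss recovered only through the overall $\mu^{5/3}$ prefactor---while the two correction identities $\tfrac{d}{dt}\bigl[\tfrac{\pa_{t}p}{p^{3/2}}Re(\cdot)\bigr]$ and $\tfrac{d}{dt}\bigl[p^{-1/2}Re(\cdot)\bigr]$ generate cross products such as $\tfrac{\pa_{t}p}{p^{3/2}}\cdot\tfrac{j}{2}\tfrac{\pa_{t}p}{p}Re(\hat{N}_{j+1,s-j}^k\bar{\hat{D}}_{j,s-j}^k)$ and $\tfrac{1}{p^{1/2}}\cdot\tfrac{j}{2}\tfrac{\pa_{t}p}{p}Re(\cdot)$ whose ``good'' halves must be absorbed by $\widetilde{\D}_8$ at level $j$ and whose ``bad'' halves must drop exactly one bad-derivative order and land in $\D_{j-1,3-(j-1)}^{com}$ with the $\mu$-power matching the normalization of \eqref{est-6}. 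Verifying that this splitting closes uniformly for all $j\in\{1,2,3\}$ and $s\in\{j,\dots,3\}$, with $A,\delta_6,\widetilde{\ep}$ and the hierarchy $c_0\gg c_1\gg c_2\gg c_3$ fixed once and for all, is the crux.
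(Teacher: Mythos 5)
Your proposal follows the paper's proof of Proposition~\ref{prop-7} in all essential respects: form the energy identity for $|(\hat{N}_{j+1,s-j}^k,\hat{D}_{j,s-j}^k)|^2$ from \eqref{eq_N-j+1}--\eqref{eq_D-j+1}, recover the density dissipation through a $p^{-1/2}Re(\hat{N}_{j+1,s-j}^k\bar{\hat{D}}_{j,s-j}^k)$ correction weighted by $\delta_6\mu^{1/3}$ (resp.\ $\delta_6\mu$ for $k=0$), absorb the lift-up terms into the dissipation at bad-derivative level $j-1$, route the $\hat{W}_{j,s-j}^k$-coupling into $\tfrac{1}{A}\D^{in}_{j,3-j}$ by taking $A$ large, and treat the nonlinear terms by the same mechanism as in Lemma~\ref{lem2-2}.

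The one genuine structural departure is that you also add the correction $\tfrac12\tfrac{d}{dt}\bigl[\tfrac{\pa_{t}p}{p^{3/2}}Re(\hat{N}_{j+1,s-j}^k\bar{\hat{D}}_{j,s-j}^k)\bigr]$, modelled on \eqref{est-energy-11} from Proposition~\ref{prop-4}. The paper omits this correction in Proposition~\ref{prop-7}: its $\widetilde{\E}_8$ contains no $\pa_{t}p/p^{3/2}$ term, and both the $N$-lift-up $\tfrac{j+1}{2}\tfrac{\pa_{t}p}{p}|\hat{N}_{j+1,s-j}^k|^2$ and the $D$-lift-up $\tfrac{j+2}{2}\tfrac{\pa_{t}p}{p}|\hat{D}_{j,s-j}^k|^2$ are bounded directly by the $(j-1)$-level demotion in \eqref{est-L4-1}. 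The $\pa_{t}p/p^{3/2}$-device is needed for $j=0$ in Proposition~\ref{prop-4} only because at that level there is no $(j-1)$ bad-derivative dissipation to fall back on and the $D$-lift-up must instead be passed sideways into $\D_{0,3}^{com,l}$ (cf.\ \eqref{est-L2-1}); once $j\ge 1$ the demotion handles both lift-ups at once, and the extra correction merely produces additional cross products $\tfrac{\pa_{t}p}{p^{3/2}}\cdot\tfrac{\pa_{t}p}{p}Re(\cdot)$ etc.\ which then have to be estimated. Your description ``to remove the $N$-lift-up'' is therefore imprecise: with the $\tfrac12$-weight the correction cancels only $\tfrac12\tfrac{\pa_{t}p}{p}|\hat{N}_{j+1,s-j}^k|^2$, leaving a residual $\tfrac{j}{2}\tfrac{\pa_{t}p}{p}|\hat{N}_{j+1,s-j}^k|^2$ (nonzero for $j\ge 1$) which you then demote anyway. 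The argument still closes, since $|\pa_{t}p/p^{3/2}|\le p^{-1/2}\le 1$ keeps the modified quadratic form equivalent to $\sum_j c_j\mu^{2j/3}\E_{j,3-j}^{com}$, but it carries unnecessary baggage compared with the paper's leaner route. The $\mu$-power bookkeeping you give for $\mu^{2j/3}\cdot\mu^{-1/3}|\hat{N}_{j,s-(j-1)}^k|^2 = \mu^{2(j-1)/3}\cdot\mu^{1/3}|\hat{N}_{j,s-(j-1)}^k|^2$ landing in $\mu^{2(j-1)/3}\D_{j-1,3-(j-1)}^{com}$ is correct, as is your observation that no $\D^{com,l}$ sideways term appears for $j\ge 1$.
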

	
	Recalling $\eqref{eq_n-d-w-2}_1$ and $\eqref{eq_n-d-w-2}_2$ once more, we obtain 
	\begin{align}
		\pa_{t}\hat{N}_{j+1,s-j}^k=&
        \f{j+1}{2}\f{\pa_{t}p}{p}\hat{N}_{j+1,s-j}^k
		-p^{\f12}\hat{D}_{j,s-j}^k+\hat{\FFF}_{1,j+1,s-j}^k,
		\label{eq_N-j+1}\\
		\pa_{t}\hat{D}_{j,s-j}^k=&-\nu p \hat{D}_{j,s-j}^k+\f{j+2}{2}\f{\pa_{t}p}{p}\hat{D}_{j,s-j}^k
		+\left(p^{\f12}+2\f{k^2}{p^{\f32}}\right)\hat{N}_{j+1,s-j}^k
		\label{eq_D-j+1}\\
		&-2\mu \f{k^2}{p}\hat{D}_{j,s-j}^k
		-2\f{k^2}{p}\hat{W}_{j,s-j}^k
		+\hat{\FFF}_{2,j+1,s-j}^k,
		\nonumber
	\end{align}
	where nonlinear terms are defined by
	\begin{align}
		\hat{\FFF}_{1,j+1,s-j}^k\tri& <k,\xi>^{s-j}p^{\f{j+1}{2}}\hat{\FFF}_1^k,
		\\
		\hat{\FFF}_{2,j+1,s-j}^k\tri& <k,\xi>^{s-j}p^{\f{j}{2}}\hat{\FFF}_2^k.
	\end{align}
	According to \eqref{eq_N-j+1} and \eqref{eq_D-j+1},  we have
	\begin{align}
		&\f12\f{d}{dt}\sum_{s=1}^{3}\sum_{j=1}^{s}c_j\mu^{\f{2j}{3}}\sum_{k\in\Z}\int_{\R}\left[|\hat{N}_{j+1,s-j}^k|^2+|\hat{D}_{j,s-j}^k|^2\right]\,d\xi
        +\sum_{s=1}^{3}\sum_{j=1}^{s}c_j\mu^{\f{2j}{3}}\sum_{k\in\Z}\int_{\R} \nu p |\hat{D}_{j,s-j}^k|^2\,d\xi
		\label{est-energy-22}\\
		=&\sum_{s=1}^{3}\sum_{j=1}^{s}c_j\mu^{\f{2j}{3}}\sum_{k\in\Z\backslash\{0\}}\int_{\R}\left\{\f{j+1}{2}\f{\pa_{t}p}{p} |\hat{N}_{j+1,s-j}^k|^2
		+\f{j+2}{2}\f{\pa_{t}p}{p}|\hat{D}_{j,s-j}^k|^2
		\right.
		\nonumber\\
		&\left.
		+2\f{k^2}{p^{\f32}}Re\left(\hat{N}_{j+1,s-j}^k\bar{\hat{D}}_{j,s-j}^k\right)
		-2\mu \f{k^2}{p}|\hat{D}_{j,s-j}^k|^2
		-2\f{k^2}{p}Re\left(\hat{W}_{j,s-j}^k\bar{\hat{D}}_{j,s-j}^k\right)
		\right\}\,d\xi
		\nonumber\\
		&+\sum_{s=1}^{3}\sum_{j=1}^{s}c_j\mu^{\f{2j}{3}}\sum_{k\in\Z}\int_{\R}\left\{Re\left(\hat{\FFF}_{1,j+1,s-j}^k\bar{\hat{N}}_{j+1,s-j}^k\right)+Re\left(\hat{\FFF}_{2,j+1,s-j}^k\bar{\hat{D}}_{j,s-j}^k\right)\right\}\,d\xi.\nonumber
	\end{align}
	In order to obtain the dissipation for $\hat{N}_{j+1,s-j}^k$, we compute that
	\begin{align}
		&-\f{d}{dt}\left[p^{-\f12}Re\left(\hat{N}_{j+1,s-j}^k\bar{\hat{D}}_{j,s-j}^k\right)\right]
		+\left(1+2\f{k^2}{p^2}\right)|\hat{N}_{j+1,s-j}^k|^2
		\label{est-energy-23}\\
		=&~|\hat{D}_{j,s-j}^k|^2+\f{1}{p^{\f12}}\left(-(j+1)\f{\pa_{t}p}{p}+2\mu \f{k^2}{p}+\nu p\right)Re\left(\hat{N}_{j+1,s-j}^k\bar{\hat{D}}_{j,s-j}^k\right)
		\nonumber\\
		&+2\f{k^2}{p^{\f32}}Re\left(\hat{N}_{j+1,s-j}^k\bar{\hat{W}}_{j,s-j}^k\right)
		-\f{1}{p^{\f12}}\left[Re\left(\hat{\FFF}_{1,j+1,s-j}^k\bar{\hat{D}}_{j,s-j}^k\right)+Re\left(\hat{\FFF}_{2,j+1,s-j}^k\bar{\hat{N}}_{j+1,s-j}^k\right)\right].
		\nonumber
	\end{align}
	Multiplying \eqref{est-energy-23} by $\delta_6\mu^{\f13}$, then summing over $k\in\Z$, $j$ from $1$ to $s$ and over $s$ from $1$ to $3$, we achieve that
	\begin{align}
		&-\delta_6\mu^{\f13}\f{d}{dt}\sum_{s=1}^{3}\sum_{j=1}^{s}c_j\mu^{\f{2j}{3}}\sum_{k\in\Z\backslash\{0\}}\int_{\R}\left[p^{-\f12}Re\left(\hat{N}_{j+1,s-j}^k\bar{\hat{D}}_{j,s-j}^k\right)\right]\,d\xi
		\label{est-energy-23-1}\\
		&+\delta_6\mu^{\f13}\sum_{s=1}^{3}\sum_{j=1}^{s}c_j\mu^{\f{2j}{3}}\sum_{k\in\Z\backslash\{0\}}\int_{\R}\left(1+2\f{k^2}{p^2}\right)|\hat{N}_{j+1,s-j}^k|^2\,d\xi
		\nonumber\\
		=&~\delta_6\mu^{\f13}\sum_{s=1}^{3}\sum_{j=1}^{s}c_j\mu^{\f{2j}{3}}\sum_{k\in\Z\backslash\{0\}}\int_{\R}\left\{|\hat{D}_{j,s-j}^k|^2+\f{1}{p^{\f12}}\left(-(j+1)\f{\pa_{t}p}{p}
		\right.\right.
		\nonumber\\
		&\left.+2\mu \f{k^2}{p}+\nu p\right)Re\left(\hat{N}_{j+1,s-j}^k\bar{\hat{D}}_{j,s-j}^k\right)
		+2\f{k^2}{p^{\f32}}Re\left(\hat{N}_{j+1,s-j}^k\bar{\hat{W}}_{j,s-j}^k\right)
		\nonumber\\
		&\left.-\f{1}{p^{\f12}}\left[Re\left(\hat{\FFF}_{1,j+1,s-j}^k\bar{\hat{D}}_{j,s-j}^k\right)+Re\left(\hat{\FFF}_{2,j+1,s-j}^k\bar{\hat{N}}_{j+1,s-j}^k\right)\right]\right\}\,d\xi.\nonumber
	\end{align}
	When $k=0$, we multiply \eqref{est-energy-23} by $\delta_6\mu$,  sum over $j$ from $1$ to $s-1$ and over $s$ from $2$ to $3$, and thereby obtain
	\begin{align}
		&-\delta_6\mu\f{d}{dt}\sum_{s=2}^{3}\sum_{j=1}^{s-1}c_j\mu^{\f{2j}{3}}\int_{\R}\left[p^{-\f12}Re\left(\hat{N}_{j+1,s-j}^0\bar{\hat{D}}_{j,s-j}^0\right)\right]\,d\xi
		+\delta_6\mu\sum_{s=2}^{3}\sum_{j=1}^{s-1}c_j\mu^{\f{2j}{3}}\int_{\R}|\hat{N}_{j+1,s-j}^0|^2\,d\xi
		\label{est-energy-23-2}\\
		=&~\delta_6\mu\sum_{s=2}^{3}\sum_{j=1}^{s-1}c_j\mu^{\f{2j}{3}}\sum_{k\in\Z\backslash\{0\}}\int_{\R}\left\{|\hat{D}_{j,s-j}^0|^2+\nu |\xi|Re\left(\hat{N}_{j+1,s-j}^0\bar{\hat{D}}_{j,s-j}^0\right)
		\right.
		\nonumber\\
		&\left.
		-|\xi|^{-1}\left[Re\left(\hat{\FFF}_{1,j+1,s-j}^0\bar{\hat{D}}_{j,s-j}^0\right)+Re\left(\hat{\FFF}_{2,j+1,s-j}^0\bar{\hat{N}}_{j,s-j}^0\right)\right]\right\}\,d\xi.\nonumber
	\end{align}
	We then define 
	\begin{align}
		\widetilde{\E}_8(t)\tri&\sum_{s=1}^{3}\sum_{j=1}^{s}c_j\mu^{\f{2j}{3}}\sum_{k\in\Z}\int_{\R}\left(|\hat{N}_{j+1,s-j}^k|^2+|\hat{D}_{j,s-j}^k|^2\right)\,d\xi
		\\
		&-2\delta_6\mu^{\f13}\sum_{s=1}^{3}\sum_{j=1}^{s}c_j\mu^{\f{2j}{3}}\sum_{k\in\Z\backslash\{0\}}\int_{\R}\left[p^{-\f12}Re\left(\hat{N}_{j+1,s-j}^k\bar{\hat{D}}_{j,s-j}^k\right)\right]\,d\xi
		\nonumber\\
		&-2\delta_6\mu\sum_{s=2}^{3}\sum_{j=1}^{s-1}c_j\mu^{\f{2j}{3}}\int_{\R}p^{-\f12}Re\left(\hat{N}_{j+1,s-j}^0\bar{\hat{D}}_{j,s-j}^0\right)\,d\xi,
		\nonumber\\
		\widetilde{\D}_8(t)\tri&\sum_{s=1}^{3}\sum_{j=1}^{s}c_j\mu^{\f{2j}{3}}\sum_{k\in\Z}\int_{\R}\left[\delta_6\mu^{\f13}\left(1+2\f{k^2}{p^2}\right)\right]|\hat{N}_{j+1,s-j}^k|^2\,d\xi
		\\
		&
		+\delta_6\mu\sum_{s=2}^{3}\sum_{j=1}^{s-1}c_j\mu^{\f{2j}{3}}\int_{\R}|\hat{N}_{j+1,s-j}^0|^2\,d\xi
		+\sum_{s=1}^{3}\sum_{j=1}^{s}c_j\mu^{\f{2j}{3}}\sum_{k\in\Z}\int_{\R}\nu p|\hat{D}_{j,s-j}^k|^2\,d\xi.
		\nonumber
	\end{align}
	A direct computation yields that
	\begin{align}\label{con-equi-6}
		\widetilde{\E}_8(t)\sim \sum_{j=1}^3c_j\mu^{\f{2j}{3}}\E_{j,s-j}^{com}(t),\quad \widetilde{\D}_8(t)\sim \sum_{j=1}^3c_j\mu^{\f{2j}{3}}\D_{j,s-j}^{com}(t).
	\end{align}
	Summing up  \eqref{est-energy-22}, \eqref{est-energy-23-1} and \eqref{est-energy-23-2}, we conclude  
	\begin{align}
		&\f12\f{d}{dt}\widetilde{\E}_8(t)+\widetilde{\D}_8(t)=\LL_4(t)+\sum_{i=1}^3M_i(t),
		\label{est-energy-24}
	\end{align}
	where the linear term $\LL_4(t)$ and the nonlinear terms $M_i (i=1,2,3)$ are defined by
	\begin{align}
		\LL_4(t)\tri&
		\sum_{s=1}^{3}\sum_{j=1}^{s}c_j\mu^{\f{2j}{3}}\sum_{k\in\Z\backslash\{0\}}\int_{\R}\left\{\f{j+1}{2}\f{\pa_{t}p}{p}|\hat{N}_{j+1,s-j}^k|^2+\left(\f{j+2}{2}\f{\pa_{t}p}{p}
		+\delta_6\mu^{\f13}-2\mu\f{k^2}{p}\right)|\hat{D}_{j,s-j}^k|^2
		\right.
		\label{def_L4}\\
		&+\left[2\f{k^2}{p^{\f32}}+\delta_6\mu^{\f13}\left(\nu p^{\f12}
		+2\mu\f{k^2}{p^{\f32}}\right)
		-(j+1)\delta_6\mu^{\f13}\f{\pa_{t}p}{p^{\f32}}
		\right]
		\nonumber\\
		&\left.\times Re\left(\hat{N}_{j+1,s-j}^k\bar{\hat{D}}_{j,s-j}^k\right)
		+2\delta_6\mu^{\f13}\f{k^2}{p^{\f32}}Re\left(\hat{N}_{j+1,s-j}^k\bar{\hat{W}}_{j,s-j}^k\right)
		-2\f{k^2}{p}Re\left(\hat{W}_{j,s-j}^k\bar{\hat{D}}_{j,s-j}^k\right)
		\right\}\,d\xi
		\nonumber\\
		&+\delta_6\mu\sum_{s=2}^{3}\sum_{j=1}^{s-1}c_j\mu^{\f{2j}{3}}\sum_{k\in\Z\backslash\{0\}}\int_{\R}\left[|\hat{D}_{j,s-j}^0|^2+\nu |\xi|Re\left(\hat{N}_{j+1,s-j}^0\bar{\hat{D}}_{j,s-j}^0\right)\right]\,d\xi,
		\nonumber\\
		M_1(t)\tri&\sum_{s=1}^{3}\sum_{j=1}^{s}c_j\mu^{\f{2j}{3}}\sum_{k\in\Z}\int_{\R}\left[Re\left(\hat{\FFF}_{1,j+1,s-j}^k\bar{\hat{N}}_{j+1,s-j}^k\right)
		+Re\left(\hat{\FFF}_{2,j+1,s-j}^k\bar{\hat{D}}_{j,s-j}^k\right)\right]\,d\xi,
		\nonumber\\
		M_2(t)\tri&-\delta_6\mu\sum_{s=2}^{3}\sum_{j=1}^{s-1}c_j\mu^{\f{2j}{3}}\int_{\R} |\xi|^{-1}\left[Re\left(\hat{\FFF}_{1,j+1,s-j}^0\bar{\hat{D}}_{j,s-j}^0\right)+Re\left(\hat{\FFF}_{2,j+1,s-j}^0\bar{\hat{N}}_{j+1,s-j}^0\right)\right]\,d\xi,
		\nonumber\\
		M_3(t)\tri&-\delta_6\mu^{\f13}\sum_{s=1}^{3}\sum_{j=1}^{s}c_j\mu^{\f{2j}{3}}\sum_{k\in\Z\backslash\{0\}}\int_{\R}\f{1}{p^{\f12}}\left[Re\left(\hat{\FFF}_{1,j+1,s-j}^k\bar{\hat{D}}_{j,s-j}^k\right)+Re\left(\hat{\FFF}_{2,j+1,s-j}^k\bar{\hat{N}}_{j+1,s-j}^k\right)\right]\,d\xi
		.\nonumber
	\end{align}
	\begin{lemm}
		It holds that
		\begin{align}\label{est-L4}
			\LL_4(t)\leq &~\f14\widetilde{\D}_8(t)+C\sum_{j=1}^3c_j\mu^{\f{2(j-1)}{3}}\D_{j-1,3-(j-1)}^{com}(t)
            \\
            &~
            +C\mu^{-1}\sum_{j=1}^3c_j\mu^{\f{2j}{3}}\D_{j,3-j}^{com,l}(t)
            +\f{C}{A}\sum_{j=1}^3\mu^{\f{2j}{3}}\D_{j,3-j}^{in}(t).
            \nonumber
		\end{align}
	\end{lemm}
	\begin{proof}
		We begin by treating all linear terms on the right-hand side of \eqref{def_L4} for $k\not=0$.
		The lift-up term involving the $j$-order bad derivative therein is then absorbed by the enhanced dissipation, which contains both $j$-order and $(j-1)$-order bad derivative. 
		Indeed, from \eqref{est-p-varphi} we deduce
		\begin{align}
			&\left|\f{j+1}{2}\f{\pa_{t}p}{p}|\hat{N}_{j+1,s-j}^k|^2
			+\f{j+2}{2}\f{\pa_{t}p}{p}|\hat{D}_{j,s-j}^k|^2\right|
			\label{est-L4-1}\\
			\lesssim&~ \f{|k|}{p^{\f12}} \left(|\hat{N}_{j+1,s-j}^k|^2+ |\hat{D}_{j,s-j}^k|^2\right)
			\nonumber\\
			\lesssim&~ \widetilde{\ep} \mu^{\f13}\left(|\hat{N}_{j+1,s-j}^k|^2+ |\hat{D}_{j,s-j}^k|^2\right) + C_{\widetilde{\ep}} \mu^{-\f13} \left(|\hat{N}_{j,s-(j-1)}^k|^2+ |\hat{D}_{j-1,s-(j-1)}^k|^2\right).
			\nonumber
		\end{align}
		Sine $p\geq1$ for $k\not=0$, we have
		\begin{align}
			\left|\big(\delta_6\mu^{\f13}-2\mu\f{k^2}{p}\big)\right|
			|{\hat{D}}_{j,s-j}^k|^2 
            \lesssim |p^{\f12}{\hat{\UU}}_{j,s-j}^k|^2,
			\label{est-L4-2}
		\end{align}
		\begin{align}
			&\left|\delta_6\mu^{\f13}\nu p^{\f12}Re\left(\hat{N}_{j+1,s-j}^k\bar{\hat{D}}_{j,s-j}^k\right)\right|
			\lesssim
			\delta_6\mu^{\f43}
			\left(|\hat{N}_{j+1,s-j}^k|^2+ p|\hat{D}_{j,s-j}^k|^2\right),
			\label{est-L4-3}
		\end{align}
		and
		\begin{align}
			&\left|2\delta_6\mu^{\f13}\f{k^2}{p^{\f32}}Re\left(\hat{N}_{j+1,s-j}^k\bar{\hat{W}}_{j,s-j}^k\right)
			-2\f{k^2}{p}Re\left(\hat{W}_{j,s-j}^k\bar{\hat{D}}_{j,s-j}^k\right)\right|
			\label{est-L4-4}\\
			\lesssim& ~
			\f{k^2}{p}|\hat{W}_{j,s-j}^k|^2
            +|\hat{N}_{j,s-(j-1)}^k|^2+ |\hat{D}_{j-1,s-(j-1)}^k|^2.
			\nonumber
		\end{align}
		Applying \eqref{est-p-varphi} once more, we deduce that
		\begin{align}	\label{est-L4-5}
			&\left|\left[2\f{k^2}{p^{\f32}}+\delta_6\mu^{\f13}\f{1}{p^{\f12}}\left(2\mu\f{k^2}{p}-(j+1)\f{\pa_{t}p}{p}\right)
			\right]
			Re\left(\hat{N}_{j+1,s-j}^k\bar{\hat{D}}_{j,s-j}^k\right)\right|
			\lesssim 
			|\hat{N}_{j,s-(j-1)}^k|^2+ |\hat{D}_{j-1,s-(j-1)}^k|^2.
		\end{align}
		For $k=0$, it is not difficult to check that
		\begin{align}
			\left|\delta_6\mu\nu |\xi|Re\left(\hat{N}_{j+1,s-j}^0\bar{\hat{D}}_{j,s-j}^0\right)\right|\lesssim \delta_6\mu^2\left(|\hat{N}_{j+1,s-j}^0|^2+|\xi|^2|\hat{D}_{j,s-j}^0|^2\right).
			\label{est-L4-6}
		\end{align}
		Inserting all the preceding estimates into \eqref{est-energy-24} and choosing $A$ (defined in \eqref{def-m2}) sufficiently large and			
		
		Inserting all the estimates above into \eqref{est-energy-24}, choosing $A$ defined in \eqref{def-m2} suitably large, $\widetilde{\ep}$ and $\delta_6$ suitably smalll, we immediately obtain \eqref{est-L4}.
	\end{proof}
	
	\begin{proof}[\textbf{Proof of Propostion \ref{prop-7}}]
		It follows from the same argument used in the proof of Proposition \ref{prop-4} that
		\begin{align}
			\sum_{i=1}^3\int_0^tM_{i}(\tau)\,d\tau
			\lesssim 	 \mu^{-1} |\E(t)|^{\frac12}\int_0^t  \D(\tau) \, d\tau.
			\label{est-M}	
		\end{align}
		Substituting \eqref{est-L4} and \eqref{est-M} into \eqref{est-energy-24}, then applying \eqref{con-equi-6}, we obtain \eqref{est-6} directly.
	\end{proof}

	\section{Proof of main result}
	
	In this section we prove Theorem \ref{theo2}.  The global existence theory for nonlinear PDEs is usually carried out in three steps: (1) construction of suitable approximate solutions;
	(2) uniform {\it a priori} energy estimates for these approximations; (3) passage to the limit.  Steps (1) and (3) are standard.  For brevity we therefore focus on the {\it a priori} energy estimates for smooth solutions of system \eqref{eq_n-d-w-1} within the $H^4$ framework.  Once Proposition \ref{prop-1} is established, the theorem follows immediately by a continuity argument; the remaining details are omitted.  We now proceed to the proof of Proposition \ref{prop-1}.

	\begin{proof}[\textbf{Proof of Proposition \ref{prop-1}}]
		
		Using the bootstrap argument, we first assume that the density bound \eqref{ass:density} holds.  Under this assumption we have already shown that \eqref{est-0+1}, \eqref{est-2+2'} and \eqref{est-3} remain valid for all time.  Multiplying \eqref{est-2+2'} by $\tilde{\delta}_1$ and \eqref{est-3} by $\tilde{\delta}_2$, adding the resulting equality to \eqref{est-0+1} together, then taking $A$ sufficiently large and $\tilde{\delta}_1$, $\tilde{\delta}_2$ sufficiently small, we obtain
		\begin{align}
			&\E_{0,3}^{com,l}(t)
			+\tilde{\delta}_1\E_{0,3}^{in}(t)
			+\tilde{\delta}_2\mu \E_{0,3}^{com}(t)
			+\int_0^t \left(\D_{0,3}^{com,l}
			+\tilde{\delta}_1\D_{0,3}^{in}
			+\tilde{\delta}_2\mu \D_{0,3}^{com}\right)(\tau)\,d\tau
			\label{est-E-1}\\
			\lesssim&~
			\E_{0,3}^{com,l}(0)
			+\tilde{\delta}_1\E_{0,3}^{in}(0)
			+\tilde{\delta}_2\mu \E_{0,3}^{com}(0)
			+	 \mu^{-1} |\E(t)|^{\frac12}\int_0^t  \D(\tau) \, d\tau. \nonumber
		\end{align}
		Notice that $\E_{1,2}^{com,l}(t)$ and $\D_{1,2}^{com,l}(t)$ can not be controlled by the terms on the left-hand side of \eqref{est-E-1}. Thus we must bound the cross terms involving both the good and bad derivatives of the density and velocity. 
		Multiplying \eqref{est-5} by $\tilde{\delta}_1$ and \eqref{est-6} by $\tilde{\delta}_1$ and $\tilde{\delta}_2$,  then adding the resulting equality to \eqref{est-4}, and then choosing  $A$ sufficiently large and $\tilde{\delta}_1,\tilde{\delta}_2$ sufficiently small, we obtain
		\begin{align}
			&\sum_{j=1}^3c_j\mu^{\f{2j}{3}}\left(\E_{j,3-j}^{com,l}(t)
			+\tilde{\delta}_1\E_{j,3-j}^{in}(t)
			+\tilde{\delta}_2\mu \E_{j,3-j}^{com}(t)\right)
			\label{est-E-2}\\
			&
			\quad+\sum_{j=1}^3 c_j\mu^{\f{2j}{3}}\int_0^t \left(\D_{j,3-j}^{com,l}
			+\tilde{\delta}_1\D_{j,3-j}^{in}
			+\tilde{\delta}_2\mu \D_{j,3-j}^{com}\right)(\tau)\,d\tau
			\nonumber\\
			\lesssim
			&
			\sum_{j=1}^3c_j\mu^{\f{2j}{3}}\left(\E_{j,3-j}^{com,l}(0)
			+\tilde{\delta}_1\E_{j,3-j}^{in}(0)
			+\tilde{\delta}_2\mu \E_{j,3-j}^{com}(0)\right)
			\nonumber\\
			&\quad
			+
			\sum_{j=1}^3c_j\mu^{\f{2(j-1)}{3}}\int_0^t \left(\D_{j-1,3-(j-1)}^{com,l}
			+\tilde{\delta}_1\D_{j-1,3-(j-1)}^{in}
			+\tilde{\delta}_2\mu \D_{j-1,3-(j-1)}^{com}\right)(\tau)\,d\tau\nonumber\\
			& \quad
			+	 \mu^{-1} |\E(t)|^{\frac12}\int_0^t  \D(\tau) \, d\tau.
			\nonumber
		\end{align}
		Multiplying \eqref{est-E-1} by $c_0$, and adding the result inequality to \eqref{est-E-2}, then choosing suitable $c_j$ such that $c_{j}\leq C_1 c_{j-1}$ holds for any $j=1,\cdots,3$, finally using the \textit{a priori} assumption \eqref{priori assumption}, we then deduce that
		\begin{align}
			\sup_{\tau\in[0,t]}\E(\tau)+\int_0^t \D(\tau)\,d\tau \lesssim \E(0)+\mu^{-1}|\E(t)|^{\f12}\int_0^t \D(\tau)\,d\tau\lesssim \E(0)+\ep_0^3\mu^2,
		\end{align}
		where $\E(t)$ and $\D(t)$ are defined in \eqref{def_E} and \eqref{def_D}, respectively. Then by Sobolev's inequality, we get
		\begin{align*} 
			\f34\leq N+1\leq \f32,\quad \textrm{for all}\quad t\in [0,\infty). 
		\end{align*}
		
		This finishes the proof of Proposition \ref{prop-1}.
	\end{proof}

	\medskip

	\section*{Acknowledgments}
	M. Li is supported by Postdoctoral Fellowship Program of CPSF under Grant No. GZB20240024 and China Postdoctoral Science Foundation under Grant No. 2024M760057 and No. 2025T180840. C. Wang is partially supported by NSF of China under Grant No. 12471189. Z. Zhang is partially supported by NSF of China under Grant No. 12288101.

	\bigskip 
	
	\section*{Declaration of competing interest}
	
	The authors declare that they have no conflict of interest.
	
	\bigskip

	\section*{Data availability}
	
	Data sharing not applicable to this article as no datasets were generated or analyzed during the current study.
	

	\bigskip

\end{document}